\documentclass{article}
\usepackage{graphicx} 
\usepackage{fullpage}
\usepackage[utf8]{inputenc}
\usepackage{amsmath,amsfonts,amsthm,amssymb}
\usepackage{graphicx}
\usepackage{graphics}
\usepackage{url}
\usepackage{hyperref}
\usepackage{color}
\usepackage{enumitem}
\usepackage{subfiles}
\usepackage{mathrsfs}
\usepackage{caption}
\usepackage{algorithm}
\usepackage{algpseudocode}
\usepackage{varwidth}
\usepackage{mathabx}
\usepackage{svg}
\usepackage{booktabs}
\usepackage{multirow}
\usepackage{multicol}
\usepackage{subcaption}
\usepackage{here}
\usepackage[export]{adjustbox} 

\usepackage{tikz}
\usepackage{tkz-graph}
\usetikzlibrary{shapes.geometric}
\usetikzlibrary{positioning}
\usetikzlibrary {decorations.markings}
\usetikzlibrary{arrows.meta}
\usetikzlibrary{calc}

\newtheorem{thm}{Theorem}[section]
\newtheorem{lem}[thm]{Lemma}

\newtheorem{claim}[thm]{Claim}

\newtheorem{dfn}[thm]{Definition}

\newtheorem{fact}[thm]{Fact}

\newtheorem{conjecture}{Conjecture}

\newcommand\drop[1]{}

\def\showlabel#1{}
\def\showfiglabel#1{}

\algnewcommand{\Break}{\State \textbf{break}}
\algnewcommand{\Continue}{\State \textbf{continue}}
\algnewcommand{\True}{\textbf{true}}
\algnewcommand{\False}{\textbf{false}}


\title{Three-edge-coloring (Tait coloring) cubic graphs on the torus:\\ 
A proof of Gr\"unbaum's conjecture}

\begin{document}
\author{
Yuta Inoue\thanks{The University of Tokyo, Tokyo, Japan, \texttt{yutainoue@is.s.u-tokyo.ac.jp}.  Supported by JSPS Kakenhi 22H05001, by JST ASPIRE JPMJAP2302, and by Hirose Foundation Scholarship.}
\and
Ken-ichi Kawarabayashi\thanks{National Institute of Informatics \& The University of Tokyo, Tokyo, Japan, \texttt{k\_keniti@nii.ac.jp}.  Supported by JSPS Kakenhi 22H05001, by JSPS Kakenhi JP20A402 and by JST ASPIRE JPMJAP2302.}
\and
Atsuyuki Miyashita\thanks{The University of Tokyo, Tokyo, Japan, \texttt{miyashita-atsuyuki869@is.s.u-tokyo.ac.jp}.  Supported by JSPS Kakenhi 22H05001 and by JST ASPIRE JPMJAP2302.}
\and
Bojan Mohar\thanks{Simon Fraser University, Burnaby, BC, Canada \& FMF, University of Ljubljana, Slovenia, \texttt{mohar@sfu.ca}. Supported in part by NSERC Discovery Grant R832714 (Canada),
by the ERC Synergy grant (European Union, ERC, KARST, project number 101071836),
and by the Research Project N1-0218 of ARIS (Slovenia).}
\and
Tomohiro Sonobe\thanks{National Institute of Informatics, Japan, \texttt{tomohiro\_sonobe@nii.ac.jp}.  Supported by JSPS Kakenhi 22H05001 and by JST ASPIRE JPMJAP2302.}
 }
\date{\today}
\maketitle


\begin{abstract}
We prove that every cyclically 4-edge-connected cubic graph that can be embedded in the torus, with the exceptional graph class called ``Petersen-like", is 3-edge-colorable. 
This means every (non-trivial) toroidal snark can be obtained from several copies of the Petersen graph using the dot product operation. The first two snarks in this family are the Petersen graph and one of Blanu\v{s}a snarks; the rest are exposed by Vodopivec in 2008. This proves a strengthening of the well-known, long-standing conjecture of Gr\"unbaum \cite{grunbaum} from 1968.

This implies that a 2-connected cubic (multi)graph that can be embedded in the torus is not 3-edge-colorable if and only if it can be obtained from a dot product of copies of the Petersen graph by replacing its vertices with 2-edge-connected planar cubic (multi)graphs.
Here, replacing a vertex $v$ in a cubic graph $G$ is the operation that takes a 2-connected planar cubic multigraph $H$ and one of its vertices $u$ of degree 3, unifying $G-v$ and $H-u$ and connecting the neighbors of $v$ in $G-v$ with the neighbors of $u$ in $H-u$ with a matching.

This result is a highly nontrivial generalization of the Four Color Theorem, and its proof requires a combination of extensive computer verification and computer-free extension of existing proofs on colorability. 

An important consequence of this result is a very strong version of the Tutte 4-Flow Conjecture for toroidal graphs. We show that a 2-edge connected graph embedded in the torus admits a nowhere-zero 4-flow unless it is Petersen-like (in which case it does not admit nowhere-zero 4-flows). Observe that this is a vast strengthening over the Tutte 4-Flow Conjecture on the torus, which assumes that the graph does not contain the Petersen graph as a minor because almost all toroidal graphs contain the Petersen graph minor, but almost none are Petersen-like.

As a consequence of our proofs, a quadratic-time algorithm for 3-edge-coloring cubic graphs of genus one is obtained.

Some of our proofs require extensive computer verification. The necessary source codes, together with the input and output files and the complete set of more than 14000 reducible configurations, are
available on Github\footnote{\url{https://github.com/edge-coloring}. Refer to the ``README.md'' file in each directory for instructions on how to run each program.} which can be considered as an addendum to this paper.  Moreover, we provide pseudocodes for all our computer verifications.
\end{abstract}

\section{Introduction}

\subsection{The Four Color Theorem, snarks, and our main results}

The four-color theorem (4CT) states that every loopless planar graph is 4-colorable. 
It was first shown by Appel and Haken \cite{4ct1,4ct2} in 1977, and Robertson et al.~\cite{RSST} gave a simplified version of the proof.  

The 4CT has another equivalent formulation by taking planar duality (by Tait in 1880 \cite{Tait}):

\begin{thm}\label{thm:3ECplanar}
   Every $2$-connected cubic planar graph is $3$-edge-colorable. 
\end{thm}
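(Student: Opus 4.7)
The plan is to derive Theorem~\ref{thm:3ECplanar} from the Four Color Theorem via the classical duality observation of Tait. Given a 2-connected cubic planar graph $G$, I would first fix a planar embedding. Because $G$ is cubic and 2-connected, it has no bridge, so every edge borders two distinct faces; moreover, the boundary walk of each face is a simple cycle, since a face visiting a vertex twice would force that vertex to be a cut vertex.

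Next, I would apply the Four Color Theorem to obtain a proper face-coloring $\chi$ of $G$, taking values in the Klein four-group $V_4=\mathbb{Z}_2\times\mathbb{Z}_2$. For each edge $e$ bordering faces $F_1$ and $F_2$, I set $\varphi(e):=\chi(F_1)+\chi(F_2)$. Since $\chi(F_1)\neq \chi(F_2)$, each $\varphi(e)$ lies in the three-element set of nonzero elements of $V_4$. To verify that $\varphi$ is a proper 3-edge-coloring, consider any vertex $v$ with incident edges $e_1,e_2,e_3$: the three face-angles $F_A,F_B,F_C$ met cyclically at $v$ are pairwise distinct by the simplicity of face boundary walks, so their $\chi$-values are pairwise distinct, and a short computation in $V_4$ (using $x+x=0$ and $x+y+z\neq 0$ for three distinct nonzero elements) shows that $\varphi(e_1),\varphi(e_2),\varphi(e_3)$ are exactly the three nonzero elements of $V_4$.

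There is essentially no hard step here: the substance is entirely carried by the Four Color Theorem itself, whose proof is deep but already in hand. The only delicate point is ensuring that three distinct faces meet at every vertex, so that the three resulting edge-colors are distinct; this is precisely where 2-connectedness of $G$ is used. Parallel edges in $G$ cause no issue, since the two bordering faces of each edge of a parallel pair remain distinct in a 2-connected embedding, and loops do not arise in a 2-connected cubic graph on more than one vertex. Thus I expect the argument to reduce to the bookkeeping above, and no further obstacle.
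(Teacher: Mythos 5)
Your argument is the standard Tait-duality derivation of the edge-coloring statement from the Four Color Theorem, which is exactly what the paper invokes: the theorem is stated as Tait's 1880 equivalent formulation of the 4CT, with no separate proof given in the paper. The embedding setup, the face-coloring in $V_4=\mathbb{Z}_2\times\mathbb{Z}_2$, the definition $\varphi(e)=\chi(F_1)+\chi(F_2)$, and the observation that bridgelessness forces the three face-corners at each vertex to be distinct faces are all correct.

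One slip in the parenthetical supporting the final step: the claim that $x+y+z\neq 0$ for three distinct nonzero elements of $V_4$ is false. In $V_4$ the three nonzero elements are $(1,0)$, $(0,1)$, $(1,1)$, and they sum to $(0,0)$. Fortunately this fact is not what the argument requires. To see that $\varphi(e_1),\varphi(e_2),\varphi(e_3)$ are pairwise distinct, use cancellation: with $\varphi(e_1)=\chi(F_C)+\chi(F_A)$ and $\varphi(e_2)=\chi(F_A)+\chi(F_B)$, equality $\varphi(e_1)=\varphi(e_2)$ would force $\chi(F_B)=\chi(F_C)$, contradicting that the three face colors at $v$ are distinct; equivalently, each face color appears exactly twice in the sum $\varphi(e_1)+\varphi(e_2)+\varphi(e_3)$, so this sum is $0$, and any coincidence among the three would force one $\varphi(e_i)=0$, contradicting nonzeroness. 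Being three distinct nonzero elements in a group with exactly three nonzero elements, they are precisely those three, and the argument goes through.
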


In this paper, we are interested in extending Theorem \ref{thm:3ECplanar} to other surfaces, but before mentioning our result, our motivation is also from the famous Tutte's \emph{$4$-Flow Conjecture}.

\begin{conjecture}[Tutte, \cite{tutte}]\label{conj:Tutte4Flow}
   Every $2$-connected graph without a Petersen graph minor admits a nowhere-zero $4$-flow\footnote{For the precise definition of nowhere-zero $4$-flow, we refer the reader to the textbook by Diestel \cite{Diestel_book}.}. 
\end{conjecture}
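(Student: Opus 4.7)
The plan is to reduce the conjecture to a structural statement about snarks and to attack that statement by adapting the template of the Four Color Theorem proof to a minor-exclusion setting. First, I would reduce from arbitrary 2-connected graphs to 2-connected cubic graphs: by Tutte's classical correspondence, a cubic graph admits a nowhere-zero 4-flow if and only if it is 3-edge-colorable. Suppressing degree-2 vertices and blowing up each vertex of degree $d \ge 4$ into a small planar subcubic gadget preserves both the existence of a nowhere-zero 4-flow and the absence of a Petersen minor, since any Petersen minor in the blow-up would contract to a Petersen minor in the original graph. So it suffices to prove: every 2-connected cubic graph without a Petersen minor is 3-edge-colorable.

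Next, I would push down to the cyclically 4-edge-connected case. A 2-edge cut lets one split the graph into two smaller 2-connected cubic graphs, each Petersen-minor-free, and combine 3-edge-colorings of the pieces via a matching of the cut edges; cyclic 3-edge-cuts are handled analogously by splitting and gluing along a triangle. Induction reduces us to showing that every cyclically 4-edge-connected cubic graph with no Petersen minor is 3-edge-colorable; equivalently, that every snark contains the Petersen graph as a minor.

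The heart of the proof — and the main obstacle — is this last structural assertion. The plan is to emulate the discharging-plus-reducible-configurations template of the Appel--Haken / Robertson--Sanders--Seymour--Thomas proof of the Four Color Theorem, but with the planar embedding replaced by the structural decomposition for Petersen-minor-free graphs coming from the Robertson--Seymour graph-minor project: a hypothetical minimum counterexample would decompose as a tree-like clique-sum of pieces that are almost-embeddable in surfaces of bounded genus, each with a bounded number of apex vertices and vortices. One then needs an unavoidable set of reducible configurations on the almost-embeddable pieces together with a discharging scheme that functions around apices and vortices. The hard part will be controlling how edge-coloring extensions interact with the non-planar defects in the decomposition: apex vertices and vortices destroy the local planarity that classical reducibility arguments rely on, and producing a combinatorial recipe for routing Kempe chains or parity arguments past such defects is, to my knowledge, the key barrier that keeps the general conjecture open. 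Any concrete attack will almost certainly require both substantial new combinatorial ideas and heavy computer verification of an enormous family of reducible configurations, in the spirit of the present paper but on a much larger scale.
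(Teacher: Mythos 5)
The statement you are trying to prove is Tutte's 4-Flow Conjecture, which the paper states only as motivation and does \emph{not} prove: it remains open, and the paper explicitly says so, proving instead a toroidal variant (Theorem \ref{thm:nz4flow}). So there is no ``paper's proof'' to match; what matters is whether your proposal could close the conjecture, and it cannot.

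The fatal gap is in your very first reduction. You claim that blowing up a vertex of degree $d\ge 4$ into a subcubic gadget preserves the absence of a Petersen minor because ``any Petersen minor in the blow-up would contract to a Petersen minor in the original graph.'' This is false: contracting the gadget edges merges branch sets of a minor model and can destroy the Petersen structure, so vertex-splitting can \emph{create} a Petersen minor that was not present in the original graph. This is exactly why the general conjecture does not reduce to the cubic case. Indeed, if your reduction were valid, the full conjecture would follow immediately from the Robertson--Seymour--Thomas theorem for cubic graphs (cited in the paper as proved in 1997), and the conjecture would not still be open. Your subsequent reformulation ``every snark contains the Petersen graph as a minor'' is precisely the (proved) cubic case, not the general statement.

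Beyond that, the remainder of your proposal is a research program rather than a proof: you yourself identify the interaction of reducibility and discharging with apices and vortices in the graph-minor structure theorem as ``the key barrier that keeps the general conjecture open.'' A proof attempt must supply that missing content, not name it. If your goal is to engage with what this paper actually establishes, the relevant targets are Theorem \ref{mainth} and its corollary Theorem \ref{thm:nz4flow}, whose proof reduces a minimum counterexample to the cubic case by local splittings that are shown \emph{not} to produce Petersen-like graphs (using that Petersen-like graphs are non-apex), a step that has no analogue in the minor-exclusion setting you propose.
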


While this conjecture is still open, its special case restricted to cubic graphs was proved in 1997 by Robertson, Seymour, and Thomas \cite{3edgecoloring}. The proof of this significant achievement is based on two nontrivial results, one about doublecross graphs \cite{doublecross}, and the other one about apex cubic graphs. (The full proof of the latter case was not published.)

For cubic graphs, having a nowhere-zero $4$-flow is equivalent to having a 3-edge-coloring. 
Motivated by questions about colorings and flows for graphs on surfaces, an ongoing effort is being made to formulate which cubic graphs that can be embedded in a fixed surface are 3-edge-colorable. 
This question can be reduced to cyclically 4-edge-connected graphs.\footnote{A graph is \emph{cyclically $k$-edge-connected} if after deleting fewer than $k$ edges from the graph, there is at most one component that contains a cycle. See also Section \ref{sect:minimal}.}
Cyclically 4-edge-connected cubic graphs with girth of at least 5 that are not 3-edge-colorable are called \emph{snarks}. 
The Petersen graph was the first snark discovered \cite{petersen-1898}. For a while, only sporadic examples were known, see, e.g.~\cite{blanuvsa1946problem, isaacs1975infinite}, but today it is clear that snarks form a very rich family, although their complete description is totally out of reach (unless P = NP). 
Snarks are an interesting and important class of graphs because they often appear as counterexamples to various intriguing conjectures. For example, the famous Cycle Double Cover Conjecture (CDCC) claims that every 2-edge-connected graph has a collection of cycles that cover each edge twice. For cubic graphs, this is equivalent to saying that the graph can be embedded in a surface so that all faces of the embedding are simple cycles. When a cubic graph has a 3-edge-coloring, it has a cycle double cover consisting of the cycles formed by each pair of colors. Therefore, snarks are the only possible counterexamples for CDCC among cubic graphs. In fact, if CDCC is true for snarks, it is true for all graphs \cite{JAEGER19851}.

The version of Theorem \ref{thm:3ECplanar} has a chance to hold under some additional assumptions. In fact, recently, the following result was obtained in \cite{proj2024}. 

\begin{thm}\label{thm:3ECproject}
    The only snark embeddable in the projective plane is the Petersen graph. 
    In other words, a $2$-connected cubic graph $G$ that can be embedded in the projective plane is not $3$-edge-colorable if and only if $G$ can be obtained from the Petersen graph by replacing each vertex by a $2$-connected planar cubic graph. 
\end{thm}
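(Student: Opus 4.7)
The plan is to argue by induction on $|V(G)|$. First I would reduce to the case that $G$ is cyclically 4-edge-connected: if $G$ has a cyclic edge cut of size at most $3$, split $G$ along the cut, recolor each side by the inductive hypothesis (using Theorem~\ref{thm:3ECplanar} whenever one side becomes planar), and recombine the colorings after a permutation of colors. The only obstruction to such a splitting is when one side collapses essentially to a single vertex, which is precisely the vertex-replacement operation described in the statement. So the task reduces to showing that every cyclically 4-edge-connected cubic graph embeddable in the projective plane, other than the Petersen graph, is 3-edge-colorable.

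For the core case, I would take a smallest cyclically 4-edge-connected projective-planar snark $G$ that is not the Petersen graph. Cyclic 4-edge-connectivity together with non-3-edge-colorability forces girth at least $5$ (short cycles would yield a reducing edge cut). Euler's formula in the projective plane gives $V - E + F = 1$, and combined with cubicity $3V = 2E$ yields
\[
  \sum_{f \in F(G)}(|f|-6) \;=\; 2E - 6F \;=\; -6.
\]
Since every face contributes at least $-1$ to this sum, the negative total is concentrated on $5$-faces, and a discharging argument — assign each face the initial charge $|f|-6$ and redistribute from faces of size $\geq 7$ toward nearby $5$-faces — can be tuned to force the appearance of a small subgraph $H$ heavily clustered with $5$-faces.

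The main work is then to show that every such unavoidable $H$ is a \emph{reducible configuration}: any proper $3$-edge-coloring of the edges entering $H$ extends through $H$, contradicting minimality. Reducibility is established by a finite Kempe-chain analysis over all boundary color patterns, exactly in the spirit of the Birkhoff-diamond reductions used in the Four Color Theorem. The hard part — and I expect the main obstacle — is striking the right balance between the discharging rules and the reducibility atlas: the unavoidable set must consist entirely of reducible configurations, yet it must be blind to the Petersen graph itself (which has ten $5$-faces and no reducible subconfiguration, so it will necessarily survive the argument). Constructing the full unavoidable/reducible atlas is where the bulk of the technical labor sits, and in practice the Kempe-chain extensions must be verified by computer.

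Once cyclically 4-edge-connected projective-planar snarks are pinned down to the Petersen graph alone, the initial reduction lifts the conclusion to all $2$-connected cubic projective-planar graphs: every non-3-edge-colorable such graph is built from Petersen by replacing its vertices with 2-connected planar cubic graphs, all of which are themselves $3$-edge-colorable by Theorem~\ref{thm:3ECplanar}.
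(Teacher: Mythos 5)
This theorem is not proved in the present paper; it is cited as a result from a prior work (\texttt{[proj2024]}, Kawarabayashi et al.), and the present paper spends its introduction explaining how the torus case \emph{differs} from that projective-planar proof. So I will compare your sketch against what the paper says about the \texttt{[proj2024]} argument rather than against a proof that appears here.

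Your high-level skeleton does match the actual strategy: reduce low cyclic cuts, pass to the dual triangulation, discharge using the Euler-formula deficit (your face charge $|f|-6$ is, up to sign and scaling, the paper's vertex charge $10(6-d(v))$ on the dual), and establish an unavoidable set of computer-verified reducible configurations. However, there are gaps in your sketch that the paper's own discussion flags as the genuinely hard parts. First, your claim that ``cyclic 4-edge-connectivity together with non-3-edge-colorability forces girth at least 5'' is not correct as stated: a 4-cycle only yields a cyclic \emph{4}-edge-cut, which is permitted; one must use the standard 4-cycle reduction (delete two opposite edges of the quadrilateral, use the minimality of the counterexample) to raise the girth, and in fact \texttt{[proj2024]} works with cyclic 5-edge-connectivity / internal 6-connectivity of the dual, a stronger condition needing its own lemma. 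Second, your framing that the discharging must be ``blind'' to the Petersen graph is not how the argument actually excludes Petersen: in the projective plane the Petersen dual is $K_6$, where every vertex has degree $5$ and hence positive charge, so the discharging very much does find positive charge there. What actually saves the argument is a \emph{safe-reduction} condition that your sketch omits entirely: whenever a C-reducible configuration is contracted, one must verify that the resulting smaller cubic graph is again in the admissible class (projective-planar, bridgeless, and not Petersen itself). This is a substantial component of \texttt{[proj2024]} (and an even larger one in the torus paper, Section~\ref{sect:safecont}), not something that falls out of choosing the right discharging rules. Third, you do not address the fact that configurations of larger diameter (up to five) may fail to be induced because of short \emph{non-contractible} cycles, a phenomenon absent in the planar 4CT proof; handling representativity and ``wrap-around'' is exactly the part of the argument that makes surfaces of positive genus harder, and it cannot be waved away by the generic Kempe-chain reducibility check alone.
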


This settles a conjecture by Mohar from 2004 and resolves a problem of Neil Robertson from the 1990s, see \cite{Mohar-snarksPP}. 

A well-known conjecture by Gr\"{u}nbaum from 1968 \cite{grunbaum} says the following.

\begin{conjecture}[Gr\"unbaum \cite{grunbaum}, 1968]\label{conj:grunbaum torus}
    If $G$ is a $2$-connected cubic graph embedded in the torus and $G$ is not $3$-edge-colorable, then $G$ contains two edges on the same face of the embedding, whose removal gives a planar graph in which the four endpoints of the removed edges lie on the boundaries of two faces. 
\end{conjecture}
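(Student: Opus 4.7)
The plan is to derive Gr\"unbaum's conjecture as a corollary of the main structural result announced in the abstract: every non-3-edge-colorable 2-connected cubic toroidal multigraph is obtained from a dot product of copies of the Petersen graph, with vertices possibly replaced by 2-edge-connected planar cubic multigraphs. Given such a $G$, I would first reduce to its Petersen skeleton $G^{*}$ by contracting each replacement gadget back to a single vertex. Because the replacement gadgets are planar and attached via 3-edge-cuts, each can be redrawn inside a small disk of the toroidal embedding of $G$; hence a pair of cofacial edges in $G^{*}$ lifts to a cofacial pair in $G$, planarity of the reduct is preserved under inserting the gadgets back, and the two target faces of the planarization of $G^{*}$ can absorb the gadgets. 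It therefore suffices to establish Gr\"unbaum's property for $G^{*}$, i.e., a dot product chain of Petersen graphs.

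I would then proceed by induction on the number of Petersen factors. In the base case $G^{*} = P_{10}$, I would use the (essentially unique) toroidal embedding into five hexagonal faces and verify by direct case analysis that some hexagon contains two edges whose removal kills the handle and leaves the four endpoints on at most two faces of the resulting planar embedding. For the inductive step, writing $G^{*} = H \cdot P_{10}$, the four interface edges of the dot product lie in a planar neighborhood of each removed vertex, so the handle of $G^{*}$'s torus embedding must reside entirely inside $H$ or entirely inside the $P_{10}$ side. Choosing the Gr\"unbaum pair on the handle-carrying side (via the inductive hypothesis, or the base case, respectively), and verifying that it remains cofacial and handle-killing after the dot product, completes the induction. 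The planar gadgets removed in the first step are then reinserted inside the two target faces of the planar reduct, yielding the desired pair of cofacial edges in $G$.

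The main obstacle will be the inductive step: toroidal embeddings of a dot product are not determined by embeddings of the factors, and the relevant face may in principle cross the dot product seam. To handle this rigorously I would enumerate the finitely many rotation systems compatible with a dot product of two cubic graphs embedded on the torus, and for each show either that the inherited Gr\"unbaum pair survives or that a new pair appears on a face passing through the seam, so that removing it still trivializes the handle. A cleaner alternative that I would try first is to bypass induction entirely and argue directly on an arbitrary Petersen-like graph embedded in the torus: identify two cofacial edges whose joint removal forms a non-separating cocycle of the embedding, so that $G^{*}$ minus these two edges is planar; then combine cyclic 4-edge-connectivity of $G^{*}$ with Euler's formula on the torus to force the four endpoints to lie on the boundaries of at most two faces of the planarization.
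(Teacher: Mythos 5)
Your plan correctly follows the paper's route---derive Gr\"unbaum's conjecture from the classification of toroidal snarks and handle the planar gadgets separately---and the gadget-reduction step is in the right spirit, but the primary inductive step rests on a false claim. You assert that when $G^{*} = H \cdot P_{10}$ is embedded in the torus, the handle must lie entirely on the $H$-side or entirely on the $P_{10}$-side, so that a Gr\"unbaum pair from one factor survives. This is not so. The four interface edges of a toroidal dot product with $P_{10}$ form precisely what the paper calls a $(2,2)$-annulus-cut (Definition~\ref{dfn:(k,l)-annulus-cut}), an edge cut met by two disjoint homotopic non-contractible curves; the excised $I_4$ fragment (Petersen minus two adjacent vertices) sits in an essential annular region of the torus (see Section~\ref{sect:minimal} and Figure~\ref{fig:4-edge-cut-reduction}). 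In every toroidal embedding of a Blanu\v{s}a-family snark, as shown in Figures~\ref{fig:blanusa-embedding}--\ref{fig:blanusaV2-embedding}, the dot-product seam wraps the handle. So the case you hoped to rule out is the \emph{only} case that occurs, and cofaciality of an inherited pair on one side does not transfer across the seam.

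Your fallback of enumerating rotation systems is closer to what the paper actually does: it lists all toroidal embeddings of the low-level snarks (Petersen, Blanu\v{s}a, Blanu\v{s}a-H1/V1/V2), partly by computer, and observes that higher-level Blanu\v{s}a-H$k$/V$k$ embeddings differ only in how the four $(2,2)$-annulus-cut edges are drawn, so a structural induction finishes the classification. You need exactly this finiteness observation to make the enumeration terminate; without it, ``the finitely many rotation systems compatible with a dot product'' is not a well-founded set. The ``cleaner alternative'' via cyclic 4-edge-connectivity and Euler's formula alone is unlikely to close: an Euler count on the torus cannot single out a cofacial pair forming a non-separating cocycle, and the requirement that the four endpoints land on the boundaries of two faces of the planarization is a structural feature of the annulus-cut decomposition of the graphs in $\mathcal{T}_0$, not a consequence of genus arithmetic.
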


This conjecture motivated a more general conjecture by Thompson, which says that if $G$ is a dual graph of a triangulation of the $(d-1)$-dimensional sphere (or even manifold) and has an even number of vertices, then $G$ is $d$-edge colorable (see Kalai \cite{kalai}). 
By Steinitz' theorem, the case $d=3$ is exactly Theorem \ref{thm:3ECplanar}. 

Although the main interest of Gr\"unbaum was about the torus, he originally asked a more general question of whether there are any snarks that are duals of triangulations of surfaces. 
Today, it is known that the general conjecture does not hold for the surface of genus 5, as proved by Kochol \cite{kochol}. 
Vodopivec \cite{Vodopivec} proved that there are infinitely many snarks that can be embedded in the torus. 
They are certain dot products of copies of the Petersen graph; each of them contains two edges whose removal yields a planar graph. 
In this paper, we outline another infinite family of toroidal snarks that are also dot products of copies of the Petersen graph. Our results then show that these two families exhaust all toroidal snarks. By classifying all their embeddings of genus 1, we obtain a proof of Gr\"unbaum's Conjecture \cite{grunbaum}.

\begin{thm}\label{mainth}\showlabel{mainth}
    The only snarks embeddable in the torus are the Petersen graph and its dot products, the Blanuša-H$k$ ($k \geq 1$) family and the Blanuša-V$k$ ($k \geq 1$).\footnote{See Section \ref{sect:minimal} for the definition of these graphs.} 
\end{thm}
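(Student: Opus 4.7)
The plan is to adapt the discharging and reducible configurations methodology that powered the Four Color Theorem to the toroidal setting, combining it with the structure theory of the dot product. Let $G$ be a hypothetical counterexample of smallest order: a snark embedded in the torus that is neither the Petersen graph nor a member of the Blanu\v{s}a-H$k$ or Blanu\v{s}a-V$k$ families. As a snark, $G$ is cyclically $4$-edge-connected with girth at least $5$. The first step is to reduce to the cyclically $5$-edge-connected case: by exploiting the correspondence between cyclic $4$-edge-cuts in a cubic graph and dot-product decompositions, any such cut in $G$ produces two smaller toroidal pieces, each of which by minimality is either $3$-edge-colorable or a member of the classified families; reassembling these decompositions would then exhibit $G$ itself as one of the listed dot-product snarks, contradicting the choice of $G$.

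Having reduced to cyclically $5$-edge-connected $G$ with a fixed $2$-cell embedding in the torus, the next step is a discharging argument. Euler's formula for cubic toroidal graphs gives $\sum_f (|f|-6)=0$, so the presence of faces of length less than $6$ is forced whenever any face has length greater than $6$. Assign each face $f$ the initial charge $|f|-6$ and transfer charge along prescribed rules so that every small face ends with nonnegative charge unless a specific local structure, a \emph{configuration}, is present. The outcome should be a finite, unavoidable list: every cubic toroidal graph of large enough face-width must contain at least one of these configurations as a local subembedding inside a disk.

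The second half of the argument is reducibility, and this is where the more than $14000$ configurations and the computer verification enter. For each configuration $C$ on the unavoidable list one checks that any $3$-edge-coloring of the edges leaving $C$ either extends into $C$, or can be modified by Kempe-chain switches in $G\setminus C$ until it does. Equivalently, replacing $C$ by a smaller gadget yields a cubic toroidal graph $G'$ of fewer vertices which, by minimality of $G$, is either $3$-edge-colorable or lies in the listed families; in both cases one can pull a coloring back to $G$, contradicting that $G$ is a snark. This check is local and finite but combinatorially explosive, motivating the automated verification whose source code accompanies the paper.

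The main obstacle is the interplay between global topology and local reducibility. On the sphere, every configuration automatically lives in a disk; on the torus, non-contractible cycles can thread through configurations, and reducibility certificates based on disk boundaries can fail. The subtlest step is therefore a face-width reduction: one must show that any would-be counterexample whose embedding has small face-width either cuts open along a short non-contractible curve into a smaller toroidal or cylindrical snark handled by induction, or is already identified with a member of the Blanu\v{s}a-H$k$ or Blanu\v{s}a-V$k$ families. Once large face-width is established, the planar reducibility framework of Birkhoff and Heesch applies verbatim inside each locally planar disk, and the genuinely torus-specific content becomes the derivation of the unavoidable set and the structural classification of the small face-width exceptions.
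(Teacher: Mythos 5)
Your high-level skeleton (minimal counterexample, reduce low connectivity via dot-product structure, discharge, reduce a configuration, special care for small face-width) matches the paper's structure, but there are at least three genuine gaps, and one of them is an outright error in the reducibility step.

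The serious error is in your sentence ``\emph{$G'$ of fewer vertices which, by minimality of $G$, is either $3$-edge-colorable or lies in the listed families; in both cases one can pull a coloring back to $G$}.'' If the reduced graph lies in the listed families, it is a snark: it has \emph{no} $3$-edge-coloring, so there is nothing to pull back, and the contradiction evaporates. This is not a corner case one can wave away; it is precisely the central difficulty the paper identifies and handles. The paper must prove (Lemma~\ref{T(v)>0T}, Sections~\ref{sect:6,7-cut} and~\ref{sect:safecont}) that for every C-reducible configuration $K$ in the unavoidable set, contracting $c(K)$ in a minimal counterexample never produces a bridge \emph{and never produces a Petersen-like graph}. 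Establishing the second half requires tracking how the contraction interacts with $6$- and $7$-cuts, checking against every embedding of the Petersen graph and Blanu\v{s}a snark family via their degree patterns (Table~\ref{tab:neighbor-degree-pattern}), and in some cases selecting among several alternative contraction sets. Without this, the reduction step is circular.

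Second, you never address the fact that the Euler characteristic of the torus is zero. Your discharging setup gives $\sum_f(|f|-6)=0$; hence it is entirely possible that \emph{every} face ends with final charge exactly $0$ (the ``flat'' case), and then the usual conclusion ``some vertex is positive, hence some configuration appears'' fails. This cannot happen on the sphere or projective plane, where the total charge is strictly positive. The paper devotes Section~\ref{sect:flatproof} to extracting a reducible configuration from concentrations of zero-charge vertices (and handles the all-degree-$6$ triangulation case with a dedicated configuration). Your proposal simply assumes a positively charged vertex will exist.

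Third, and related to your face-width discussion: on the torus a configuration found by discharging may ``weakly appear'' without being an induced subgraph, because it may wrap around a noncontractible cycle. Declaring reducibility for the planar configuration is then not enough; one must verify reducibility of all its wrappings (Section~\ref{sect:lowrep}), and separately treat genuinely annular configurations and the representativity-$1$ case (Lemma~\ref{lem:rep1}). Your paragraph gestures at ``cutting open along a short non-contractible curve,'' but cutting open destroys the triangulation/discharging setup and requires the nearly-internally-$6$-connected machinery and annular islands the paper develops; none of that is sketched.
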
 

With this result, we give a complete characterization of non-3-edge-colorable cubic graphs that can be embedded in the torus, see Section \ref{sect:minimal}. 
Gr\"unbaum \cite{grunbaum} conjectured that the dual graph of a triangulation of every two-dimensional manifold is always 3-edge-colorable. This is false as mentioned above, but our result implies that the dual graph of a triangulation in the torus is always 3-edge-colorable. 
This is also interesting because the research community (see \cite{grunbaumtorus, grunbaumeven} for example) is actively working on \emph{Gr\"unbaum colorings}, by which we mean a partition of the edges of a triangulation of $G$ of some surface in three parts such that for each face $f$ of $G$, the three boundary edges of $f$ are in distinct parts of the edge-partition. Clearly, $G$ has a Gr\"unbaum coloring if and only if its dual cubic graph has a proper 3-edge-coloring. Hence, our result implies that every triangulation of the torus has a Gr\"unbaum coloring. 

Our Theorem \ref{mainth} yields another result about nowhere-zero 4-flows in more general (not necessarily cubic) graphs. It can be considered as a very strong version of the Tutte 4-Flow Conjecture. For that effect, we extend the definition of \emph{Petersen-like} graphs to allow vertices of higher degrees. We start with a cubic graph that is the dot product of copies of the Petersen graph, and then we replace some of its vertices with planar graphs $H$ that need not be cubic; we only request that the vertex $u$ of $H$ that is split to connect with 3 edges to the Petersen graph be of degree 3.

\begin{thm}\label{thm:nz4flow}
     Every cyclically-$4$-edge-connected toroidal graph different from the Petersen graph and the Blanuša-Vodopivec snark family admits a nowhere-zero $4$-flow.
\end{thm}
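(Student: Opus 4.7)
The plan is to derive Theorem~\ref{thm:nz4flow} from the cubic classification in Theorem~\ref{mainth} via the standard inflation reduction. Recall Tutte's theorem that a bridgeless cubic graph admits a nowhere-zero $4$-flow if and only if it is $3$-edge-colorable, and note that degree-$2$ vertices of $G$ can be suppressed without affecting the existence of a nowhere-zero $4$-flow or the toroidal embedding. So when $G$ is cubic, the conclusion is immediate from Theorem~\ref{mainth}: a cyclically $4$-edge-connected cubic toroidal graph that is neither the Petersen graph nor in the Blanuša-Vodopivec family is not a snark, hence is $3$-edge-colorable.

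Assume now that $G$ has a vertex $v$ of degree $d \geq 4$. I would replace each such $v$ by a $2$-edge-connected planar cubic gadget $H_v$ with exactly $d$ attachment vertices of degree $2$, placed in a small disk around $v$ in the torus embedding so that the attachment edges exit in the cyclic order of the original edges at $v$. The gadget will be chosen to contain a face of length $4$; for example, one can take a planar cubic graph built from a $d$-cycle augmented by a small nested structure providing a $4$-face. Performing this inflation at every high-degree vertex produces a cubic graph $\hat G$ embeddable in the torus, and any nowhere-zero $4$-flow on $\hat G$ induces one on $G$ via contraction of each $H_v$ back to a single vertex: flow conservation at the contracted vertex is the sum of flow conservations at the vertices of $H_v$, and the values on the external edges are exactly the flow values on the edges of $G$.

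It then suffices to produce a nowhere-zero $4$-flow on $\hat G$, which I would do by invoking Theorem~\ref{mainth}. Snark exclusion is automatic: each gadget $H_v$ contributes a face of length $4$, so $\hat G$ has girth at most $4$, whereas every Petersen or Blanuša-Vodopivec snark has girth $5$. Cyclic $4$-edge-connectivity of $\hat G$ should be inherited from that of $G$ provided each gadget has no $3$-edge-cut separating its attachment vertices into two sides that each contain a cycle -- a property that can be verified once and for all for a suitable family of gadgets. The main technical obstacle is precisely this structural check: one must carefully trace every potential small cyclic cut of $\hat G$ through the boundaries of the gadgets and reduce it to a cyclic cut of $G$ of the same size, or show that it localizes inside a single $H_v$. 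Once this is settled, Theorem~\ref{mainth} (in its strong form, covering every cyclically $4$-edge-connected cubic toroidal graph not on the Petersen/Blanuša-Vodopivec list, as stated in the abstract) yields a $3$-edge-coloring of $\hat G$, and contraction transfers it to a nowhere-zero $4$-flow on $G$.
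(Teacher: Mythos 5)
Your proposal takes a genuinely different route from the paper. The paper extends the notion of a minimum counterexample to a $2$-edge-connected toroidal graph $G$ (not Petersen-like, without a nowhere-zero $4$-flow) minimizing $|V(G)|+|E(G)|$, and then shows that such a $G$ must be a $3$-connected cubic graph by a sequence of reductions: at a cut-vertex, at a $2$-vertex separator, at a cyclic $3$-edge-cut, at a $1$-noose (showing $G$ is not apex), and finally by \emph{splitting off a single pair of consecutive edges} at any vertex of degree $>3$ and observing that at least one of the three choices of consecutive pair yields a smaller non-Petersen-like graph. This local, one-vertex-at-a-time splitting avoids having to control the global structure of a fully-inflated cubic replacement all at once.

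Your ``inflate every high-degree vertex by a planar cubic gadget'' plan is the right general idea, but it has a genuine gap that you flag yet do not resolve, and I do not think it is merely a routine structural check. The girth-$4$ argument rules out the Petersen graph and the Blanu\v{s}a--Vodopivec snarks, but \emph{not} Petersen-like graphs in general: in the paper's sense (the class $\mathcal{T}_0$), Petersen-like graphs may contain cyclic $2$- and $3$-edge-cuts and can easily have girth $3$ or $4$ inside a replaced vertex, so $\hat G$ having girth $4$ does not by itself place $\hat G$ in $\mathcal{T}_1\setminus\mathcal{T}_0$. To conclude, you genuinely need $\hat G$ to be cyclically $4$-edge-connected (then girth $<5$ forces girth exactly $4$, and the $\mathcal{T}_1\setminus\mathcal{T}_0$ form of Theorem~\ref{mainth} applies). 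But verifying cyclic $4$-edge-connectivity of $\hat G$ means ruling out cyclic $3$-edge-cuts that may pass through several gadgets or split a gadget's attachments in various ways, and this requires both a careful gadget design and an induction/case argument that you have not supplied. Also, your sentence ``\dots is not a snark, hence is $3$-edge-colorable'' is too quick: ``not a snark'' could be solely because of low girth; you must invoke the paper's stronger form of Theorem~\ref{mainth} (every simple cubic bridgeless toroidal graph outside $\mathcal{T}_0$ is $3$-edge-colorable) rather than the snark statement. The paper's splitting-off proof sidesteps all of this by only ever decreasing the count $|V|+|E|$ by a constant and directly analyzing the Petersen-like outcome at a single vertex, using the fact that Petersen-like toroidal graphs are not apex.
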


The proof of Theorem \ref{thm:nz4flow} is given in the appendix (section \ref{nonzero4}).   
Theorem \ref{thm:nz4flow} implies the following conjecture of
Neil Robertson from the mid-1990s; see Problem 5.5.19 in \cite{MT}.

\begin{conjecture}[Robertson, 1994]\label{neil}
    Every $2$-edge-connected graph in the torus whose representativity is at least $3$, has a nowhere-zero $4$-flow.
\end{conjecture}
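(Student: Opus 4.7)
The plan is to derive Conjecture \ref{neil} as a consequence of Theorem \ref{thm:nz4flow} by two reductions.

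First, I would reduce to the cyclically 4-edge-connected case. If a 2-edge-connected toroidal graph $G$ has an edge cut $S$ with $|S|\in\{2,3\}$ separating the graph into nontrivial sides, then classical splicing arguments for nowhere-zero flows produce two smaller torus-embeddable graphs, and a NZ 4-flow on each combines into one on $G$. The representativity hypothesis $\geq 3$ implies that any such small cut is \emph{separating} in the torus (a non-contractible cycle of length at most 3 in the dual of the cut would violate representativity), so one of the two sides after splicing lies in a disk of the embedding and is therefore planar; the Four Color Theorem (in the form of Theorem \ref{thm:3ECplanar} with a standard reduction from flows to colorings) supplies a NZ 4-flow on that planar side, while the other side inherits a torus embedding with representativity $\geq 3$. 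Iterating, one reaches a cyclically 4-edge-connected toroidal graph still satisfying the representativity bound.

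Second, I would verify that no Petersen-like graph embeds in the torus with representativity at least 3. The Petersen graph itself is well known to have face-width exactly 2 in its toroidal embedding. The Blanuša-H$k$ and Blanuša-V$k$ families are iterated dot products of copies of the Petersen graph, and every dot product produces a pair of edges whose removal leaves a planar graph; these two edges are crossed by a non-contractible simple closed curve in the torus meeting the graph in exactly two points, so the face-width of any toroidal embedding is at most 2. The extended definition of Petersen-like graph (allowing replacement of a vertex by a planar subgraph) does not raise face-width either, since such a replacement happens entirely inside a single face of the torus embedding of the underlying cubic Petersen-like graph.

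Combining these two steps, every 2-edge-connected toroidal graph with representativity $\geq 3$ reduces to a cyclically 4-edge-connected toroidal graph that is \emph{not} Petersen-like, and Theorem \ref{thm:nz4flow} furnishes a nowhere-zero 4-flow, which lifts back through the reductions to one on $G$. The main obstacle is the reduction step in the non-cubic setting: one must carefully track how the torus embedding, 2-edge-connectivity, and representativity behave under splicing along 2- and 3-edge cuts, and ensure that the planar residue receives an honest NZ 4-flow from the 4CT rather than merely a 3-edge-coloring. The fact that the Robertson threshold is exactly 3 matches the face-width 2 of all Petersen-like toroidal snarks, and verifying this numerical coincidence is what makes the argument go through.
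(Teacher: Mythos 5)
Your two-step plan captures the intended derivation: reduce to the cyclically 4-edge-connected case and then invoke Theorem \ref{thm:nz4flow} together with the fact that the exceptional snarks (the Petersen graph, Blanuša-H$k$, Blanuša-V$k$) have face-width at most $2$ in every toroidal embedding. (The paper takes the slightly different route of first proving, in Section \ref{nonzero4}, that \emph{every} 2-edge-connected toroidal non-Petersen-like graph has a nowhere-zero 4-flow, but the underlying ideas are the same.) Step~1 is sound in substance, though the parenthetical reason is off: a noncontractible dual cycle of length exactly $3$ would \emph{not} violate representativity $\geq 3$. The correct observation is that a bond of size at most $3$ is dual either to a single contractible cycle (a disk cut, which is what you want) or to a pair of disjoint homotopic noncontractible cycles, and in the latter case the shorter of the two has length at most $1$, which does violate the representativity hypothesis. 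You should also argue explicitly that replacing the planar side of a disk cut by a vertex or an edge does not decrease representativity; this is true but deserves a line, since noncontractible curves near the disk boundary interact with the cut edges.

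Step~2 has a genuine gap. The inference "$G-\{e,f\}$ is planar, hence there is a noncontractible simple closed curve meeting the graph only in $e$ and $f$, hence face-width $\leq 2$ for every toroidal embedding" is not a valid implication. For instance, $K_5$ embeds in the torus with face-width $3$, and yet $K_5$ minus a single edge is already planar; so possessing two edges whose removal leaves a planar graph does not, on its own, bound the face-width of an arbitrary embedding. The claim you need is true, but it has to be established either by enumerating all toroidal embeddings of the relevant snarks (which is what the paper does: see Figures \ref{fig:petersen-embedding}--\ref{fig:blanusaV2-embedding}, Table \ref{tab:neighbor-degree-pattern}, and the inductive remark about Blanuša-V$k$, Blanuša-H$l$ in Section \ref{sect:minimal}), or by a genuinely structural argument exploiting the dot-product 4-edge-cut: show that in a face-width-$\geq 3$ toroidal embedding that cut would be forced to be a disk cut, and then derive a contradiction from the fact that both sides of the cut are planar while the whole graph is not. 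The bare planarity-after-deletion observation is not enough to close this step.
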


Robertson made a more general conjecture resembling Gr\"unbaum's Conjecture \ref{conj:grunbaum torus}.

\begin{conjecture}[Robertson, 1994]
\label{conj:Robertson general 4flow}
    For every surface $S$, there is a constant $r$ such that every $2$-edge-connected graph embedded in $S$ with representativity at least $r$ has a nowhere-zero $4$-flow.
\end{conjecture}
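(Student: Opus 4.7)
The plan is to attempt induction on the genus $g$ of the surface $S$, using the Four Color Theorem and Theorem~\ref{thm:nz4flow} as the base cases $g=0$ and $g=1$, respectively, and to reduce higher-genus instances to lower-genus ones by surface surgery along short non-contractible cycles (the non-orientable case would require an analogous but separately tracked parameter). Concretely, one would define $r(g)$ recursively, starting from a constant $r(1)$ supplied by Theorem~\ref{thm:nz4flow} (which works because every Petersen-like toroidal graph is known to embed with representativity at most $2$), and choosing $r(g)$ large enough that every $2$-edge-connected $G$ embedded in a surface of genus $g$ with representativity at least $r(g)$ contains a non-contractible cycle $C$ of length at most some function of $r(g-1)$, whose neighborhood in $G$ is $2$-edge-connected and embeds into an annulus of width at least $r(g-1)$. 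The existence of such short, locally planar cycles is a standard consequence of the interplay between edge-width and representativity for graphs embedded in a fixed surface.

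Given such a cycle $C$, the next step is to cut $S$ along $C$, producing either a single surface of strictly smaller genus (if $C$ is non-separating) or two surfaces whose total genus is $g$ (if $C$ is separating). The graph $G$ decomposes into graphs $G_1, \dots, G_k$ embedded in the new surfaces, each with two copies of every edge of $C$ appearing on the cut boundaries. Closing each boundary cycle with a fresh vertex produces $2$-edge-connected graphs $G_i'$ whose representativities can be shown to remain at least $r(g-1)$ by the choice of $r(g)$, so the induction hypothesis yields nowhere-zero $4$-flows $\phi_i'$ on each $G_i'$. To finish, one combines them into a nowhere-zero $\mathbb{Z}_2 \times \mathbb{Z}_2$-flow $\phi$ on $G$ by adjusting each $\phi_i'$ by a cycle-supported flow so that the two copies of every edge of $C$ agree in value.

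The main obstacle, and where I expect this plan to genuinely stall for $g \geq 2$, is the flow-matching step. The two sides of the cut assign independent values to the edges of $C$, and the space of adjustments available inside each $G_i'$ (coming from closed $\mathbb{Z}_2 \times \mathbb{Z}_2$-flows) must be rich enough to force agreement on all of $C$ simultaneously. This is where one would need either a quantitative ``flow extension'' argument in the spirit of Robertson--Seymour--Thomas~\cite{3edgecoloring, doublecross}, exploiting the large annular neighborhood of $C$ to provide many independent cocycle corrections, or a surface-by-surface extension of the classification behind Theorem~\ref{mainth} --- listing, for each fixed $S$, the finitely many small obstructions to nowhere-zero $4$-flow, analogous to the Blanu\v{s}a--Vodopivec family on the torus. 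Both directions appear to require substantially new ideas beyond the toroidal techniques of the present paper, which is presumably why the statement is recorded here as a conjecture rather than a theorem.
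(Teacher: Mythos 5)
This statement is recorded in the paper as a conjecture, not a theorem: the authors do not prove it, and there is no proof in the paper to compare your proposal against. What the paper does establish is only the two small-genus cases --- the result of \cite{proj2024} gives $r=4$ for the projective plane, and Theorem~\ref{thm:nz4flow} of the present paper gives $r=3$ for the torus. The general conjecture for all surfaces remains open, and the paper says so explicitly.

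Your proposal is therefore not a gap in a proof so much as an honest sketch of why a proof is hard, and you diagnose the real obstacle correctly. The induction-on-genus plan via cutting along a short noncontractible cycle is the natural first attempt, and it fails exactly where you say it does: after cutting, the two (or more) pieces carry flows whose boundary traces are independent, and the space of cocycle corrections available on each side is not known to be rich enough to force simultaneous agreement on all boundary edges. Nothing in the toroidal machinery of this paper --- reducible configurations, discharging, the Blanu\v{s}a--Vodopivec classification --- supplies such a ``flow extension'' lemma, and the paper's Theorem~\ref{thm:nz4flow} relies on a complete enumeration of obstructions (the Petersen-like family) that has no known analogue for genus at least $2$. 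Indeed, Kochol's genus-$5$ snark (cited in the introduction) shows that the na\"{\i}ve hope that all snarks of a fixed surface arise from dot products of Petersen graphs is false, so any higher-genus extension would have to cope with a genuinely wilder obstruction set. One small correction to your base case: Theorem~\ref{thm:nz4flow} yields $r(1)=3$ because cyclically-$4$-edge-connected forces representativity at least $3$ after collapsing small cuts, not because Petersen-like toroidal graphs embed with representativity at most $2$ per se --- but this is a side issue. The bottom line is that you should flag this item as a conjecture rather than attempt to prove it; your assessment that ``substantially new ideas'' are needed matches the authors' own presentation.
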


By the result in \cite{proj2024}, the value $r=4$ works for the projective plane, and by Theorem \ref{thm:nz4flow}, $r=3$ works for the torus.

We obtain the following algorithmic result using Theorem \ref{mainth}.

\begin{thm}\label{algo}\showlabel{algo}
The following algorithmic question can be solved in quadratic time, $O(n^2)$, where $n=|V(G)|$.\\
{\bf Input}: A cubic graph $G$.\\
{\bf Output}: Either a 3-edge-coloring of $G$ or an obstruction showing that $G$ is not 3-edge colorable, or the conclusion that $G$ cannot be embedded in the torus (certified by obtaining a forbidden minor for embedding in the torus that is contained in $G$).
\end{thm}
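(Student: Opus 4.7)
The plan is to combine Theorem \ref{mainth} with three standard algorithmic ingredients: a linear-time toroidal embedding test, a decomposition along small edge-cuts, and a reducibility-based recursion extracted from the proof of Theorem \ref{mainth} itself.

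First, I would run Mohar's linear-time algorithm for embeddability in a fixed surface (applied to the torus) on the input $G$. If it rejects, it also exhibits a forbidden toroidal minor contained in $G$, which handles the third output case in $O(n)$ time. Otherwise, we obtain an explicit toroidal embedding of $G$. I would then compute the $3$-edge-connectivity decomposition of $G$ in linear time and split along each cyclic edge-cut of size at most $3$. A cubic graph is $3$-edge-colorable iff all of its cyclically $4$-edge-connected ``chunks'' produced by this splitting are; colorings (resp.\ non-colorability witnesses) can be stitched together across the cut edges in linear time, the total size of the chunks remains $O(n)$, and toroidality is preserved on each chunk. This reduces the problem to the cyclically $4$-edge-connected case.

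For each cyclically $4$-edge-connected toroidal chunk $G'$, I would distinguish two cases. If $G'$ is planar, apply the $O(n^2)$ planar $3$-edge-coloring algorithm of Robertson, Sanders, Seymour, and Thomas. Otherwise $G'$ is nonplanar toroidal, and the toroidal embedding lets us test in linear time whether $G'$ is the Petersen graph or a member of the Blanu\v{s}a-$H_k$ or Blanu\v{s}a-$V_k$ families: these snarks have a repetitive dot-product structure, so recognition reduces to detecting a bounded local pattern and verifying that it propagates around a non-contractible cycle. If $G'$ is one of these Petersen-like snarks, return it (together with its explicit Petersen-like decomposition) as the required obstruction.

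If $G'$ is cyclically $4$-edge-connected, nonplanar, toroidal, and not Petersen-like, then Theorem \ref{mainth} guarantees $G'$ is $3$-edge-colorable, and the proof supplies this coloring through its catalog of reducible configurations. The discharging argument forces at least one catalog configuration to appear in $G'$; I would scan the embedding in $O(n)$ time to locate one, perform the prescribed local reduction (which decreases $|V(G')|$ by a constant while keeping the graph toroidal, possibly creating small edge-cuts that are absorbed into an additional pass of the small-cut reduction above), recurse on the smaller graph, and lift the resulting coloring across the reduction in constant time using the reducibility certificate. With $O(n)$ rounds at cost $O(n)$ each, the total runtime is $O(n^2)$. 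The main obstacle is making this reducibility recursion fully algorithmic: one must organize the more than $14000$ reducible configurations so that detection in the embedded graph runs in $O(n)$ per round, and each reduction must come packaged with an explicit constant-time coloring-extension rule. Both tasks are of implementation character and follow from the explicit certificates already underpinning the proof of Theorem \ref{mainth}, but verifying that the reducibility bookkeeping fits together cleanly with the small-cut and planar-chunk subroutines is where the main care is required.
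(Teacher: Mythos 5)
Your outline matches the paper's high-level architecture (Mohar's embedding algorithm, small-cut decomposition, reducibility-driven recursion), but two of the steps you treat as routine are exactly where the paper's algorithm has to do real work, and your version of them does not go far enough.

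First, you reduce only to cyclic $4$-edge-connectivity by splitting along cyclic cuts of size $\leq 3$. That is not strong enough for the reducibility machinery to take over. The catalog $\mathcal{K}$ and the discharging lemmas (Lemma \ref{T(v)>0}, Lemma \ref{lem:charge0F}) are established under the hypothesis that the dual $G'$ is internally $6$-connected, which for the cubic primal means cyclic $5$-edge-connectivity together with the constraint that every cyclic $5$-cut isolates a $5$-cycle (Lemma \ref{minc}), and no cyclic $(2,\leq 2)$-, $(2,3)$-, or $(1,\leq 3)$-annulus-cuts (Lemmas \ref{lem:(2,2)-annulus-cut}, \ref{lem:(1,3)-annulus-cut}). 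A cyclically $4$-edge-connected chunk can still have cyclic $4$-cuts and bad $5$-cuts; on such a chunk the assertion ``the discharging argument forces at least one catalog configuration to appear'' is simply not what the paper proves, and the recursion has no guaranteed reduction to make. The paper's Step~2 therefore reduces across cuts of size up to $4$, and up to $5$ when neither side is a $5$-cycle, including the annular variants, before invoking discharging.

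Second, you gloss over why the prescribed reduction is \emph{safe}. When a C-reducible configuration $K$ with $|c(K)|\geq 2$ is contracted, the result can a priori have a bridge or be Petersen-like; if that happened, the recursive call would correctly report an obstruction, which would be the wrong answer since the original chunk is $3$-edge-colorable by Theorem \ref{mainth}. Ensuring this never happens is exactly the content of Lemma \ref{T(v)>0T}, which relies on the $6$/$7$-cut analysis of Section \ref{sect:6,7-cut} and on choosing a reduction inside an appropriately chosen balanced short cycle. Algorithmically, the paper needs to enumerate all contractible cycles of length $5$, $6$, $7$ in $G'$ in $O(n)$ time (via BFS layering into bounded-treewidth slabs, \cite{FOCS05,Eppstein00}) before selecting a safe reduction. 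Your phrase ``possibly creating small edge-cuts that are absorbed into an additional pass'' does not address this: the danger is not small cuts per se but that $G\dotdiv c(K)$ could become Petersen-like, and backtracking over candidate reductions would destroy the $O(n^2)$ bound. You also omit the representativity-$1$ subcase handled by Lemma \ref{lem:rep1}, and the flat case $T(v)\equiv 0$, though the latter is easy to add.

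Your use of the RSST planar $O(n^2)$ algorithm on planar chunks, and a linear-time pattern-matching recognizer for the Blanu\v{s}a families, are reasonable alternatives to how the paper organizes things, but they do not close either of the two gaps above.
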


Note that it is NP-complete to decide the 3-edge-colorability of cubic graphs, see \cite{Holyer}.

\subsection{Sketch of the proof: Technical difficulty over the proof for the projective planar case}

The basic outline of our proof of Theorem \ref{mainth} is similar to that for the projective planar case. A description of the proof and its main differences over that of the 4CT is given in \cite{proj2024}, and we omit it here. 

However, there are essential differences between the toroidal and projective planar case. 
We list below some of the main issues in the torus case. 

\begin{itemize}   
    \item[1.] It is harder to obtain a set of valid discharging rules and reducible configurations.
    
    The difficulty of finding reducible configurations has a great impact on the overall size of the unavoidable set of reducible configurations, as evidenced by the projective planar case. 
    (The planar case needs 633 configurations, whereas the projective planar case uses 5706 configurations.)
    Although we have no mathematical explanation why the reducibility condition is strictly harder to satisfy for the torus than for the projective plane, we found that many configurations that are reducible for the projective plane are not reducible for the torus, whereas we have no examples for the converse direction. As a result, compared to the projective plane, we have to search further for reducible configurations and discharging rules, thus obtaining an even bigger unavoidable set of 14000+ configurations.
    
    \item[2.] The Petersen graph is not the only snark embeddable in the torus.

    For the projective plane, the Petersen graph is the only embeddable snark.
    The Petersen graph in the projective plane has many nice properties (most notably, the dual graph is the complete graph $K_6$), and these structural properties were used in \cite{proj2024} to prove that some given cubic subgraph can not be part of a snark.
    However, for the torus, since there are infinitely many snarks of genus 1, there is much more flexibility in the structure of the snarks (examples are shown in Figures 
    \ref{fig:petersen-embedding}--\ref{fig:blanusaV2-embedding}).
    This makes it harder to confirm whether a given graph must not be a subgraph of a snark.
    
    \item[3.] The properties of non-contractible cycles in the torus are more diverse.

    As with the projective plane, the torus also has non-contractible cycles.
    For the projective plane, if there is a non-contractible cycle in a graph, one can obtain a planar subgraph by replacing the cycle with some vertices, and some nice properties for the plane could be applied to the remaining subgraph.
    However, we cannot do this for the torus because the remaining subgraph would be embedded in an annulus.
    The subgraphs embedded in the annulus must be dealt with differently than those in the plane. This distinction is resolved by introducing ``annular configurations").
    
    \item [4.] The Euler characteristic of the torus is equal to zero.

    The Euler characteristic plays a central role in the discharging method.
    Essentially, the sum of the charge of every vertex is proportional to the Euler characteristic and stays the same throughout the discharging process.
    For the projective plane, the Euler characteristic is positive, and the proof indicating that the resulting charge of every vertex must be nonpositive is sufficient to derive a contradiction.
    
    On the other hand, for the torus, we need to confirm that a reducible configuration appears even when every vertex has a final charge of exactly zero rather than being strictly positive. This is because it is possible that every vertex ends up with a final charge of zero, no matter how we set up the discharging rules. 
\end{itemize}

In Section \ref{sect:discharging}, we discuss the reducibility of configurations.
Roughly speaking, configurations are subgraphs in cubic graphs, and a configuration in $G$ is \emph{reducible} if it can be replaced by a smaller one (by deleting edges and suppressing the resulting vertices of degree 2) without changing the non-3-edge-colorability of $G$. 
Since the surface is more complicated, we need more reducible configurations. 
Our complete list of reducible configurations consists of approximately 14000 configurations\footnote{They are available in the aforementioned Google drive}, which is much more than that for the projective planar case (5706 configurations).

The discharging rules require a major revision as well.
Ultimately, we have constructed a list of 201 discharging rules (vs. the 169 discharging rules for the projective planar case). All the rules are described in Section \ref{sect:rule} in the Appendix. 
More details are provided in Sections \ref{sect:reducible configurations} and~\ref{sect:discharging}. 

The second issue arises when trying to reduce a configuration whose reducibility is based on having two or more edges deleted\footnote{Following previous work, we adapt ``contractions'' in the dual graph rather than ``deletions'' in the primal. See Section \ref{sect:reducible configurations}.}. 
The same issue (with the size of contraction size ``two'' replaced by ``three'') appears in the projective planar case, and we basically follow the same strategy as in \cite{proj2024}. The main difficulty over the projective planar case is that many more exceptional cases exist for the torus. This forces us to perform more involved computer testing. 
We also utilize a lemma proved in Section \ref{sect:6,7-cut}\footnote{The precise statement is the following: if $C$ is a contractible cycle of length 6 or 7 in the triangulation $G'$ (the dual graph of the cubic graph $G$) and the number of vertices strictly contained in the disk $D$ bounded by $C$ is at least 5, we can find another reducible configuration contained in $D\setminus C$.}, which is similar to the projective planar case, but since there are many more discharging rules, the proof is much more involved. 
 
For some cases that do not pass computer verification, we provide proof for each configuration using additional criteria. There are, however, some differences over the projective planar case. We may have to enumerate contractions, not just a single possible way to reduce, but many ways to reduce, and in the end, at least one contraction should work. Moreover, we observe that $G$ is of order at least 38 by the result of \cite{brinkmann2013generation} (who generated and examined all snarks of order at most 36). It turns out that only certain snarks (dot products of copies of the Petersen graph) can be embedded in the torus. This means that the dual graph $G'$ is of order at least 19. This indeed helps a lot.  
The full proof is given in Section~\ref{sect:safecont}.

For the third issue, when dealing with reducibility, we must confirm that the outskirt of the configuration is induced, meaning that there should be no edges connecting two vertices of the configuration.
In the planar case, this can be achieved fairly easily. In our proof (and also in the projective planar case), there is a short-length cycle that may be \emph{non-contractible}. The minimum length of such a cycle is called the \emph{representativity} (also called the \emph{face-width}) of the embedded graph. For the projective planar case, the proof handles the cases of representativity up to 5. The proof in \cite{proj2024} uses a combination of discharging methods and reducibility checking, which also involve some ideas that are developed in \cite{doublecross}. 

On the other hand, for the torus case, the methods for the projective planar case would not work because we have to deal with the annulus (so we have to deal with two disks, instead of just one disk for the planar/projective planar case). Therefore we have to develop completely new ideas. 
We first deal with the representativity-1 case, where we have an edge $e\in E(G)$ such that $G-e$ is planar\footnote{In this case, $G$ is an apex graph. If we would be able to assume that every apex graph is 3-edge-colorable, this case would not be relevant.}. Now, assume that the representativity of the embedding of $G$ is at least two.  In this case, $G$ may contain multiple edges, but they would not form a face of size two. So we can still apply our discharging rules to the embedding because it is a 2-cell embedding. 

Using our discharging rule, roughly, we show the following:

\begin{quote}
   In the dual graph $G'$ of a cubic graph $G$ embedded in the torus, if there is a vertex $v$ that ends up with a positive final charge, there is a subgraph $W$ that contains either $v$ or its neighbor, such that reducing $W'$ (which corresponds to $W$ in $G'$) from $G$ results in a 3-edge-colorable cubic graph. 

   Here $W$ can be embedded either in a disk or in an annulus. 
\end{quote}

For the proof of 4CT, reducible configurations are required to be induced, however, in our case above, we allow the configuration $W$ to ``wrap-around'' the torus. Indeed what we do is as follows: as mentioned above, we first obtain approximately 14000 reducible configurations that are ``induced'' (i.e., none of the configurations is allowed to be ``wrap-around''). Then, for each configuration, we enumerate ALL possible ways to ``wrap-around'' (there are approximately 50 ways for each reducible configuration) and test whether or not they are all reducible (so we test more than half a million configurations).  Fortunately, they are all reducible. Therefore, if there is a vertex of positive final charge, we are done.

We now come to the fourth issue. It may happen that there is no vertex with positive final charge. Since the total charge is 0, this would mean that all vertices have final charge equal to 0.
This means that we cannot find a reducible configuration in the conventional way (i.e. a vertex of positive final charge implies the appearance of a reducible configuration).
We have to find a reducible configuration even if all the vertices of $G'$ end up with a final charge 0 (which we refer to as the \emph{flat} case). 
We need a lot of extra work here.
We show that if the degree of a vertex is at least ten, a vertex of final charge zero implies the appearance of a reducible configuration. Also, we can characterize all cases where all vertices in the third neighborhood\footnote{This neighborhood is called a \emph{cartwheel} in \cite{RSST}.} of a vertex of degree at most nine have final charge zero. Using this, we show that a reducible configuration appears if vertices of degree nine whose final charge is zero are concentrated (see Lemma \ref{lem:charge0-concentrated}). We show an example in Figure \ref{fig:example-combine-graph}. In these cases, we may not have reducible configurations, but if two such vertices $v_0,v_1$ are adjacent, we can find a reducible configuration in the union of their third neighborhoods. In doing so, we need many new reducible configurations. Using these results, we find reducible configurations even in the flat case. 

\begin{figure}[htbp]
    \centering
    \includegraphics[width=13.5cm]{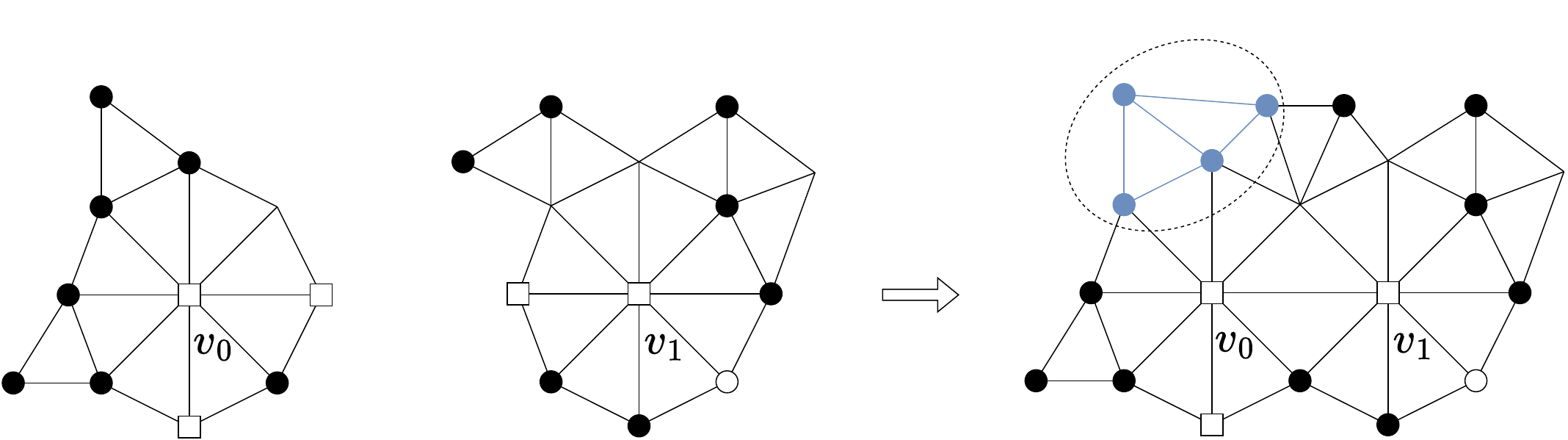}
    \caption{Two configurations around $v_0,v_1$ where the final charge is zero. If $v_0,v_1$ are adjacent, then their union contains a reducible configuration, shown as the blue-colored subgraph surrounded by a dotted circle.}
    \label{fig:example-combine-graph}
\end{figure}

\subsection{Some technical results and organization of the paper}
\label{org}\showlabel{org}

In addition to the discharging method and the reducibility testing, which are detailed in Sections \ref{sect:reducible configurations} and \ref{sect:discharging}, respectively, we utilize the following technical results.

\begin{itemize}
    \item[(a)]
    Let $C$ be a contractible cycle of length $l$ in the dual graph $G'$, where $l\in\{6, 7\}$ and the disk $D$ bounded by $C$ contains at least $l-2$ vertices. Then there is a reducible configuration that is strictly contained in the interior of $D$. 

   This is shown in Sections \ref{sect:smaller} and \ref{sect:6,7-cut}.
   \item[(b)]
 We deal with the ``flat'' case in Section \ref{sect:flatproof}. 
\end{itemize}

The conclusion in (a) is the same as that in \cite{proj2024}, but the details are different because we have more discharging rules, which makes the proof harder. The importance of (a) is that 
if the assumption of (a) is satisfied,  we can find another reducible configuration $J'$ in the disk $D$ bounded by $C$ in $G$, and we manage to show that by making a reduction using $J'$, the resulting graph obtained from $G$ is still 3-edge-colorable. More details will be given in Sections \ref{sect:smaller} and \ref{sect:6,7-cut}.  This means that if $C$ is a separating cycle of length exactly $l$ for $l=6, 7$ in $G'$, we may assume that the component of $G'-C$ strictly inside the disk bounded by $C$ has at most $l-3$ vertices.

We have around 2000 reducible configurations that need contraction of two edges or more. 
For each of these configurations, we provide a case-by-case analysis using results in Section \ref{sect:safecont} to prove that after reduction, the resulting cubic graph of $G$ still becomes 3-edge-colorable. We use computer programs to cover most of the cases.
The algorithm used for the analysis is provided in Appendix \ref{sect:code}.

This paper is organized as follows.
In Section \ref{sect:minimal} we shall introduce the main notation. 
The discharging method and the reducibility check are detailed in Sections \ref{sect:reducible configurations} and \ref{sect:discharging}, respectively. 
As pointed out above, we have to deal with the flat case. This is given in Section \ref{sect:flatproof}. 
Finally, in Sections \ref{sect:mainproof} and \ref{sect:algorithm}, we give the proof of Theorem \ref{mainth} and our quadratic-time algorithm for Theorem \ref{algo}, respectively.

Some of the main conclusions of this work are based on extensive computation. This involves reducibility of configurations and the ``unavoidability" (the discharging part). All computations have been programmed and run independently at least twice. 
Pseudocodes of programs used in our computer verification are included in Section \ref{sect:code}.

\section{A minimal counterexample}
\label{sect:minimal}
\showlabel{sect:minimal}

\begin{dfn}
    Let $G$ be a cubic graph. An edge-cut $F \subset E(G)$ is a \emph{cyclic cut} if at least two components of $G - F$ contain cycles. The graph is \emph{cyclically $k$-edge-connected} if no cyclic cut of size less than $k$ exists.
\end{dfn}

Let $\mathcal{T}_1$ be the set of all simple cubic bridgeless graphs embeddable in the torus. 
For $G \in \mathcal{T}_1$, the replacement of any cyclic 3-edge-cut by a vertex is called a \emph{3-edge-cut reduction}.
Similarly, the replacement of any cyclic 2-edge-cut by an edge is called a \emph{2-edge-cut reduction}.
These two reductions were introduced in the projective planar theorem \cite{proj2024}. In this paper, we introduce a third reduction called the 4-edge-cut reduction.

\begin{dfn}
    Let $G$ be a cubic graph and let $F$ be an edge-cut of size exactly four.
    Let $I_4$ be the subcubic graph obtained from the Petersen graph by removing two adjacent vertices, see Figure \ref{fig:4-edge-cut-reduction}.\footnote{This is an \emph{annular island} of ring size 4, which we will define later in Section \ref{sect:reducible configurations}.}
    If one of the connected components of $G-F$ is isomorphic to $I_4$, let us remove $I_4$ and add two edges, $\hat{e}_1$ connecting the endpoints of $e_1, e_1'$, and $\hat{e}_2$ connecting the endpoints of $e_2, e_2'$.
    (Note that the underlying surface that $I_4$ is embedded is annular, so the two newly added edges can be drawn in the same surface as $G$ without $\hat{e}_1$ and $\hat{e}_2$ crossing each other.)
    This operation is called a \emph{4-edge-cut reduction}.
\end{dfn}

\begin{figure}[htbp]
  \centering
  \includegraphics[width=6cm]{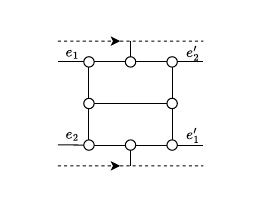}
  \caption{The graph $I_4$ as a subgraph of a toroidal graph, on which we can make a 4-edge-cut reduction. After deleting it, $e_1,e_1'$ and $e_2,e_2'$ can be connected in the skeleton of the removed $I_4$ without crossing each other.}
  \label{fig:4-edge-cut-reduction}
\end{figure}

We refer to these reductions as \emph{low edge-cut reductions}.
Since there are exactly two connected components separated by a 2-edge or 3-edge or 4-edge-cut, there are two possible graphs that are obtained by making a low edge-cut reduction. 

\begin{lem}
    A cubic graph $G \in \mathcal{T}_1$ is $3$-edge-colorable if and only if any cubic graph obtained from $G$ by zero or more low edge-cut reductions is 3-edge-colorable.
    \label{low-red}
\end{lem}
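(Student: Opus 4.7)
The plan is to prove the lemma by induction on the number of reductions, so the whole problem reduces to showing that \emph{each single} low edge-cut reduction is an ``if and only if.'' After a single reduction, the resulting graph is again cubic and, since each reduction is carried out locally in a disk (or annulus) on the torus, remains in $\mathcal{T}_1$, so the inductive step is legitimate. Thus I would treat the three reduction types independently.

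For the 2-edge-cut reduction, the standard parity argument applies: in any proper 3-edge-coloring of a cubic graph, the two edges of any edge-cut of size $2$ must receive the same color (look at the two-coloring induced on one side; any color used on the cut edges appears an even number of times across the cut because its two color classes together form an even subgraph). Hence a coloring of $G$ restricts to valid colorings on both sides, and conversely two colorings (one on each side) that agree on the cut edges glue to a coloring of $G$; up to permutation of colors we can always arrange such agreement. The 3-edge-cut reduction is essentially the same argument: the three edges of a cyclic 3-cut must receive three distinct colors, so replacing one side by a degree-3 vertex gives exactly the same boundary signature.

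The nontrivial case is the 4-edge-cut reduction. Letting $J$ denote the other side of the 4-cut, it suffices to prove that for every proper 3-edge-coloring of the boundary edges $e_1,e_1',e_2,e_2'$, extendability into $I_4$ coincides with the joint constraint imposed by the new edges $\hat e_1, \hat e_2$, namely that $\mathrm{col}(e_1)=\mathrm{col}(e_1')$ and $\mathrm{col}(e_2)=\mathrm{col}(e_2')$ are two distinct colors. I would therefore establish the two statements:
(i) in any proper 3-edge-coloring of $I_4$ with four dangling edges labeled as in Figure \ref{fig:4-edge-cut-reduction}, one has $\mathrm{col}(e_1)=\mathrm{col}(e_1')\ne \mathrm{col}(e_2)=\mathrm{col}(e_2')$; and
(ii) for every pair of distinct colors $a\ne b$, there is a proper 3-edge-coloring of $I_4$ realizing $\mathrm{col}(e_1)=\mathrm{col}(e_1')=a$ and $\mathrm{col}(e_2)=\mathrm{col}(e_2')=b$.
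Statement (i) is proved cleanly using the fact that the Petersen graph itself admits no proper 3-edge-coloring: if in some coloring of $I_4$ the constraint failed, one could reinsert the two deleted vertices $u,v$ and the edge $uv$ consistently, yielding a Tait coloring of Petersen, a contradiction. Statement (ii) is verified by exhibiting a single explicit coloring (with the $\{a,b\}$-pattern) and then applying a Kempe-chain swap on the third color to permute $a,b$.

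The main obstacle is the 4-edge-cut case, and more precisely statement (ii): one must check that the Petersen-minus-two-adjacent-vertices graph actually \emph{does} admit one such ``balanced'' boundary pattern, i.e.\ that the non-3-edge-colorability of Petersen is the \emph{only} obstruction to boundary extension. I would do this by exhibiting an explicit coloring on the standard pentagon/pentagram drawing of Petersen; the rest follows by color-swapping and by the symmetry of $I_4$ exchanging the two sides of the removed edge $uv$. Once (i) and (ii) hold, the 4-edge-cut reduction preserves 3-edge-colorability exactly as the 2- and 3-cut reductions do, and the lemma follows by induction on the number of reductions applied.
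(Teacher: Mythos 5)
Your framework (induction on the number of reductions, with the 2- and 3-edge-cut cases handled by the parity lemma) is correct, and it is more detailed than the paper, which simply invokes the well-known fact that a dot product of a cubic graph with $P_{10}$ is $3$-edge-colorable if and only if the cubic graph is. The problem is in your statement (i) for the 4-edge-cut case, which is false as stated. You claim that every proper 3-edge-coloring of $I_4$ has $\mathrm{col}(e_1)=\mathrm{col}(e_1')\ne\mathrm{col}(e_2)=\mathrm{col}(e_2')$, i.e.\ the two pair-colors must differ. In fact $I_4$ admits a coloring in which all four dangling edges receive the \emph{same} color. Take Petersen with outer cycle $0,1,2,3,4$, spokes $i\,i'$, inner pentagon $0'2'4'1'3'0'$, and $u=0$, $v=1$; the dangling edges sit at $4,0'$ (the $e_1,e_1'$ pair) and $2,1'$ (the $e_2,e_2'$ pair). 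The assignment $3\text{-}2\mapsto b$, $3\text{-}4\mapsto c$, $3\text{-}3'\mapsto a$, $2\text{-}2'\mapsto c$, $4\text{-}4'\mapsto b$, $2'\text{-}0'\mapsto b$, $2'\text{-}4'\mapsto a$, $0'\text{-}3'\mapsto c$, $3'\text{-}1'\mapsto b$, $1'\text{-}4'\mapsto c$, all four dangling edges $\mapsto a$, is a proper 3-edge-coloring that violates your (i).

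The reason your argument for (i) does not rule this out: the reinsertion trick only produces a Tait coloring of Petersen when $\mathrm{col}(e_1)\ne\mathrm{col}(e_1')$ (by parity, the four colors are then $a,b,a,b$ in some arrangement, and $u$, $v$, $uv$ can be inserted consistently). When all four dangling edges agree, the reinsertion is simply impossible (two equal-colored edges would meet at $u$), which is \emph{not} a contradiction. The actual constraint characterizing boundary colorings of $I_4$ that extend is just $\mathrm{col}(e_1)=\mathrm{col}(e_1')$ and $\mathrm{col}(e_2)=\mathrm{col}(e_2')$, with both equal and unequal pair-colors realizable; and that is precisely what the two new edges $\hat e_1,\hat e_2$ (which do not share a vertex) impose in the reduced graph. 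Consequently your statement (ii) also needs to cover the case $a=b$; without it, the implication ``reduced graph colorable $\Rightarrow$ $G$ colorable'' fails whenever the chosen coloring of the reduced graph gives $\hat e_1$ and $\hat e_2$ the same color.
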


The cases for 2-edge-cut and 3-edge-cut reductions are obvious. The 4-edge-cut reduction is just the inverse of making the dot product with the Petersen graph, and it is well known that a dot product of a cubic graph $G_1$ with $P_{10}$ is 3-edge-colorable if and only if $G_1$ is 3-edge-colorable. 

Let $\mathcal{T}_0$ be a set of those graphs $G \in \mathcal{T}_1$ that can be reduced to a graph isomorphic to $P_{10}$ after a sequence of low edge-cut reductions. Graphs in $\mathcal{T}_0$ are said to be \emph{Petersen-like graphs}.
We show several examples of Petersen-like graphs. 
All of the examples are obtained by the inverse operation of a 4-edge-cut reduction, which we call \textit{4-edge-cut expansions}.
Note that a graph obtained from a toroidal graph by 4-edge-cut expansions is not necessarily toroidal, but here we only think about graphs in which the resulting graph is also toroidal.

The toroidal graph obtained from the Petersen graph by applying 4-edge-cut expansion only once is unique up to isomorphism. 
This unique graph is known as the \textit{Blanuša snark} \cite{blanuvsa1946problem}, being the second smallest snark.

Applying 4-edge-cut expansions to the Blanuša snark gives us two types of snark families: the Blanuša-H$k$ ($k \geq 1$) family and the Blanuša-V$k$ ($k \geq 1$) family. 
The Blanuša-H$k$ snark is obtained by applying 4-edge-cut expansions $k$ times horizontally, and the Blanuša-V$k$ snark is obtained by applying 4-edge-cut expansions $k$ times vertically\footnote{The embedding of Blan\v{u}sa snark 
in the first figure in Figure \ref{fig:blanusa-embedding} is the basis for vertical and horizontal alignment.}.
In this paper, we call all snarks in the Blanuša-H$k$ and Blanuša-V$k$ family as well as the Blanuša snark itself the \emph{Blanuša snark family}, and we call H or V the \emph{type} and $k$ the \emph{level} of the Blanuša snark.
The original Blanuša snark has no type and has level~$0$.
It is known that any two members of the Blanuša snark family with different type or level are nonisomorphic.
The embeddings of the Petersen graph, the Blanuša, Blanuša-H$1$, Blanuša-V$1$, and Blanuša-V2 snark, as well as their duals on the torus, are shown in Figures \ref{fig:petersen-embedding}--\ref{fig:blanusaV2-embedding}.
We show all embeddings of each of these snarks (up to its automorphisms) since we need to investigate the properties of any embedding of these snarks later in this paper.\footnote{For Blanuša-V$k$ ($k \geq 3$), Blanuša-H$l$ ($l \geq 2$), the flexibility of embeddings only depends on the difference of how four edges of (2,2)-annulus-cut are embedded. This can be easily shown by induction on $k, l$. For safety, we enumerate all the embeddings of the snarks up to $k \leq 3, k \leq 3$ by computer. See Subsection \ref{subsect:avoidpet} and Table \ref{tab:neighbor-degree-pattern}.}

\begin{figure}[htbp]
  \centering
  \includegraphics[width=0.55\textwidth]{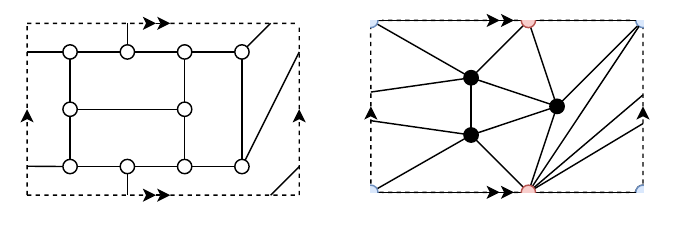}
  \caption{Embeddings of the Petersen graph and its dual in the torus.}
  \label{fig:petersen-embedding}
\end{figure}

\begin{figure}
    \centering
    \includegraphics[width=0.62\textwidth]{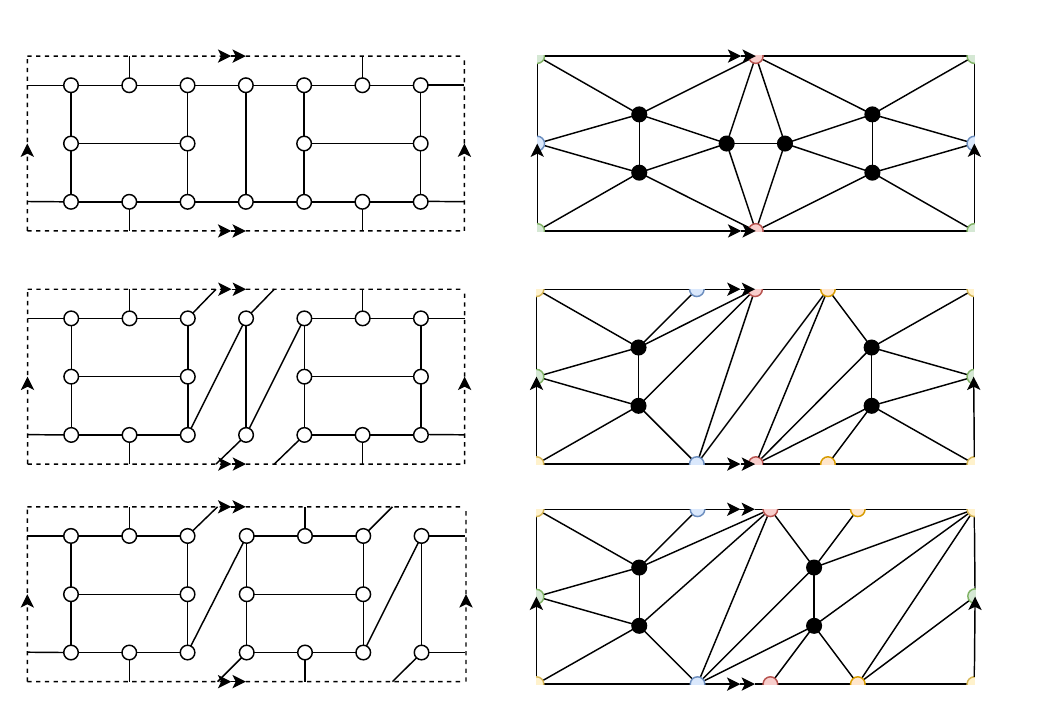}
    \caption{Embeddings of the Blanuša snark and its dual in the torus.}
    \label{fig:blanusa-embedding}
\end{figure}

\begin{figure}
    \centering
    \includegraphics[width=0.8\textwidth]{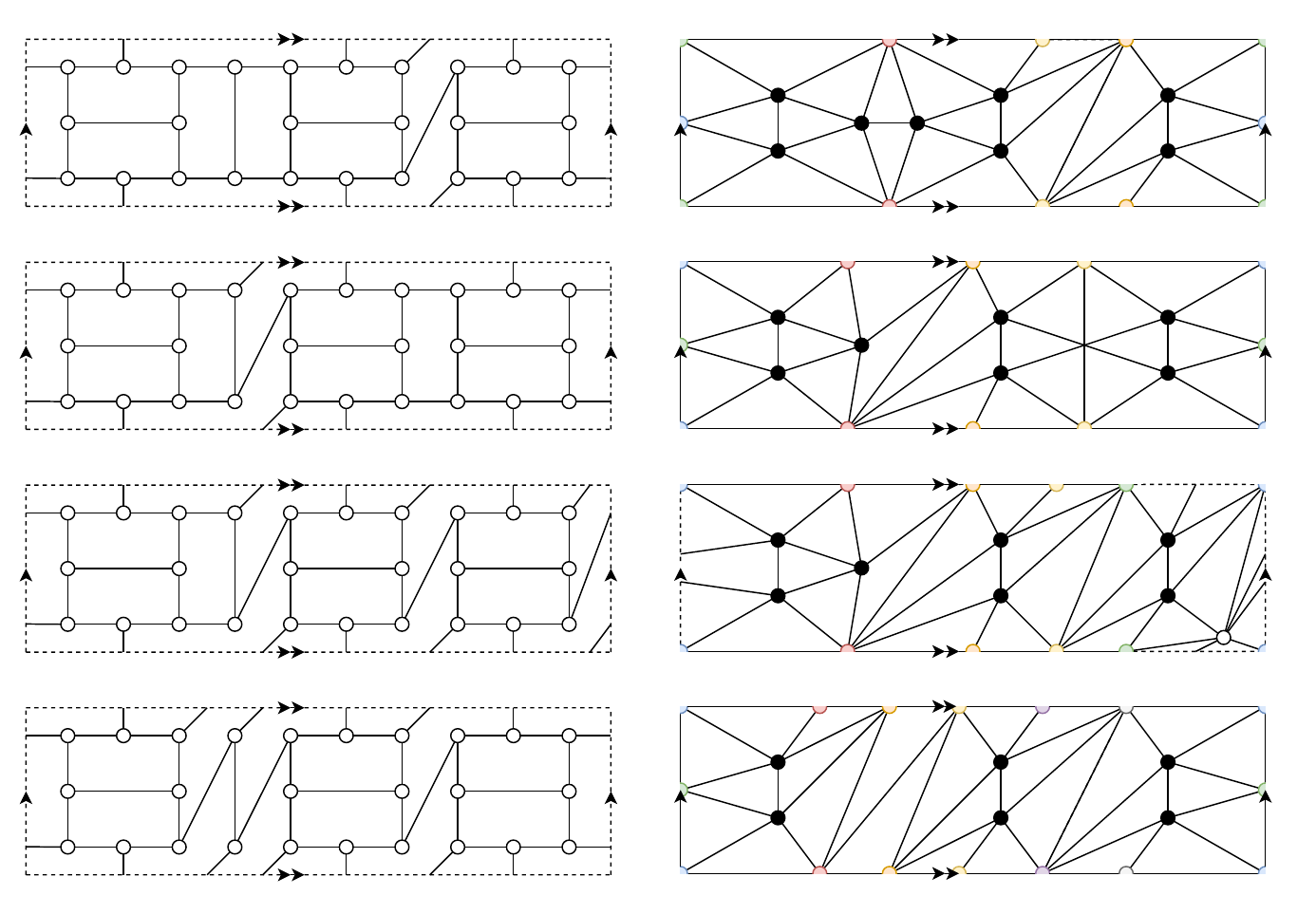}
    \caption{Embeddings of the Blanuša-H1 snark and its dual in the torus.}
    \label{fig:blanusaH1-embedding}
\end{figure}

\begin{figure}[htbp]
  \centering
  \includegraphics[width=0.5\textwidth]{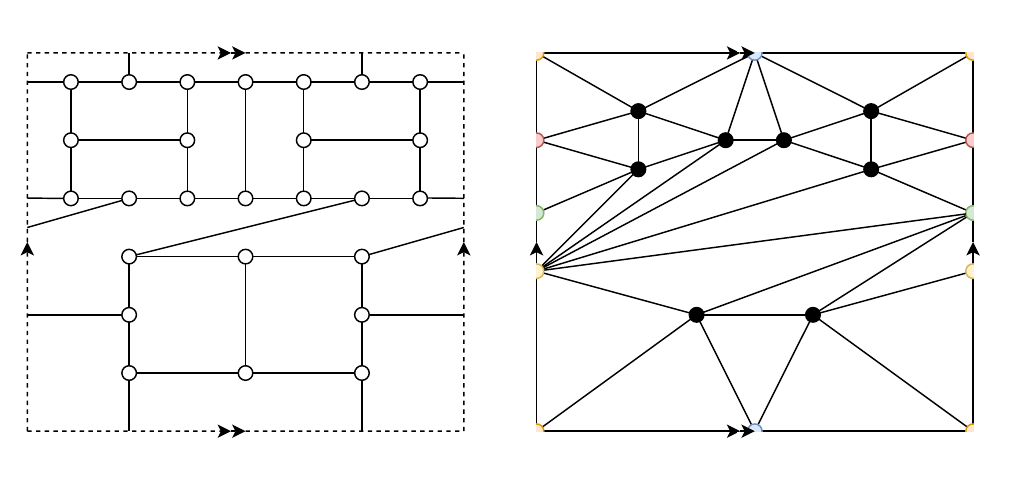}
  \caption{Embeddings of the Blanuša-V1 snark and its dual in the torus.}
  \label{fig:blanusaV1-embedding}
\end{figure}

\begin{figure}
    \centering
    \includegraphics[width=0.5\linewidth]{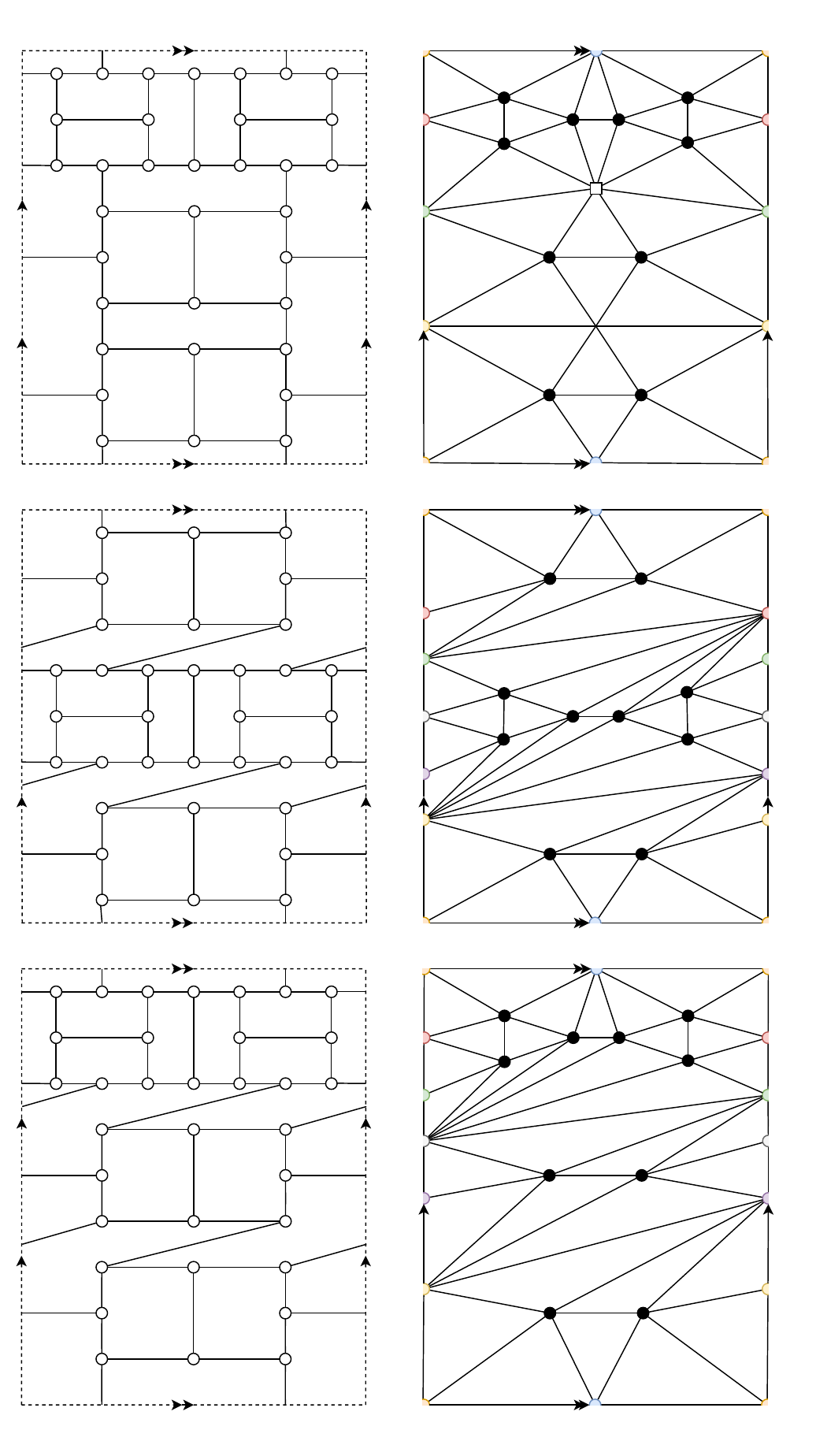}
    \caption{Embeddings of Blanuša-V2 snark and its dual in the torus.}
    \label{fig:blanusaV2-embedding}
\end{figure}

Because $P_{10}$ is not 3-edge-colorable, Lemma \ref{low-red} implies that graphs in $\mathcal{T}_0$ are not 3-edge-colorable. 
These are the obvious non-3-edge-colorable toroidal cubic graphs, and no other examples are known. 
In this paper, we prove that these are the only non-3edge-colorable cubic graphs that can be embedded in the torus. In other words, any graph in $\mathcal{T}_1 \setminus \mathcal{T}_0$ can be 3-edge-colored. While trying to prove this, we consider a hypothetical counterexample and discuss how it should look like until finally reaching a contradiction.

\begin{dfn}
    A 2-connected cubic graph $G \in \mathcal{T}_1\setminus \mathcal{T}_0$ is called a \emph{counterexample} if the graph does not admit a 3-edge-coloring. A counterexample is \emph{minimal} if there are no counterexamples of smaller order in $\mathcal{T}_1 \setminus \mathcal{T}_0$.
\end{dfn}

Our main result, Theorem \ref{mainth}, is equivalent to saying that there are no counterexamples in $\mathcal{T}_1 \setminus \mathcal{T}_0$.

The following lemmas, whose proof will be given in Appendix (section \ref{sect:below5cuts}), hold.

\begin{lem}\label{minc}\showlabel{minc}
    A minimal counterexample $G \in \mathcal{T}_1 \setminus \mathcal{T}_0$ is cyclically 5-edge-connected. Furthermore, for any cyclic cut $F$ of size $5$, one of the connected components of $G - F$ must be a 5-cycle.
\end{lem}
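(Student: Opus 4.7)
The plan is a standard minimality argument driven by Lemma~\ref{low-red}. Let $G$ be a minimal counterexample, and suppose for contradiction that either $G$ admits a cyclic $k$-edge-cut $F$ with $k \le 4$, or $G$ admits a cyclic $5$-edge-cut neither of whose components of $G - F$ is a single $5$-cycle.

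For $k \in \{2,3\}$ the $k$-edge-cut reduction applies unconditionally and produces two strictly smaller cubic toroidal graphs $G_1, G_2 \in \mathcal{T}_1$. Since $G$ is not $3$-edge-colorable, Lemma~\ref{low-red} forces at least one of them, say $G_1$, also not to be $3$-edge-colorable. By the minimality of $G$, we must have $G_1 \in \mathcal{T}_0$; hence $G_1$ admits a reduction sequence $G_1 \to \cdots \to P_{10}$, and prepending the reduction $G \to G_1$ yields such a sequence for $G$. Thus $G \in \mathcal{T}_0$, contradicting $G \notin \mathcal{T}_0$. For $k = 4$ the same argument applies once some side of $G - F$ is isomorphic to $I_4$, so the remaining task is to exclude cyclic $4$-edge-cuts in which neither side is $I_4$. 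Here one invokes the standard dot-product lemma for cubic graphs: among the three possible matchings on the four endpoints of $F$, at least one produces a smaller non-$3$-edge-colorable cubic graph whose toroidal embedding is preserved because the matching edges can be drawn inside the small disk or annulus vacated by the removed side; minimality together with a reversal argument finishes this sub-case.

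For the $5$-edge-cut claim, let $A, B$ be the components of $G - F$ and assume neither is a single $5$-cycle. The strategy is a $5$-cycle completion: replace $B$ by a new $5$-cycle joined to the five endpoints of $F$ in $A$, obtaining a cubic graph $G_1$ that embeds in the torus (the completion fits inside the annular region surrounding $F$) and satisfies $|V(G_1)| < |V(G)|$ because $|V(B)| > 5$. A Kempe/parity analysis on the five cut-edges, applied symmetrically to $A$ and $B$, shows that at least one such completion inherits non-$3$-edge-colorability from $G$. By minimality $G_1 \in \mathcal{T}_0$, and reinserting $B$ in place of the inserted $5$-cycle produces a reduction sequence for $G$ down to $P_{10}$, so $G \in \mathcal{T}_0$, a contradiction.

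The main obstacle is the $5$-edge-cut case, because Lemma~\ref{low-red} provides no off-the-shelf $5$-edge-cut reduction: the $5$-cycle completion must be constructed by hand, its toroidal embedding verified (the annular neighborhood of $F$ may interact nontrivially with the handle of the torus), and its $3$-edge-colorability linked to that of $G$ via a delicate Kempe-chain analysis on the five cut-edges. The only irreducible obstruction is precisely the case where one side of $F$ is already a single $5$-cycle, which explains why the lemma cannot be strengthened to cyclic $6$-edge-connectivity. The $4$-edge-cut sub-case with no $I_4$ side is conceptually similar but easier, as it rests on the well-known dot-product lemma rather than a new ad-hoc completion.
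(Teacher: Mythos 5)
There are genuine gaps, and they cluster in exactly the places where the torus differs from the plane and the projective plane.

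\textbf{The $k=5$ ``reinsertion'' step is not a valid argument.} After forming $G_1 = A \cup C_5$ and concluding by minimality that $G_1 \in \mathcal{T}_0$, you write that ``reinserting $B$ in place of the inserted $5$-cycle produces a reduction sequence for $G$ down to $P_{10}$.'' But $G \to G_1$ is not one of the low edge-cut reductions of Lemma~\ref{low-red} (those are only the $2$-, $3$-, and the very specific $I_4$-based $4$-edge-cut reductions), so membership $G_1 \in \mathcal{T}_0$ does not transfer to $G$ by ``prepending a reduction.'' Nor is ``reinserting $B$'' a reduction of any kind: it makes the graph \emph{larger}. To conclude $G \in \mathcal{T}_0$ from $G_1 \in \mathcal{T}_0$ you would need a structural argument explaining how the inserted $5$-cycle sits inside the Petersen-like structure of $G_1$ (e.g.\ whether it lies entirely inside a single planar vertex-replacement blob or crosses one of the dot-product $4$-cuts), and that analysis is missing. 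The same objection applies, less acutely, to your $k=4$ sub-case where neither side is $I_4$: the ``reversal argument'' you invoke has to show that if $A$ and $B$, each completed by a matching, both land in $\mathcal{T}_0$, then $G$ itself is Petersen-like. The paper makes this precise using the algebraic closure of the Blanu\v{s}a family under dot products, namely $BH_i \cdot BH_j = BH_{i+j-2}$ and $BH_i \cdot BV_j = BV_{i+j-2}$. Your sketch never engages with the Petersen-like outcome at all.

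\textbf{You never distinguish disk cuts from annular cuts, which is the crux of the toroidal case.} A small cyclic cut in a toroidal embedding is dual either to a contractible cycle (disk cut) or to a $(k,l)$-annulus-cut. The paper dispatches disk cuts by citing the projective-planar argument and then devotes the bulk of its work to the annular case; for $|F|=4$ it needs the Blanu\v{s}a dot-product identities, and for $|F|=5$ with a $(2,3)$-annulus-cut it needs the explicit enumeration of how a $5$-cycle can sit in the annular region (Figure~\ref{fig:annular-5-cycles}). Your statement that ``the completion fits inside the annular region surrounding $F$'' glosses over precisely the thing that requires a picture and a case check. Relatedly, the paper's actual appendix lemma allows an extra exception for $|F|=5$ when $F$ is a $(1,4)$-annulus-cut (representativity one), which is then handled separately by Lemma~\ref{lem:rep1}; your argument does not notice this case at all, so if it were pursued literally it would ``prove'' something slightly stronger than is true at this stage.

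\textbf{Minor but real:} for $k\in\{2,3\}$ you assert $G_1,G_2\in\mathcal{T}_1$ without checking simplicity and bridgelessness of the reduced graphs. The reduction edge can create a digon or a cut edge in degenerate situations, and while these can be cleaned up by further reductions, that needs to be said since $\mathcal{T}_1$ is defined to contain only \emph{simple} bridgeless cubic graphs. Overall your plan shares the broad ``minimality plus reduction'' shape of the actual proof, but the two pieces that actually carry weight on the torus — the annular-cut analysis and the closure of $\mathcal{T}_0$ under the relevant gluings — are asserted rather than argued, and the way you close the $|F|=4,5$ cases (prepending a non-low reduction to a $\mathcal{T}_0$-sequence, or ``reinserting'') is not a valid inference.
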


\begin{dfn}\label{dfn:(k,l)-annulus-cut}\showlabel{dfn:(k,l)-annulus-cut}
    A $(k,l)$-annulus-cut in an embedded graph $G$ in the torus is an edge cut $\{e_1,...,e_k,e_{k+1},...,e_{k+l}\}$ such that there exists a non-contractible simple closed curve that intersects only $e_1,...,e_k$ in $G$, and the same for $e_{k+1},...,e_{k+l}$.
\end{dfn}

We also prove the following two lemmas whose proof will be given in  Appendix (section \ref{sect:below5cuts}). 

\begin{lem}\label{lem:(2,2)-annulus-cut}\showlabel{lem:(2,2)-annulus-cut}
    A minimal counterexample $G \in \mathcal{T}_1 \setminus \mathcal{T}_0$ has no cyclic (2, $\leq$ 2)-annulus-cut.
    If a cyclic (2, 3)-annulus-cut exists in a minimal counterexample $G \in \mathcal{T}_1 \setminus \mathcal{T}_0$, one of the connected components of the remaining graph obtained by removing the annulus-cut is a 5-cycle.
\end{lem}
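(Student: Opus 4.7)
The plan is to deduce Lemma \ref{lem:(2,2)-annulus-cut} directly from Lemma \ref{minc}. By the definition immediately preceding, a $(k,\ell)$-annulus-cut is a standard edge cut of size $k+\ell$ endowed with the extra topological data of two disjoint non-contractible simple closed curves, one crossing only $\{e_1,\dots,e_k\}$ and the other only $\{e_{k+1},\dots,e_{k+\ell}\}$ (such curves on the torus must be homotopic parallel loops). The annular structure is used only in the hypothesis; the conclusion of this lemma depends solely on the cut size together with $G$ being a minimal counterexample, so Lemma \ref{minc} will do all the work.

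For the first assertion, suppose for contradiction that a minimal counterexample $G$ admits a cyclic $(2,\ell)$-annulus-cut with $\ell\le 2$. This is a cyclic edge cut of size $2+\ell\le 4$. However Lemma \ref{minc} asserts that $G$ is cyclically $5$-edge-connected, so every cyclic edge cut has size at least $5$. This contradiction settles the first claim.

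For the second assertion, a cyclic $(2,3)$-annulus-cut $F$ of $G$ is a cyclic $5$-edge-cut of $G$. The second part of Lemma \ref{minc} states that for any cyclic $5$-edge-cut in a minimal counterexample, one of the two components of $G-F$ is a $5$-cycle, which is precisely what is asserted.

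If one prefers a proof not invoking Lemma \ref{minc} (for instance if the two lemmas are to be proved in parallel in the appendix), I would handle each case via the matching low edge-cut reduction from Lemma \ref{low-red}. For $\ell=0,1$ apply the $2$- or $3$-edge-cut reduction, obtaining two smaller toroidal cubic graphs $G_1',G_2'$; by Lemma \ref{low-red}, $G$ fails to be $3$-edge-colorable only if some $G_i'$ does, and by minimality such a $G_i'$ would have to lie in $\mathcal{T}_0$; checking that this pulls back to $G\in\mathcal{T}_0$ (using that both pieces of a low edge-cut in a Petersen-like graph produce Petersen-like pieces) then contradicts $G\in\mathcal{T}_1\setminus\mathcal{T}_0$. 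I expect the main obstacle of this alternative route to be the $(2,2)$ case when neither side of the $4$-edge-cut is isomorphic to $I_4$: the $4$-edge-cut reduction as defined is restricted to that configuration, so one must instead argue via a parity/Vizing-style analysis of how the two possible color patterns $\{a,a,b,b\}$ and $\{a,a,a,a\}$ distribute across the cut, and then combine compatible colorings on the two annular pieces, using the annular topology to ensure the reassembled graph is still a toroidal cubic graph outside $\mathcal{T}_0$.
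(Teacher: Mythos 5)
Your primary derivation from Lemma \ref{minc} is formally valid but is circular from the standpoint of the paper's proof structure. The appendix (Section \ref{sect:below5cuts}) proves a single combined lemma covering all cyclic edge cuts, and Lemma \ref{minc}, Lemma \ref{lem:(2,2)-annulus-cut}, and Lemma \ref{lem:(1,3)-annulus-cut} are all read off from it; within that combined proof, the \emph{disk cuts} are deferred to \cite{proj2024}, while the \emph{annular cuts} --- precisely the subject of the present lemma --- require new arguments. Establishing the cyclic 5-edge-connectivity claim of Lemma \ref{minc} already entails ruling out cyclic annular $4$-edge-cuts, so the present lemma cannot be obtained ``for free'' by citing Lemma \ref{minc}: the annular-cut analysis \emph{is} the missing work, and simply invoking Lemma \ref{minc} does not supply it. (There is also a minor overstatement to be aware of: the appendix lemma exempts size-$5$ $(1,4)$-annulus-cuts from the 5-cycle conclusion, so Lemma \ref{minc} as printed is mildly too strong, and your derivation of the second assertion leans on the overstated reading; it survives because a $(2,3)$-annulus-cut is a different topological decomposition, but that needs to be said.) You do flag this circularity concern, so the question becomes whether your alternative sketch closes the gap.

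The alternative agrees with the paper for $|F|=2,3$ (the paper also disposes of these by 2/3-edge reductions), but for the $(2,2)$ case of size $4$ it genuinely diverges and falls short. You propose a parity/Vizing-style analysis of the boundary color patterns $\{a,a,b,b\}$ and $\{a,a,a,a\}$ and a stitching of compatible colorings across the two annular pieces. However, the hard part of this case is not establishing 3-edge-colorability of the pieces; it is ruling out that the reassembled graph lands back in $\mathcal{T}_0$ (i.e.\ is Petersen-like), since a minimal counterexample is, by definition, outside $\mathcal{T}_0$. The paper handles this structurally: it imports the parallel-edge/vertex-pair insertion framework from \cite{proj2024} together with the dot-product algebra of the Blanu\v{s}a family from \cite{chladny2010factorisation}, namely $BH_i\cdot BH_j = BH_{i+j-2}$ and $BH_i\cdot BV_j = BV_{i+j-2}$, to conclude that merging two Petersen-like annular pieces across a $(2,2)$-annulus-cut again produces either a 3-edge-colorable graph or a Petersen-like graph. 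A boundary-coloring analysis as you describe has no mechanism to detect or exclude the Petersen-like outcome, so it would not close the $(2,2)$ case as written.
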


We also need the following result.
\begin{lem}\label{lem:(1,3)-annulus-cut}\showlabel{lem:(1,3)-annulus-cut}
    A minimal counterexample $G \in \mathcal{T}_1 \setminus \mathcal{T}_0$ has no cyclic (1, $\leq$ 3)-annulus-cut.
\end{lem}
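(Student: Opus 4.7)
My plan is to derive Lemma \ref{lem:(1,3)-annulus-cut} as an immediate corollary of Lemma \ref{minc}. By Definition \ref{dfn:(k,l)-annulus-cut}, a $(1, l)$-annulus-cut is in particular an edge-cut of size exactly $1 + l$, obtained as the (disjoint) union of the single edge crossed by one non-contractible curve and the $l$ edges crossed by the second non-contractible curve. For $l \leq 3$, this size is at most $4$.

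The main step is then to invoke Lemma \ref{minc}: a minimal counterexample $G \in \mathcal{T}_1 \setminus \mathcal{T}_0$ is cyclically $5$-edge-connected. By the definition of cyclic edge-connectivity at the beginning of Section \ref{sect:minimal}, any cyclic cut in $G$ must have size at least $5$. Suppose, for contradiction, that a cyclic $(1, l)$-annulus-cut $F$ exists with $l \leq 3$. Then $F$ is a cyclic edge-cut of size $1 + l \leq 4$, contradicting cyclic $5$-edge-connectivity. Hence no cyclic $(1, \leq 3)$-annulus-cut can exist, proving the lemma.

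I do not foresee any serious obstacle: the lemma is essentially a one-line corollary of Lemma \ref{minc}. Unlike the boundary case treated in the second half of Lemma \ref{lem:(2,2)-annulus-cut}, where the $(2,3)$-annulus-cut has size exactly $5$ and the statement requires the additional structural input (one side is a $5$-cycle) from the second clause of Lemma \ref{minc}, here all possible cut sizes $|F| \in \{2, 3, 4\}$ lie strictly below the cyclic $5$-edge-connectivity threshold. So no further case analysis, discharging argument, or reducibility check is required, and the only minor verification is that the two edge subsets produced by the two non-contractible curves are genuinely disjoint so that $|F| = 1 + l$; this is built into the notation of Definition \ref{dfn:(k,l)-annulus-cut}, which lists all $1+l$ edges without repetition.
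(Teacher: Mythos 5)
Your derivation is correct: a cyclic $(1,\le 3)$-annulus-cut is in particular a cyclic edge-cut of size at most $4$ (or less, if the edge lists overlap), which contradicts the cyclic $5$-edge-connectivity asserted in Lemma \ref{minc}. This is the same route the paper takes: Lemmas \ref{minc}, \ref{lem:(2,2)-annulus-cut} and \ref{lem:(1,3)-annulus-cut} are all proved simultaneously in Section \ref{sect:below5cuts} as consequences of a single unified small-edge-cut lemma asserting that every cyclic cut of a minimal counterexample has size at least $5$, so the present lemma carries no content beyond that unified statement.
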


Note that we cannot get rid of $(1, 4)$-annulus-cut, but we can actually handle the representativity one case separately, see Lemma \ref{lem:rep1}.

These lemmas imply the following:
\begin{lem}\label{lem:notwoshort}\showlabel{lem:notwoshort}
A minimal counterexample $G \in \mathcal{T}_1 \setminus \mathcal{T}_0$ has no two noncontractible curves $C_1, C_2$ hitting exactly two vertices, such that $C_1, C_2$ bound the two cylinders that both contain at least one vertex (that is, there is one vertex neither on $C_1$ nor on $C_2$ but in the cylinder)
\end{lem}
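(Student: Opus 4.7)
\emph{Proof plan.} The plan is to derive a contradiction by converting the two given curves into a $(2,2)$-annulus-cut of $G$ and invoking Lemma~\ref{lem:(2,2)-annulus-cut}. First, at each of the two vertices $v$ lying on $C_i$, the three edges at $v$ are partitioned by $C_i$ into two groups of sizes $\{a_v, 3-a_v\}$ with $a_v \in \{1,2\}$. Sliding $C_i$ locally into the minority side at each of its two vertices yields a parallel noncontractible curve $C_i'$ that avoids $V(G)$ and crosses exactly $2$ edges. The perturbed curves $C_1', C_2'$ remain disjoint and homotopic, so they bound two cylinders $A_1', A_2'$ of the torus, and the four crossed edges form a $(2,2)$-annulus-cut $F$ of $G$.

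Next, I would argue that $F$ must be cyclic. Let $n_i'$ be the number of vertices of $G$ in $A_i'$ and $e_i$ the number of edges strictly inside $A_i'$. The handshake identity $3 n_i' = 2 e_i + 4$ forces $n_i'$ to be even, and $A_i'$ contains a cycle if and only if $n_i' \geq 4$ (equivalently, $e_i \geq n_i'$). Since the interior vertices of each original cylinder lie in the corresponding $A_i'$ and are unaffected by the perturbation, the hypothesis gives $n_i' \geq 2$. If both $n_i' \geq 4$, then $F$ is cyclic and Lemma~\ref{lem:(2,2)-annulus-cut} yields the contradiction.

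The main obstacle is the degenerate case $n_1' = 2$ (or $n_2' = 2$): then $A_1'$ consists of exactly two adjacent vertices $x,y$ joined by a single internal edge, each with two cut edges. I would handle this by \emph{re-perturbing} at a carefully chosen boundary vertex $u_j$ on $C_1 \cup C_2$: reversing the slide direction at $u_j$ absorbs $u_j$ into $A_1'$, increasing $n_1'$ and adding one crossing to the corresponding curve. Depending on the choice of $u_j$ and on how the four cut edges are distributed between $C_1'$ and $C_2'$, this produces either a cyclic $(2,3)$-annulus-cut, where the $(2,3)$-clause of Lemma~\ref{lem:(2,2)-annulus-cut} applies and the required $5$-cycle side is ruled out by a direct handshake count on the now-enlarged $A_1'$, or, after one further re-perturbation, a cyclic $(1,\leq 3)$-annulus-cut contradicting Lemma~\ref{lem:(1,3)-annulus-cut}.

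The tedious part of the argument will be a case analysis over the $\{a_v\}$-pattern of the four boundary vertices and the local adjacency of $x,y$ with them, checking that some valid re-perturbation always falls into one of the two outcomes above. I expect no genuinely new difficulty here: the obstructions already dealt with in Lemmas~\ref{lem:(2,2)-annulus-cut} and \ref{lem:(1,3)-annulus-cut} should cover every configuration, so the proof reduces to verifying that the bookkeeping closes.
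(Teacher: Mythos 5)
Your overall plan---perturb $C_1,C_2$ off the vertices to obtain an annulus-cut, then invoke Lemmas~\ref{lem:(2,2)-annulus-cut} and \ref{lem:(1,3)-annulus-cut}---is exactly the route the paper has in mind (it states Lemma~\ref{lem:notwoshort} as a consequence of the annulus-cut lemmas without giving a separate argument), and the handshake/parity step giving $n_i'\geq 2$ is a clean observation. The problem is that your treatment of the degenerate case $n_1'=2$ does not close, and it is the only nontrivial case.

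Concretely: absorbing a boundary vertex $u_j$ into $A_1'$ adds exactly one vertex and exactly one interior edge to the $A_1'$-side (the former minority edge at $u_j$ becomes interior, and the two majority edges at $u_j$ become new cut edges, net $+1$ to the cut size). Starting from $n_1'=2$, $e_1=1$, this gives $n_1'=3$, $e_1=2$ after one re-perturbation, then $4,3$, then $5,4$, and so on; the small side stays a tree with $e_1 = n_1'-1$ forever, because the invariant $3n_1' = 2e_1 + (\text{cut size})$ is preserved. So the re-perturbed cut is still \emph{not cyclic}, and neither the $(2,3)$-clause of Lemma~\ref{lem:(2,2)-annulus-cut} nor Lemma~\ref{lem:(1,3)-annulus-cut} applies, since both lemmas assert statements about \emph{cyclic} annulus-cuts. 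Your claim that re-perturbation ``produces a cyclic $(2,3)$-annulus-cut'' is therefore not justified: after absorbing one vertex you have a $(2,3)$-annulus-cut, but the $A_1'$-side is a $3$-vertex path, so the cut is not a cyclic cut and the lemma's $5$-cycle conclusion never enters. (The only way re-perturbation creates a cycle on the small side is when two consecutively absorbed boundary vertices happen to be adjacent, which is not guaranteed.) The degenerate case---two adjacent vertices $x,y$ each with two dangling cut edges spanning the cylinder---seems to need a genuinely different step, such as deleting $x,y$, reconnecting the four stubs so as to stay in $\mathcal{T}_1$, applying minimality of $G$, and lifting a $3$-edge-coloring via a Kempe argument, or else showing directly that this local structure forces a forbidden small edge-cut elsewhere. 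As written, the proof has a real gap at exactly this point.
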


Let $G$ be a minimal counterexample. 
Throughout the paper, we will interchangeably consider the cubic graph $G \in \mathcal{T}_1 \setminus \mathcal{T}_0$ and its dual graph $G'$ in our statements and proofs. 
Since $G$ is cubic and nonplanar, $G'$ is a triangulation in the torus. The graph $G'$ is loopless since otherwise, $G$ would have a cutedge.

For other definitions and notations used in the paper, we refer to Diestel's book \cite{Diestel_book} for basic graph theory and to the monograph \cite{MT} for notions in topological graph theory.

\section{Reducible configurations}
\label{sect:reducible configurations}
\showlabel{sect:reducible configurations}

In this section, we start by considering the dual graph $G'$, which is a triangulation in the torus. 
Let us first recall basic definitions of configurations and reducibility used in \cite{RSST}.
A \emph{near-triangulation} is a plane graph whose faces are triangles with the exception of the infinite face. 

\begin{dfn}\label{dfn:conf}\showlabel{dfn:conf}
    A \emph{configuration} $K$ consists of a near-triangulation $G(K) = (V(K), E(K))$, and a map $\gamma_K \colon V(K) \rightarrow \mathbb{Z}_{+}$ satisfying the following conditions.
    \begin{enumerate}
        \renewcommand{\labelenumi}{(\roman{enumi})}
        \item For every vertex $v \in V(K)$, $G(K) - v$ has at most two components, and if there are two, then $\gamma_K(v) = \deg_{G(K)}(v) + 2$.
        \item For every vertex $v \in V(K)$, if $v$ is not incident with the infinite face, then $\gamma_K(v) = \deg_{G(K)}(v)$, and otherwise $\gamma_{K}(v) > \deg_{G(K)}(v)$. In either case, $\gamma_K(v) \geq 5$.
        \item $K$ has ring-size $\geq 2$ where \emph{ring-size} is $\sum_{v}(\gamma_K(v) - \deg_{G(K)}(v) - 1)$, and the summation runs over all vertices $v \in V(K)$ that are on the infinite face and for which $G(K)-v$ is connected.
    \end{enumerate}
\end{dfn}

In what follows, we will have many figures showing configurations. In all of them, we will use the Heesch notation \cite{Heesch69} designating vertex degrees. To help the reader, we collect the vertex-shapes in Figure \ref{fig:shapes}. We want to represent the degree of at least 11 in a few cases. Especially, in Figure \ref{fig:confs_degree56}, the degree of 12 is represented by describing the number near the vertex.

\begin{figure}[htbp]
  \centering
  \includegraphics[width=0.85\columnwidth]{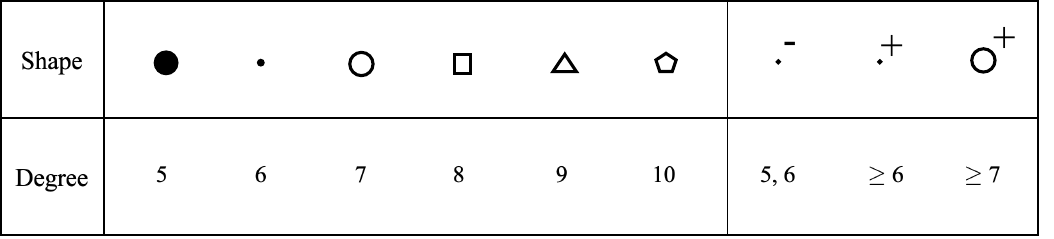}
  \caption{Shapes used to designate vertices of specific degrees.}
  \label{fig:shapes}
\end{figure}

\begin{dfn}\label{dfn:appear}\showlabel{dfn:appear}
    Let $G'$ be a triangulation in some surface $\Sigma$. A configuration $K$ \emph{appears} in $G'$ if 
    \begin{enumerate}
        \renewcommand{\labelenumi}{(\roman{enumi})}
        \item $G(K)$ is an induced subgraph of $G'$.
        \item Every finite face of $G(K)$ is also a face of $G'$.
        \item Every vertex $v \in V(K)$ satisfies $\gamma_K(v) = \deg_{G'}(v)$.
        \item There is a disk $D\subset \Sigma$ containing $G(K)$ in its interior, and the orientation of each oriented finite face of $G(K)$ is the same in $D$ (either all are clockwise or all are counterclockwise). 
    \end{enumerate}
\end{dfn}

\begin{dfn}\label{dfn:ring}\showlabel{dfn:ring}
    Let $K$ be a configuration, and $R = (V(R), E(R))$ be a cycle whose length is equal to the ring-size of $K$ and is disjoint from $G(K)$. A near-triangulation $S$ is a \emph{free completion} of K with \emph{ring} $R$ if
    \begin{enumerate}
        \renewcommand{\labelenumi}{(\roman{enumi})}
        \item $R$ is a cycle in $S$ that bounds the infinite face of $S$.
        \item $G(K)$ is a subgraph of $S$, induced on $V(S)\setminus V(R)$, i.e. $G(K) = S - V(R)$, every finite region of $G(K)$ is a finite region of $S$, and the infinite region of $G(K)$ includes $R$ and the infinite region of $S$.
        \item Every vertex $v$ of $S$ not in $V(R)$ has degree $\gamma_K(v)$ in S.
    \end{enumerate}
\end{dfn}

\begin{dfn}\label{dfn:island}\showlabel{dfn:island}
  An \emph{island} $I$ is a plane graph with the following properties.
  \begin{enumerate}
    \renewcommand{\labelenumi}{(\roman{enumi})}
    \item $I$ is 2-connected.
    \item Every vertex of $I$ has degree two or three.
    \item Every vertex of degree two is incident with the infinite region.
  \end{enumerate}
  An island \emph{appears} in a cubic graph $G \in \mathcal{T}_1$ if $I$ is a subgraph of $G$, and every finite face of $I$ is also a face of $G$.
\end{dfn}

For any configuration $K$, the inner dual\footnote{The \emph{inner dual} is obtained from the dual graph by deleting the vertex that corresponds to the infinite face.} of the free completion of $K$ is an island. This island is called the \emph{island of $K$} and will be denoted by $I(K)$. 

\begin{dfn}\label{dfn:match}\showlabel{dfn:match}
  Let $k$ be a positive integer. 
  A \emph{match} is an unordered pair $\{a, b\}$ of distinct integers in $\{1,\dots,k\}$.
  Two matches $\{a,b\}$ and $\{c,d\}$ with $1\le a<c<b<d\le k$ are said to \emph{overlap}.
  A \emph{signed match} is a pair $(m, \mu)$ where $m$ is a match and $\mu = \pm 1$.
\end{dfn}

\begin{dfn}\label{dfn:matching}\showlabel{dfn:matching}
    A \emph{matching} is a set $M$ of matches such that any two distinct matches $\{a_1, b_1\}, \{a_2, b_2\} \in M$ is disjoint, i.e. $\{a_1, b_1\} \cap \{a_2, b_2\} = \varnothing$.
    Let $M$ be a matching and $\chi_M$ be a mapping $M \to \{-1, +1\}$.
    The set $M' = \{(m, \chi(m)) \mid m \in M\}$ is called the \emph{signed matching} corresponding to $M$, and $\chi_M$ is called the \emph{sign function}.
    We denote $E(M)$ as the union of all matches in $M$ where $M$ is a matching, and we denote $E(M')$ as the union of all matches in $M$ where $M'$ is a signed matching corresponding to $M$.
\end{dfn}

\begin{dfn}\label{dfn:planar matching}\showlabel{dfn:planar matching}
  A \emph{planar matching} is a matching $M$ such that no two different matches in $M$ overlap.
  A \emph{planar signed matching} is a signed matching corresponding to some planar matching $M$.
\end{dfn}

\begin{dfn}\label{dfn:toroidal-matching}\showlabel{dfn:toroidal-matching}
    A \emph{toroidal matching} is a matching $M$ with one of the following properties.
    
  \begin{enumerate}
      \renewcommand{\labelenumi}{(\roman{enumi})}
      \item $M$ is a planar matching.
      \item There exist two overlapping matches $\{a_1, b_1\}, \{a_2, b_2\} \in M$ ($a_1 < a_2 < b_1 < b_2$) such that, the following matching $M'$ is a planar matching.
      \begin{align*}
        M' &= \{\{f(a), f(b)\} \mid \{a, b\} \in M \setminus \{\{a_1,b_1\},\{a_2,b_2\}\}\} \\
        \textrm{where } f(x) &= \begin{cases}
            x - a_2 + b_2 - a_1 - 3 + 2(|M| - 2) & (x < a_1)\\
            x - a_2 + b_2 - b_1 - 1 & (a_2 < x < b_1)\\
            x - b_1 & (b_1 < x < b_2)\\
            x - a_1 + b_2 - a_2 - 2 & (a_1 < x < a_2)\\
            x - a_2 + b_2 - a_1 - 3 & (b_2 < x)\\
        \end{cases}
      \end{align*}
  \end{enumerate}

  A \emph{toroidal signed matching} is a signed matching corresponding to some toroidal matching $M$.
\end{dfn}

The mapping $f$ is a way to convert matchings embedded in the torus to the plane. (See Figure \ref{fig:toroidal-kempe}.)

\begin{figure}[htbp]
    \centering
    \includegraphics[width=0.5\linewidth]{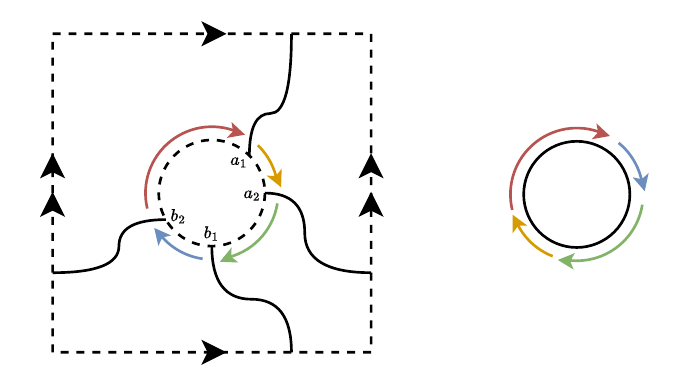}
    \caption{The mapping $f$ converts the left toroidal matching to the right planar matching.}
    \label{fig:toroidal-kempe}
\end{figure}

Let a coloring $\kappa$ be a mapping from $\{1,\dots,k\}$ to $\{0,1,2\}$.
The coloring $\kappa$ satisfies \emph{the parity condition} if $|\kappa^{-1}(0)| \equiv |\kappa^{-1}(1)| \equiv |\kappa^{-1}(2)| \pmod 2$. 
We will only consider colorings that satisfy the parity condition since every edge-coloring of edges in a cut satisfies the parity condition.

\begin{dfn}\label{dfn:fit}\showlabel{dfn:fit}
  Let $\kappa: \{1,\dots,k\} \rightarrow \{0, 1, 2\}$ be a coloring that satisfies parity. For $\theta \in \{0, 1, 2\}$, $\kappa$ is said to \emph{$\theta$-fit} a signed matching $M$ if
  \begin{enumerate}
        \renewcommand{\labelenumi}{(\roman{enumi})}
        \item $E(M) = \{e \mid 1 \leq e \leq k, \kappa(e) \neq \theta\}$
        \item For any $(\{e, f\}, \mu) \in M$, $\kappa(e) = \kappa(f)$ if and only if $\mu = 1$.
  \end{enumerate}
\end{dfn}

\begin{dfn}\label{dfn:consist}\showlabel{dfn:consistent}
    Let $\mathcal{M}$ be some set of matchings.
    A set $\mathscr{C}$ of colorings $\{1,\dots,k\} \rightarrow \{0,1,2\}$ that satisfy parity is \emph{consistent over} $\mathcal{M}$ if for every $\kappa \in \mathscr{C}$ and every $\theta \in \{0,1,2\}$ there is a signed matching $M'$ corresponding to a matching $M \in \mathcal{M}$ and a sign function $\chi_M$ such that $\kappa$ $\theta$-fits $M'$, and $\mathscr{C}$ contains every coloring that $\theta$-fits $M'$.
    Especially, when $\mathcal{M}$ is the set of all planar matchings, $\mathscr{C}$ is said to be consistent on the plane, and when $\mathcal{M}$ is the set of all toroidal matchings, $\mathscr{C}$ is said to be consistent in the torus.
\end{dfn}

\begin{dfn}\label{dfn:Dred}\showlabel{dfn:Dred}
  Let $I$ be an island, and let $\mathcal{M}$ be a set of matchings.
  We add a new incident edge to each vertex of degree 2 in $I$ and denote the set of all added edges by $R$. 
  Let $\mathscr{C}^{\ast}$ be the set of all colorings $\{1,\dots,|R|\} \rightarrow \{0,1,2\}$ that satisfies parity.
  Let $\mathscr{C_0}$ be the set of all $3$-edge-colorings of $R$ that are restrictions of $3$-edge-colorings of $I \cup R$. 
  Let $\mathscr{C}'$ be the maximal consistent subset of $\mathscr{C}^{\ast} - \mathscr{C_0}$ over $\mathcal{M}$. 
  The island $I$ is said to be \emph{D-reducible} over $\mathcal{M}$ if $\mathscr{C}' = \emptyset$. 
  We also call the configuration $K$ \emph{D-reducible} over $\mathcal{M}$ if the corresponding island $I(K)$ is D-reducible.
\end{dfn}

\begin{dfn}
    Let $G$ be a subcubic graph and $F$ be a set of edges in $G$ that satisfy the following conditions.
    \begin{itemize}
        \item The endpoints of $f \in F$ have degree 3.
        \item For all $v \in V(G)$, the number of edges adjacent to $v$ in $F$ is not $2$.
    \end{itemize}
    Let $G \dotdiv F$ be the subcubic graph obtained from $G$ by deleting the edges in $F$ and suppressing its endpoints of degree 2.
\end{dfn}

Note that if $G$ is a cubic graph, $G\dotdiv F$ is also a cubic graph.

\begin{dfn}\label{dfn:Cred}\showlabel{dfn:Cred}
  Let $I$ be an island and let $X \subseteq E(I)$ be such that none of the vertices in $I$ is incident with exactly two edges of $X$. 
  Let $I' = I \dotdiv X$ .
  Let $\mathscr{D}$ be all restrictions to $R$ of $3$-edge-colorings of $I' \cup R$.
  $I$ is \emph{C-reducible} if
    $\mathscr{D} \cap \mathscr{C}' = \emptyset$.

  The edges in $X$ are called \emph{contraction edges}\footnote{Although the edges in $X$ are not contracted, but rather deleted in $I$, we use the term ``contraction" following previous work. 
  This is because the term originated from definitions using the near-triangulation $K$ rather than the cubic dual $I$.} of $I$, and $X$ is denoted by $c(I)$. 
  The size of $c(I)$ is called the \emph{contraction size} of $I$.
  
  A configuration $K$ is also \emph{C-reducible} if the corresponding island $I(K)$ is C-reducible.
  In this case, we call the edges of $K$ corresponding to the edges in $c(I)$ the \emph{contraction edges} and denote them by $c(K)$.
\end{dfn}

Let $I$ be a reducible island in a minimal counterexample $G$.
By the definition of reducibility, if we can 3-edge color $G \dotdiv c(I)$, we can also 3-edge color $G$.
This can be constructively proved by a method known as \emph{Kempe changes} \cite{AppelHaken89}\cite{RSST}. Hence, we omit the proof here since it is identical to previous works for planar graphs (note that reducibility implies that Kempe chains are enough to extend a 3-edge coloring of $G$ from that of $G \dotdiv c(I)$).

This means that contracting the edges in $c(I)$ will not affect the non-3-edge colorability of a graph $G$ in which $I$ is a subgraph.
This implies the following lemma.

\begin{lem}\label{lem:reducing}\showlabel{lem:reducing}
    Let $G$ be a minimal counterexample embedded in the torus $\Sigma$. 
    Suppose that $C$ is a circuit of $G$ bounding a closed disk $\Delta$ such that the subgraph of $G$ formed by the vertices and edges drawn in $\Delta$ is a reducible island $I$. 
    Then $G \dotdiv c(I)$ either has a bridge or belongs to $\mathcal{T}_0$.
\end{lem}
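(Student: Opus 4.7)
The plan is to invoke reducibility of $I$ to transfer non-$3$-edge-colorability from $G$ to the smaller graph $G \dotdiv c(I)$, and then to apply minimality of $G$ to obtain the dichotomy in the conclusion.

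First I would argue that $G \dotdiv c(I)$ is not $3$-edge-colorable. Suppose for contradiction that $\kappa$ is a $3$-edge-coloring of $G \dotdiv c(I)$. Its restriction $\kappa|_R$ to the ring $R$ surrounding $I$ lies in $\mathscr{D}$, the set of ring-colorings extending across the contracted island $I \dotdiv c(I)$. Since $I$ is reducible, $\mathscr{D} \cap \mathscr{C}' = \varnothing$, so $\kappa|_R$ is either already in $\mathscr{C_0}$ (and extends across $I$ directly), or it lies outside the maximal consistent subset $\mathscr{C}'$ and can be pushed into $\mathscr{C_0}$ by finitely many Kempe swaps performed in the exterior of $I$. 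The endpoints of each such Kempe chain on $R$ are tracked by a toroidal signed matching (Definition \ref{dfn:toroidal-matching}), precisely the flexibility needed because a chain may wrap around a non-contractible loop of $\Sigma$. This is the standard extension argument of \cite{RSST, AppelHaken89} transcribed to the torus. Either way, the modified coloring extends to a $3$-edge-coloring of $G$, contradicting the counterexample hypothesis.

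Next, $G \dotdiv c(I)$ remains a cubic (multi)graph embedded in $\Sigma$, since the deletions and degree-$2$ suppressions happen entirely inside the disk $\Delta$, and it has strictly fewer vertices than $G$ whenever $c(I) \neq \varnothing$. (In the D-reducible case $c(I) = \varnothing$, the Kempe argument above already yields a direct contradiction, so we may henceforth assume $c(I) \neq \varnothing$.) Now apply minimality: if $G \dotdiv c(I)$ has a bridge we are done; otherwise it is bridgeless, and after resolving any multi-edges created by the suppression step via $2$-edge-cut reductions — which by Lemma \ref{low-red} preserve non-$3$-edge-colorability and, by the very definition of $\mathcal{T}_0$ through iterated low edge-cut reductions, also preserve membership in $\mathcal{T}_0$ — we reach a simple bridgeless cubic toroidal graph $H$ that is still strictly smaller than $G$ and still not $3$-edge-colorable. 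By minimality of $G$ in $\mathcal{T}_1 \setminus \mathcal{T}_0$, we must have $H \in \mathcal{T}_0$, and hence $G \dotdiv c(I) \in \mathcal{T}_0$.

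The main obstacle is the Kempe-chain extension step on the torus: unlike the classical planar setting, a Kempe chain in the exterior of $I$ may wrap around a non-contractible cycle of $\Sigma$, producing a non-planar matching of its endpoints along $R$. This is exactly why reducibility in Section \ref{sect:reducible configurations} is defined over the wider class of toroidal matchings rather than planar ones, so that the consistency set $\mathscr{C}'$ absorbs these extra Kempe swaps. Granted this set-up, the remainder of the argument is a routine combination of reducibility, minimality, and Lemma \ref{low-red}.
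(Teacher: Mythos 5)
Your plan matches the paper's own (essentially omitted) argument: use reducibility plus Kempe chains to show that a $3$-edge-coloring of $G\dotdiv c(I)$ would lift to one of $G$, then invoke minimality to force a bridge or membership in $\mathcal{T}_0$. Two cautions are worth flagging. First, your parenthetical handling of the D-reducible case is not accurate: when $c(I)=\varnothing$ the Kempe step is vacuous (it reads ``if $G$ is $3$-edge-colorable then $G$ is $3$-edge-colorable''), so it does not ``yield a direct contradiction''; in fact the conclusion of the lemma as literally stated is false in that case, since $G$ has no bridge and $G\notin\mathcal{T}_0$. The intended reading is that the lemma is applied only when $c(I)\neq\varnothing$; for D-reducible islands the non-appearance argument instead caps off $\Delta$ with a small planar filler, applies minimality to that strictly smaller graph to get a ring coloring of the exterior, and only then runs the Kempe argument. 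Second, your closing ``hence $G\dotdiv c(I)\in\mathcal{T}_0$'' does not literally follow if suppression creates parallel edges, because $\mathcal{T}_0\subseteq\mathcal{T}_1$ consists of simple graphs; what your chain of reductions actually shows is that $G\dotdiv c(I)$ is carried into a $\mathcal{T}_0$-graph by low edge-cut reductions. Both points are also glossed over in the paper, so your proposal is on par with the paper's own sketch and the core approach is identical.
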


Now, we introduce a set of reducible configurations $\mathcal{K}$, which we have constructed by hand and verified each of its reducibility by computer programs.
There are more than 14000 reducible configurations in $\mathcal{K}$, and the reader can find all the reducible configurations (including contraction edges for $C$-reducible configurations) in the aforementioned Google Drive. 
The set $\mathcal{K}$ \footnote{Note that $\mathcal{K}$ consists of normal configurations and does not contain any annular configuration or any wrapping of a normal configuration ("normal configurations and wrapping" are defined later).} also happens to be an unavoidable set of reducible configurations in the torus.
We prove unavoidability by the discharging method later in this paper; see lemma \ref{T(v)>0}.
In Table \ref{tab1}, we show the contraction size of each configuration of $\mathcal{K}$. The contraction size being zero means D-reducible.
Let us remark the following.

\begin{itemize}
    \item There are 170 configurations that need to contract at least five edges. 
    Indeed, there are $90, 59, 16, 3, 1, 1$ configurations where the contraction edge size is 5, 6, 7, 8, 9, and 10, respectively. See Table \ref{tab1}. 
    \item There are around 1000 configurations that contain pairs of vertices at a distance of exactly five (although none of our configurations contain pairs of vertices at a distance of six or more). 
\end{itemize}
\begin{table}[htbp]
    \centering
    \caption{The number of configurations that need the contraction size $k$ $(0 \leq k \leq 10)$ in $\mathcal{K}$.}\label{tab1}
    \begin{tabular}{cccccccccccccc}
        \toprule
         contraction size & 0 & 1 & 2 & 3 & 4 & 5 & 6 & 7 & 8 & 9 & 10 & 11$_{\geq}$ & total \\
        \midrule
         \# of configurations & 4777 & 7441 & 530 & 975 & 384 & 90 & 59 & 16 & 3 & 1 & 1 & 0 & 14276 \\
        \bottomrule
    \end{tabular}
\end{table}

\subsection{General configurations}\label{non-induced}

The definitions of \emph{configurations} and \emph{islands} stated above are derived from previous work.
However, in later sections, we need to use configurations that are not planar and even configurations with two or more boundaries. 
To account for this, we introduce a more general type of configurations which we define below.

\begin{dfn}
    A \emph{general configuration} $K$ consists of a graph $G(K) = (V(K), E(K))$ that is 2-cell embedded in some closed surface $\mathbb{S}$, a set of faces\footnote{Face boundaries here need not be cycles; just closed facial walks.} of $G(K)$ which we denote by $\mathcal{Q}(K)$, and a map $\gamma_K: V(K) \to \mathbb{Z}_{+}$ satisfying the following conditions:
    \begin{enumerate}
        \item For every vertex $v\in V(K)$, there are at most two faces in $\mathcal{Q}(K)$ that are incident with $v$. If $v$ is incident with $Q\in \mathcal{Q}(K)$, we denote by $w_Q(v)$ the number of times the boundary of $Q$ (considered as a closed facial walk) touches $v$. (For convenience, if $v$ is not incident with $Q$, we set $w_Q(v)=0$.)
        \item For every vertex $v\in V(K)$, if $v$ is not incident with a face in $\mathcal{Q}(K)$, then $\gamma_K(v) = \text{deg}_{G(K)}(v)$, and otherwise $\gamma_K(v) \geq \deg_{G(K)}(v) + \sum_{Q\in \mathcal{Q}(K)} w_Q(v)$.
        In either case, $\gamma_K(v) \geq 5$.
        \item For every face $Q \in \mathcal{Q}(K)$, there is a value $r(Q)$ called the \emph{ring size}, defined by the following formula: 
        $$ r(Q) = \sum_{v} (\gamma_K(v) - \deg_{G(K)}(v) - w_Q(v)),$$ 
        where the summation runs over all vertices $v \in V(K)$ that are incident with $Q$.
        \item All faces that are of size greater than 3 must be in $\mathcal{Q}(K)$.
    \end{enumerate}
\end{dfn}

We can see that a (normal) configuration is the special case of general configurations, where $\mathcal{Q}(K)$ consists of only one face (the outer face boundary). 

We also define \emph{free completions} and \emph{rings} for general configurations.

\begin{dfn}
    Let $K$ be a general configuration and let $\mathcal{R}(K)$ be a set of cycles, in which for each cycle $R \in \mathcal{R}(K)$ there exists a corresponding face $Q_R \in \mathcal{Q}(K)$ such that $|V(R)| = r(Q_R)$.
    A graph $S$ embedded in the torus is a \emph{free completion} of $K$ with \emph{ring set} $\mathcal{R}(K)$ if the following conditions are satisfied.

    \begin{enumerate}
        \item $R$ is a subgraph of $S - G(K)$, and for every vertex of $r$ of $R$, there is a vertex adjacent to $r$, which is in the boundary of the corresponding face $Q_R$.
        \item $G(K)$ is a subgraph of $S$, induced on $V(S) \setminus V(R)$.
        \item Every $v \in V(S)$ not in $V(R)$ has degree $\gamma_K(v)$ in $S$.
    \end{enumerate}
\end{dfn}

We can again see that this definition coincides with the original definition of free completions when $K$ is a normal configuration (with the ring set $\mathcal{R}(K)$ containing the ring $R$).

When the surface $\mathbb{S}$ is a sphere and $\mathcal{R}(K)$ contains exactly two faces, we call $K$ an \emph{annular configuration}. 
An \emph{annular island} is obtained by taking the dual of a free completion of an annular configuration and removing vertices that correspond to $R \in \mathcal{R}(K)$.
It corresponds to the fact that a normal island is obtained by taking the inner dual of a free completion of a normal configuration.

For annular configurations, we define reducibility using the following \emph{annular matchings}. 

\begin{dfn}\label{dfn:annular-matching}\showlabel{dfn:annular-matching}
  Let $k$ be a positive integer and let $0 < a < b$ be positive integers.
  Let $M$ be a matching with \textbf{one of} the following properties.
  
  \begin{enumerate}
      \renewcommand{\labelenumi}{(\roman{enumi})}
      \item There are no matches $\{i, j\} \in M, i < j$ such that $i < a < j$, and the two disjoint matchings $M_1 = \{\{i,j\} \in M \mid i < j < a\}$ and $M_2 = \{\{i,j\} \in M \mid a < i < j\}$ are both planar matchings. 
      \item There exists a match $\{a', b'\} \in M, a' < a < b'$ such that, the matching $M' = \{\{f(i), f(j)\} \mid \{i, j\} \in M, \{i,j\} \neq \{a', b'\}\}$ is a planar matching.
      Here, $f$ is a mapping defined as below.
      \begin{align*}
        f(x) &= \begin{cases}
            x & (x < a') \\
            x - a' + b - a + a'- 2 & (a' < x < a)\\ 
            x - a + a' + b - b' - 1 & (a \leq x < b') \\
            x - b' + a' - 1 & (b' < x)
        \end{cases}
    \end{align*}
  \end{enumerate}

  We call such $M$ a \emph{annular matching} of size $(a, b-a)$.
\end{dfn}

\begin{dfn}
    Let $K$ be a general configuration with $\mathcal{R}(K)$ containing two rings $R_1, R_2$.
    We label the edges in $R_1$ with numbers $\{1, \cdots, r_1\}$ and edges in $R_2$ with numbers $\{r_1 + 1, \cdots, r_1 + r_2\}$, where $r_1 = |R_1|$ and $r_2 = |R_2|$.
    Labelings are done in a direction such that the two disks bounded by $R_1$ and $R_2$ preserve the surface's orientability.
    
    Then, with a coloring $\kappa: \{1, \cdots, r_1 + r_2\} \to \{0,1,2\}$, we define \emph{$\theta$-fitting} with the above edge/label relationship.
    The notions of \emph{consistency}, \emph{D-reducibility}, \emph{C-reducibility} are defined in the same way as for normal configurations.
\end{dfn}

\subsection{Weak appearance of configurations}
\label{sect:weak-appear}\showlabel{sect:weak-appear}

The definition of the appearance of configurations can also be expanded to general configurations.
We will define another term, ``weak appearance," for use in later sections.

\begin{dfn}
    Let $K$ be a general configuration with free completion $S$, and let $G'$ be a triangulation.
    $K$ is said to \emph{appear} in $G'$ if there exists a mapping $f: V(S) \to V(G')$ such that
    \begin{enumerate}
      \renewcommand{\labelenumi}{(\roman{enumi})}
        \item There is no $v, u \in V(S)$ where at least one of $v, u$ is in $V(K)$ such that $v \neq u$ and $f(v) = f(u)$.
        \item For any $v \in V(K)$, $\gamma_K(v) = \text{deg}_{G'}(f(v))$.
        \item For any $v, u, w \in V(K)$ such that the three vertices are vertices of a triangle face in $G(K)$, the three vertices $f(v), f(u), f(w)$ are vertices of a triangle face in $G'$.
    \end{enumerate}
    Furthermore, $K$ is said to be \emph{weakly appearing} in $G'$ when condition (i) is dropped.
\end{dfn}

We also define a term closely related to weak appearance as follows.

\begin{dfn}
    Let $K$ and $K'$ be general configurations.
    $K'$ is said to be a \emph{wrapping} of $K$ if there exists a mapping $f: V(K) \to V(K')$ such that
    \begin{enumerate}
      \renewcommand{\labelenumi}{(\roman{enumi})}
        \item For any $v \in V(K)$, $\gamma_K(v) = \gamma_{K'}(f(v))$.
        \item For any $v, u, w \in V(K)$ such that the three vertices are vertices of a triangle face in $G(K)$, the three vertices $f(v), f(u), f(w)$ are vertices of a triangle face in $G(K')$.
    \end{enumerate}
\end{dfn}

By definition, if a general configuration $K'$ is a wrapping of another general configuration $K$, and $K'$ appears in $G'$, $K$ weakly appears in $G'$.

Why do we need these terms?
As discussed in this section, the appearance of a reducible configuration $K$ in $G'$ results in a reduction of $G'$.
However, the \emph{discharging} method used in Section \ref{sect:discharging} only asserts the weak appearance of some reducible configuration $K$, which is not enough to warrant a reduction of $G'$.

This is when the wrappings of $K$ come into play.
Even if we can only assert the weak appearance of $K$, if we find out that for any wrapping of $K$ (say $K'$) is also reducible, we can use $K'$ instead for finding a reduction of $G'$.

More about this is discussed in Section \ref{sect:lowrep}.

\subsection{Distance five in reducible configurations}
\label{sect:dist5}\showlabel{sect:dist5}

In the proof of the 4CT in \cite{RSST}, there are 633 reducible configurations, all of which are of diameter at most four. 
On the other hand, in our set of reducible configurations $\mathcal{K}$, there are many configurations whose diameter is five (although we do not use any reducible configuration of diameter six or more). 

If a reducible configuration $K$ in a planar graph is of diameter at most four, $K$ is an induced subgraph (or otherwise, there is a separating cycle of order at most five with both sides having at least two vertices, which would not exist in a minimal counterexample). 
This is also the case for the doublecross. 
However, in the torus case, we have to deal with the case when a reducible configuration is not induced. 

As with the sections above, $G'$ is a triangulation that is embedded in the torus, and $K$ is a reducible configuration in $G'$. 
Let us assume that there are two vertices $u, v \in V(K)$ that are at distance five in $K$, and there is an edge $uv \in E(G')$. 
Then we obtain a cycle $C$ of length exactly six, and 
assume that $C$ is a contractible cycle in $G'$. 
    
    In the cubic graph $G$, $C$ corresponds to a 6-cut, which yields a contractible curve of order exactly six. 

Using Lemma \ref{minc} and assuming Lemma \ref{lem:6,7-cut}, which we will prove in Section \ref{sect:6,7-cut}, we get the following fact.

\begin{fact}\label{fact:cont-cycle}\showlabel{fact:cont-cycle}
    If $G'$ is a minimal counterexample, then $G'$ contains no cycles with any of the following properties: \begin{itemize}
        \item A contractible cycle of length at most four that divides $G'$ into two nonempty components.
        \item A contractible cycle of length five that divides $G'$ into two components of order at least two.
        \item A contractible cycle of length six that divides $G'$ into two components of order at least four.
    \end{itemize}
\end{fact}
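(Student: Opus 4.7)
The plan is a length-by-length case analysis on $l\in\{3,4,5,6\}$, using Euler's formula to convert each dual cycle into a primal edge-cut and then invoking Lemma~\ref{minc} (for $l\le 5$) or Lemma~\ref{lem:6,7-cut} (for $l=6$).

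Setup. Let $C$ be a contractible cycle of length $l$ in the triangulation $G'$ bounding a closed disk $D$, and suppose $k$ vertices of $G'$ lie strictly inside $D$. Since every inner face of $D$ is a triangle, a standard Euler count on $D$ (with $V=k+l$ and $3F=2E_{\text{int}}+l$) gives exactly $2k+l-2$ inner triangular faces, each of which is a vertex of $G$ by duality. Hence the $l$-edge cut of $G$ corresponding to $C$ separates off $n_{\mathrm{in}}=2k+l-2$ primal vertices and $m_{\mathrm{in}}=(3n_{\mathrm{in}}-l)/2=3k+l-3$ internal edges. In particular, whenever $k\ge 1$ we have $m_{\mathrm{in}}\ge n_{\mathrm{in}}$, so the inside subgraph of $G$ is not a forest and must contain a cycle; the same holds on the outside.

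For $l\le 4$ the hypothesis $k_{\mathrm{in}},k_{\mathrm{out}}\ge 1$ therefore yields cycles on both sides of the corresponding $l$-edge cut, making the cut cyclic with $l\le 4$ and contradicting the cyclic $5$-edge-connectedness guaranteed by Lemma~\ref{minc}. For $l=5$ the hypothesis $k_{\mathrm{in}},k_{\mathrm{out}}\ge 2$ yields $n_{\mathrm{in}},n_{\mathrm{out}}\ge 7$ vertices on each side; the $5$-edge cut is cyclic but neither component is a $5$-cycle, contradicting the second clause of Lemma~\ref{minc}.

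For $l=6$ the hypothesis $k_{\mathrm{in}}\ge 4$ matches exactly the assumption $k\ge l-2$ of Lemma~\ref{lem:6,7-cut}, so that lemma supplies a reducible configuration $J$ strictly contained in the interior of $D$. By the Kempe-chain reducibility argument underlying Definitions~\ref{dfn:Dred}--\ref{dfn:Cred} and by Lemma~\ref{lem:reducing}, the reduction $G\dotdiv c(J)$ is a strictly smaller cubic graph that is still not $3$-edge-colorable, and the verification carried out alongside Lemma~\ref{lem:6,7-cut} in Section~\ref{sect:6,7-cut} ensures that $G\dotdiv c(J)$ remains in $\mathcal{T}_1\setminus\mathcal{T}_0$. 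This smaller counterexample contradicts the minimality of $G$. The main obstacle is thus only the last case: everything reduces to Lemma~\ref{lem:6,7-cut}, which is itself a substantial discharging/reducibility result and, as remarked in the introduction, considerably more intricate than its projective-planar analogue because of the much larger sets of discharging rules and reducible configurations that must be combined.
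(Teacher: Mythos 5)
Your argument is correct and follows the paper's intended derivation of this fact from Lemmas~\ref{minc} and~\ref{lem:6,7-cut}; the paper itself merely asserts the fact, and you are supplying the Euler-formula bookkeeping (converting the dual $l$-cycle into a primal $l$-edge-cut and computing $n_{\mathrm{in}}=2k+l-2$, $m_{\mathrm{in}}=3k+l-3$) that makes the derivation explicit. One small imprecision worth noting: for the complementary side the surface is the torus minus an open disk ($\chi=-1$), not a disk, so the analogous count there gives $n_{\mathrm{out}}=2k_{\mathrm{out}}+l+2$ and $m_{\mathrm{out}}=n_{\mathrm{out}}+k_{\mathrm{out}}+1>n_{\mathrm{out}}$ for every $k_{\mathrm{out}}\ge 0$; the outside thus always contains a cycle, so your intended conclusion is correct, just not by the identical calculation as the inside.
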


To handle the above case, we show the following.
\begin{lem}\label{pairs6}\showlabel{pairs6}
    Let $C$ be a cycle of length $6$ in $G'$, as discussed above. Then, it satisfies Fact \ref{fact:cont-cycle}, and thus cannot appear in a minimal counterexample.
\end{lem}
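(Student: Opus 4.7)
The plan is to show that $C$ satisfies the forbidden condition in Fact \ref{fact:cont-cycle} for length $6$, namely that $C$ divides $G'$ into two components each of order at least $4$. Since $C$ is assumed contractible, it bounds a disk $D$ on one side of the torus, while its complement is a one-holed torus carrying the handle.

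First I would set up the geometry. Since $uv \in E(G') \setminus E(K)$, both $u$ and $v$ must be incident with the outer face of the near-triangulation $K$; otherwise the edge $uv$ would have to be drawn inside $K$'s disk, forcing $uv \in E(K)$ and contradicting $d_K(u,v) = 5$. Thus the $5$-path $P$ from $u$ to $v$ lies inside $K$'s disk, the edge $uv$ is drawn across the torus outside $K$'s disk, and $C = P \cup \{uv\}$ is a simple $6$-cycle.

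Next I would bound the complement side of $D$. Since $G$ is a minimal counterexample and every snark has at least $38$ vertices by Brinkmann et al.\ \cite{brinkmann2013generation}, we have $|V(G')| \geq 19$. The complement of $D$ carries every non-contractible cycle of $G'$, and by Lemmas \ref{lem:(2,2)-annulus-cut}, \ref{lem:(1,3)-annulus-cut}, and \ref{lem:notwoshort} it cannot be too degenerate; in particular, it must contain at least $4$ vertices of $G'$.

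For the disk side $D$, the key ingredient is that $P$ is a \emph{geodesic} of length $5$ in the near-triangulation $K$. Each internal vertex $w_i$ of $P$ satisfies $\gamma_K(w_i) \geq 5$; since $P$ is a geodesic, $w_i w_j \notin E(G')$ for $|i-j| \geq 2$, so $w_i$ has at least three neighbors off $P$ distributed between the two sides of $P$. Moreover, each of the five edges of $P$ is incident to two triangular faces of $K$, contributing a wedge vertex on each side. Collecting these contributions on the $D$-side of $P$, together with any vertices of $G'$ lying in the annular region outside $K$'s disk but inside $D$, yields at least $4$ distinct vertices in the interior of $D$. Combining the two bounds and invoking Fact \ref{fact:cont-cycle} delivers the required contradiction.

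The main obstacle is the lower bound of $4$ on the disk side, since the precise count depends on which side of $P$ ends up bounded inside $D$ and on how the wrap-around edge $uv$ is routed across the torus. A clean geometric argument is likely to need a case analysis over the roughly $1000$ configurations in $\mathcal{K}$ that carry a pair of distance-$5$ vertices, combined with a computer enumeration of the few dozen wrap-around routings per configuration, in the spirit of the weak-appearance verification outlined in Section \ref{sect:weak-appear}.
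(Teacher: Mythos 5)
Your approach is in the right spirit---both you and the paper aim to show the $6$-cycle $C$ would contradict Fact~\ref{fact:cont-cycle} by separating $G'$ into two components each of order at least~$4$, and both ultimately rely on a computer enumeration over configurations with distance-$5$ vertex pairs. However, your sketch misses the essential complication that the paper explicitly flags: the naive geometric count of ``$\ge 4$ vertices strictly inside $D$'' \emph{fails} for some of the configurations, and the contradiction has to come from elsewhere. The paper reports that after running the computer check (the same program as Lemma~3.16 of \cite{proj2024}, restricted to the contractible case), $23$ cases remain where the $6$-cycle's two sides cannot be shown to both have at least $4$ vertices. For $22$ of those cases the actual contradiction is the presence of a separating $4$-cycle (the first bullet of Fact~\ref{fact:cont-cycle}), not the $6$-cycle itself; for the one remaining case the reducible configuration C(2) appears. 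Your proposal does not anticipate that the $6$-cycle argument alone is insufficient and that these auxiliary structures must be located.

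Two further concrete issues. First, your ``complement side'' bound is shaky: invoking Lemmas~\ref{lem:(2,2)-annulus-cut}, \ref{lem:(1,3)-annulus-cut}, and \ref{lem:notwoshort} does not directly give at least $4$ vertices outside $D$; what one actually uses is $|V(G')|\ge 19$ together with an \emph{upper} bound on the disk side, and establishing that upper bound is itself nontrivial. Second, the geodesic-plus-wedge-vertex counting on the disk side of $P$ does not reliably deliver four distinct interior vertices---this is precisely why the exceptional cases arise---so the claim ``yields at least $4$ distinct vertices in the interior of $D$'' is asserting exactly the thing the computer check shows is false in $23$ configurations. To complete the argument you would need to identify, as the paper does, the small separating cycles or reducible configurations that rescue those exceptional cases.
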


The proof of Lemma \ref{pairs6} is the same as that of Lemma 3.16 in \cite{proj2024} except that we only care about a contractible cycle here (the noncontractible cycle case will be handled in Section \ref{sect:lowrep}). We use the same computer program. In almost all cases, we conclude that the cycle would contradict Fact \ref{fact:cont-cycle}. However, there are 23 cases that we cannot immediately conclude. We show them in Figure \ref{fig:dist5}. For the remaining 22 cases, we confirm that there are separating 4-cycles, which would still contradict Fact \ref{fact:cont-cycle}. For one case drawn in the second column from the left in the last row, another reducible configuration C(2)\footnote{C(2) is the second configuration drawn in Figure \ref{fig:confs_degree56}, which is explained later.} appears, so we do not care about the case. Therefore, Lemma \ref{pairs6} holds.

\begin{figure}[htbp]
  \centering
  \includegraphics[width=15cm]{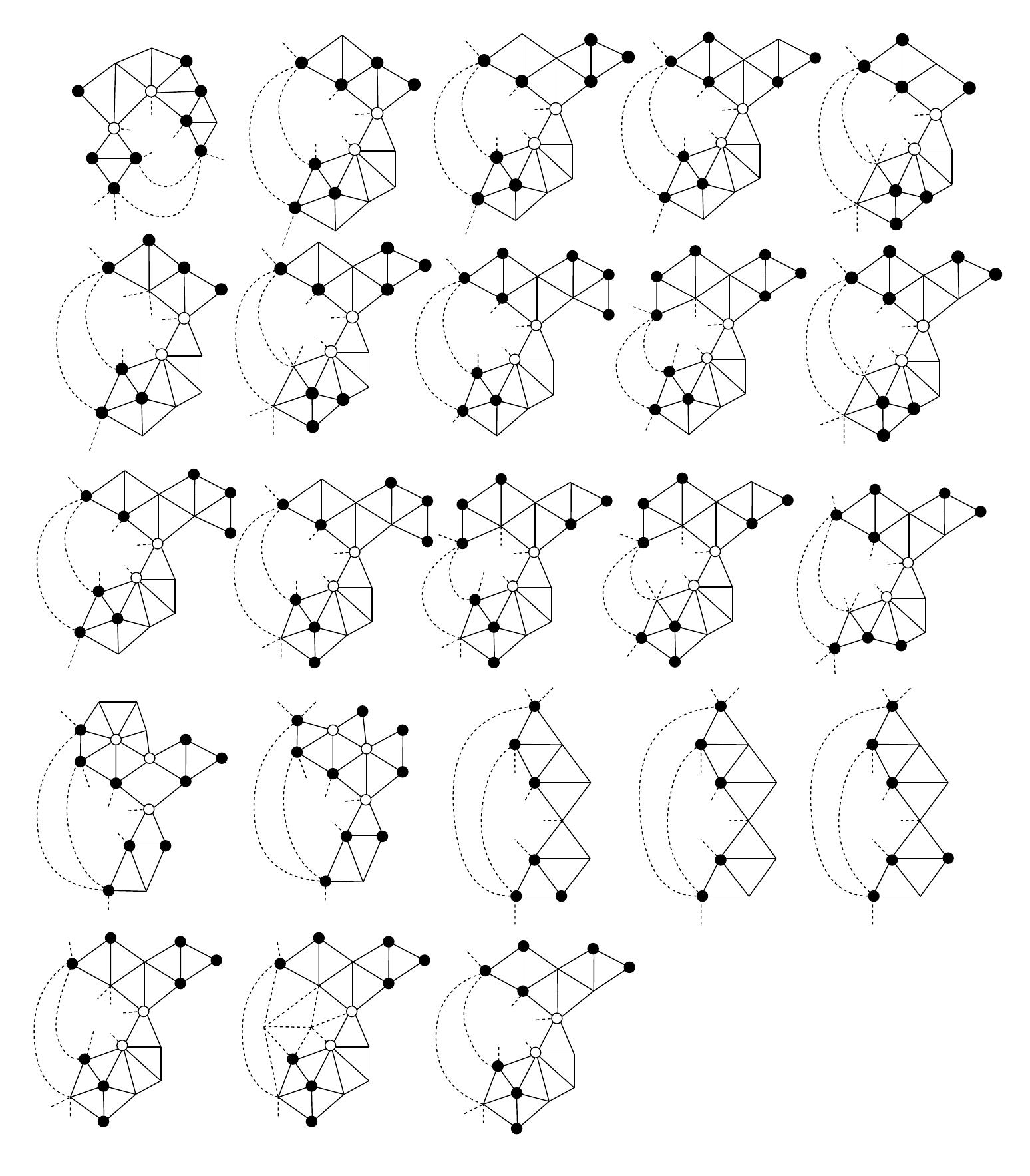}
  \caption{The 23 cases that remain after computer-check in the proof of Lemma \ref{pairs6}. The dotted lines represent edges when we assume two vertices of distance five are adjacent.}
  \label{fig:dist5}
\end{figure}
 
\section{Discharging process}
\label{sect:discharging}\showlabel{sect:discharging}

Let $G'$ be an internally 6-connected triangulation in the torus\footnote{We assume that the representativity of $G'$ is at least two. Note that it could be exactly two, so there could be multiple edges. But the embedding is 2-cell, so we can still apply the rule below}. We start by assigning to each vertex $v \in V(G')$ the value 
$$
    T_0(v) := 10(6 - d(v)),
$$
which we call the \emph{initial charge at $v$}. It follows by Euler's formula (see Lemma \ref{lem:120} below) that 
$$
    \sum_{v\in V(G')} T_0(v) = 0.
$$
Now we redistribute the charge among vertices by applying the \emph{discharging rules} that are shown in Section \ref{sect:rule} in the appendix. There are 201 rules (compared to 33 rules for the proof of the 4CT and that for the doublecross case).

Discharging rules are formally defined as follows.

\begin{dfn}\label{dfn:rule}\showlabel{dfn:rule}
  A \emph{rule} is six-tuple $R = (G(R), \beta_R, \delta_R, r(R), s(R), t(R))$, so that
  \begin{enumerate}
      \renewcommand{\labelenumi}{(\roman{enumi})}
      \item $G(R) = (V(R), E(R))$ is a near-triangulation, and for each $v \in V(R)$, $G(R) - v$ is connected.
      \item $\beta_R \colon V(R) \rightarrow \mathbb{N}$, $\delta_R \colon V(R) \rightarrow \mathbb{N} \cup \{\infty\}$, such that $5 \leq \beta_R(v) \leq \delta_R(v)$ for each $v \in V(R)$.
      \item $r(R)$ is a positive integer.
      \item $s(R), t(R) \in V(R)$ are distinct adjacent vertices.
  \end{enumerate}
  A configuration $K$ \emph{obeys} the rule $R$ if $G(K) = G(R)$ and every vertex $v \in V(K)$ satisfies $\beta_R(v) \leq \gamma_K(v) \leq \delta_R(v)$.
\end{dfn}

A rule is a near-triangulation in which each vertex $v$ is given a range of possible degrees, the interval $[\beta_R(v), \delta_R(v)]$. The value $r(R)$ represents the amount of charge that is sent from the vertex $s(R)$ to its neighbor $t(R)$. We have $r(R) = 1$ or $r(R)=2$ for all our rules. We describe the degree range $[\beta_R, \delta_R]$ by the same convention as in configurations. There are three cases. When $\beta_R(v) = \delta_R(v)$, we describe $\beta_R(v)$ by one of the shapes shown in Figure \ref{fig:shapes}. When $5 = \beta_R(v) < \delta_R(v)$, we describe $\delta_R(v)$ as the vertex shape and set $-$ next to the vertex. When $\beta_R(v) < \delta_R(v) = \infty$, we describe $\beta_R(v)$ as the vertex shape and set $+$ next to the vertex. All our rules have one of these three types of degree ranges. 

The set of 201 rules exhibited in Figure \ref{fig:rules} in the appendix is denoted by $\mathcal{R}$. 
An edge with an arrow represents the direction in which a charge moves. Thus, $s(R), t(R)$ are two vertices that are incident with an edge with an arrow.
The number of arrows in the figure represents $r(R)$.

We calculate the amount of charge sent along an edge $st$ in $G'$ by checking which rules can be used on this edge. For a rule $R\in \mathcal{R}$, $r(R)$ gives the amount of charge to send, $s(R)$ is the vertex that sends the charge, and $t(R)$ is the vertex that receives charge. We \textit{apply a rule $R$} when a configuration $K$, which obeys $R$ weakly appears in $G'$. Note that each\footnote{The very first rule (R1) is the only exception that is used only once.} rule $R\in \mathcal{R}$ can be applied at the edge $st$ in two ways, where the triangles in $G(R)$ have the same orientation as in $G'$, and when they have opposite orientation.\footnote{
We just consider the orientation of triangles locally around the edge $st$.}

\begin{dfn}\label{dfn:amount}\showlabel{dfn:amount}
  For adjacent vertices $u, v \in V(G')$, we define $\phi(u,v)$ as the sum of the values $r(R)$ over all rules $R\in \mathcal{R}$ that are applied with $s(R) = u$, and $t(R) = v$. This is \emph{the amount of charge sent from $u$ to $v$}.
\end{dfn}

\begin{dfn}\label{dfn:finalcharge}\showlabel{dfn:finalcharge}
    For a vertex $u \in V(G')$, we set
    \[
      T(u) := T_0(u) + \sum_{v \sim u} (\phi(v, u) - \phi(u,v)).
    \]
    The obtained value is the \emph{final charge} at $u$.
\end{dfn}

The discharging method uses the following easy observation.

\begin{lem}
\label{lem:120}\showlabel{lem:120}
$\sum_{u \in V(G')} T(u) = 0$.
\end{lem}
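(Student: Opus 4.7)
The plan is to split $\sum_u T(u)$ into an initial-charge part and a discharging-induced part, and argue separately that each is zero. The discharging part vanishes from antisymmetry on each edge, and the initial-charge part vanishes from Euler's formula together with the triangulation property.

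First, I would expand using the definition of $T(u)$:
$$
\sum_{u \in V(G')} T(u) = \sum_{u \in V(G')} T_0(u) \;+\; \sum_{u \in V(G')} \sum_{v \sim u} \bigl( \phi(v,u) - \phi(u,v) \bigr).
$$
In the double sum, every unordered edge $uv$ contributes $\phi(v,u) - \phi(u,v)$ once at endpoint $u$ and $\phi(u,v) - \phi(v,u)$ once at endpoint $v$. These two contributions are negatives of each other and cancel. Therefore the discharging process preserves the total charge, and
$$
\sum_{u \in V(G')} T(u) = \sum_{u \in V(G')} T_0(u).
$$

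Next, I would evaluate the initial-charge sum. By definition,
$$
\sum_{u \in V(G')} T_0(u) = 10 \sum_{u \in V(G')} \bigl(6 - d(u)\bigr) = 10 \bigl(6|V(G')| - 2|E(G')|\bigr).
$$
Since $G'$ is a 2-cell triangulation of the torus, every face is bounded by a closed walk of length $3$, giving $2|E(G')| = 3|F(G')|$. Euler's formula for the torus yields $|V(G')| - |E(G')| + |F(G')| = 0$, and combining these two identities gives $|E(G')| = 3|V(G')|$. Substituting, $\sum_u T_0(u) = 10(6|V(G')| - 6|V(G')|) = 0$.

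There is no real obstacle here; the lemma is a short bookkeeping fact that is standard in discharging arguments. The only subtlety worth flagging is that we are working on the torus under the assumption that the embedding is 2-cell (representativity at least two), so Euler's formula applies verbatim even if $G$ has parallel edges and the triangle count via $2|E| = 3|F|$ still holds since every face of $G'$ has facial walk of length exactly three.
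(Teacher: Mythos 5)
Your proof is correct and follows essentially the same approach as the paper's: show that discharging preserves total charge, then use Euler's formula for the torus together with the triangulation identity $2|E(G')| = 3|F(G')|$ to obtain $|E(G')| = 3|V(G')|$ and conclude that the initial charges sum to zero. You merely spell out the edge-wise antisymmetry cancellation and the derivation of $|E(G')| = 3|V(G')|$ a bit more explicitly than the paper does.
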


\begin{proof}
By applying any discharging rule, the total sum of all charges remains the same. Thus, $\sum_{u \in V(G')} T(u) = \sum_{u \in V(G')} 10(6-d(u))$. If $n$ is the number of vertices of $G'$, then Euler's formula for the triangulation $G'$ implies that $\tfrac12 \sum_{u\in V(G')}d(u) = |E(G')| = 3n$, and thus we have:
 \[
 \sum_{u \in V(G')} T(u) = \sum_{u \in V(G')} 10(6-d(u)) = 60n - 10\sum_{u \in V(G')}d(u) =
 60n-60n = 0.
 \]
\end{proof}

Lemma \ref{lem:120} implies that either (i) there is a vertex $u$ with $T(u) > 0$, or (ii) $T(u) = 0$ for each vertex $u$ in $V(G')$. Known proofs of the 4CT show that for any vertex $u$ with $T(u)>0$, there is a reducible configuration in the vicinity of $u$. This yields a contradiction to the assumption that we have a minimum counterexample. 

We have a similar result on the torus, but there is some difference.

\begin{lem}\label{T(v)>0}\showlabel{T(v)>0}
  Let $G$ be a minimal counterexample with representativity at least two. Let $G'$ be the dual of $G$. If there is a vertex $v$ of $V(G')$ with $T(v) > 0$, a configuration that is isomorphic to one of the reducible configurations of $\mathcal{K}$ weakly appears.    
\end{lem}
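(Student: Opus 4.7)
The plan is to analyze each vertex $v$ with $T(v) > 0$ by examining its local neighborhood in $G'$ (the \emph{cartwheel} of $v$, i.e., the second or third neighborhood). Since $T_0(v) = 10(6 - d(v))$, for $v$ to satisfy $T(v) > 0$ we need either $d(v) = 5$ (with $T_0(v) = 10$), or $d(v) \geq 6$ with sufficient net positive charge received from neighbors. Each rule in $\mathcal{R}$ sends at most two units of charge along a single edge, so the maximum net charge a vertex of degree $k$ can receive is bounded; this lets us bound the degrees for which $T(v) > 0$ is even possible, and in particular forces high-degree vertices to exhibit very restricted local structure in $G'$ already captured by configurations in $\mathcal{K}$.

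First I would enumerate all possible cartwheel structures around $v$, parametrized by $d(v)$ and by the degrees (or degree ranges) of vertices in its neighborhoods, subject to the structural constraints that $G'$ is a triangulation of representativity at least two, is cyclically 5-edge-connected (Lemma \ref{minc}), and contains no contractible short separating cycle (Fact \ref{fact:cont-cycle} together with Lemma \ref{pairs6}). For each such partial cartwheel, the algorithm computes an upper bound on $T(v)$ by summing, over all edges $e$ incident with $v$ and all pairs of adjacent vertices near $v$, the contributions of every rule $R \in \mathcal{R}$ that can weakly appear at that location. If this bound is non-positive, the case is discharged; otherwise, we extend the cartwheel by specifying more neighbor degrees until either the bound becomes non-positive or the pattern matches the free completion of some $K \in \mathcal{K}$.

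The certification is performed by the computer program whose pseudocode is given in Section \ref{sect:code}: it traverses the finite tree of partial cartwheels, prunes whenever the remaining rule contributions cannot raise the accumulated charge above zero, and at each leaf either reports a match against a reducible configuration (up to isomorphism, and allowing weak appearance so that outskirt vertices or edges may coincide with the configuration) or expands the cartwheel further. The content of the lemma is the assertion that this procedure terminates on every branch with a match found, so that every $v \in V(G')$ with $T(v)>0$ witnesses the weak appearance of some $K \in \mathcal{K}$.

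The main obstacle is twofold. First, with 201 rules and more than $14000$ configurations, the raw state space of cartwheels is huge, and rules interact nontrivially because many depend on second-neighbor degree patterns rather than just the degrees of immediate neighbors; the pruning and the enumeration order therefore have to be chosen with care so that the search actually terminates in a feasible time (and has to be implemented twice, independently, as stated in the paper). Second, because we only obtain \emph{weak} appearance, the matching configuration might wrap around the torus, so the cartwheel may have two combinatorially distinct vertices mapped to the same vertex of $G'$; this is handled by enumerating all wrappings of each $K \in \mathcal{K}$, as defined in Section \ref{sect:weak-appear}, and verifying that wrappings still correspond to legitimate matches. The sets $\mathcal{R}$ and $\mathcal{K}$ have been constructed precisely so that this exhaustive certification succeeds, and the lemma is, at its core, this computational statement.
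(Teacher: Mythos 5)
Your proposal follows the paper's proof: the lemma is certified by an exhaustive computer search over cartwheels around $v$ (with hand-written arguments covering degrees $5$, $6$, and $\geq 12$, cf.\ Lemmas~\ref{lem:deg5deg6} and~\ref{lem:12}), and the sets $\mathcal{R}$ and $\mathcal{K}$ are constructed precisely so that this search terminates with a match on every branch. One small scope note: the wrapping enumeration you invoke (Lemma~\ref{lem:weakly} in Section~\ref{sect:lowrep}) actually belongs to the downstream reduction step, i.e.\ to Lemma~\ref{T(v)>0T}, not to the present lemma---since Lemma~\ref{T(v)>0} asserts only \emph{weak} appearance, the cartwheel search already yields the conclusion by itself, and no verification about wrappings is needed to prove it.
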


We show Lemma \ref{T(v)>0} by computer check, but we can show this lemma without the help of computers when $d_{G'}(v)=5$ or $d_{G'}(v)=6$ (Lemma \ref{lem:deg5deg6}) or $d_{G'}(v) \geq 12$ (Lemma \ref{lem:12}). For other cases, computer-free proof would be too long. Note also that the representativity one case is handled separately, see Lemma \ref{lem:rep1}. 
\subfile{tikz/confs}

\begin{lem}
  \label{lem:deg5deg6}
  \showlabel{lem:deg5deg6}
  Let $G' = (V, E)$ be an internally 6-connected triangulation. Let $v \in V$ be a vertex of degree 5 or 6.
  Suppose that no configurations shown in Figure $\ref{fig:confs_degree56}$ weakly appear in $G'$. Then, $T(v) = 0$.
\end{lem}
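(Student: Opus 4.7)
The plan is to determine $T(v)$ from the initial charge $T_0(v) = 10(6 - d(v))$ together with the contributions of every discharging rule in $\mathcal{R}$ that applies at an edge incident with $v$. For $d(v) = 5$ we start at $T_0(v) = +10$ and must show the net outflow is exactly $10$; for $d(v) = 6$ we start at $T_0(v) = 0$ and must show that inflow and outflow cancel. In both cases the applicability of each rule at an edge $vu$ depends only on the degrees of $v$, $u$, and a bounded collection of nearby vertices prescribed by $G(R)$, so $T(v)$ is a fixed function of the local combinatorial pattern around $v$.

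The first step is to single out, among the $201$ rules, those whose central edge $s(R)t(R)$ admits an endpoint of degree $5$ or $6$; only these can contribute to $T(v)$. For each such rule $R$ I would record the two possible orientations at an edge $vu$, the sign ($+r(R)$ if $v = t(R)$, $-r(R)$ if $v = s(R)$), and the constraints imposed on $u$ and on the third vertices of the two triangles sharing the edge $vu$.

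The second step is the case analysis. I would parametrize the local structure around $v$ by the cyclic sequence of neighbor degrees $(d_1,\ldots,d_{d(v)})$ together with the degrees of those second neighbors that appear in the support of any applicable rule. For each such pattern I would sum the contributions of all firing rules and read off $T(v)$. The role of the hypothesis is that the configurations in Figure \ref{fig:confs_degree56} are precisely those local patterns for which the sum would be nonzero; excluding them leaves only patterns in which the rule contributions at the $d(v)$ edges incident with $v$ exactly absorb $T_0(v)$. Because neighbors of sufficiently high degree behave uniformly with respect to the rules, only finitely many genuinely distinct patterns need to be inspected in each degree case.

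The main obstacle is controlling the combinatorial explosion. Although $d(v) \in \{5,6\}$ is small, each neighbor may have arbitrary degree $\geq 5$ and each second neighbor may similarly vary, so the number of a priori patterns is large; moreover, with $201$ rules, identifying which rules fire in each pattern is itself nontrivial. The delicate point is to verify that the set in Figure \ref{fig:confs_degree56} is both sufficient (every excluded pattern that would give $T(v) \neq 0$ contains one of the listed configurations as a weakly appearing substructure) and tight (no listed configuration is redundant). Provided this bookkeeping goes through, the conclusion $T(v) = 0$ follows by a direct sum in each of the surviving cases.
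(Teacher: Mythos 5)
Your proposal is correct in outline and takes essentially the same route that underlies the paper's proof: a finite case analysis over the local degree patterns around $v$, restricted to the subset of rules whose sending or receiving vertex can have degree $5$ or $6$, with the hypothesis about Figure~\ref{fig:confs_degree56} ruling out exactly the local patterns that would leave $T(v)\neq 0$. The paper itself does not carry out this case analysis anew; it observes that the relevant rule subset (those with $\delta_R(s(R))\leq 6$ or $\delta_R(t(R))\leq 6$) and the configurations in Figure~\ref{fig:confs_degree56} coincide with those of the projective-planar case \cite{proj2024}, and simply reuses that hand-written verification.
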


The proof of Lemma \ref{lem:deg5deg6} is the same as that given in \cite{proj2024}. 
Indeed the set of configurations in Figure \ref{fig:confs_degree56} includes the configurations used in hand-written proofs of \cite{proj2024}. The rules in $\mathcal{R}$ such that the degree of a vertex that receives or sends charge is at most 6 (i.e. $\delta_R(s(R)) \leq 6$ or $\delta_R(t(R)) \leq 6$) is the same as that of rules used in \cite{proj2024}. Hence, the proof of Lemma \ref{lem:deg5deg6} is the same as that of Lemma about the final charge of degree $5$ or $6$ in \cite{proj2024}.
We denote a configuration in Figure \ref{fig:confs_degree56} by C(i) $(1 \leq i \leq 29)$ in the natural order, from top to bottom in a row and from left to right in a column.
The bold lines in Figure \ref{fig:confs_degree56} represents contraction edges used for C-reducibility.

\subsection{Maximum charge that a vertex can send}
\label{sect:maxcharge}
\showlabel{sect:maxcharge}


Let $n$ be a positive integer. We want to figure out the case that a vertex sends charge $n$ to a vertex of degree at least $7$. We calculate it by combining a set of rules of $\mathcal{R}$ such that the edge where charge moves coincide, and then by checking that none of our reducible configurations of $\mathcal{K}$ weakly appears. We use computer-check to calculate it. The pseudocode of this algorithm is the same as the algorithm described in \cite{proj2024}. 
We now show the following.

\begin{lem}\label{lem:vsends}\showlabel{lem:vsends}
Assume that none of our reducible configurations in the set $\mathcal{K}$ weakly appears in $G'$. Then, any vertex $v \in V(G')$ can send charge at most $6$ to any one of its neighbors.   
If it sends charge $6$ ($5$ resp.), then we have one of the cases shown in Figure \ref{fig:send6} (Figure \ref{fig:send5}, resp.) in the appendix. 
If a vertex of degree $5$ sends charge $4$, we have one of the cases in Figure \ref{fig:deg5send4} in the appendix.
\end{lem}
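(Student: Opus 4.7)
The plan is to treat this as a finite case analysis driven by the structure of the rule set $\mathcal{R}$. Fix an ordered edge $uv \in E(G')$ and consider all rules $R \in \mathcal{R}$ with $s(R)=u$ and $t(R)=v$. For each such $R$, there are finitely many ways it can be applied at $uv$: two choices of orientation of the triangles of $G(R)$ relative to the embedding around $uv$, and for each vertex of $G(R)$ a choice of degree in the interval $[\beta_R(v),\delta_R(v)]$ that is realized by the corresponding vertex of $G'$. Each such choice produces a concrete near-triangulation ``patch'' around $uv$ which must be a weak subgraph of the local structure of $G'$.

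First I would enumerate all such individual applications, then form all multisets of applications that are mutually compatible, in the sense that their patches can be simultaneously embedded into a single near-triangulation around $u$ and $v$ (i.e. they agree on which edges/triangles they share, and the degree constraints they impose on common vertices are consistent). Second, for each such compatible multiset I would record its contribution to $\phi(u,v)$, namely $\sum_R r(R)$, and also the resulting merged near-triangulation $N$ around $uv$. Third, I would test whether some configuration in $\mathcal{K}$ weakly appears inside $N$; by the hypothesis of the lemma, any multiset whose patch contains such a weak appearance is forbidden and can be discarded. The maximum value of $\sum r(R)$ over the surviving multisets is, by Definition~\ref{dfn:amount}, the maximum possible value of $\phi(u,v)$. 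The claim is that this maximum is $6$, and that the surviving multisets achieving $6$, $5$, and (restricted to $d(u)=5$) $4$ are precisely those enumerated in Figures~\ref{fig:send6}, \ref{fig:send5}, and~\ref{fig:deg5send4} respectively.

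The execution is essentially the same combinatorial search used in the projective-planar case of \cite{proj2024}, adapted to the larger rule set $\mathcal{R}$ (now 201 rules instead of 169) and the larger library $\mathcal{K}$ of reducible configurations for the weak-appearance test. The pseudocode given in Section~\ref{sect:code} applies verbatim, with the only change being the input files. To keep the search tractable one prunes aggressively: rules are grouped by the degree range they impose on $u$ and $v$, patches are extended vertex-by-vertex around $u$, and as soon as a partial merge either forces an inconsistent degree or contains a weakly-appearing reducible configuration, the branch is cut.

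The main obstacle is purely the scale of the enumeration. With $201$ rules, each admitting two orientations and a range of compatible degree assignments, the number of \emph{a priori} compatible multisets around a single edge is very large, and the per-multiset test of weak appearance of any of the $14000{+}$ configurations in $\mathcal{K}$ is itself nontrivial. Nothing mathematically subtle happens beyond the already-established weak-appearance framework of Section~\ref{sect:weak-appear}; the content of the lemma is the output of the search, namely the explicit catalogues in the three figures. Independent re-runs of the verification, as described in the introduction, give confidence in the result.
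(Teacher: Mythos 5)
Your proposal matches the paper's own approach: the paper likewise computes $\phi(u,v)$ by combining all rules in $\mathcal{R}$ that act along the same edge, then uses a computer search (with the same pseudocode as in \cite{proj2024}) to discard combinations whose merged local patch forces a weak appearance of some configuration in $\mathcal{K}$, and reads off the surviving maxima and their local degree patterns to produce Figures \ref{fig:send6}, \ref{fig:send5}, and \ref{fig:deg5send4}. This is exactly what you describe, so your blind proposal is correct and essentially identical in method.
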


We denote the case in row $i$ and column $j$ in Figure \ref{fig:send6} by send6($i$, $j$). We use the same notation to denote the case in Figure \ref{fig:send5} by send5($i$, $j$) and Figure \ref{fig:deg5send4} by send4($i$, $j$), respectively. 

Using Lemma \ref{lem:vsends}, we prove the following.

\begin{lem}\label{lem:edgesends}\showlabel{lem:edgesends}
Assume that none of our reducible configurations in the set $\mathcal{K}$ weakly appears in $G'$. Let $u$ be a vertex of degree at least $7$ in $G'$ and $v_1, v_2, v_3$ be three consecutive neighbors of $u$ 
listed in the clockwise order in the embedding of $G'$.
If $\phi(v_1, u) = 6$, $\phi(v_2, u) \leq 5$ and $\phi(v_3, u) \leq 4$ hold. Moreover, if all the equalities hold ($\phi(v_2, u) = 5, \phi(v_3, u) = 4$), we have one of the cases shown in Figure \ref{fig:send6,5,4}.
\end{lem}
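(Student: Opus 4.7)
The plan is to leverage Lemma \ref{lem:vsends} as a structural oracle: it gives the complete list of local neighborhoods around any edge $xy$ with $\phi(x,y)\geq 5$, as the cases send6$(i,j)$ and send5$(i,j)$ in Figures \ref{fig:send6} and \ref{fig:send5}, plus send4$(i,j)$ when $d(x)=5$. Since $G'$ is a triangulation and $v_1,v_2,v_3$ are consecutive neighbors of $u$ in the rotation at $u$, the edges $v_1v_2$ and $v_2v_3$ are present, and each of the three spokes $uv_1, uv_2, uv_3$ comes with two bounding triangles that overlap with the adjacent spoke. Thus fixing a send6 pattern on $uv_1$ already pins down the degree and partial neighborhood of $v_2$, and symmetrically for $uv_3$.

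First I would show $\phi(v_2,u)\leq 5$. Assume for contradiction that $\phi(v_2,u)=6$. Then the neighborhood of $uv_1$ equals some send6$(i_1,j_1)$ while the neighborhood of $uv_2$ equals some send6$(i_2,j_2)$. The two patterns must agree on the shared triangle containing $u,v_1,v_2$: this identifies two vertices and their degrees in the overlap region, and forces a large piece of the second neighborhood of $u$. Enumerating all ordered pairs $(\mathrm{send6}(i_1,j_1),\mathrm{send6}(i_2,j_2))$ in the two possible orientations and checking compatibility either produces a direct degree mismatch (a vertex with two incompatible degree labels) or yields a configuration in which, by inspection or computer check, some element of $\mathcal{K}$ weakly appears, contradicting our standing hypothesis. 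A completely analogous enumeration with $(\mathrm{send6}(i_1,j_1),\mathrm{send}\alpha(i_2,j_2))$ for $\alpha\in\{5,6\}$ on the spokes $uv_1, uv_3$ — now with $v_2$ and its two send-patterns providing the bridge between the two local neighborhoods — proves $\phi(v_3,u)\leq 4$.

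Finally, under the equality hypothesis $\phi(v_1,u)=6$, $\phi(v_2,u)=5$, $\phi(v_3,u)=4$, I would retain precisely those triples of patterns not eliminated by the two enumerations above. The case $\phi(v_3,u)=4$ subdivides by $d(v_3)$: if $d(v_3)\geq 6$ then $\phi(v_3,u)=4$ follows from one of the rules of $\mathcal{R}$ whose geometric footprint is small, while if $d(v_3)=5$ then the send4$(i,j)$ list from Lemma \ref{lem:vsends} (Figure \ref{fig:deg5send4}) exhausts the possibilities. Matching these against the already-filtered send6 and send5 patterns on $uv_1$ and $uv_2$ and again removing everything in which a member of $\mathcal{K}$ weakly appears yields exactly the configurations displayed in Figure \ref{fig:send6,5,4}.

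The main obstacle is combinatorial: the send6, send5, and send4 lists together are large, and each combined local neighborhood then has to be tested against the $14000+$ reducible configurations of $\mathcal{K}$ for weak appearance. A hand argument is infeasible at this scale, so the enumeration is carried out by the same kind of computer program used for Lemma \ref{lem:vsends} (with pseudocode in Appendix \ref{sect:code}); care is needed to consider both relative orientations of each rule at each spoke, and to track shared vertices in the overlap region so that the combined configuration is interpreted correctly as a subgraph of $G'$ rather than of a formal gluing.
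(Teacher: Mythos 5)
Your approach — use Lemma \ref{lem:vsends} to enumerate the possible send-patterns at each spoke, then stitch adjacent spokes together along the shared triangle and eliminate pairs that conflict or in which a member of $\mathcal{K}$ weakly appears — is exactly the idea the paper uses. The one meaningful difference is that you declare a hand argument infeasible and propose a full computer enumeration against all $14000+$ configurations, whereas the paper in fact settles this lemma by a short hand case analysis organized around the degrees of $v_1,v_2,v_3$, invoking only the small explicit subset C(2), C(3), C(14)--C(17), C(19), C(24), C(25) from Figure \ref{fig:confs_degree56} (and, in one branch where $d(v_3)\ge 7$, reading $\phi(v_2,u)\le 1$ directly off the rules rather than off a send-pattern). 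Your automated enumeration would succeed and is arguably safer against oversight, but your estimate of the combinatorial scale is off by three orders of magnitude: the reducible configurations relevant to this lemma are the 29 hand-drawn ones, not the full set $\mathcal{K}$, which is why a hand proof is workable here.
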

\begin{proof}
    When the degree of $v_1$ is 7 (that corresponds to send6(2,4), or send6(2,5)), $\phi(v_2, u) \leq 4$ holds by Lemma \ref{lem:vsends}.
    
    We consider the case when degree of both $v_1$ and $v_2$ is 6 (that corresponds to send6(1,4), send6(2,1)).
    First, we consider the case when $v_1$ sends charge 6 in send6(1, 4). 
    The left figure in Figure \ref{fig:proof_lem:edgesends} represents this situation. 
    The degree of $v_4$ is at least 7, otherwise, C(2), or C(3) weakly appear. 
    The degree of $v_3$ is not 5, otherwise, C(2) weakly appears. 
    When the degree of $v_3$ is 6, $\phi(v_2, u)$ is at most 2, otherwise one of C(17), C(16), C(24) weakly appears in the following three cases $d(v_5)=5$, or $d(v_5)=6,d(v_6)=5$, or $d(v_5)=6,d(v_6)=6,d(v_7)=5$ respectively.
    When the degree of $v_3$ is at least 7, $\phi(v_2, u)$ is at most 1 by the rules we used.
    
    Second, we consider the case when $v_1$ sends charge 6 in send6(2, 1). 
    The right figure in Figure \ref{fig:proof_lem:edgesends} represents this situation. 
    The degree of $v_4$ is at least 7 since otherwise C(14) or C(15) weakly appears.
    The value $\phi(v_2, u)$ is at most 3, since otherwise one of C(4), C(25) weakly appears in the following two cases $d(v_5)=5$, or $d(v_5)=6, d(v_6)=5$ respectively.
    
    It remains to handle the case when the degree of $v_1$ is 6 and the degree of $v_2$ is 5 (that corresponds to send6(1, $i$) ($1 \leq i \leq 5$), send6(2, $j$) ($1 \leq j \leq 3$). When $\phi(v_2, u) \geq 5$, the only possible case is that $v_2$ sends charge 5 in send5(1,2) by Lemma \ref{lem:vsends}, so $\phi(v_2, u) \leq 5$ holds. The equality holds only when $v_1$ sends charge 6 in send6(1,2), send6(1,3), or send6(2,2).
    The degree of $v_3$ is 5 in every case. Let $w, x, y$ be neighbors of $v_3$ such that $u, v_2, x, y, z$ are listed in the clockwise order in the embedding of $G'$. If the degree of $y$ is 5, C(2) weakly appears. If the degree of $y$ is 6 and the degree of $z$ is 5, C(17) weakly appears. Hence $\phi(v_3, u) \leq 4$ by Lemma \ref{lem:vsends} (Figure \ref{fig:send5}). Moreover, the equality holds only when  
    $\phi(v_3, u)$ is exactly $4$ in send4(1,4), or send4(2,1). However C(19) weakly appears in send4(2,1), so send4(1,4) is the only case. Hence, if $\phi(v_3, u)$ is exactly $4$, all cases are enumerated in Figure \ref{fig:send6,5,4}.
\end{proof}
\begin{figure}
    \centering
    \includegraphics[height=4cm]{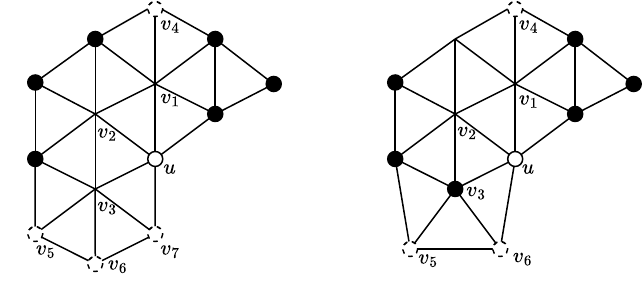}
    \caption{The figures used in the proof of Lemma \ref{lem:edgesends}}
    \label{fig:proof_lem:edgesends}
\end{figure}
\tikzset{deg5/.style={thick, circle, draw, fill=black, inner sep=1.5pt,}}
\tikzset{deg6/.style={thick, circle, draw, fill=black, inner sep=0pt,}}
\tikzset{deg7/.style={thick, circle, draw, fill=white, inner sep=2pt,}}
\tikzset{deg8/.style={thick, rectangle, draw, fill=white, inner sep=2pt,}}
\tikzset{deg9/.style={thick, regular polygon, regular polygon sides=3, rotate=180, draw, fill=white, inner sep=1pt,}}
\tikzset{deg10/.style={thick, regular polygon, draw, fill=white, inner sep=2pt,}}
\tikzset{->-/.style={decoration={
    markings,
    mark=at position .6 with {\arrow{>}}}, postaction={decorate}}}
\tikzset{->>-/.style={decoration={
    markings, 
    mark=at position .5 with {\arrow{>}};, 
    mark=at position .6 with {\arrow{>}};}, postaction={decorate}}}

\begin{figure}[htbp]
\centering
\begin{minipage}{0.3\linewidth}
\centering
\captionsetup{width=.95\linewidth}
\begin{tikzpicture} [baseline=1.5cm]
    \node [deg6] at (2.151, 1.5) (v0) {};
    \node [deg7] at (2.151, 0.5) (v1) {};
    \node [below = 0.15 cm of v1, anchor=center] (v1+) { $+$ };
    \node [deg5] at (1.285, 1.0) (v2) {};
    \node [deg5] at (1.285, 2.0) (v3) {};
    \node [deg5] at (3.017, 1.0) (v4) {};
    \node [deg5] at (3.017, 2.0) (v5) {};
    \node [deg5] at (0.3, 1.274) (v6) {};
    \node [deg6] at (4.002, 1.274) (v7) {};
    \node [deg5] at (4.002, 0.5) (v8) {};
    \node [deg5] at (3.017, 0) (v9) {};
    \draw [->-] (v0) -- (v1);
    \draw [->-] (v4) -- (v1);
    \draw [->-] (v8) -- (v1);
    \foreach \u / \v in {v0/v1, v0/v2, v0/v3, v0/v4, v0/v5, v1/v2, v1/v4, v1/v9, v2/v3, v4/v5, v3/v6, v2/v6, v3/v6, v4/v7, v5/v7, v1/v8, v4/v8, v7/v8, v8/v9}
        \draw (\u) -- (\v);
\end{tikzpicture}
\end{minipage}
\begin{minipage}{0.3\linewidth}
\centering
\captionsetup{width=.95\linewidth}
\begin{tikzpicture} [baseline=1.5cm]
    \node [deg6] at (2.151, 1.5) (v0) {};
    \node [deg7] at (2.151, 0.5) (v1) {};
    \node [below = 0.15 cm of v1, anchor=center] (v1+) { $+$ };
    \node [deg5] at (1.285, 1.0) (v2) {};
    \node [deg5] at (1.285, 2.0) (v3) {};
    \node [deg5] at (3.017, 1.0) (v4) {};
    \node [deg5] at (3.017, 2.0) (v5) {};
    \node [deg6] at (0.3, 1.274) (v6) {};
    \node [deg6] at (4.002, 1.274) (v7) {};
    \node [deg5] at (4.002, 0.5) (v8) {};
    \node [deg5] at (0.3, 0.5) (v9) {};
    \node [deg5] at (3.017, 0) (v10) {};
    \draw [->-] (v0) -- (v1);
    \draw [->-] (v4) -- (v1);
    \draw [->-] (v8) -- (v1);
    \foreach \u / \v in {v0/v1, v0/v2, v0/v3, v0/v4, v0/v5, v1/v2, v1/v4, v1/v10, v2/v3, v4/v5, v3/v6, v2/v6, v3/v6, v4/v7, v5/v7, v1/v8, v4/v8, v7/v8, v1/v9, v2/v9, v6/v9, v8/v10}
        \draw (\u) -- (\v);
\end{tikzpicture}
\end{minipage}
\begin{minipage}{0.3\linewidth}
\centering
\captionsetup{width=.95\linewidth}
\begin{tikzpicture} [baseline=1.5cm]
    \node [deg6] at (2.151, 1.5) (v0) {};
    \node [deg7] at (2.151, 0.5) (v1) {};
    \node [below = 0.15 cm of v1, anchor=center] (v1+) { $+$ };
    \node [deg5] at (1.285, 1.0) (v2) {};
    \node [deg6] at (1.285, 2.0) (v3) {};
    \node [deg5] at (3.017, 1.0) (v4) {};
    \node [deg5] at (3.017, 2.0) (v5) {};
    \node [deg5] at (0.3, 1.274) (v6) {};
    \node [deg6] at (4.002, 1.274) (v7) {};
    \node [deg5] at (4.002, 0.5) (v8) {};
    \node [deg5] at (0.3, 0.5) (v9) {};
    \node [deg5] at (3.017, 0) (v10) {};
    \draw [->-] (v0) -- (v1);
    \draw [->-] (v4) -- (v1);
    \draw [->-] (v8) -- (v1);
    \foreach \u / \v in {v0/v1, v0/v2, v0/v3, v0/v4, v0/v5, v1/v2, v1/v4, v1/v10, v2/v3, v4/v5, v3/v6, v2/v6, v3/v6, v4/v7, v5/v7, v1/v8, v4/v8, v7/v8, v1/v9, v2/v9, v6/v9, v8/v10}
        \draw (\u) -- (\v);
\end{tikzpicture}
\end{minipage}
\caption{Three consecutive neighbors $v_1,v_2,v_3$ of $u$ satisfy $\phi(v_1,u) = 6, \phi(v_2,u) = 5, \phi(v_3, u) = 4$ where an arrow in the center represents the edge from $v_1$ to $u$ and two arrows in the right side represent the edges from $v_2, v_3$ to $u$.}
\label{fig:send6,5,4}
\end{figure}
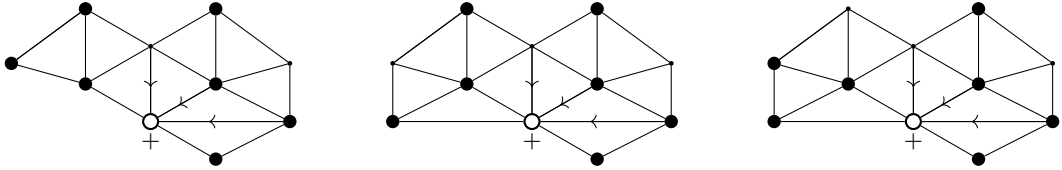

\begin{lem}\label{lem:sendavg5+1}\showlabel{lem:sendavg5+1}
    Assume that none of our reducible configurations in the set $\mathcal{K}$ weakly appears in $G'$.
    Let $u$ be a vertex of degree at least 7 in $G'$, and $v_1, v_2, ..., v_n (1 \leq n \leq d_{G'}(u))$ be consecutive neighbors of $u$ listed in the clockwise order in the embedding of $G'$.
    The sum of $\phi(v_i, u) (1 \leq i \leq n)$ is at most $5n + 1$.
\end{lem}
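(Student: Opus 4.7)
My plan is to establish the bound $\sum_{i=1}^n \phi(v_i, u) \leq 5n+1$ by induction on $n$, using Lemmas \ref{lem:vsends} and \ref{lem:edgesends} as the two engines that control local excesses above the amortized average of $5$ per vertex.

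For the base cases $n \in \{1, 2, 3\}$, I would apply Lemma \ref{lem:vsends} directly to bound each $\phi(v_i, u) \leq 6$, which handles $n = 1$ and $n = 2$ with room to spare. For $n = 3$, I would split on $\phi(v_1, u)$: if $\phi(v_1, u) = 6$, then Lemma \ref{lem:edgesends} forces $\phi(v_2, u) \leq 5$ and $\phi(v_3, u) \leq 4$, so the sum is at most $15$; otherwise $\phi(v_1, u) \leq 5$ and the three terms together give at most $5 + 6 + 5 = 16 = 5\cdot 3 + 1$, which is the tight case.

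For the inductive step with $n \geq 4$, I would again split on the first value. If $\phi(v_1, u) \leq 5$, then $v_2, \ldots, v_n$ is itself a run of $n-1$ consecutive neighbors of $u$, so the induction hypothesis yields $\sum_{i=2}^n \phi(v_i, u) \leq 5(n-1)+1$; adding $\phi(v_1, u)$ gives at most $5n + 1$. If $\phi(v_1, u) = 6$, Lemma \ref{lem:edgesends} forces $\phi(v_2, u) \leq 5$ and $\phi(v_3, u) \leq 4$, so the first three terms sum to at most $15 = 5\cdot 3$, and applying the induction hypothesis to the remaining $n-3$ consecutive neighbors $v_4, \ldots, v_n$ contributes at most $5(n-3)+1$; adding these gives $5n+1$ once more.

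The underlying mechanism is an amortized counting scheme: a peak value of $6$ at a position must be paid for by the depressed values it forces two steps ahead, so that every triple beginning with a $6$ contributes an average of exactly $5$, leaving at most one unit of global slack arising from a non-peak start. I do not anticipate a substantive obstacle, since the degree-at-least-$7$ hypothesis on $u$ is preserved throughout the induction (as $u$ is fixed), and the sub-runs $v_2,\ldots,v_n$ and $v_4,\ldots,v_n$ are immediately consecutive neighbors of $u$ of strictly smaller length, so the inductive application is clean.
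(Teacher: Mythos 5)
Your approach — induction on $n$, splitting on whether $\phi(v_1,u)=6$, and invoking Lemmas \ref{lem:vsends} and \ref{lem:edgesends} to control the sum — is essentially the paper's own argument. The only substantive difference in the inductive step is that you merge the peak case into a single branch ($\phi(v_1,u)=6 \Rightarrow \phi(v_2,u)\leq 5,\ \phi(v_3,u)\leq 4$, first three terms $\leq 15$, apply induction to $v_4,\ldots,v_n$), whereas the paper splits further on whether $\phi(v_2,u)\leq 4$ or $\phi(v_2,u)=5$; both versions of the bookkeeping give exactly $5n+1$.

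However, your base-case handling of $n=2$ is not correct as stated. You claim that bounding each $\phi(v_i,u)\leq 6$ via Lemma \ref{lem:vsends} alone ``handles $n=1$ and $n=2$ with room to spare,'' but $6+6=12$ exceeds $5\cdot 2+1=11$, so Lemma \ref{lem:vsends} by itself does not close $n=2$. You need the same split you perform at $n=3$: if $\phi(v_1,u)=6$ then Lemma \ref{lem:edgesends} forces $\phi(v_2,u)\leq 5$, giving $11$; otherwise $\phi(v_1,u)\leq 5$ and $\phi(v_2,u)\leq 6$, again giving $11$. (Incidentally, your $n=3$ ``otherwise'' branch, where you write $5+6+5=16$, is implicitly using the $n=2$ bound applied to $v_2,v_3$, so the base cases really should be built up in order: $n=1$ by Lemma \ref{lem:vsends}, then $n=2$ by the split, then $n=3$.) Once the $n=2$ case is repaired this way, the rest of the induction is sound and matches the paper.
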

\begin{proof}
    This proof is by induction on $n$.
    When $n \leq 3$, the claim holds by Lemma \ref{lem:vsends}, \ref{lem:edgesends}.
    We assume $n \geq 4$. Suppose for a contradiction that a counterexample to the claim exists. 
    By Lemma \ref{lem:edgesends}, either of $\phi(v_1, u) \leq 5$, or $\phi(v_1, u) = 6, \phi(v_2, u) \leq 4$, or $\phi(v_1, u) = 6, \phi(v_2, u) = 5, \phi(v_3, u) \leq 4$ holds. In every case, the remaining vertices contradict the induction hypothesis, so the claim holds.
\end{proof}

Using these lemmas, we obtain the following result.
\begin{lem}\label{lem:12}\showlabel{lem:12}
Assume that none of our reducible configurations in the set $\mathcal{K}$ weakly appears in $G'$.
Any vertex $v$ of degree at least 12 has final charge less than $0$.
\end{lem}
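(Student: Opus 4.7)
The plan is to combine Lemma \ref{lem:sendavg5+1} with the formula for the final charge. Write $d := d_{G'}(v) \ge 12$ and recall $T_0(v) = 10(6-d)$. Applying Lemma \ref{lem:sendavg5+1} to all $d$ (cyclically ordered) neighbors of $v$ at once yields
\[
T(v) \;\le\; T_0(v) \;+\; \sum_{u \sim v}\phi(u,v) \;\le\; 10(6-d) + (5d + 1) \;=\; 61 - 5d,
\]
since the outgoing contribution $\sum_{u \sim v}\phi(v,u)$ is non-negative. For $d \ge 13$, this already gives $T(v) \le -4 < 0$, settling the bulk of the cases.

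The only delicate case is $d = 12$, where the direct bound yields only $T(v) \le 1$. I would close this gap by refining the cyclic estimate. Starting from the baseline configuration in which every $\phi(v_i,v) = 5$ (giving sum $5d$), consider any neighbor $v_j$ with $\phi(v_j,v) = 6$. By Lemma \ref{lem:edgesends} applied clockwise from $v_j$, either $\phi(v_{j+1},v) \le 4$, or $\phi(v_{j+1},v) = 5$ and $\phi(v_{j+2},v) \le 4$; in either case at least one unit of charge is lost on the clockwise side. Symmetrically, at least one unit is lost on the counterclockwise side, giving a total loss of at least $2$ against a gain of only $+1$ from $v_j$. A short case analysis handles clustered $\phi = 6$ neighbors: two such neighbors cannot sit at distance $1$ by Lemma \ref{lem:edgesends}, and at distances $2$, $3$, or $4$ the shared and individual constraints still enforce at least one net unit of loss overall. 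Hence, cyclically, $\sum_{u \sim v}\phi(u,v) \le 5d = 60$, with equality only if every neighbor satisfies $\phi(v_i,v) = 5$, giving $T(v) \le 0$.

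The remaining task is to rule out this borderline ``all fives'' configuration under our standing assumption that no configuration of $\mathcal{K}$ weakly appears. By Lemma \ref{lem:vsends} (together with Figure \ref{fig:send5}), a neighbor sending exactly $5$ to $v$ must lie in one of a short list of local patterns. Fitting twelve such patterns cyclically around $v$ forces the second neighborhood of $v$ to weakly contain one of the reducible configurations in $\mathcal{K}$, producing the required contradiction and hence $T(v) < 0$. The main obstacle is exactly this final enumeration: it is a finite but intricate compatibility check around a degree-$12$ vertex, analogous to Lemma \ref{lem:deg5deg6} but at a higher degree, and in practice it is discharged by the same computer-assisted verification that underlies Lemma \ref{T(v)>0}.
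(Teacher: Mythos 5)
Your argument for $d \ge 13$ is fine, but the $d = 12$ case has a real gap, and it is precisely the place where the paper has to do more work.

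The claim that $\sum_{u\sim v}\phi(u,v)\le 5d$ with equality only when every neighbor sends exactly $5$ is false. Your bidirectional-loss bookkeeping over-counts: if two neighbors $v_j$ and $v_{j+2}$ both send $6$ with $\phi(v_{j+1})=4$, then the ``loss'' vertex $v_{j+1}$ is shared between the counterclockwise side of $v_{j+2}$ and the clockwise side of $v_j$, so the pair of $6$'s contributes a net gain of $+2$ against a single shared loss of $-1$. Taken cyclically, the alternating pattern $6,4,6,4,\ldots,6,4$ around a degree-$12$ vertex gives $\sum_{u\sim v}\phi(u,v)=6\cdot6+6\cdot4=60=5d$ while having no neighbor send exactly $5$. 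This pattern is consistent with Lemma~\ref{lem:edgesends} (read correctly, that lemma constrains $\phi(v_3,u)\le4$ only in the sub-case $\phi(v_2,u)=5$, as is visible from how it is invoked in the proof of Lemma~\ref{lem:sendavg5+1}), so it is not excluded. The paper's proof avoids the double-count by using a one-directional (clockwise, hence injective) pairing of each $6$-sender with a distinct $\le 4$-sender; that gives $\sum\le 5d$ but only leaves, at equality, the two cases: (i) all $\phi(u_i,v)=5$, or (ii) the alternating $6,4$ pattern. Your argument silently drops case~(ii).

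Beyond that, your closing step is also off-target. You propose to rule out the extremal pattern by appealing to the computer verification behind Lemma~\ref{T(v)>0}, but Lemma~\ref{lem:12} is explicitly presented as one of the two computer-\emph{free} parts of that verification (the paper notes it can prove the claim by hand exactly for $d\in\{5,6\}$ and $d\ge12$). The paper's proof finishes by an explicit hand enumeration: in case~(i) it drills down through the sub-cases send5$(1,1)$, send5$(1,2)$, send5$(1,3)$, send5$(6,1)$, exhibiting one of C(2), C(17), C(20), or C(28); in case~(ii) it narrows the possibilities for each $6$-sender to send6$(1,1)$, send6$(1,4)$, send6$(1,9)$, ruling out the latter two and showing C(29) (and C(1) along the way). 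To repair your proof you would need to (a) restore the missing alternating case, and (b) replace the appeal to computer checking by an explicit case analysis of the type above.
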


\begin{proof}
    We fix the clockwise order of neighbors of $v$ in the embedding of $G'$. When we say the ``next" neighbor, it means the next neighbor in this clockwise order.
    When a neighbor of $v$ sends charge 6 to $v$, the next neighbor sends charge at most 4, or the next neighbor sends charge 5 and the next after next neighbor sends charge at most 4 by Lemma \ref{lem:edgesends}. Hence, there is one neighbor that sends charge at most 4 when a neighbor of $v$ that sends a charge of $6$ so the average amount of charges the neighbors of $v$ send to $v$ is at most $5$. 
    This implies that the final charge of $v$ is less than 0 ($T(v) < 0$) if the degree of $v$ is at least 13 ($10(6-d)+5d=60-5d<0$). If the degree of $v$ is 12, the final charge can become exactly 0. We show the case when the degree of $v$ is exactly 12 in the rest of the proof. We denote $u_i$($0 \leq i < 12$) as neighbors of $v$ such that $u_{i+1}$ is the next neighbor of $u_{i}$ ($0 \leq i < 12$, $u_i (i \geq 12)$ represents $u_{i \bmod 12}$).

    \begin{itemize}
        \item [(1)] If $\phi(u_i, v)=\phi(u_{i+1}, v)=5$ and the average amount of charges the neighbors of $v$ send to $v$ is 5, then for each $0 \leq i < 12$, $\phi(u,v) = 5$ holds.
    \end{itemize}
    Suppose for a contradiction that a counterexample of (1) exists.
    Let $i, j$ be two different indexes such that one of the followings  holds:
    \begin{itemize}
        \item $i < j$, $\phi(u_{i-1}, v) \neq 5, \phi(u_{j+1}, v) \neq 5, \phi(u_k, v) = 5$ ($i \leq k \leq j$), or 
        \item $i > j$, $\phi(u_{i-1}, v) \neq 5, \phi(u_{j+1}, v) \neq 5, \phi(u_k, v) = 5$ ($i \leq k < 12$ or $ 0 \leq k \leq j$).
    \end{itemize}
    A counterexample of (1) has such indexes. We show only the first case (the proof of the second case is the same.). By Lemma \ref{lem:edgesends}, $\phi(u_{i-1}, v) < 5, \phi(u_{j+1}, v) < 5$. By Lemma \ref{lem:sendavg5+1}, the vertices $v_k$ ($j+2 \leq k < 12$ or $0 \leq k \leq i-2$) send at most $5(9-j+i)+1$ (average charge $5$ plus one). Hence, the claim (1) holds.

    \begin{itemize}
        \item [(2)] If $\phi(u_i, v)=6, \phi(u_{i+1}, v)=5, \phi(u_{i+2}, v)=4$, then the average amount of charges the neighbors of $v$ send to $v$ is less than 5.
    \end{itemize}
    If $\phi(u_i, v)=6, \phi(u_{i+1}, v)=5, \phi(u_{i+2}, v)=4$, degree of $u_{i+3}$ is 5 by Lemma \ref{lem:edgesends} (Figure \ref{fig:send6,5,4}). The amount of charges a vertex of degree 5 sends is at most 5 by Lemma \ref{lem:vsends}. If $\phi(u_{i+3}, v) < 5$, claim (2) holds by (1). 
    Moreover, if $\phi(u_{i+3}, v) = 5$ and $\phi(v_{i+4}, v) < 5$, claim (2) holds by the same reason. By claim (1), neither $\phi(u_{i+3}, v) = 5$ nor $\phi(u_{i+4}, v) = 5$. Hence the only possible case that contradicts claim (2) is $\phi(u_{i+3}, v) = 5$ and $\phi(u_{i+4}, v) = 6$. 
    However, C(17) weakly appears by considering $\phi(u_{i+2}, v)=4, \phi(u_{i+3}, v)=5, \phi(u_{i+4}, v)=6$ and Lemma \ref{lem:edgesends} (Figure \ref{fig:send6,5,4}). Hence claim (2) holds.

\medskip

    If the average amount of charges the neighbors of $v$ send to $v$ is exactly 5, by combining claims (1) and (2), one of the followings   happens: (i) $\phi(u_i, v) = 5$ for each $0 \leq i < 12$, or (ii) $\phi(u_i, v) = 6$ ($i \bmod 2 = 0$), $\phi(u_j, v) = 4$ ($i \bmod 2 = 1$).

    First, we handle case (i). 
    We apply Lemma \ref{lem:vsends} (especially Figure \ref{fig:send5}).
    The degree of $u_i$ is 5 or 6 or 7. If the degree of $u_i$ is 7 and the degree of $u_{i+1}$ is 5 or 6, then $\phi(u_{i+1}, v)$ is not 5. None of degrees of $u_i, u_{i+1}$ is 7. Hence, all degrees of $u_i$ ($0 \leq i < 12$) are 5 or 6. 

    Suppose all degrees of $u_i$ are 6. It is possible when all $u_i$ ($0 \leq i < 12$) sends charges of 5 in send5(6, 1) since for any other cases described in Figure \ref{fig:send5}, there exists a vertex of degree 5 that is adjacent to $v$. However, C(17) weakly appears in this case. Hence, the degree of some $u_i$ is 5.

    Suppose some $u_i$ is in send5(1, 2). The degree of either $u_{i+1}$ or $u_{i-1}$ is 5. Assume without loss of generality that the degree of $u_{i+1}$ is 5. Let $w, x, y$ be neighbors of $u_{i+1}$ such that $v, u_i, w, x, y$ are listed in clockwise order of the embedding of $G'$. The degree of $w$ is 6. If the degree of $x$ is 5, C(2) weakly appears. If the degree of $x$ is 6 and the degree of $y$ is 5, C(17) weakly appears. Hence $\phi(u_{i+1}, v)$ cannot be 5.

    Suppose some $u_i$ is in send5(1, 3). Assume without loss of generality that both degrees of $u_{i+1}, u_{i+2}$ are 5 and the degrees of neighbors of $u_{i+1}$ except $v, u_i, u_{i+2}$ are 6. By the condition of degrees, $u_{i+1}$ must be in send5(1, 3). However, C(20) weakly appears in this case.

    When some $u_i$ is in send5(1, 1), degree of both $u_{i-1}$ and $u_{i+1}$ is 5. Hence, both $u_{i-1}$ and $u_{i+1}$ are in send5(1,1). Hence, all vertices $u_i$ ($0 \leq i < 12$) are in send5(1,1). However, C(28) weakly appears in this case.
    
    Second, we handle case (ii). 
    The amount of charges a vertex of degree 5 sends is at most 5 by \ref{lem:vsends}, so the degree of $u_i$ ($i \bmod 2 = 0$) is not 5. We only consider the case that $u_i$ ($i \bmod 2 = 0$) is in one of the following cases: send6(1, 1), send6(1, 4), send6(1, 9) since for  other cases, the degree of $u_{i-2}$ or $u_{i+2}$ is 5. 

    Suppose some $u_i$ ($i \bmod 2 = 0$) is in send6(1, 4). The degree of $u_{i+1}$ or $u_{i-1}$ is 6. Assume without loss of generality that the degree of $u_{i+1}$ is 6, so $u_{i+2}$ is also in send6(1, 4). In this case, $\phi(u_{i+1}, v)$ is at most 2.

    Suppose some $u_i$ ($i \bmod 2 = 0$) is in send6(1, 9). The degree of $u_{i+1}$ is 5. If $\phi(u_{i+1}, v)$ is 4, the degree of $u_{i+2}$ is 5 by Lemma \ref{lem:vsends} (Figure \ref{fig:deg5send4}). In this case, C(1) weakly appears.

    Hence all vertices $u_i$ ($0 \leq i < 12$, $i \bmod 2 = 0$) are in send6(1, 1).  However, C(29) weakly appears in this case.
\end{proof}

\begin{lem}\label{lem:onesidesend}\showlabel{lem:onesidesend}
Let $H$ be an internally 6-connected triangulation in the plane (or the torus). Assume that none of our reducible configurations in the set $\mathcal{K}$ weakly appears $H$.
Let $vuw$ be a triangle face in $H$ and assume that the degree of $w$ is at least $9$. The followings hold.
\begin{itemize}
    \item If the degree of $u$ is $5$, then $\phi(u, v) \leq 4$. If the equality holds, we have one of the first three cases shown in Figure \ref{fig:deg5send4} in the appendix.
    \item If the degree of $u$ is $6$, then $\phi(u, v) \leq 3$. If the equality holds, we have a situation shown in Figure~\ref{fig:proj3_deg6_oneside}.
    \item If the degree of $u$ is $7$, then $\phi(u, v) \leq 2$.
    \item If the degree of $u$ is at least $8$, then $\phi(u, v) = 0$.
\end{itemize}
\end{lem}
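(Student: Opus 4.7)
The plan is to derive each of the four bounds by direct enumeration, leveraging Lemma~\ref{lem:vsends}, which catalogues every configuration in which a vertex sends charge at least $5$ (Figures~\ref{fig:send6} and~\ref{fig:send5}), together with Figure~\ref{fig:deg5send4} for a degree-$5$ vertex sending $4$. The extra hypothesis here is that $u$ and $v$ share the third vertex $w$ of the triangle $uvw$ with $\deg_{G'}(w)\ge 9$. In each listed case of the enumeration, the two common neighbors of the sender $u$ and the receiver $v$ in the local picture are labelled with explicit degree ranges, so the check reduces to asking whether one of those two vertices is allowed to have degree $\ge 9$; configurations in which both common neighbors are forced to have degree at most $8$ are discarded.

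I would first handle $\deg(u)=5$: scanning the cases of Figure~\ref{fig:send5}, I expect to verify that in every case in which a degree-$5$ vertex sends charge $5$, both common neighbors of $u$ and $v$ carry bounded degree (at most $7$), so the hypothesis $\deg(w)\ge 9$ excludes all of them and yields $\phi(u,v)\le 4$. The equality case then requires examining how a degree-$5$ vertex can send $4$ (Figure~\ref{fig:deg5send4}), and I would argue that only those pictures in which at least one common neighbor of $u$ and $v$ is left of unrestricted degree survive the high-degree constraint on $w$; these should turn out to be precisely the first three cases of Figure~\ref{fig:deg5send4}, as the statement asserts.

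Next, I would handle $\deg(u)=6$ and $\deg(u)=7$ in the same spirit: for each degree, list every rule $R\in\mathcal{R}$ with $s(R)=u$ and $t(R)=v$, combine them in all admissible ways around the edge $uv$ (reading each rule in either of its two orientations, so that either of the two common neighbors of $u$ and $v$ can play the role of $w$), and discard those contributions that force both common neighbors to have degree $\le 8$. This should yield the bounds $\phi(u,v)\le 3$ and $\phi(u,v)\le 2$ respectively, and the unique equality case for $\deg(u)=6$ should match Figure~\ref{fig:proj3_deg6_oneside}. For $\deg(u)\ge 8$, the same enumeration of $\mathcal{R}$ (Section~\ref{sect:rule}) shows that no rule with sender of degree $\ge 8$ is compatible with both neighbors of $uv$ being of the form demanded by $\deg(w)\ge 9$, hence $\phi(u,v)=0$.

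The main obstacle is the finite but sizeable case bookkeeping: every rule of $\mathcal{R}$ must be considered in both of its orientations at the edge $uv$ so that either common neighbor of $u$ and $v$ can serve as $w$, and for every admissible combination the extra constraint $\deg(w)\ge 9$ must be checked against the labelled degrees. This is exactly the same style of finite verification that underlies Lemma~\ref{lem:vsends}, and the pseudocode used there (Section~\ref{sect:code}) should extend verbatim after adding the single constraint on the third vertex of the chosen triangle. A minor side point to verify by hand is that no additional reducible configuration from $\mathcal{K}$ is forced to weakly appear in the residual cases, which is consistent with the standing hypothesis of the lemma.
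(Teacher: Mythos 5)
Your proposal is correct and takes essentially the same route as the paper. The paper's proof of Lemma~\ref{lem:onesidesend} is a pure computer check, explicitly stated to reuse the algorithm from the projective-planar paper \cite{proj2024}; what you describe --- enumerating all admissible rule combinations around the edge $uv$ in both orientations, imposing the extra degree constraint $\deg(w)\ge 9$ on one of the two common neighbors, and discarding combinations whose labelled degree ranges cannot accommodate it --- is exactly that algorithm with the one added constraint, and you correctly note the pseudocode extends verbatim. The only place to be a bit more careful than your sketch suggests is that Figures~\ref{fig:send6}, \ref{fig:send5}, \ref{fig:deg5send4} are generated under the standing assumption that the receiving vertex has degree at least $7$ (the setup of Section~\ref{sect:maxcharge}), whereas Lemma~\ref{lem:onesidesend} places no restriction on $\deg(v)$; so one should not merely filter those precomputed figures but actually re-run the enumeration with the new side condition, which is what you ultimately propose anyway.
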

\tikzset{deg5/.style={thick, circle, draw, fill=black, inner sep=1.5pt,}}
\tikzset{deg6/.style={thick, circle, draw, fill=black, inner sep=0pt,}}
\tikzset{deg7/.style={thick, circle, draw, fill=white, inner sep=2pt,}}
\tikzset{deg8/.style={thick, rectangle, draw, fill=white, inner sep=2pt,}}
\tikzset{deg9/.style={thick, regular polygon, regular polygon sides=3, rotate=180, draw, fill=white, inner sep=1pt,}}
\tikzset{deg10/.style={thick, regular polygon, draw, fill=white, inner sep=2pt,}}
\tikzset{->-/.style={decoration={
    markings,
    mark=at position .6 with {\arrow{>}}}, postaction={decorate}}}
\tikzset{->>-/.style={decoration={
    markings, 
    mark=at position .5 with {\arrow{>}};, 
    mark=at position .6 with {\arrow{>}};}, postaction={decorate}}}

\begin{figure*}[htbp]
\centering
\begin{tabular}{ccc}
\begin{minipage}[t]{0.20\hsize}
\centering
\begin{tikzpicture} []
    \node [deg6] at (0.8, 0.3) (v0) {};
    \node [deg7] at (1.8, 0.3) (v1) {};
    \node [above = 0.15 cm of v1, anchor=center] (v1+) { $+$ };
    \node [deg6] at (1.3, 1.166) (v2) {};
    \node [deg6] at (0.3, 1.166) (v3) {};
    \node [deg5] at (2.3, 1.166) (v4) {};
    \node [deg5] at (0.8, 2.032) (v5) {};
    \node [deg5] at (1.8, 2.032) (v6) {};
    \draw [->-] (v0) -- (v1);
    \foreach \u / \v in {v0/v1, v0/v2, v0/v3, v1/v2, v1/v4, v2/v3, v2/v4, v2/v5, v2/v6, v3/v5, v4/v6, v5/v6}
        \draw (\u) -- (\v);
\end{tikzpicture}
\end{minipage}
&
\begin{minipage}[t]{0.20\hsize}
\centering
\begin{tikzpicture} []
    \node [deg6] at (0.8, 0.3) (v0) {};
    \node [deg7] at (1.8, 0.3) (v1) {};
    \node [above = 0.15 cm of v1, anchor=center] (v1+) { $+$ };
    \node [deg5] at (1.3, 1.166) (v2) {};
    \node [deg5] at (0.3, 1.166) (v3) {};
    \node [deg5] at (1.126, 2.151) (v4) {};
    \draw [->-] (v0) -- (v1);
    \foreach \u / \v in {v0/v1, v0/v2, v0/v3, v1/v2, v2/v3, v2/v4, v3/v4}
        \draw (\u) -- (\v);
\end{tikzpicture}
\end{minipage}
&
\begin{minipage}[t]{0.20\hsize}
\centering
\begin{tikzpicture} []
    \node [deg6] at (0.8, 0.3) (v0) {};
    \node [deg7] at (1.8, 0.3) (v1) {};
    \node [above = 0.15 cm of v1, anchor=center] (v1+) { $+$ };
    \node [deg5] at (1.3, 1.166) (v2) {};
    \node [deg5] at (0.3, 1.166) (v3) {};
    \node [deg5] at (2.24, 1.508) (v4) {};
    \node [deg6] at (1.126, 2.151) (v5) {};
    \draw [->-] (v0) -- (v1);
    \foreach \u / \v in {v0/v1, v0/v2, v0/v3, v1/v2, v1/v4, v2/v3, v2/v4, v2/v5, v3/v5, v4/v5}
        \draw (\u) -- (\v);
\end{tikzpicture}
\end{minipage}
\\
\begin{minipage}[t]{0.20\hsize}
\centering
\begin{tikzpicture} []
    \node [deg6] at (0.8, 0.3) (v0) {};
    \node [deg7] at (1.8, 0.3) (v1) {};
    \node [above = 0.15 cm of v1, anchor=center] (v1+) { $+$ };
    \node [deg5] at (1.3, 1.166) (v2) {};
    \node [deg6] at (0.3, 1.166) (v3) {};
    \node [deg5] at (2.24, 1.508) (v4) {};
    \node [deg5] at (1.126, 2.151) (v5) {};
    \draw [->-] (v0) -- (v1);
    \foreach \u / \v in {v0/v1, v0/v2, v0/v3, v1/v2, v1/v4, v2/v3, v2/v4, v2/v5, v3/v5, v4/v5}
        \draw (\u) -- (\v);
\end{tikzpicture}
\end{minipage}
&
\begin{minipage}[t]{0.20\hsize}
\centering
\begin{tikzpicture} []
    \node [deg6] at (0.8, 0.3) (v0) {};
    \node [deg7] at (1.8, 0.3) (v1) {};
    \node [above = 0.15 cm of v1, anchor=center] (v1+) { $+$ };
    \node [deg6] at (1.3, 1.166) (v2) {};
    \node [deg5] at (0.3, 1.166) (v3) {};
    \node [deg5] at (0.8, 2.032) (v4) {};
    \node [deg5] at (1.8, 2.032) (v5) {};
    \draw [->-] (v0) -- (v1);
    \foreach \u / \v in {v0/v1, v0/v2, v0/v3, v1/v2, v2/v3, v2/v4, v2/v5, v3/v4, v4/v5}
        \draw (\u) -- (\v);
\end{tikzpicture}
\end{minipage}
&
\begin{minipage}[t]{0.20\hsize}
\centering
\begin{tikzpicture} []
    \node [deg6] at (1.3, 0.3) (v0) {};
    \node [deg7] at (2.3, 0.3) (v1) {};
    \node [above = 0.15 cm of v1, anchor=center] (v1+) { $+$ };
    \node [deg5] at (1.8, 1.166) (v2) {};
    \node [deg5] at (0.8, 1.166) (v3) {};
    \node [deg5] at (0.3, 0.3) (v4) {};
    \draw [->-] (v0) -- (v1);
    \foreach \u / \v in {v0/v1, v0/v2, v0/v3, v0/v4, v1/v2, v2/v3, v3/v4}
        \draw (\u) -- (\v);
    \foreach \u / \v in {}
        \draw[ultra thick, blue] (\u) -- (\v);
\end{tikzpicture}
\end{minipage}
\\
\end{tabular}
\caption{A vertex of degree $6$ sends charge 3 in these cases.}
\label{fig:proj3_deg6_oneside}
\end{figure*}
Our proof of Lemma \ref{lem:onesidesend} is done by computer check (and indeed it is the same as that given in \cite{proj2024}. The algorithm used to prove Lemma \ref{lem:onesidesend} is described in \cite{proj2024}.

\tikzset{deg5/.style={thick, circle, draw, fill=black, inner sep=1.5pt,}}
\tikzset{deg6/.style={thick, circle, draw, fill=black, inner sep=0pt,}}
\tikzset{deg7/.style={thick, circle, draw, fill=white, inner sep=2pt,}}
\tikzset{deg8/.style={thick, rectangle, draw, fill=white, inner sep=2pt,}}
\tikzset{deg9/.style={thick, regular polygon, regular polygon sides=3, rotate=180, draw, fill=white, inner sep=1pt,}}
\tikzset{deg10/.style={thick, regular polygon, draw, fill=white, inner sep=2pt,}}
\tikzset{->-/.style={decoration={
    markings,
    mark=at position .6 with {\arrow{>}}}, postaction={decorate}}}
\tikzset{->>-/.style={decoration={
    markings, 
    mark=at position .5 with {\arrow{>}};, 
    mark=at position .6 with {\arrow{>}};}, postaction={decorate}}}

\begin{figure}[htbp]
\centering
\begin{minipage}{0.3\linewidth}
\centering
\captionsetup{width=.95\linewidth}
\begin{tikzpicture} [baseline=1.5cm]
    \node [deg9] at (0.5, 0) (a) {};
    \node [below = 0.4 cm of a, anchor=center] (a+) { $+$ };
    \node [deg9] at (1.5, 0) (b) {};
    \node [below = 0.4 cm of b, anchor=center] (b+) { $+$ };
    \node [deg5] at (0.1, 0.9) (c) {};
    \node [deg5] at (1.0, 0.8) (d) {};
    \node [deg5] at (1.9, 0.9) (e) {};
    \node [deg6] at (1.0, 1.6) (f) {};
    \draw [->-] (d) -- (a);
    \draw [->-] (d) -- (b);
    \foreach \u / \v in {a/b, a/c, a/d, b/d, b/e, c/d, c/f, d/e, d/f, e/f}
        \draw (\u) -- (\v);
\end{tikzpicture}
\caption{When $u$ is the vertex with two arrows and $v,w$ are two vertices of degree at least $9$, $\phi(u,v) + \phi(u,w) = 8$.}
\label{fig:twoedge8}
\end{minipage}
\begin{minipage}{0.3\linewidth}
\centering
\captionsetup{width=.95\linewidth}
\begin{tikzpicture} [baseline=1.5cm]
    \node [deg9] at (0.5, 0) (a) {};
    \node [below = 0.4 cm of a, anchor=center] (a+) { $+$ };
    \node [deg9] at (1.5, 0) (b) {};
    \node [below = 0.4 cm of b, anchor=center] (b+) { $+$ };
    \node [deg5] at (0.1, 0.9) (c) {};
    \node [deg5] at (1.0, 0.8) (d) {};
    \node [deg6] at (1.9, 0.9) (e) {};
    \node [deg5] at (1.0, 1.6) (f) {};
    \draw [->-] (d) -- (a);
    \draw [->-] (d) -- (b);
    \foreach \u / \v in {a/b, a/c, a/d, b/d, b/e, c/d, c/f, d/e, d/f, e/f}
        \draw (\u) -- (\v);
\end{tikzpicture}
\caption{When $u$ is the vertex with two arrows and $v,w$ are two vertices of degree at least $9$, $\phi(u,v) + \phi(u,w) = 7$.}
\label{fig:twoedge7}
\end{minipage}
\begin{minipage}{0.3\linewidth}
\centering
\captionsetup{width=.95\linewidth}
\begin{tikzpicture} [baseline=1.5cm]
    \node [deg9] at (1.3, 0) (a) {};
    \node [below = 0.4 cm of a, anchor=center] (a+) { $+$ };
    \node [deg9] at (0.5, 0.6) (b) {};
    \node [below = 0.4 cm of b, anchor=center] (b+) { $+$ };
    \node [deg9] at (2.1, 0.6) (c) {};
    \node [below = 0.4 cm of c, anchor=center] (c+) { $+$ };
    \node [deg5] at (1.3, 0.9) (d) {};
    \node [deg5] at (0.8, 1.5) (e) {};
    \node [deg5] at (1.8, 1.5) (f) {};
    \draw [->-] (d) -- (a);
    \draw [->-] (d) -- (b);
    \draw [->-] (d) -- (c);
    \foreach \u / \v in {a/b, a/c, a/d, b/d, b/e, c/d, c/f, d/e, d/f, e/f}
        \draw (\u) -- (\v);
\end{tikzpicture}
\caption{When $u$ is the vertex with three arrows and $v,w,x$ are three vertices of degree at least $9$, $\phi(u,v) + \phi(u,w) + \phi(u,x) = 10$.}
\label{fig:threeedge10}
\end{minipage}
\end{figure}

The proofs of Lemmas \ref{lem:twoedgesends} and \ref{lem:threeedgesends} below are the same as those given in \cite{proj2024} since Lemma \ref{lem:onesidesend} (especially when the degree of $u$ is $7$ or $8$) is the same as \cite{proj2024}.

\begin{lem}\label{lem:twoedgesends}\showlabel{lem:twoedgesends}
Let $H$ be internally 6-connected triangulation in the plane (or the torus). Assume that none of our reducible configurations in the set $\mathcal{K}$ weakly appears $H$.
Let $vuw$ be a triangle face in $H$ and assume the degree of $v, w$ is at least $9$.
\begin{itemize}
    \item If the degree of $u$ is $5$, then $\phi(u, v) + \phi(u, w) \leq 8$. If $\phi(u, v) + \phi(u, w) = 8(, 7)$, they are in Figure \ref{fig:twoedge8}(, \ref{fig:twoedge7}) respectively. 
    \item If the degree of $u$ is $6, 7$, then $\phi(u, v) + \phi(u, w) \leq 4$.
    \item If the degree of $u$ is at least $8$, then $\phi(u, v) + \phi(u, w) = 0$.
\end{itemize}
\end{lem}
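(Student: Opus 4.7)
The plan is to bound each of $\phi(u,v)$ and $\phi(u,w)$ separately using Lemma \ref{lem:onesidesend}, and then exclude the combinations that fail to sum below the stated bounds by exhibiting a weak appearance of some configuration in $\mathcal{K}$. Note that Lemma \ref{lem:onesidesend} applies to each of the two edges $uv$ and $uw$ because both opposite vertices $w$ and $v$ have degree at least $9$.

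The easy cases fall out immediately. If $\deg(u) \ge 8$, Lemma \ref{lem:onesidesend} gives $\phi(u,v) = \phi(u,w) = 0$, so their sum is $0$. If $\deg(u) = 7$, each term is at most $2$, so the sum is at most $4$. For $\deg(u)=5$, each term is at most $4$, giving the crude bound $8$; the content is then to characterise when equality $\phi(u,v)+\phi(u,w) \in \{7,8\}$ can be realised. Since the cases achieving $\phi(u,v)=4$ (resp.\ $\phi(u,w)=4$) are enumerated by the first three pictures of Figure \ref{fig:deg5send4}, one enumerates the $3 \times 3$ possible local patterns on the two sides of the triangle $uvw$, merges them into a single local configuration around $u$, and checks for each pattern whether the merger is consistent with the degree constraints around $u$ and forces the weak appearance of some element of $\mathcal{K}$ (otherwise keeping that pattern among those listed in Figure \ref{fig:twoedge8} or Figure \ref{fig:twoedge7}).

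The main obstacle is the $\deg(u)=6$ case, where the naive bound coming from Lemma \ref{lem:onesidesend} is only $\phi(u,v)+\phi(u,w) \le 6$, and we must sharpen it to $4$. Here the unique pattern achieving $\phi(u,v)=3$ is shown in Figure \ref{fig:proj3_deg6_oneside}; the task is to show that overlaying any pattern giving $\phi(u,w) \ge 2$ on the other side of the triangle $uvw$ produces either a vertex-degree conflict, a short separating cycle forbidden by Fact \ref{fact:cont-cycle}, or a weakly appearing reducible configuration in $\mathcal{K}$. The enumeration has to respect the orientation of the triangle $uvw$ in $H$, and must be carried out for both orientations since rules are applied in both orientations (cf.\ Definition \ref{dfn:amount}).

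Finally, because the number of candidate local patterns to merge and check is large (the rule set $\mathcal{R}$ contains $201$ rules and $\mathcal{K}$ has over $14000$ configurations), the case analysis is carried out by the same computer routine used in \cite{proj2024}: iterate over all combinations of rules applied on edges $uv$ and $uw$ that obey the degree constraints on $u$, $v$, $w$, build the union configuration around $u$, and test whether any element of $\mathcal{K}$ weakly appears in it. This is the step where we expect the combinatorial bulk; no new computer-free argument is required, since the bounds depend only on the local structure at $u$, which is identical to the situation in \cite{proj2024} whenever $\deg(u) \le 8$ and Lemma \ref{lem:onesidesend} already matches theirs.
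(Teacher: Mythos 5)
Your proposal is correct and follows essentially the same approach as the paper, which for this lemma simply defers to \cite{proj2024}: the text states that the proofs of Lemmas \ref{lem:twoedgesends} and \ref{lem:threeedgesends} ``are the same as those given in \cite{proj2024} since Lemma \ref{lem:onesidesend} \ldots is the same as \cite{proj2024}.'' Your decomposition — bound each of $\phi(u,v)$ and $\phi(u,w)$ via Lemma \ref{lem:onesidesend}, dispatch the degree-$\ge 7$ cases immediately from those bounds, and then close the degree-$5$ and degree-$6$ cases by overlaying rule applications on the two sides of the triangle and computer-checking for weakly appearing members of $\mathcal{K}$ — is exactly the argument behind that citation. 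One small imprecision: the ``$3\times 3$'' enumeration of patterns from Figure \ref{fig:deg5send4} only covers the $\phi(u,v)=\phi(u,w)=4$ (sum $=8$) case; to characterize sum $=7$ you additionally need the degree-$5$ patterns with $\phi=3$ on one side, which Lemma \ref{lem:onesidesend} does not explicitly enumerate. However, your final paragraph's description of iterating over \emph{all} rule combinations on $uv$ and $uw$ subsumes this, so the gap is only in the interim wording, not in the method.
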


\begin{lem}\label{lem:threeedgesends}\showlabel{lem:threeedgesends}
Let $H$ be an internally 6-connected triangulation in the plane (or the torus). Assume that none of our reducible configurations in the set $\mathcal{K}$ weakly appears in $H$.
Let $u$ be a vertex in $H$ and $v, w, x$ be the neighbors of $u$ so that $v,w$ are adjacent and $w,x$ are adjacent. Assume that the degree of $v, w, x$ is at least $9$.
\begin{itemize}
    \item If the degree of $u$ is $5$, then $\phi(u, v) + \phi(u, w) + \phi(u, x) \leq 10$. If the equality holds, they are in Figure \ref{fig:threeedge10}.
    \item If the degree of $u$ is $6$ (, $7$), then $\phi(u, v) + \phi(u, w) + \phi(u, x) \leq 6$ (, $4$) respectively.
    \item If the degree of $u$ is at least 8, then $\phi(u, v) + \phi(u, w) + \phi(u, x) = 0$.
\end{itemize}
\end{lem}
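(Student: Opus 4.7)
The plan is to follow the same structure as the proof of the corresponding lemma in the projective planar case \cite{proj2024}, leveraging the fact that Lemma \ref{lem:onesidesend} matches that paper verbatim for $\deg(u) \in \{7, 8\}$, and combining it with Lemma \ref{lem:twoedgesends}. The argument proceeds by a case analysis on $\deg(u)$, from top to bottom.

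For $\deg(u) \geq 8$, three direct applications of Lemma \ref{lem:onesidesend} immediately give $\phi(u, v) = \phi(u, w) = \phi(u, x) = 0$: each of the triangles $uvw$ and $uwx$ contains a neighbor of $u$ of degree at least $9$, so the hypothesis of Lemma \ref{lem:onesidesend} is met for every edge. For $\deg(u) = 7$, I would first apply Lemma \ref{lem:twoedgesends} to the triangle $uvw$ (yielding $\phi(u,v) + \phi(u,w) \leq 4$) and to $uwx$ (yielding $\phi(u,w) + \phi(u,x) \leq 4$). These do not immediately sum to the desired bound of $4$, so the next step is to inspect the rules of $\mathcal{R}$ that could simultaneously contribute to $\phi(u,v)$ and to $\phi(u,x)$ when both adjacent triangles already impose the sharp $\leq 4$ constraint. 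A short rule-by-rule inspection shows that any such rule would force a local configuration weakly appearing in $\mathcal{K}$, and hence the bound $\phi(u,v)+\phi(u,w)+\phi(u,x) \leq 4$ follows. The $\deg(u) = 6$ case proceeds analogously, using the $\leq 4$ bound of Lemma \ref{lem:twoedgesends} on each adjacent pair and the $\phi(u,\cdot) \leq 3$ bound of Lemma \ref{lem:onesidesend} on each edge individually.

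For $\deg(u) = 5$, the most delicate case, I would start from the pairwise bound $\phi(u,v) + \phi(u,w) \leq 8$ and $\phi(u,w) + \phi(u,x) \leq 8$ given by Lemma \ref{lem:twoedgesends}, together with the individual bound $\phi(u, \cdot) \leq 4$ from Lemma \ref{lem:onesidesend}. Since these only yield an a priori bound of $12$, one must exploit the enumeration of equality cases in Figure \ref{fig:twoedge8}: whenever $\phi(u,v) + \phi(u,w) = 8$, the local structure around $uvw$ is pinned down, and one checks which values of $\phi(u,x)$ are compatible with the non-appearance (as weak subgraphs) of any configuration in $\mathcal{K}$ inside $uwx$ and its extension. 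This finite enumeration is handled by the same computer script used in \cite{proj2024} (now run against our enlarged rule set $\mathcal{R}$ and configuration set $\mathcal{K}$), and the survivors constitute exactly the configurations in Figure \ref{fig:threeedge10}. The principal obstacle is precisely this degree-$5$ equality analysis: because a degree-$5$ vertex can contribute up to $4$ per edge, tightening the a priori $12$ to $10$ requires the full power of the equality enumeration of Lemma \ref{lem:twoedgesends} combined with the reducible set $\mathcal{K}$, and the bookkeeping must carefully account for the fact that the same rule may contribute to two adjacent edges at once.
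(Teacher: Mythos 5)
Your proposal follows essentially the same route as the paper, which simply states that the proof of Lemma~\ref{lem:threeedgesends} is the same as the corresponding one in \cite{proj2024} because Lemma~\ref{lem:onesidesend} (in particular for $\deg(u)\in\{7,8\}$) is unchanged, and because the degree-$5$ equality cases are enumerated by the same computer machinery used there. You make explicit the clean degree-$\geq 8$ derivation (three applications of Lemma~\ref{lem:onesidesend} across the two triangles $uvw$ and $uwx$) and correctly flag the degree-$5$ case as the one requiring the equality enumeration of Figure~\ref{fig:twoedge8} together with a machine check against $\mathcal{K}$; this is the same division of labor as in \cite{proj2024}, and so the proposal is sound and matches the paper's approach.

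One small caution on presentation rather than substance: for $\deg(u)=6$ and $\deg(u)=7$ the naive combination of the bounds from Lemmas~\ref{lem:onesidesend} and~\ref{lem:twoedgesends} does not by itself close the gap (one gets a priori ceilings of $7$ and $6$ rather than $6$ and $4$), so the ``short rule-by-rule inspection'' you invoke is genuinely carrying weight there, not merely tidying up. You do acknowledge this, but it would be worth making explicit that the middle-degree cases, like the degree-$5$ case, require examining which rules could contribute simultaneously to $\phi(u,v)$, $\phi(u,w)$, and $\phi(u,x)$ under the shared constraint of no weak appearance from $\mathcal{K}$, exactly as done in \cite{proj2024}.
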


\begin{figure}
\begin{minipage}{0.45\linewidth}
    \centering
    \includegraphics[width=7cm]{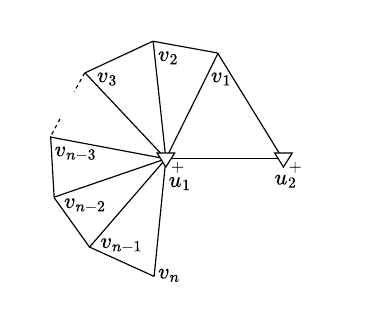}
    \caption{}
    \label{fig:send-fan1}
\end{minipage}
\begin{minipage}{0.45\linewidth}
    \centering
    \includegraphics[width=7cm]{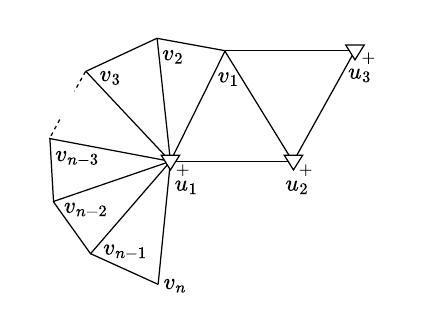}
    \caption{}
    \label{fig:send-fan2}
\end{minipage}
\end{figure}

\begin{lem}\label{lem:sendavg5-2or4}\showlabel{lem:sendavg5-2or4}
    Let $H$ be an internally 6-connected triangulation in the plane (or the torus). Assume that none of our reducible configurations in the set $\mathcal{K}$ weakly appears in $H$.
    \begin{enumerate}
        \item[(a)] Let $u_1, u_2$ be two adjacent vertices of degree at least 9 in $H$, and $v_1, v_2, ..., v_n$ be neighbors of $u_1$ listed in the clockwise order of $H$ such that $v_1u_1u_2$ constitutes a facial triangle. See Figure \ref{fig:send-fan1}. Then, 
         \begin{itemize}
            \item If the degree of $v_1$ is 5, $\phi(v_1, u_2) + \sum_{i=1}^n \phi(v_i, u_1) \leq 5n + 3$. 
            \item If the degree of $v_1$ is at least $6$, $\phi(v_1, u_2) + \sum_{i=1}^n \phi(v_i, u_1) \leq 5n $.
        \end{itemize}

        \item[(b)] Let $u_1, u_2, u_3$ be vertices of degree at least 9 in $H$, and $v_1, v_2, ..., v_n$ be neighbors of $u_1$ listed in the clockwise order of $H$ such that both $v_1u_1u_2, v_1u_2u_3$ are facial triangles. See Figure \ref{fig:send-fan2}. Then, $\phi(v_1, u_2) + \phi(v_1, u_3) + \sum_{i=1}^n \phi(v_i, u_1) \leq 5n + 6$
    \end{enumerate}
\end{lem}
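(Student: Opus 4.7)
The plan is to combine the local bounds on the charge sent out of $v_1$ to its adjacent high-degree vertices (Lemmas~\ref{lem:twoedgesends} and~\ref{lem:threeedgesends}) with the consecutive-neighbor sum bound of Lemma~\ref{lem:sendavg5+1} applied to $v_2,\ldots,v_n$, splitting cases according to $d(v_1)$.

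For part~(a), if $d(v_1)\ge 8$, then applying Lemma~\ref{lem:onesidesend} to the triangle $v_1u_1u_2$ twice (once with $w=u_1$ and once with $w=u_2$, noting that both have degree $\ge 9$) gives $\phi(v_1,u_1)=\phi(v_1,u_2)=0$; Lemma~\ref{lem:sendavg5+1} then bounds $\sum_{i=2}^{n}\phi(v_i,u_1)\le 5(n-1)+1$, so the total is at most $5n-3\le 5n$. If $d(v_1)\in\{6,7\}$, then Lemma~\ref{lem:twoedgesends} gives $\phi(v_1,u_1)+\phi(v_1,u_2)\le 4$, so the total is at most $4+5(n-1)+1=5n$, as claimed.

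The tight case is $d(v_1)=5$. Here Lemma~\ref{lem:twoedgesends} only yields $\phi(v_1,u_1)+\phi(v_1,u_2)\le 8$, and the naive combination with Lemma~\ref{lem:sendavg5+1} gives $8+5(n-1)+1=5n+4$, overshooting the target by one. The main obstacle is to shave off this extra unit. The strategy is as follows. If $\phi(v_1,u_1)+\phi(v_1,u_2)\le 7$, then we are already done, since $7+(5n-4)=5n+3$. Otherwise $\phi(v_1,u_1)+\phi(v_1,u_2)=8$, and the local configuration at $v_1$ is one of those enumerated in Figure~\ref{fig:twoedge8}. For each such configuration I would inspect the next clockwise neighbor $v_2$ (which is also adjacent to $v_1$ via the triangular face $u_1v_1v_2$) and, using Lemma~\ref{lem:vsends} together with the weak non-appearance of configurations of $\mathcal{K}$, verify that $\phi(v_2,u_1)\le 4$. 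This suffices: applying Lemma~\ref{lem:sendavg5+1} to $v_3,\ldots,v_n$ yields $\sum_{i=3}^{n}\phi(v_i,u_1)\le 5(n-2)+1$, hence $\sum_{i=2}^{n}\phi(v_i,u_1)\le 4+5(n-2)+1=5n-5$, and the total is at most $8+5n-5=5n+3$.

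For part~(b), the argument is cleaner because the budget $5n+6$ already absorbs the slack of Lemma~\ref{lem:sendavg5+1}. Splitting on $d(v_1)$, Lemma~\ref{lem:threeedgesends} gives $\phi(v_1,u_1)+\phi(v_1,u_2)+\phi(v_1,u_3)\le 10,\,6,\,4,\,0$ according as $d(v_1)=5,\,6,\,7,\,\ge 8$, respectively. Combining each case with $\sum_{i=2}^{n}\phi(v_i,u_1)\le 5(n-1)+1$ from Lemma~\ref{lem:sendavg5+1}, the quantity $\phi(v_1,u_2)+\phi(v_1,u_3)+\sum_{i=1}^{n}\phi(v_i,u_1)$ is bounded by $5n+6,\,5n+2,\,5n,\,5n-3$, respectively, all within the desired bound. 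No extra case-by-case inspection of explicit figures is required, so the only real difficulty in the proof lies in the $d(v_1)=5$ subcase of part~(a).
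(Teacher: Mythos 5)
Your proposal is correct and takes essentially the same approach as the paper: bound $\phi(v_1,u_1)+\phi(v_1,u_2)$ (resp.\ the three-term sum) via Lemma~\ref{lem:twoedgesends} (resp.\ Lemma~\ref{lem:threeedgesends}), control $\sum_{i\ge 2}\phi(v_i,u_1)$ via Lemma~\ref{lem:sendavg5+1}, and in the tight subcase $d(v_1)=5$, $\phi(v_1,u_1)+\phi(v_1,u_2)=8$, rule out $\phi(v_2,u_1)\ge 5$ by inspecting the Figure~\ref{fig:twoedge8} situations against Lemma~\ref{lem:vsends}. The paper carries out that final inspection explicitly (finding that $v_2$ would be in send5(1,1), send5(1,2), or send5(1,3), each forcing C(2) or C(17) to weakly appear), which is the one step you outline rather than execute, but the strategy is identical.
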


\begin{proof}
    First, we prove (a). 

    By Lemma \ref{lem:twoedgesends}, $\phi(v_1, u_2) + \phi(v_1, u_1) = 8$ in the case described in Figure \ref{fig:twoedge8}. If $\phi(v_2, u_1)$ is at most 4, the claim holds since $\sum_{i=3}^n \phi(v_i, u_1) \leq 5n-9$ by Lemma \ref{lem:sendavg5+1}. Hence, we assume 
    $\phi(v_1,u_2) + \phi(v_1, u_1) = 8$ and $\phi(v_2, u_1) \geq 5$.  Then $v_2$ sends charge 5 in either of send5(1,1) or send5(1,2) or  send5(1,3). In every case, C(2), or C(17) weakly appears.
    
    Second, we prove (b). By Lemma \ref{lem:threeedgesends}, $\phi(v_1, u_1) + \phi(v_1, u_2) + \phi(v_1, u_3) \leq 10$. By Lemma \ref{lem:sendavg5+1} $\sum_{i=2}^n \phi(v_i, u_1) \leq 5n-4$. The statement holds by combining these two formulas.
\end{proof}

\section{Looking at a disk bounded by a short cycle}
\label{sect:smaller}
\showlabel{sect:smaller}

In this section, we show Lemma \ref{lem:conf-in-T}, which states that a reducible configuration appears strictly inside the disk bounded by a cycle with some specified conditions. 
This lemma is necessary to handle 6,7-edge-cuts in Section \ref{sect:6,7-cut}. The same statement of this lemma is proved in \cite{proj2024}. The configuration set and rule set used in \cite{proj2024} are different from our sets $\mathcal{K}, \mathcal{R}$, but we show that the same statement holds for our setting too. The key to proving this lemma is Lemma \ref{lem:conf-in-T-general}. In \cite{proj2024}, Lemma \ref{lem:conf-in-T-general} implied Lemma \ref{lem:conf-in-T} by combining discharging method to ensure the appearance of configuration if a positively charged vertex exists. 
Hence, to prove Lemma \ref{lem:conf-in-T} in our setting  $\mathcal{K}, \mathcal{R}$, we only have to check the hypothesis of Lemma \ref{lem:conf-in-T-general}, especially 3.1 and 3.2 below. 
In fact, we already checked these two statements in Lemma \ref{lem:sendavg5-2or4}, so Lemma \ref{lem:conf-in-T} holds for our sets $\mathcal{K}, \mathcal{R}$ too. 

It turns out that Lemma \ref{lem:conf-in-T} is quite useful for other cases, including the planar, the doublecross, the apex case, and the projective planar cases.

\begin{lem}\label{lem:conf-in-T}\showlabel{lem:conf-in-T}
Let $T$ be an internally 6-connected near-triangulation in the plane, which is a subgraph of $G'$. Let $C$ be an induced cycle in $T$, which bounds the infinite region of $T$. Assume that there is no vertex in $V(T -C)$ which is adjacent to at least four consecutive vertices of $C$ and there are more than $\frac{18}{5} \cdot |C| - 12$ edges in $T$ between $C$ and $T - C$. Then a reducible configuration in the set $\mathcal{K}$ appears in $T$.
\end{lem}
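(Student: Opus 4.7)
The plan is to invoke the general machinery of Lemma \ref{lem:conf-in-T-general} from \cite{proj2024}, which already packages the discharging-based reduction to a numerical inequality on the boundary of a planar near-triangulation. In \cite{proj2024} that lemma takes as hypotheses two averaging estimates (conditions 3.1 and 3.2 there) on how the discharging rules transport charge through a fan of high-degree vertices. The proof therefore reduces to (i) placing $T$ into this framework and (ii) verifying the two averaging conditions for the present rule set $\mathcal{R}$ and reducible set $\mathcal{K}$.

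For step (i), I would argue by contradiction: assume no configuration of $\mathcal{K}$ appears in $T$. A standard Euler-formula calculation for a planar near-triangulation with outer cycle of length $c := |C|$ yields $\sum_{v \in V(T)} T_0(v) = 20c + 60$. Since $T$ is internally 6-connected and $C$ is induced, the unavoidability analysis of Section \ref{sect:discharging} (in particular Lemmas \ref{lem:deg5deg6}, \ref{lem:12}, and \ref{T(v)>0}) applies locally at every vertex $v \in V(T) \setminus V(C)$ and forces $T(v) \leq 0$. Hence the entire surplus of $20c + 60$ units of charge must be carried by $C$, which, after subtracting the initial charge carried by $C$, translates into a net influx of at least $10m - 20c + 60$ across the $m$ boundary edges.

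For step (ii), the assumption that no interior vertex is adjacent to four consecutive vertices of $C$ confines any fan of $C$-neighbours around a single interior vertex to length at most three, so Lemma \ref{lem:sendavg5-2or4} applies fan by fan. Summing those local estimates, and absorbing $C$-to-$C$ traffic into the same accounting scheme used in \cite{proj2024}, I expect an upper bound of the form $5m - 2c$ on the net inward flow. Combining this with the lower bound $10m - 20c + 60$ forces $m \leq \tfrac{18}{5}c - 12$, contradicting the hypothesis.

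The delicate point, and the reason the lemma is not immediate from \cite{proj2024}, is the re-verification of the two averaging conditions 3.1 and 3.2 for our much larger systems $\mathcal{R}$ (201 rules) and $\mathcal{K}$ (over $14000$ configurations). Lemma \ref{lem:sendavg5-2or4} is precisely the packaging of those two conditions in our setting, and once it is available the rest of the argument is inherited essentially verbatim from the projective-planar case. The main obstacle I anticipate in executing the plan is not the Euler or topological bookkeeping but pinning down the exact constants in the fan-bound estimate so that they combine to yield the sharp $\tfrac{18}{5}$ coefficient; any weaker averaging estimate would still produce a bound of similar shape but miss the precise threshold demanded by the lemma.
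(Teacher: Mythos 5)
Your approach is exactly the paper's: reduce to Lemma \ref{lem:conf-in-T-general}, note that only its conditions 3.1 and 3.2 depend on the specific rule and configuration sets $\mathcal{R}, \mathcal{K}$, and observe that these are precisely what Lemma \ref{lem:sendavg5-2or4} establishes. The Euler-formula and fan-decomposition accounting you sketch is internal to the proof of Lemma \ref{lem:conf-in-T-general} itself (which the paper inherits from \cite{proj2024} rather than re-deriving), but your identification of Lemma \ref{lem:sendavg5-2or4} as the sole new verification required is the entire content of the paper's argument.
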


\begin{lem}\label{lem:conf-in-T-general}\showlabel{lem:conf-in-T-general}
Let $T$ be an internally 6-connected near-triangulation in the plane and let $C$ be an induced cycle in $T$, which bounds the infinite region of $T$. Let $\mathcal{R}, \mathcal{K}$ be a set of rules and a set of reducible configurations respectively. Suppose that no configuration in $\mathcal{K}$ appears in $T$.

Assume that 
\begin{enumerate}
    \item 
    there is no vertex in $V(T-C)$ which is adjacent to at least four consecutive vertices of $C$, 
    \item 
    there are more than $\frac{18}{5} \cdot |C| - 12$ edges in $T$ between $C$ and $T - C$, 
    \item
    by applying rules in $\mathcal{R}$, the total amount of charges moved from $V(T-C)$ to $V(C)$ are at most $5k-2|C|$, where $k$ is the number of edges between $C$ and $T-C$. \\

    \medskip
    
    3 can be achieved by showing the following statements:
    \begin{itemize}
        \item[3.1] let $u_1, u_2$ be adjacent vertices of degree at least 9, and $v_1, v_2, ..., v_n$ be neighbors of $u_1$ listed in clockwise order of $T$ such that $v_1u_1u_2$ constitutes a facial triangle. See Figure \ref{fig:send-fan1}. Then, the amount of charges sent from $v_1,v_2,...,v_n$ to $u_1, u_2$ by $\mathcal{R}$ is at most $5n+3$,
        \item[3.2] let $u_1, u_2, u_3$ be vertices of degree at least 9, and $v_1, v_2, ..., v_n$ be neighbors of $u_1$ listed in clockwise order of $T$ such that both $v_1u_1u_2, v_1u_2u_3$ are facial triangles. See Figure \ref{fig:send-fan2}. Then, the amount of charges sent from $v_1,v_2,...,v_n$ to $u_1, u_2, u_3$ by $\mathcal{R}$ is at most $5n+6$. 
    \end{itemize}
\end{enumerate}
 Then, there is a vertex in $V(T-C)$ that ends up with a final charge positive. 
\end{lem}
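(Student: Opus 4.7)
The plan is a global charge-counting argument restricted to the interior set $V_I := V(T - C)$. The first step is the key identity
$$\sum_{v \in V_I} T_0(v) \;=\; 10k - 20|C| + 60,$$
where $k$ denotes the number of edges between $V_I$ and $V(C)$. To prove it, I would apply Euler's formula to the near-triangulation $T$: the only non-triangular face has length $|C|$, so $|E(T)| = 3|V(T)| - |C| - 3$, whence $2|E(T)| = 6|V_I| + 4|C| - 6$. Each vertex of $C$ has exactly two neighbors on $C$, so $\sum_{v \in V(C)} d_T(v) = 2|C| + k$. Subtracting gives $\sum_{v \in V_I} d_T(v) = 6|V_I| + 2|C| - 6 - k$, and since every vertex of $V_I$ is interior in $T$ (hence $d_T(v) = d(v)$), summing $T_0(v) = 10(6-d(v))$ yields the claimed identity.

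Next, every charge transfer internal to $V_I$ cancels in $\sum_{v \in V_I} T(v)$, while charges sent from $V(C)$ back into $V_I$ are nonnegative. By hypothesis 3, the total charge sent from $V_I$ to $V(C)$ is at most $5k - 2|C|$, and therefore
$$\sum_{v \in V_I} T(v) \;\geq\; (10k - 20|C| + 60) - (5k - 2|C|) \;=\; 5k + 60 - 18|C|.$$
Hypothesis 2, which states $k > \tfrac{18}{5}|C| - 12$, is equivalent to $5k > 18|C| - 60$, so the right-hand side is strictly positive. Hence at least one vertex of $V_I$ has positive final charge, as required.

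The only genuinely nontrivial piece, which is packaged into hypothesis 3 itself, is verifying the $5k - 2|C|$ outflow bound from the fan inequalities 3.1 and 3.2. Here I would treat each cycle vertex $c \in V(C)$ as an analogue of one of the high-degree vertices $u_i$ appearing in Lemma \ref{lem:sendavg5-2or4}: its $V_I$-neighbors form a consecutive fan in the planar embedding of $T$; condition 1 guarantees that no vertex of $V_I$ is adjacent to four consecutive vertices of $C$; and the facial triangles incident with $C$ therefore match the configurations required by 3.1 and 3.2. Partitioning the $k$ bridging edges into these overlapping fans along $C$ and amortizing the additive slacks $+3$ and $+6$ against the $|C|$ cyclic positions yields the desired linear bound. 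This amortization follows the template of the analogous step in \cite{proj2024}, and I expect its verification for our specific rule set $\mathcal{R}$ to be the main technical obstacle in practice; the global arithmetic above is then immediate.
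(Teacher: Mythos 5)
Your core argument is correct and is essentially the standard Euler/charge-accounting proof: the identity $\sum_{v\in V_I}T_0(v)=10k-20|C|+60$ follows exactly as you compute (using that $C$ is induced, so $\sum_{v\in V(C)}d_T(v)=2|C|+k$), internal transfers cancel, inflow from $C$ is nonnegative, outflow is $\le 5k-2|C|$ by hypothesis~3, and hypothesis~2 is precisely what makes $5k+60-18|C|>0$. The paper does not reprove this lemma (it defers to \cite{proj2024}), but your calculation is the same template the paper itself uses in Claims~\ref{clm:initial-charge-in-T}--\ref{clm:final-charge-in-T} for the closely analogous Lemma~\ref{lem:conf-in-rep1}. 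One remark: your final paragraph sketching how 3.1/3.2 (together with hypothesis~1) would yield the $5k-2|C|$ outflow bound is only at gesture level, and as you correctly suspect it is the only genuinely delicate part; note that 3.1/3.2 as literally stated (with uniform $+3$ and $+6$ slacks) are actually weaker than Lemma~\ref{lem:sendavg5-2or4}, so any honest amortization has to invoke the sharper degree-conditioned forms (and the triangle bounds in Lemmas~\ref{lem:twoedgesends}--\ref{lem:threeedgesends}) rather than the bare $+3/+6$ bounds. Since hypothesis~3 is assumed in the lemma, this does not affect the correctness of your proof of the stated implication.
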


In this section, we extend Lemma \ref{lem:conf-in-T} to the special triangulation.

Let $T$ be a triangulation in the plane, and let $e=ab, e'=a'b'$ to be two edges in $T$, and assume that we add two multiple edges $e_1=ab, e'_1=a'b'$ (thus there are now exactly $3n-4$ edges). We say that $T$ is \emph{nearly internally 6-connected}, if 
\begin{itemize}
    \item for any contractible cycle $C$ of order at most five such that one open disk $\Delta$ bounded by $C$ contains none of $a, b, a', b'$, $\Delta$ contains no vertex or $\Delta$ consists of one vertex and $|V(C)|=5$, and
    \item there is no contractible cycle $C$ of order at most three such that both sides divided by $C$ contain at least one of $a, b, a', b'$. 
\end{itemize}

The vertices $a, b, a', b'$ in $T$ are incident to multiple edges. The degree of a vertex $v$ of $T$ is calculated by the number of edges that are incident to $v$, not by the number of adjacent vertices.
\begin{lem}\label{lem:conf-in-rep1}\showlabel{lem:conf-in-rep1}
Let $T$ be a nearly internally 6-connected triangulation. 
Let $c, d$($c', d'$, resp.) be the vertices of $T$ such that the facial triangles $abc$, $abd$($a'b'c'$, $a'b'd'$, resp.) exist respectively.(since $T$ is nearly internally 6-connected, the number of such vertices for each edge $ab$, $a'b'$ is exactly two).
Let $k$ be the number of vertices in $v \in \{c, d, c', d'\}$ such that the degree of $v$ is more than 5.
If one of the following holds, there is a reducible configuration in the set $\mathcal{K}$ that weakly appears in $T$ and that contains neither $e, e_1$ nor $e', e'_1$.
\begin{itemize}
    \item $k=4$, or
    \item $k \leq 3$ and $d(a) + d(b) + d(a') + d(b') > \lfloor (112-3k)/5 \rfloor$. 
\end{itemize}
\end{lem}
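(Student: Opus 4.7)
The plan is to run the discharging argument of Section~\ref{sect:discharging} on $T$ itself, and show that under the hypotheses some vertex at distance at least two from $\{a,b,a',b',c,d,c',d'\}$ ends up with strictly positive final charge, whence Lemma~\ref{T(v)>0} produces a weakly appearing reducible configuration whose image automatically avoids the multi-edges $e,e_1,e',e'_1$. Since $T$ has $n$ vertices and $3n-4$ edges, Euler's formula forces its faces to consist of $2n-4$ triangles together with two digons (one at each multi-edge pair), and the total initial charge is $\sum_{v\in V(T)} 10(6-d(v)) = 60n - 20(3n-4) = 80$. This is the ``budget'' that must be absorbed somewhere after discharging.

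Apply all rules in $\mathcal{R}$ to $T$ exactly as in Definition~\ref{dfn:amount}. A rule fires only when its configuration weakly appears, so in particular any rule whose configuration requires two facial triangles sharing the edge $s(R)t(R)$ cannot fire with that edge mapped to any of $e,e_1,e',e'_1$, because in $T$ each multi-edge bounds a digon on one side; single-triangle rules, however, fire freely along the multi-edges. Call a vertex $v\in V(T)$ \emph{safe} if its second neighborhood in $T$ is disjoint from $\{a,b,a',b',c,d,c',d'\}$. For any safe $v$ the local graph is a genuine near-triangulation, so by Lemma~\ref{T(v)>0}, if $T(v)>0$ then some configuration of $\mathcal{K}$ weakly appears at $v$ with image disjoint from the multi-edges, which is exactly the conclusion sought. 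Thus I may assume for contradiction that $T(v)\le 0$ for every safe $v$; then the eight special vertices together with their immediate neighbors must absorb all $80$ units of final charge.

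The heart of the argument is to derive a matching upper bound on this absorbed charge. I would use Lemmas~\ref{lem:vsends}, \ref{lem:edgesends}, and~\ref{lem:sendavg5+1} to bound the charge flowing into each of $a,b,a',b'$ from its triangular neighbors by roughly $5d(\cdot)+O(1)$, so that $T(a)+T(b)+T(a')+T(b') \le C_0 - 5D + O(1)$ for an explicit constant $C_0$; and use Lemma~\ref{lem:deg5deg6} together with the degree hypothesis on $\{c,d,c',d'\}$ to bound their combined contribution by $40-\alpha k$ for an explicit $\alpha>0$ (each degree-$\ge 6$ member among $c,d,c',d'$ costs at least $\alpha$ in absorbing capacity). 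Collating these bounds and comparing with $80$ should yield precisely the arithmetic threshold $D > \lfloor(112-3k)/5\rfloor$ for $k\le 3$, while $k=4$ gives an automatic contradiction. The main obstacle will be attaining this exact threshold: it requires careful casework on which of the $201$ rules of $\mathcal{R}$ can actually fire near the multi-edges (taking into account the blocked two-sided rules at each digon) and on the precise degree patterns of $c,d,c',d'$; a small amount of computer assistance in the style of Lemma~\ref{lem:onesidesend} will likely be needed to handle the borderline degree combinations and verify the constants.
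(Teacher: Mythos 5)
Your proposal captures the overall spirit of the paper's argument --- run discharging on $T$, compute a budget from Euler's formula, and argue that the special vertices cannot absorb enough charge --- but it misses the central technical device that makes the paper's proof clean and rigorous, and without it your plan faces genuine obstacles.

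The paper does not discharge $T$ directly with its digons; it first \emph{inflates} the degrees of $a,b,a',b'$ to at least $13$ by adding edges inside the two disks bounded by the multi-edges (triangulating those disks arbitrarily). Since $13$ strictly exceeds the maximum degree appearing in any configuration of $\mathcal{K}$, this single move guarantees that any weakly appearing reducible configuration produced by discharging avoids $a,b,a',b'$, and hence avoids $e,e_1,e',e'_1$. Your alternative of working directly with the digon graph and defining ``safe'' vertices creates two problems. First, the discharging rules of $\mathcal{R}$ are defined via configurations that appear in a \emph{triangulation}; your informal claim that ``single-triangle rules fire freely along the multi-edges'' while ``two-sided rules cannot'' does not follow from Definition \ref{dfn:rule} and would itself require a separate verification about rule behavior at digons. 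Second, your ``safe'' notion requires a vertex's entire second neighborhood to avoid all eight vertices $\{a,b,a',b',c,d,c',d'\}$, which is far more restrictive than the paper's target of avoiding just $\{a,b,a',b'\}$; you would then need to bound the charge absorbed not only by $a,b,a',b'$ but also by $c,d,c',d'$ and all their neighbors, making the budget accounting considerably heavier.

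Moreover your constants are left vague (``$\alpha$'', ``$O(1)$''), whereas the paper needs the exact bound of Claim \ref{clm:move-charge-from-T}, namely $5\bigl(d(a)+d(b)+d(a')+d(b')\bigr)-48-3k$, which follows from Lemma \ref{lem:sendavg5-2or4}: each of $c,d,c',d'$ having degree $>5$ improves the per-vertex bound by exactly $3$, and it is this $3$ that produces the $-3k$ in the threshold. Finally, your proposal does not address the boundary case $k=4$: there the total final charge outside $\{a,b,a',b'\}$ can be exactly zero (when all of $a,b,a',b'$ have degree $5$), and the paper needs a separate, more local argument using Lemma \ref{lem:twoedgesends} and Lemma \ref{lem:vsends} to rule out that degenerate configuration. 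Without the degree-inflation trick and the exact arithmetic, your plan is a reasonable sketch of a strategy but does not close the argument.
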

Note that the nearly internally 6-connectivity of $T$ implies that all degrees of $T$ including $a, b, a', b'$ are at least 5. We prove Lemma \ref{lem:conf-in-rep1} by using the following claims.

\begin{claim}\label{clm:initial-charge-in-T}\showlabel{lm:initial-charge-in-T}
    \[
        \sum_{v \in V(T) \setminus \{a, b, a', b'\}} 10 \cdot (6 - d(v)) = 10 \cdot (d(a) + d(b) + d(a') + d(b')) - 160. 
    \]
\end{claim}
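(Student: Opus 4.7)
The plan is to reduce the claim to a direct application of the handshake lemma, using the edge count established in the setup. The graph $T$ in question is obtained from a planar triangulation on $n$ vertices (which has $3n-6$ edges) by adding the two multiple edges $e_1=ab$ and $e'_1=a'b'$, so $T$ has exactly $3n-4$ edges. Hence by the handshake lemma,
\[
\sum_{v\in V(T)} d(v) \;=\; 2|E(T)| \;=\; 6n-8.
\]

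Now I would expand the left-hand side of the claim by separating the constant and degree terms:
\[
\sum_{v \in V(T) \setminus \{a, b, a', b'\}} 10(6 - d(v)) \;=\; 60(n-4) \;-\; 10\!\!\sum_{v\in V(T)\setminus\{a,b,a',b'\}}\!\! d(v).
\]
The degree sum on the right equals $(6n-8) - (d(a)+d(b)+d(a')+d(b'))$ by the handshake identity above. Substituting this in and simplifying $60(n-4) - 10(6n-8) = -240+80 = -160$ yields the claimed formula.

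The argument is a one-line Euler-formula computation; there is no substantial obstacle. The only point that needs care is remembering that $T$ carries the two extra multiple edges $e_1, e'_1$, so the total number of edges is $3n-4$ rather than the usual $3n-6$ for a simple planar triangulation. Once this is noted, the rest is an algebraic identity with no further use of the nearly internally 6-connected structure.
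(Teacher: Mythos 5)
Your proof is correct and follows essentially the same approach as the paper: both establish $|E(T)| = 3|V(T)| - 4$ (the paper derives it from Euler's formula and face-edge incidence counting; you observe it directly from the $3n-6$ count for a simple planar triangulation plus the two added parallel edges), then apply the handshake lemma and carry out the same algebraic simplification.
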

\begin{proof}
    $|V(T)|-|E(T)|+|F(T)|=2$ holds by Euler formula. By counting the number of edges, $2|E(T)| = 3(|F(T)| - 2) + 2 \cdot 2$ holds. The combination of the two formulas imply $|E(T)|=3|V(T)|-4$. $\sum_{v \in V(T)} 10 \cdot (6 - d(v)) = 60|V(T)| - 10 \sum_{v \in V(T)} d(v) = 60 |V(T)| - 20 |E(T)| = 80$. $\sum_{v \in \{a, b, a', b'\}} 10 \cdot (6 - d(v)) = 240 - 10 \cdot (d(a) + d(b) + d(a') + d(b'))$. Hence, this claim holds.
\end{proof}

\begin{figure}
    \centering
    \includegraphics[width=6.5cm]{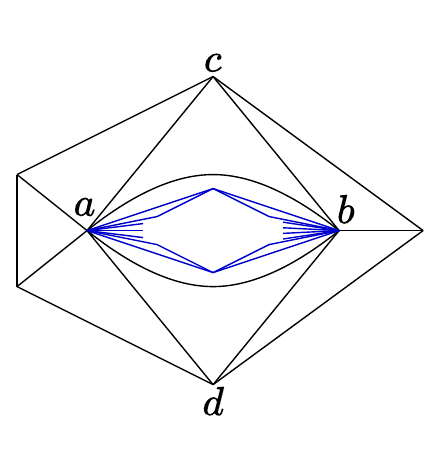}
    \caption{The blue edges are added to increase the degrees of $a, b$. The detailed structure inside the disk bounded by multiple edges is omitted but can be triangulated.}
    \label{fig:multiedge-deg13}
\end{figure}

After assigning a charge to each vertex, we increase the degree of $a, b$, ($a', b'$, resp.) to at least 13 by adding edges inside the disk bounded by multiple edges $ab$, ($a'b'$, resp.) like the Figure \ref{fig:multiedge-deg13}. We triangulate faces by adding edges appropriately. The number 13 represents the least degree that is not assigned to any vertex in the configurations in  $\mathcal{K}$. Even after increasing the degree, we use the symbol $d(\cdot)$ to represent the degree in the original graph $T$. In the following claim, we apply rules in the graph after increasing degrees of $a, b, a', b'$.

\begin{claim}\label{clm:move-charge-from-T}\showlabel{clm:move-charge-from-T}
    The total amount of charges sent from $T - \{a, b, a', b'\}$ to $\{a, b, a', b'\}$ by rules in $\mathcal{R}$ is at most $5 \cdot (d(a) + d(b) + d(a') + d(b')) - 48 - 3k$.
\end{claim}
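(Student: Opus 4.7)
The plan is to decompose the target sum $C_{SB}:=\sum_{v\in V(T)\setminus\{a,b,a',b'\}}\sum_{w\in\{a,b,a',b'\}}\phi(v,w)$ by the source vertex $v$, and treat the two pairs $(a,b)$ and $(a',b')$ symmetrically. For the pair $(a,b)$, the non-partner neighbors of $a$ in $T$ split as $N_T(a)\setminus\{b\}=\{c,d\}\cup X_a$, where $X_a:=N_T(a)\setminus\{b,c,d\}$; analogously for $b$. Since $a$ is incident with the two parallel edges $e,e_1$ to $b$ and the faces $abc$, $abd$ lie on opposite sides of these edges, $X_a$ forms a consecutive arc of length $d(a)-4$ in the cyclic order of $a$'s neighbors in the augmented graph $T'$, sitting between $d$ and $c$ on the non-bigon side. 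Writing $A_a:=\sum_{v\in N_T(a)\setminus\{b\}}\phi(v,a)$ and $B_a:=\sum_{v\in N_T(b)\setminus\{a\}}\phi(v,b)$, we therefore have
\[
A_a+B_a \;=\; [\phi(c,a)+\phi(c,b)]+[\phi(d,a)+\phi(d,b)]+\sum_{x\in X_a}\phi(x,a)+\sum_{x\in X_b}\phi(x,b).
\]

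For the contributions from the common neighbors $c,d$, since both $a,b$ have degree at least $13\ge 9$ in $T'$, Lemma~\ref{lem:twoedgesends} applies with middle vertex $v\in\{c,d\}$ and triangle $avb$. When $d(v)>5$ the lemma gives $\phi(v,a)+\phi(v,b)\le 4$. When $d(v)=5$ it gives the weaker bound $\le 8$; however equality at $8$ would force one of the configurations listed in Figure~\ref{fig:twoedge8}, each of which lies in our reducible set $\mathcal{K}$ and hence cannot weakly appear. Therefore $\phi(v,a)+\phi(v,b)\le 7$ whenever $d(v)=5$. Writing $k^{(ab)}:=|\{v\in\{c,d\}:d(v)>5\}|$ we obtain
\[
[\phi(c,a)+\phi(c,b)]+[\phi(d,a)+\phi(d,b)] \;\le\; (2-k^{(ab)})\cdot 7+k^{(ab)}\cdot 4 \;=\; 14-3k^{(ab)}.
\]
For the fan sums, Lemma~\ref{lem:sendavg5+1} applied to the consecutive fan $X_a$ of length $d(a)-4$ in the degree-$\ge 7$ vertex $a$ of $T'$ gives $\sum_{x\in X_a}\phi(x,a)\le 5(d(a)-4)+1$, and similarly $\sum_{x\in X_b}\phi(x,b)\le 5(d(b)-4)+1$.

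Assembling these three estimates,
\[
A_a+B_a \;\le\; (14-3k^{(ab)})+\bigl(5(d(a)-4)+1\bigr)+\bigl(5(d(b)-4)+1\bigr) \;=\; 5(d(a)+d(b))-24-3k^{(ab)}.
\]
The identical analysis for the pair $(a',b')$ yields $A_{a'}+B_{a'}\le 5(d(a')+d(b'))-24-3k^{(a'b')}$. Summing and using $k=k^{(ab)}+k^{(a'b')}$ produces the desired bound $C_{SB}\le 5(d(a)+d(b)+d(a')+d(b'))-48-3k$. The only non-routine step is the sharpening from $8$ to $7$ at degree-$5$ common neighbors; this reduces to a direct reducibility check for the few configurations appearing in Figure~\ref{fig:twoedge8} against $\mathcal{K}$, and is the principal obstacle in the argument.
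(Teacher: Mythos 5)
Your decomposition by source vertex is sound, and the arithmetic is correct, but the step you flag as ``the principal obstacle''—the sharpening $\phi(v,a)+\phi(v,b)\le 7$ for a degree-5 common neighbor $v\in\{c,d\}$—is genuinely broken, not merely non-routine. Lemma~\ref{lem:twoedgesends} is stated \emph{under the hypothesis} that no configuration of $\mathcal{K}$ weakly appears, and its conclusion is that $\phi(u,v)+\phi(u,w)\le 8$ may still hold with equality, in which case the local picture is one of Figure~\ref{fig:twoedge8}. In other words, the configurations in Figure~\ref{fig:twoedge8} are precisely the ones that \emph{survive} the exclusion of $\mathcal{K}$; they cannot simultaneously be members of $\mathcal{K}$, or the lemma's bound would read $\le 7$. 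So ``a direct reducibility check against $\mathcal{K}$'' would not eliminate the $=8$ case, and your pairwise bound reverts to $\phi(c,a)+\phi(c,b)\le 8$. That yields $A_a+B_a \le 5(d(a)+d(b))-22-4k^{(ab)}$, which falls short of the target $5(d(a)+d(b))-24-3k^{(ab)}$ by $2-k^{(ab)}$, i.e.\ by exactly the number of degree-5 vertices among $\{c,d\}$. The deficit is therefore positive whenever $k^{(ab)}<2$.

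The paper closes this gap not by improving the isolated bound on $\phi(c,a)+\phi(c,b)$, but by using Lemma~\ref{lem:sendavg5-2or4}(a), which bounds the coupled quantity $\phi(v_1,u_2)+\sum_{i=1}^{n}\phi(v_i,u_1)$ by $5n+3$ in the degree-5 case. Internally, its proof handles the $\phi(v_1,u_1)+\phi(v_1,u_2)=8$ case by showing that then $\phi(v_2,u_1)\le 4$ (else a configuration in $\mathcal{K}$ appears), so the loss at $v_1$ is recouped from the first fan vertex. Concretely, for the pair $(a,b)$ the paper applies Lemma~\ref{lem:sendavg5-2or4}(a) twice with $n=d(\cdot)-3$: once with $u_1=a$, $u_2=b$, $v_1=c$ and the fan running from $c$ through $X_a$ (excluding $d$), and once with $u_1=b$, $u_2=a$, $v_1=d$ and the fan running from $d$ through $X_b$ (excluding $c$). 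Adding the two inequalities recovers $A_a+B_a$ exactly, with no double-counting, and gives $A_a+B_a\le 5(d(a)+d(b))-30+\epsilon_a+\epsilon_b$ where $\epsilon_a,\epsilon_b\in\{0,3\}$; summing $\epsilon$ over the four applications yields $3(4-k)$, hence the stated $-48-3k$. If you want to keep your decomposition, you must replace the stand-alone bound on $\phi(c,a)+\phi(c,b)$ by a coupled bound with the adjacent fan vertex, which is in effect re-deriving Lemma~\ref{lem:sendavg5-2or4}(a).
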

\begin{proof}
    By Lemma \ref{lem:sendavg5-2or4}, $\sum_{v \in \{a, b, a', b'\}, d(v) = 5} \{ 5(d(v)-3)+3 \} + \sum_{v \in \{a, b, a', b'\}, d(v) > 5} \{ 5(d(v)-3) \} = 5 \cdot (d(a) + d(b) + d(a') + d(b')) - 48 - 3k$
\end{proof}

\begin{claim}\label{clm:final-charge-in-T}\showlabel{clm:final-charge-in-T}
    The total amount of final charges accumulated over all vertices $v \in V(T)$, except $\{a, b, a', b'\}$, is at least $5 \cdot (d(a) + d(b) + d(a') + d(b')) - 112 + 3k$.
\end{claim}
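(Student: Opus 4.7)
The plan is to obtain this claim as an immediate corollary of Claims \ref{clm:initial-charge-in-T} and \ref{clm:move-charge-from-T}. For any set $S$ of vertices, summing the identity (final charge) $=$ (initial charge) $+$ (charge received) $-$ (charge sent) over $v\in S$ collapses to
\[
 \sum_{v\in S} T(v)\;=\;\sum_{v\in S} T_0(v) \;-\; (\text{net charge sent from } S \text{ to } V\setminus S),
\]
because every rule application that moves charge between two vertices of $S$ contributes with opposite signs and cancels.

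I will apply this with $S:=V(T)\setminus\{a,b,a',b'\}$. The subtle point is to argue that the net outflow from $S$ actually equals the charge shipped from $S$ into $\{a,b,a',b'\}$ alone, so that Claim \ref{clm:move-charge-from-T} directly bounds it. In the triangulation obtained from $T$ after we insert extra vertices and edges inside the disks bounded by the multiple edges $ab$ and $a'b'$ to push $\deg(a),\deg(b),\deg(a'),\deg(b')$ above $12$, the newly introduced auxiliary vertices lie strictly inside those two disks, so their only neighbors are in $\{a,b,a',b'\}$ or among the auxiliary vertices themselves. Since every rule in $\mathcal{R}$ moves charge along a single edge of the graph, no rule can move charge directly between $S$ and an auxiliary vertex. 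Hence the net outflow from $S$ reduces to the net flow from $S$ into $\{a,b,a',b'\}$, which is bounded above by the amount in Claim \ref{clm:move-charge-from-T} (the charge received by $S$ from $\{a,b,a',b'\}$ is nonnegative and only improves our lower bound).

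Combining the initial charge sum of Claim \ref{clm:initial-charge-in-T} with this outflow bound yields
\[
 \sum_{v\in S} T(v) \;\ge\; \bigl[10(d(a)+d(b)+d(a')+d(b'))-160\bigr] - \bigl[5(d(a)+d(b)+d(a')+d(b')) - 48 - 3k\bigr],
\]
which simplifies to the stated $5(d(a)+d(b)+d(a')+d(b'))-112+3k$. I do not anticipate a substantive obstacle: the only point requiring care is the separation argument showing that auxiliary vertices cannot absorb charge from $S$, and that follows at once from the planar layout of the added edges inside the multi-edge disks.
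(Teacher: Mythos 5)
Your proof is correct and follows the same route as the paper: the claim is obtained by subtracting the outflow bound of Claim~\ref{clm:move-charge-from-T} from the initial-charge total of Claim~\ref{clm:initial-charge-in-T}, using that internal transfers cancel, that inflow from $\{a,b,a',b'\}$ only helps, and that the auxiliary vertices inserted inside the multi-edge disks are separated from $S$ so no rule moves charge between them and $S$. The paper states this as a one-line arithmetic consequence of the two preceding claims; you have simply made explicit the bookkeeping (cancellation of internal transfers and the separation of the auxiliary region) that justifies the subtraction, which is a welcome but not substantively different elaboration.
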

\begin{proof}
    By Claims \ref{clm:initial-charge-in-T} and \ref{clm:move-charge-from-T}, $\{ 10 \cdot (d(a) + d(b) + d(a') + d(b')) - 160\} - \{ 5 \cdot (d(a) + d(b) + d(a') + d(b')) - 48 - 3k \} = 5 \cdot (d(a) + d(b) + d(a') + d(b')) - 112 + 3k$.
\end{proof}
\begin{proof}[Proof of Lemma \ref{lem:conf-in-rep1}]
    By Claim \ref{clm:final-charge-in-T}, if $k \leq 3$ and $d(a) + d(b) + d(a') + d(b') > \lfloor (112 - 3k)/5 \rfloor $, the total amount of final charges accumulated over all vertices $V(T)$, except $a,b,a',b'$, is positive. 
    
    If $k=4$, the total amount of final charges accumulated over all vertices $V(T)$, except $a, b, a', b'$, is at least zero but can be exactly zero. This happens only when $d(a)+d(b)+d(a')+d(b')=20$, so degree of $a, b, a', b'$ is all 5. Let $e$ denote the neighbor of $a$ other than $b, c, d$. The amount $\phi(c,a) + \phi(c,b) + \phi(e, a)$ must be 10 by claim \ref{clm:move-charge-from-T}. Since the degree of $c$ is at least $6$, this is possible only when $\phi(c, a) + \phi(c, b) = 4$ and $\phi(e, a) = 6$ by Lemma \ref{lem:twoedgesends}. However, either of the degree of $c, d$ is 5 when $\phi(e, a) = 6$ by Lemma \ref{lem:vsends} (Figure \ref{fig:send6}). This contradicts the formula $k=4$. Hence, the total amount of final charges accumulated over all vertices $V(T)$, except $a,b,a',b'$, does not become exactly zero.

    Therefore, in both cases, at least one vertex of $V(T) \setminus \{a, b, a', b'\}$ has the final positive charge. Before moving charges, the degrees of $a, b, a', b'$ are increased to at least 13, so no configuration of $\mathcal{K}$ contains a vertex in $a, b, a', b'$. Hence, a reducible configuration in the set $\mathcal{K}$ weakly appears in $T$ and it contains neither $e$ nor $e'$.
\end{proof}

\subsection{When representativity $=1$}\label{sect:rep=1}\showlabel{sect:rep=1}

In this subsection, we assume that the representativity of the embedding of $G$ in the torus is exactly one. The purpose of this subsection is to apply Lemma \ref{lem:conf-in-rep1} to avoid a loop in the discharging and find a reducible configuration that does not ``wrap around" a non-contractible loop. 
Let $e=uv$ be an edge of $G$ such that $G-e$ is planar. Such an edge $e$ is unique since two different such edges constitute a 2-edge cut that contradicts Lemma \ref{minc}. We embed $G-e$ in the plane so that the local rotation of incident edges of each vertex is the same as that of the original embedding of $G$ in the torus. Since $G-e$ is planar, such an embedding exists. Let $T$ be the dual graph of $G-e$ in the plane. The degree of vertices $u, v$ of $G-e$ is 2, and the degrees of other vertices are 3, so $T$ has two multiple edges, and all faces except two faces bounded by two multiple edges are triangles. Also, $T$ is nearly internally 6-connected by Lemmas \ref{minc} and \ref{lem:(2,2)-annulus-cut}, \ref{lem:(1,3)-annulus-cut}. 
By deleting $e$ from $G$, the toroidal embedding of $G-e$ has one face that is homomorphic to the annulus. The face is bounded by two paths. Each of the two paths becomes a facial cycle in the embedding of $G-e$ in the plane. Let $a, a'$ be the two vertices of $T$ that correspond to these two facial cycles. The vertices $a, a'$ incident with multiple edges. Let $b, b'$ be the two vertices of $T$ such that each of $ab$, $a'b'$ is a multiple edge. We use the symbols $c,d,c',d'$ to denote the same vertices of Lemma \ref{lem:conf-in-rep1}.

\begin{figure}
    \centering
    \begin{tabular}{cc}
        \includegraphics[width=4cm]{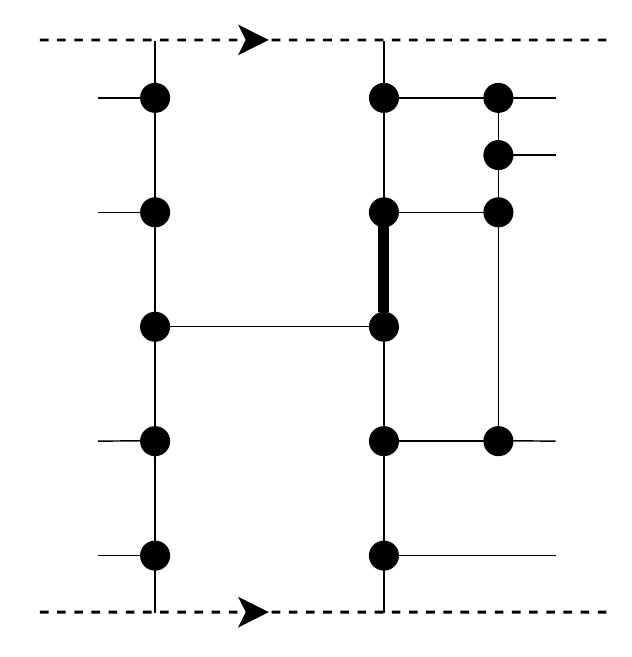} &
        \includegraphics[width=4cm]{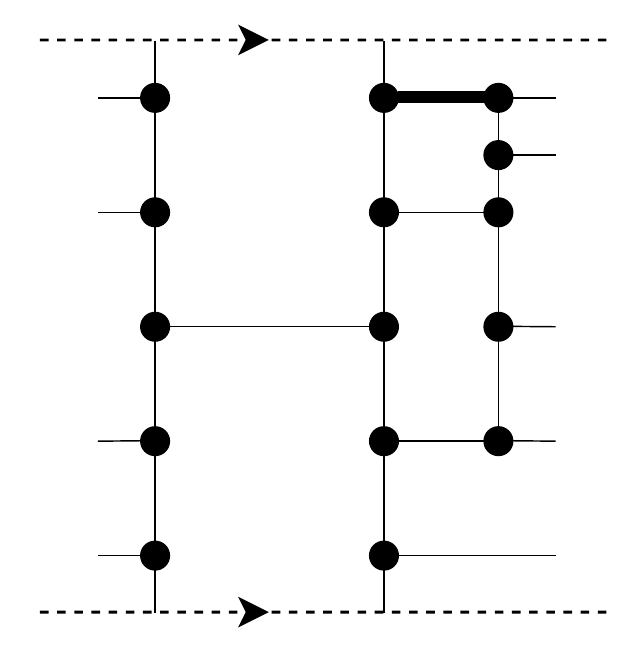}
    \end{tabular}
    \vspace{0.5cm}
    \begin{tabular}{ccc}
        \includegraphics[width=4cm]{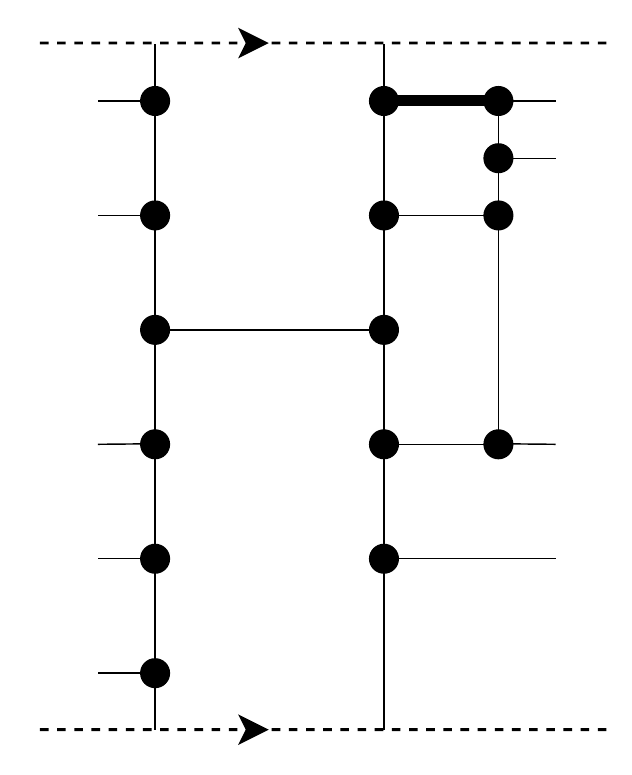} &
        \includegraphics[width=4cm]{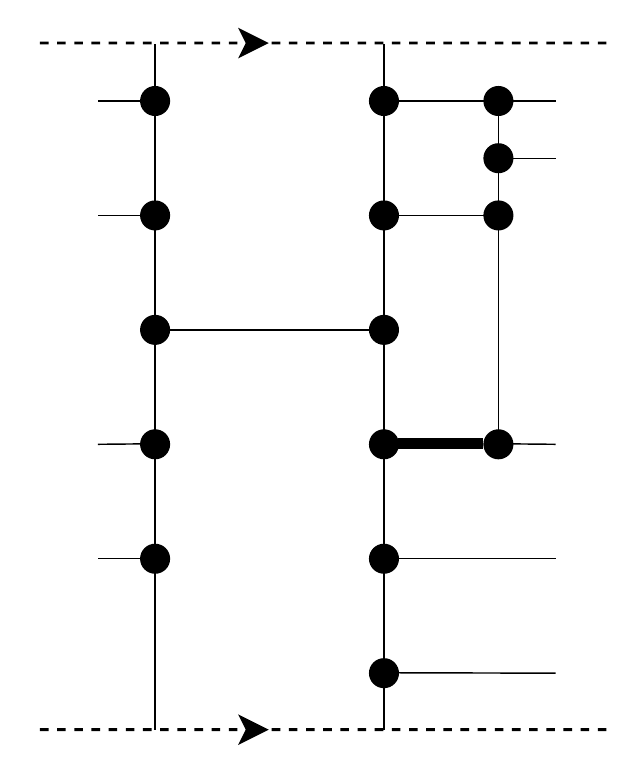} &
        \includegraphics[width=4cm]{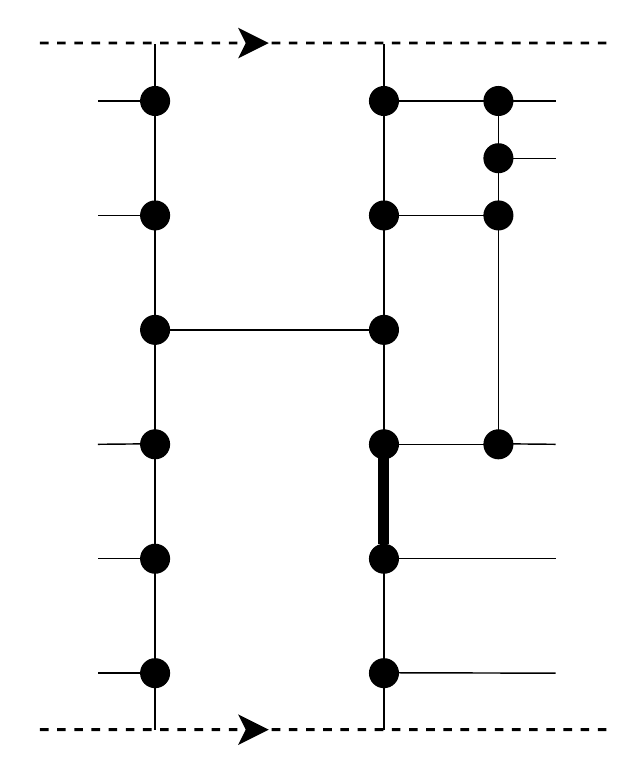}
    \end{tabular}
    \caption{The reducible annular islands used in the proof of Lemma \ref{lem:rep1}.}
    \label{fig:rep1-annular-islands}
\end{figure}

To prove Lemma \ref{lem:rep1}, we use the fact that all the annular islands described in Figure \ref{fig:rep1-annular-islands} are reducible. The bold edges depicted in the figure represent the edges to be deleted to become C-reducible. The size of edges deleted for all islands is exactly one, so we do not have to worry about the resulting graph (after deletion) having a bridge or becoming Petersen-like. We denote the annular islands in Figure \ref{fig:rep1-annular-islands} by A($i$) ($1 \leq i \leq 5$) in the natural order.
\begin{lem}\label{lem:rep1}\showlabel{lem:rep1}
    If the representativity of the embedding of $G$ in the torus is exactly one, either one of the annular islands described in Figure \ref{fig:rep1-annular-islands} appears, or a reducible configuration $K$ in the set $\mathcal{K}$ weakly appears in $G'$ (which is the dual of $G$) in such a way that $K$ does not contain a non-contractible loop in $G'$.
\end{lem}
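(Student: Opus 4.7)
The plan is to apply Lemma~\ref{lem:conf-in-rep1} to the planar near-triangulation $T$ constructed in the setup and split into two cases according to whether the numerical hypothesis of that lemma is satisfied.

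\textbf{Case 1: the hypothesis holds.} That is, either $k=4$, or $k\le 3$ and $d(a)+d(b)+d(a')+d(b') > \lfloor (112-3k)/5\rfloor$. Then Lemma~\ref{lem:conf-in-rep1} yields a reducible configuration $K\in\mathcal{K}$ weakly appearing in $T$ that avoids both multiple edges $\{e,e_1\}$ and $\{e',e_1'\}$. The multiple edges of $T$ arise as duals of the two edges of $G-e$ at each degree-$2$ vertex $u,v$, and in the toroidal dual $G'$ they correspond exactly to the edges bounding the triangular region around the non-contractible loop $e^*$ (the dual of $e$). Hence a configuration in $T$ avoiding these multiple edges weakly appears in $G'$ in a region disjoint from the non-contractible loop, which is the second alternative of the lemma.

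\textbf{Case 2: the hypothesis fails.} Then $k \le 3$ and, using the lower bound $d(a),d(b),d(a'),d(b') \ge 5$ coming from nearly internal $6$-connectivity of $T$,
\[
\begin{array}{ll}
    k=0: & 20 \le d(a)+d(b)+d(a')+d(b') \le 22, \\
    k=1: & 20 \le d(a)+d(b)+d(a')+d(b') \le 21, \\
    k=2: & 20 \le d(a)+d(b)+d(a')+d(b') \le 21, \\
    k=3: & d(a)+d(b)+d(a')+d(b') = 20.
\end{array}
\]
Only a handful of degree patterns on $\{a,b,a',b'\}$ survive, and they are further restricted by the rule that at most $k$ of the vertices $\{c,d,c',d'\}$ can exceed degree $5$ while the remaining $4-k$ must have degree exactly $5$. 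I would enumerate these patterns and, for each, translate back from the planar dual $T$ to the cubic graph $G$: the two faces of $G$ on either side of the non-contractible edge $e$ correspond to the facial cycles around $a$ and $a'$, and their boundary structure together with the small degrees at $b,c,d$ and $b',c',d'$ determines the annular cubic substructure of $G$ that wraps around $e$ once. By direct inspection, each such substructure contains one of the five annular islands A(1)--A(5) of Figure~\ref{fig:rep1-annular-islands}, giving the first alternative of the lemma.

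The main obstacle will lie in Case 2: for each surviving degree pattern on $\{a,b,a',b',c,d,c',d'\}$ one must correctly reconstruct the cubic neighborhood of $e$ in $G$ (up to the local rotations on the two sides of $e$, which can be independently oriented) and verify that it realizes, as a subgraph, one of A(1)--A(5). Because each of A(1)--A(5) is C-reducible with contraction size exactly one, their appearance cannot create a bridge nor move the graph into $\mathcal{T}_0$, and so suffices to contradict the minimality of $G$.
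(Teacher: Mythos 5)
Your plan is essentially the paper's, just run "inside out." The paper also constructs the nearly internally $6$-connected near-triangulation $T$ and reaches for Lemma~\ref{lem:conf-in-rep1}, and it too uses the reducibility of A(1)--A(5) to cover the degree patterns where the numerical hypothesis of that lemma does not hold at face value. The presentational difference is this: the paper enumerates by $(d_T(a),d_T(a'))$ and then by $(d_T(b),d_T(b'))$, and for each small-sum pattern it invokes the reducibility of the appropriate A($i$) in contrapositive form — ``since A($i$) is reducible it cannot appear, hence $d(c),d(d)$ (and possibly $d(c'),d(d')$) exceed $5$'' — which bootstraps $k$ upward so that Lemma~\ref{lem:conf-in-rep1} applies after all, always landing on the second alternative. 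You instead split on whether Lemma~\ref{lem:conf-in-rep1}'s hypothesis holds as-is, and in the failing branch you assert the appearance of A(1)--A(5), landing on the first alternative. The two are logically interchangeable: for each fixed degree pattern, ``hypothesis fails at the actual $k$'' is exactly the negation of ``$d(c),d(d),d(c'),d(d')$ are large enough,'' so the paper's dichotomy (``A($i$) appears or those degrees are $>5$'') converts directly into your ``hypothesis fails $\Rightarrow$ A($i$) appears.''

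Two small things to tighten. First, ``at most $k$ of $\{c,d,c',d'\}$ can exceed degree $5$'' should be ``exactly $k$'' — $k$ is defined as that count. Second, and more substantively, the content of your Case~2 is carried entirely by ``by direct inspection, each such substructure contains one of A(1)--A(5),'' which is precisely the structural claim the paper makes when it asserts, e.g., that $d(a)=d(b)=5$ together with $d(c)=5$ forces A(1) to appear. This must be checked against the actual configurations in Figure~\ref{fig:rep1-annular-islands} for every surviving $(d(a),d(b),d(a'),d(b'),k)$ pattern, including the sum-$22$ patterns at $k=0$ where a $7$ can occur among $b,b'$ (and note your enumeration, unlike the paper's top-level split on $(d_T(a),d_T(a'))$, must also account for $d_T(a')$ reaching $7$); without carrying out that inspection the argument is incomplete, but the framework is sound and it is what the paper does.
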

\begin{proof}
    We divide the case analysis depending on $d_T(a), d_T(a')$. By Lemma \ref{lem:(1,3)-annulus-cut}, $d_T(a), d_T(a') \geq 5$. By Lemma \ref{minc}, $d_T(b), d_T(b') \geq 5$, so if $d_T(a) + d_T(a') > 12$, $d_T(a)+d_T(a')+d_T(b)+d_T(b')>22$, so a reducible configuration appears by Lemma \ref{lem:conf-in-rep1}. We handle the remaining cases.
   
    \begin{itemize}
        \item[(1)] $d_T(a) = 5, d_T(a') = 5$
    \end{itemize}
    If $d_T(b) + d_T(b') > 12$, $d_T(a) + d_T(a') + d_T(b) + d_T(b') > 22$, so we can apply Lemma \ref{lem:conf-in-rep1}.
    If $d_T(b) = 5$ and $d_T(b') = 7$, the degrees of $c, d$ are more than 5 since A(1) is reducible. Hence, we can apply Lemma \ref{lem:conf-in-rep1} for $k=2$.
    If $d_T(b) \leq 6 $ and $ d_T(b') \leq 6$, the degrees of $c, d, c', d'$ are more than 5 since A(1), A(2) are reducible. Hence, we can apply Lemma \ref{lem:conf-in-rep1} for $k=4$.
    
    \begin{itemize}
        \item[(2)] $d_T(a) = 5, d_T(a') = 6$ (or $d_T(a) = 6, d_T(a') = 5$)
    \end{itemize}
    If $d_T(b) + d_T(b') > 11$, $d_T(a) + d_T(a') + d_T(b) + d_T(b') > 22$, so we can apply Lemma \ref{lem:conf-in-rep1}.
    If $d_T(b) = 5, d_T(b') = 6$, the degrees of $c, d$ are more than 5 since A(3), A(4) are reducible. Hence we can apply Lemma \ref{lem:conf-in-rep1} for $k \geq 2$. If $d_T(b) = 6, d_T(b') = 5$, we can prove it in the same way.
    If $d_T(b) = 5, d_T(b') = 5$, the degrees of $c, d, c', d'$ are more than 5 since A(3), A(4) are reducible. Hence we can apply Lemma \ref{lem:conf-in-rep1} for $k=4$.

    \begin{itemize}
        \item[(3)] $d_T(a) = 6, d_T(a') = 6$
    \end{itemize}
    If $d_T(b) + d_T(b') > 10$, $d_T(a) + d_T(a') + d_T(b) + d_T(b') > 22$, so we can apply Lemma \ref{lem:conf-in-rep1}. If $d_T(b) = 5$ and $d_T(b') = 5$, all degrees of $c, c', d, d'$ are more than 5, since A(5) is reducible. Hence we can apply Lemma \ref{lem:conf-in-rep1} for $k=4$.
\end{proof}

\section{Cuts of size 6 or 7}
\label{sect:6,7-cut}\showlabel{sect:6,7-cut}

Let us remind the reader that $G$ is a minimal counterexample and $G'$ is the dual of the embedding of $G$ in torus. In this section, we will show that one side of $G$ divided by a 6-edge-cut or a 7-edge-cut is relatively small. By Lemma \ref{minc}, we know that one side of $G$ divided by a cut of order at most five is small.
Let us observe that edges in a cut of $G$ correspond to a circuit\footnote{A circuit is a closed walk, allowing some vertex to be used twice or more. A cycle is a closed walk but does not allow each vertex to be used twice or more. We use ''circuit'' here to handle the case that a subwalk of $C$ may be a noncontractible cycle.} $C$ in $G'$. Throughout this section, we shall consider $G'$. The goal of this section is to prove Lemma \ref{lem:6,7-cut} that roughly says that the disk bounded by a circuit of length six or seven contains at most three or four vertices strictly inside. 

The proof of this lemma needs a lot of case analysis, unfortunately. 
The problem is that when we find a C-reducible configuration $K$ inside the circuit $C$ in $G'$, after contractions in $K$, it is still possible that the resulting graph would end up with the dual of a Petersen-like graph after 2,3-cycle reductions\footnote{2,3-cycle reductions corresponds to 2,3-cut reductions in dual}. There are a lot of cases we have to handle, so we do need some computer checks, see Section \ref{sect:cut6,7-Petersen-like}. 

To this end, we first handle C(1), which is a special case of Lemma \ref{lem:6,7-cut}, because the ring of C(1) is of length exactly six, and C(1) consists of four vertices (which would violate Lemma \ref{lem:6,7-cut} below). The configuration C(1) has two different contractions of size exactly six for C-reducibility as shown in Figure \ref{fig:5555-contractions}. We prove that $G'$ would not result in the dual graph of any Petersen-like graph after one of the contractions in Lemma \ref{lem:5555}.
The proof of the lemma is deferred to Appendix \ref{subsect:5555}.
We also handle C(27) separately. C(27) is used only when all degrees are 6 in Section \ref{sect:flatproof}. Hence, we do not need to care about it here.

Let $C$ be a separating circuit of length $l=6,7$. If $C$ bounds no disk (i.e. $C$ is not a cycle but $C$ consists of several contractible cycles), it is of no interest since one component of $G'-C$ has at most one vertex by Lemma \ref{minc}.
A separating circuit $C$ of length $l=6,7$ in $G'$ is \emph{balanced} if there is a closed disk $D$ that is bounded by $C$ and both of $D-C$ and $G'-D$ contains at least $l-2$ vertices.

\begin{figure}[htbp]
    \centering
    \includegraphics[width=12cm]{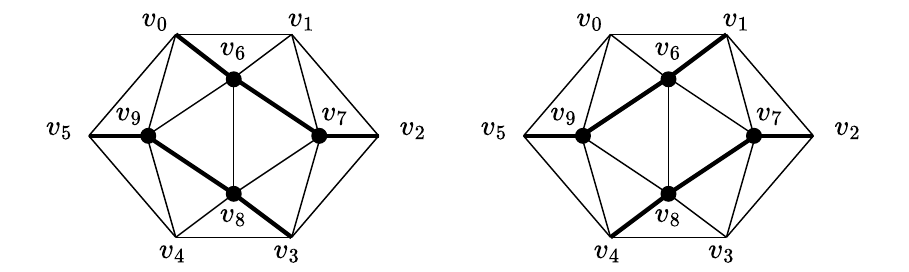}
    \caption{A free completion of $K$ with the ring, where $K$ is C(1). The bold lines represent the contraction edges used for C-reducibility.}
    \label{fig:5555-contractions}
\end{figure}

\begin{lem}\label{lem:5555}\showlabel{lem:5555}
    If C(1) appears in $G'$, then after some edge contraction and low-cycle reductions, $G'$ results in a graph in $\mathcal{T}_1 \setminus \mathcal{T}_0$.
\end{lem}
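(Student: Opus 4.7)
The plan is to exploit the fact that $C(1)$ admits two distinct C-reducibility contractions, shown in Figure \ref{fig:5555-contractions}, and to argue that at least one of them produces a graph in $\mathcal{T}_1 \setminus \mathcal{T}_0$ after any forced low-edge-cut reductions. Let $H \subseteq G$ be the primal cubic subgraph dual to $C(1)$; it lies in a disk bounded by the six-edge ring of $C(1)$. Each of the two contractions deletes six edges of $H$ and suppresses the resulting degree-2 vertices, yielding cubic toroidal graphs $G_1, G_2$, which differ only inside that disk.

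First I would write out $G_1$ and $G_2$ explicitly by tracing how the six ring-edges of $C(1)$ are re-paired in each contraction; modulo the automorphisms of $C(1)$ (which are substantial, since all four interior vertices have the same degree 5), this leaves only a short list of inequivalent local pictures. Next, assume for a contradiction that neither $G_i$ lies in $\mathcal{T}_1 \setminus \mathcal{T}_0$ after low-edge-cut reductions. Then for each $i \in \{1,2\}$ there is a sequence of 2-, 3-, or 4-edge-cut reductions taking $G_i$ to $P_{10}$. By Lemma \ref{minc} and Lemmas \ref{lem:(2,2)-annulus-cut}, \ref{lem:(1,3)-annulus-cut}, the graph $G$ itself has no small cut of any of these types (beyond what those lemmas permit), so every new low cut in $G_i$ must cross the disk bounded by the ring of $C(1)$ and use edges affected by the suppression step.

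The core of the argument then becomes a case analysis on the first applicable low-edge-cut reduction in each $G_i$. For each candidate 2-cut or 3-cut crossing the disk I would lift it back to an edge set of $G$ and check that it contradicts cyclic 5-edge-connectivity or one of the annulus-cut lemmas. For a 4-edge-cut reduction (which removes an annular island $I_4$ isomorphic to $P_{10}$ minus two adjacent vertices), the corresponding eight-vertex configuration in $G$ must be concentrated near $H$, and the candidate local embeddings can be enumerated. A 4-edge-cut of $I_4$-type in $G_1$ pins down the structure around the ring of $C(1)$ in $G$ quite tightly, and I expect that this pinned structure is incompatible with an analogous $I_4$-type 4-edge-cut in $G_2$, which uses a different re-pairing of the same six ring edges.

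The main obstacle will be the 4-edge-cut step. Because the Blanuša-Vodopivec family is infinite, Petersen-likeness cannot be excluded by a size bound on the reduced graph; the argument must rely entirely on the local structure near the ring of $C(1)$. I therefore expect this step to require either a careful incidence analysis pairing the two contraction patterns against the finitely many positions where an $I_4$ copy can straddle the disk, or a short computer enumeration in the same spirit as the checks cited in Section \ref{sect:cut6,7-Petersen-like}, which is consistent with the proof being deferred to the appendix.
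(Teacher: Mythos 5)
Your high-level strategy — exploit the two distinct C-reducibility contractions $C_1, C_2$ of C(1) and show that at least one produces a graph in $\mathcal{T}_1 \setminus \mathcal{T}_0$ — matches the paper. You also correctly observe that any new low cut appearing after contraction must be concentrated near the disk, and that cyclic 5-edge-connectivity plus the annulus-cut lemmas rule out most such cuts. However, the proposal is missing the one idea that actually makes the paper's case analysis tractable, and without it your proposed case analysis on ``$I_4$-type 4-edge-cuts'' is unlikely to close.

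The paper never traces a reduction sequence back to $P_{10}$. Instead, it uses Fact~\ref{fact:size-snark}: every graph in $\mathcal{T}_0$ has order $10 + 8n$. Both $G/\!C_1$ and $G/\!C_2$ start from the same $G'$, and Claim~\ref{clm:5555-reductions} shows there are only three possible local $2,3$-cycle reductions following each contraction, each deleting a known number of faces (2, 4, or 2). If both results were Petersen-like, the difference of their orders must be a multiple of 8, which constrains which of the three local reductions can co-occur. That parity constraint is what reduces the combinatorics to the six degree patterns in Figure~\ref{fig:5555-cases}, and even then the paper needs a further stockpile of auxiliary reducible (annular) configurations D(1)–D(19) and several structural Facts about the duals of all toroidal embeddings of the Petersen and Blanuša snarks (Facts~\ref{fact:deg5-in-3cycle}–\ref{fact:6,9-around8}) to finish. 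Your proposal does not anticipate the mod-8 counting or these structural facts, and your closing step — expecting that an $I_4$-cut in $G_1$ is ``incompatible'' with one in $G_2$ — is a hope rather than an argument: the paper's six residual cases show that such incompatibility is not automatic and requires the additional reducible configurations to resolve. You have correctly identified why the lemma is delicate (infiniteness of the Blanuša family blocks a pure size bound), but you have not identified the parity mechanism that the paper uses to get around it.
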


\begin{lem}
\label{lem:6,7-cut}
\showlabel{lem:6,7-cut}
    \begin{itemize}
        \item If there is a balanced separating circuit $C'$ in $G'$, there is a balanced separating circuit $C$ in $G'$ such that a configuration $K$ of $\mathcal{K}$ appears in a disk bounded by $C$ and the dual graph obtained from $G'$ by contracting edges of $c(K)$ is in $\mathcal{T}_1 \setminus \mathcal{T}_0$.
        \item If there is a separating circuit $C$ of length $l \in \{6,7\}$ in $G'$ that bounds a disk $D'$ where $D'-C$ contains at most $l-3$ vertices. Then a disk $D$ bounded by $C$  is one of the graphs in Figure \ref{fig:6cut} $(l=6)$ or Figure \ref{fig:7cut} $(l=7)$.
    \end{itemize}
\end{lem}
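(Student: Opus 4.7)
The plan is to treat the two statements separately.

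For the second statement I would perform a finite case analysis. When $l = 6$ (resp.\ $l = 7$), the disk $D$ is bounded by a $6$-cycle (resp.\ $7$-cycle) and contains at most $3$ (resp.\ $4$) interior vertices. Combined with the facts that $D$ is a near-triangulation and every interior vertex has degree at least $5$ in $G'$, this leaves only finitely many near-triangulations to consider. I would enumerate them by the possible degree sequences of the interior vertices and their local adjacency to $C$, eliminate those that violate Fact \ref{fact:cont-cycle} or Lemma \ref{minc} (short contractible cycles with two large sides, or $4$-edge cuts), and check that the surviving cases are exactly those recorded in Figures \ref{fig:6cut} and \ref{fig:7cut}; an independent computer enumeration corroborates the list.

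For the first statement I would pick, among all balanced separating circuits $C'$ of length $l \in \{6,7\}$ in $G'$, one whose smaller disk $D$ contains the fewest interior vertices, and call this choice $C$. By the balanced assumption $|V(D) \setminus V(C)| \geq l - 2$. The near-triangulation $T$ formed by $D$ together with its boundary $C$ satisfies the hypotheses of Lemma \ref{lem:conf-in-T}: no vertex in $V(T) \setminus V(C)$ is adjacent to four consecutive vertices of $C$, for otherwise one could reroute $C$ through such a vertex to obtain a shorter separating circuit bounding a strictly smaller disk, contradicting the minimal choice of $C$ or Lemma \ref{minc}; and the number of cross-edges between $C$ and $V(T) \setminus V(C)$ exceeds $\tfrac{18}{5}|C| - 12$, obtained by a degree-sum argument inside $D$ using the internal $6$-connectivity of $G'$ and the absence of separating circuits shorter than $C$. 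Applying Lemma \ref{lem:conf-in-T} then yields a configuration $K \in \mathcal{K}$ that appears in the interior of $D$.

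It remains to verify that the graph obtained from $G'$ by contracting $c(K)$, followed by low edge-cut reductions, lies in $\mathcal{T}_1 \setminus \mathcal{T}_0$. The only conceivable obstruction is that the reduction accidentally produces a Petersen-like graph. The single exceptional configuration requiring a separate hand-written argument is C(1): its ring has length exactly $6$ and only four interior vertices, so it falls outside the scope of Lemma \ref{lem:conf-in-T}, and it is handled by Lemma \ref{lem:5555}. For every other $K \in \mathcal{K}$ that could appear inside $D$, the check that contraction never lands in $\mathcal{T}_0$ is carried out by the computer enumeration described in Section \ref{sect:cut6,7-Petersen-like}. The main obstacle is establishing the cross-edge lower bound $\tfrac{18}{5}|C| - 12$ under the weakest possible assumptions on $C$, and then enumerating systematically the ways in which $c(K)$ might combine with subsequent $2$-, $3$-, or $4$-edge-cut reductions to produce a dot-product of Petersen graphs; the latter is precisely what forces the reliance on computer verification rather than a purely hand-written proof.
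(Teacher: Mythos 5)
Your overall plan matches the paper's at a high level (choose a balanced circuit with a minimal interior, apply Lemma~\ref{lem:conf-in-T}, isolate C(1) for Lemma~\ref{lem:5555}, and defer the remaining Petersen-like checks to the computer), but two substantive ingredients are missing and a third step is asserted rather than established.

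First, the choice of $C$ in the paper is not merely ``fewest interior vertices.'' Conditions (3) and (4) in Section~\ref{sect:6,7-cut} further normalize $C$ by re-routing it through chords or length-2 paths lying outside $D$. These conditions are what make Claims~\ref{clm:no-chord-C-outiside-D} and \ref{clm:no-two-path-C-outside-D} true, i.e.\ they control exactly which contractibly connected paths of $C$ can exist outside $D$. Without that control, the enumeration of distance patterns (6cut-1)--(7cut-15) in the Petersen-like check has no basis: a 2- or 3-cycle created after contraction could pass through the region outside $D$ in ways you cannot bound. Picking $C$ by interior-vertex minimality alone does not give you Claim~\ref{clm:no-chord-C-outiside-D}.

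Second, you assert ``the only conceivable obstruction is that the reduction accidentally produces a Petersen-like graph,'' but this is wrong: $G \dotdiv c(K)$ could acquire a bridge, i.e.\ contraction could create a loop in $G'$. The paper devotes Section~\ref{sect:cut6,7-bridge} (plus Sections~\ref{subsubsect:noncontractible-loop} and \ref{subsubsect:particular-contractible-loop}) to ruling this out, distinguishing contractible from non-contractible loops and using Claim~\ref{clm:loop-6,7cycle} together with Claim~\ref{clm:no-chord-C-outiside-D}. This is not a cosmetic omission; landing in $\mathcal{T}_1$ is part of the statement.

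Third, your claim that the cross-edge hypothesis of Lemma~\ref{lem:conf-in-T} is ``obtained by a degree-sum argument'' is not how the paper proceeds, and I do not think it holds in that generality. The paper instead argues by cases: if $C$ has a chord in $D$, or a vertex adjacent to four consecutive boundary vertices, or too few cross-edges, it produces a shorter circuit inside $D$ whose disk must match one of the graphs in Figure~\ref{fig:6cut}/\ref{fig:7cut}, which contradicts balancedness; the hypotheses of Lemma~\ref{lem:conf-in-T} are thereby available by contradiction, not by a direct counting argument. If you believe a pure degree-sum argument suffices, you need to supply it; as written, it is a gap.
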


\begin{figure}[htbp]
  \centering
  \includegraphics[width=0.5\hsize]{./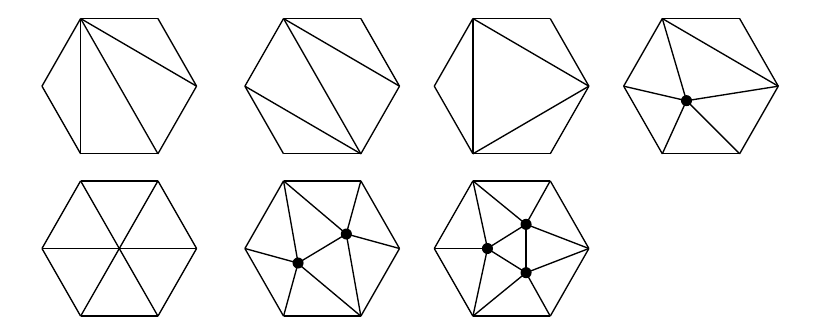}
  \caption{The disks bounded by a separating circuit of length six.}
  \label{fig:6cut}
\end{figure}
\begin{figure}[htbp]
  \centering
  \includegraphics[width=0.6\hsize]{./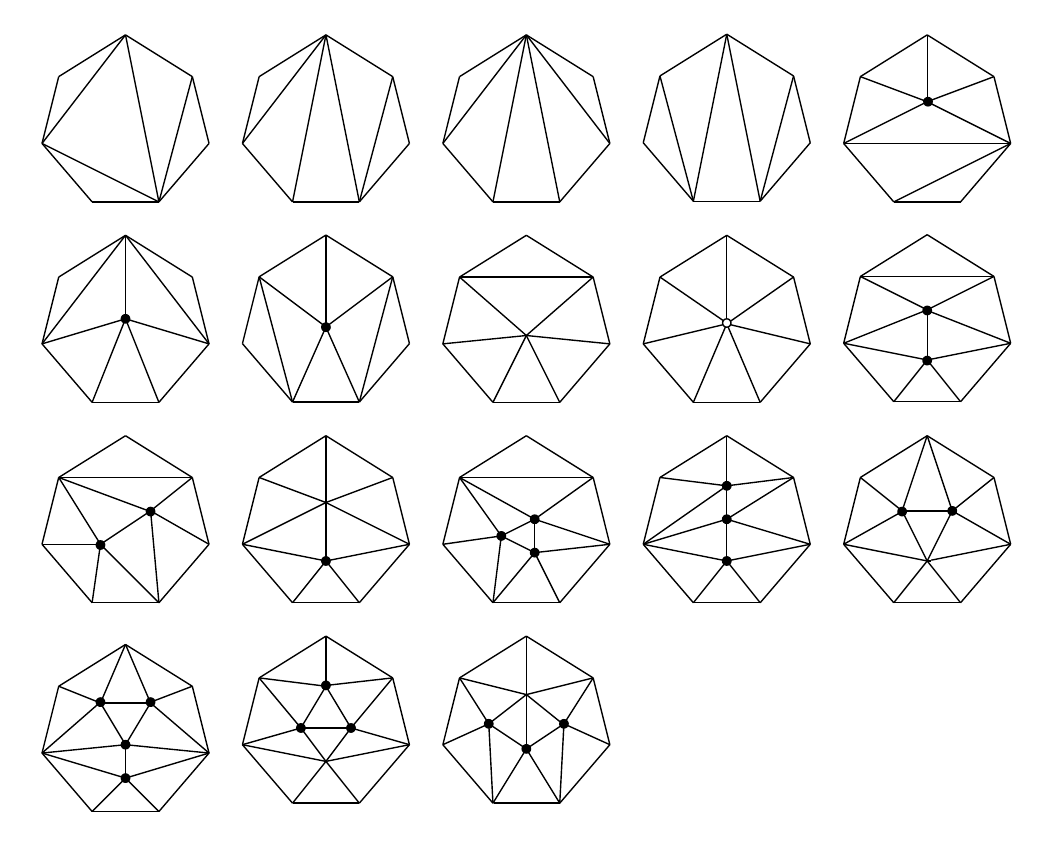}
  \caption{The disks bounded by a separating circuit of length seven.}
  \label{fig:7cut}
\end{figure}

The first claim states that $G'$ has no balanced circuit.
The second claim states that we have the precise characterization of a closed disk $D$ bounded by $C$ if $D-C$ has at most $l-3$ vertices. 
The second claim is easy to verify by enumerating the structure of graphs of at most $l-3$ vertices inside the disk.
Our proof of the first claim of Lemma \ref{lem:6,7-cut} depends on the computer-check described in Section \ref{sect:cut6,7-bridge}, \ref{sect:cut6,7-Petersen-like}.

To prove the first claim of Lemma \ref{lem:6,7-cut}, we choose $C$ as a separating circuit of length $l \in \{6,7\}$ with the following condition. Let $D$ denote the closed disk bounded by $C$.
\begin{enumerate}
    \item[(1)] $C$ is a balanced separating circuit.
    \item[(2)] $D-C$ contains the minimum number of vertices under the condition (1) (If there are two or more minimizers, we preferentially choose one with $l=6$.).
    \item[(3)] If a chosen circuit $c_0c_1c_2c_3c_4c_5c_6$ by the condition (1), (2) has a chord $c_0c_2$ outside the disk bounded by the circuit and a facial triangle $c_0c_1c_2$ exists, then we choose a circuit $c_0c_2c_3c_4c_5c_6$ as $C$.
    \item[(4)] If a chosen circuit $c_0c_1c_2c_3c_4c_5c_6$ by the condition (1), (2), (3) has a path $c_0vc_3$ such that $v$ is outside the disk bounded by the circuit and facial triangles $c_0c_1v, c_1c_2v, c_2c_3v$ exist, then we choose a circuit $c_0vc_3c_4c_5c_6$ as $C$.
\end{enumerate}
Obviously, a chosen circuit by conditions (1), (2) is balanced.
Also, if a circuit chosen by conditions (1), (2) is balanced, the same applies for a circuit chosen by adding conditions (3), (4). 
In the following, we show $C$ is the desired balanced separating circuit of the first claim of Lemma \ref{lem:6,7-cut}.

By the condition (2) of $C$, a circuit $C'(\neq C)$ in $D$ of length $k \in \{6, 7\}$ that bounds the disk that contains at least $k-2$ vertices strictly inside contradicts the definition of $C$ in most cases. An exceptional case is a circuit of length $k=7$ that shares five or four consecutive edges with $C$ and $l=6$. It does not contradict the definition of $C$ due to the conditions (3), (4) and because this circuit may not be balanced. We explain why this circuit may not be balanced. 
If $l=6$, then exactly four vertices may be in $G'-D$, but there must be five vertices outside a disk bounded by a balanced circuit of length seven, so a circuit of length $k=7$ that shares five edges with $C$ may not be balanced.
If $l=6$ and the number of vertices in $C$ is at most five (a vertex appears twice in $C$), then $C$ is not a cycle, and there exists a subwalk of $C$ that is a noncontractible cycle. 
If the number of vertices in $C$ is five, $C$ passes through one vertex twice, so a circuit of length $k=7$ that shares four edges with $C$ may not be balanced.
If the number of vertices in $C$ is at most four, the region obtained by deleting $D$ consists of (i) one disk, or (ii) two or three regions (each region is a disk or an annulus). In the first case, if $G'-D$ has at most four vertices, $D-C$ has also at most four vertices by the condition (2), so $G'$ has at most twelve vertices. It is not a minimal counterexample by the result of \cite{brinkmann2013generation}. In the second case, it contradicts $G'-D$ has at least four vertices by Lemma \ref{minc}, \ref{lem:(2,2)-annulus-cut}, \ref{lem:(1,3)-annulus-cut}. \footnote{A (1,4)-annulus-cut is possible but edges of $C$ do not constitute it when $l=6$.}. Hence, a circuit of length $k=7$ that shares at most three edges with $C$ can contradict the minimality of $C$.

We assume that a path $P$ that connects two vertices $u_i, u_j (i \neq j)$ of $C$ outside $D$ exists. The symbol $E$ represents $P + u_iu_{i+1} + u_{i+1}u_{i+2} + ... + u_{j-1}u_j$ when $i < j$, otherwise $P + u_iu_{i+1} + ... + u_{l-1}u_0 + ... + u_{j-1}u_j$. If $E$ bounds a disk that is disjoint from $D$, $P$ is called \textit{a contractibly connected $u_iu_j$-path of $C$}. Note that a contractible connected $u_0u_2$-path and $u_2u_0$-path are different since $E$ is made from $u_0u_1+u_1u_2$ in the former case, while $E$ is made from $u_2u_3+u_3u_4+...+u_{l-1}u_0$ in the latter case.
We call a contractibly connected $u_iu_j$-path of $C$ whose length is 1 \textit{a contractibly connected $u_iu_j$-chord of $C$}.

The condition (3) seems to be tricky, but the condition (3) 
is important for the following claim.
\begin{claim}\label{clm:no-chord-C-outiside-D}
    A contractibly connected chord of $C$ does not exist.
\end{claim}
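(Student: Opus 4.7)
The plan is to argue by contradiction using Fact~\ref{fact:cont-cycle} and the selection condition (3) on $C$. Assume $C$ has a contractibly connected chord $u_iu_j$, and let $P=u_iu_{i+1}\cdots u_j$ be the arc of $C$ (of length $k$, with $2\le k\le l-2$) such that chord $+\,P$ bounds a disk $R$ disjoint from $D$; write $m$ for the number of vertices of $G'$ strictly inside $R$. Since $G'-R$ contains the interior of $D$, which is nonempty (as $C$ is balanced, $|V(D)\setminus V(C)|\ge l-2\ge 4$), Fact~\ref{fact:cont-cycle} applied to the $(k+1)$-cycle $\partial R$ forces $m=0$ when $k\le 3$, $m\le 1$ when $k=4$, and $m\le 3$ when $k=5$.

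Next, I consider the rerouted cycle $C'':=u_iu_j\cup u_ju_{j+1}\cdots u_{i-1}u_i$ of length $l-k+1$, which bounds the disk $D'':=D\cup R$. A direct count gives $|V(D'')\setminus V(C'')|=|V(D)\setminus V(C)|+(k-1)+m$ and $|V(G')\setminus V(D'')|=|V(G')\setminus V(D)|-m$. The single case $(l,k)=(7,2)$, where $|C''|=6$, is handled by condition~(3): here $m=0$ makes $R$ a facial triangle, so the selection of $C$ would have replaced the arc $u_iu_{i+1}u_{i+2}$ by the chord to produce a circuit of length $6$, contradicting $|C|=7$. In every other case one has $|C''|\in\{3,4,5\}$, and the vertex counts above together with the bounds on $m$ yield $|V(D'')\setminus V(C'')|\ge 5$ and $|V(G')\setminus V(D'')|\ge 1$ (indeed $\ge 2$ when $|C''|=5$); each such configuration directly violates Fact~\ref{fact:cont-cycle}.

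The main subtlety I anticipate is the degenerate situation in which $C$ is not a simple cycle but a circuit with a repeated vertex; then the chord could identify two distinct occurrences of the same graph vertex, $R$ might be pinched rather than a genuine disk, and $C''$ could fail to be a simple cycle. I expect to dispose of this case by combining the preceding discussion in the excerpt, which restricts the number of distinct vertices on $C$ in the relevant configurations, with Lemmas~\ref{minc}, \ref{lem:(2,2)-annulus-cut}, and~\ref{lem:(1,3)-annulus-cut}, which forbid short non-contractible cycles and low-order unbalanced cuts in a minimal counterexample.
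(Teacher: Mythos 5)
Your overall strategy matches the paper's: dispose of the $(l,k)=(7,2)$ facial-triangle case by the selection condition~(3), and reduce every other chord to a short contractible cycle that clashes with the cyclic edge-connectivity guaranteed by Lemma~\ref{minc}. However, there are two genuine gaps.

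First, the asserted range $2\le k\le l-2$ is never justified, and it is not automatic in this paper's setting. Here a ``contractibly connected chord'' is any length-one path outside $D$ between two distinct $u_i,u_j$, and $G'$ is allowed to have parallel edges (the paper only forbids faces of size two). So $k=1$ (a chord parallel to an edge of $C$ whose digon bounds $R$) and $k=l-1$ (where the rerouted cycle $C''$ is a digon) are a priori on the table. Both do in fact die -- a contractible $2$-cycle with nonempty interior corresponds to a cyclic $2$-edge-cut in $G$, contradicting Lemma~\ref{minc}, while a noncontractible digon cannot bound the required disk -- but you need to say this; the paper's ``cycle of length $k\in\{2,3,4,5\}$'' explicitly covers the length-$2$ case, whereas your case split silently excludes it.

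Second, and more seriously, your bound $m\le 3$ when $k=5$ invokes the length-$6$ clause of Fact~\ref{fact:cont-cycle}. That clause is stated in the paper ``assuming Lemma~\ref{lem:6,7-cut}'' -- the very lemma whose proof contains this claim -- so the appeal is circular. (Note that the paper's own proof is careful to invoke only Lemma~\ref{minc} and therefore only cycles of length at most~$5$.) The circularity is easy to remove: for $k=5$ the rerouted cycle $C''$ has length $3$, so it corresponds to a $3$-edge-cut of $G$; by Lemma~\ref{minc} one side must be a single vertex, i.e.\ a facial triangle of $G'$, which is impossible since $D''=D\cup R$ already contains at least $l-2+(k-1)=9$ interior vertices. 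No bound on $m$ is needed, and the length-$6$ clause of Fact~\ref{fact:cont-cycle} should not be used here at all.
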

\begin{proof}
    When $l=7$, $u_iu_{i+1}u_{i+2}$ is not a facial triangle by the condition (3) of $C$, so a contractibly connected $u_iu_{i+2}$-chord does not exist. If other kinds of contractibly connected chords exist, it would constitute a cycle of length $k \in \{2,3,4,5\}$ with $C$ that contradicts Lemma \ref{minc}.
\end{proof}
The condition (4) is also useful for the following claim.
\begin{claim}\label{clm:no-two-path-C-outside-D}
    If a contractibly connected $u_iu_j$-path of length 2 of $C$ exists, $j - i \leq 2$ or $(j + l) - i \leq 2$ holds.
\end{claim}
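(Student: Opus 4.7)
My approach is by contradiction: assume there is a contractibly connected $u_iu_j$-path $P = u_iv u_j$ of length $2$ whose positive arc length $a$ (i.e., $a=j-i$ if $i<j$, and $a=j+l-i$ otherwise) satisfies $a\ge 3$. I would consider the cycle $E := P \cup (\text{positive arc})$ of length $a+2$, which bounds the disk $D'$ disjoint from $D$, together with the cycle $E^* := P \cup (\text{negative arc})$ of length $l-a+2$, which bounds the disk $D\cup D'$. The main tools are Fact~\ref{fact:cont-cycle} constraining the vertex distribution across short contractible cycles of $G'$; the balanced condition, which together with the decomposition $V(G')\setminus V(D) = \{v\} \sqcup V(D'^\circ) \sqcup \mathrm{outside}$ yields $|V(D'^\circ)| + |\mathrm{outside}| \ge l-3$; Lemma~\ref{minc} in its dual form (a separating $5$-cycle of $G'$ has at most one vertex on its smaller side, and that vertex must have degree $5$); and the refinement conditions (3) and~(4) built into the choice of $C$.

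First I would handle the cases with $|E^*|\le 4$, which are $(l,a)\in\{(6,4),(6,5),(7,5),(7,6)\}$: Fact~\ref{fact:cont-cycle} forces $|\mathrm{outside}|=0$, so every vertex of $G'$ lies in $D\cup D'$; but then the complement of the disk $D\cup D'$ in the torus is a non-disk face, violating the $2$-cellularity of the embedding (equivalently, it would force $G'$ to be planar). The antipodal case $l=6$, $a=3$ (so $|E|=|E^*|=5$) is also immediate: Fact on each of $E$ and $E^*$ gives $|V(D'^\circ)| \le 1$ and $|\mathrm{outside}|\le 1$, contradicting $|V(D'^\circ)| + |\mathrm{outside}| \ge 3$.

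The remaining cases are with $l=7$. For $a=3$ (so $|E|=5$), Fact on $E$ yields $|V(D'^\circ)|\le 1$. If $V(D'^\circ)=\emptyset$, then $D'$ is a triangulated pentagon, and any of its two required chords includes one of the form $u_iu_{i+2}$ with facial triangle $u_iu_{i+1}u_{i+2}$, which triggers condition~(3). If $V(D'^\circ)=\{w\}$, then Lemma~\ref{minc} forces $\deg(w)=5$ with $w$ adjacent to each of $u_i,u_{i+1},u_{i+2},u_{i+3},v$, whence the length-$2$ path $u_iw u_{i+3}$ together with the facial triangles $u_iu_{i+1}w$, $u_{i+1}u_{i+2}w$, $u_{i+2}u_{i+3}w$ triggers condition~(4). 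In either subcase $C$ would be replaced by a length-$6$ circuit, contradicting $l=7$.

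The main obstacle is the subcase $l=7$, $a=4$ (so $|E|=6$ and $|E^*|=5$). Fact on $E^*$ gives $|\mathrm{outside}|\le 1$, so balanced forces $|V(D'^\circ)|\ge 3$; conversely Fact on the length-$6$ cycle $E$ (which rules out both sides being of size $\ge 4$) forces $|V(D'^\circ)|\le 3$. Hence $|V(D'^\circ)|=3$ and $|\mathrm{outside}|=1$, and Lemma~\ref{minc} applied to $E^*$ produces a degree-$5$ vertex $w^*\in\mathrm{outside}$ adjacent to all five vertices of $E^*$. An edge-count in the triangulated hexagonal disk $D'$ (nine vertices, eighteen edges, ten triangles) then shows that the three interior vertices are mutually adjacent and each has exactly three boundary neighbors. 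The plan is to conclude, by a case analysis on how those nine boundary-to-interior incidences distribute around the hexagon and around $w^*$, that some interior vertex either induces a chord of $C$ triggering condition~(3) or forms a length-$2$ path $c_0 w c_3$ at three-apart vertices of $C$ with the three facial triangles required by condition~(4); each possibility contradicts the choice of $C$. This last structural analysis, tightly linking the rigid interior of $D'$ with the vertex $w^*$, is the technically delicate step of the proof.
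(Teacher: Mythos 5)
Your overall split by $|E^*|=l-a+2$ is reasonable, but you are missing the key simplification the paper relies on, and several of your steps are wrong. Except for $(l,a)=(7,3)$, every case with $a\geq 3$ has $|E^*|\leq 5$, and then the dual of $E^*$ is a cyclic edge-cut of size $\leq 5$ in $G$. By Lemma~\ref{minc} this is impossible when the size is $\leq 4$, and when the size is $5$ the cut must have a 5-cycle component; a short Euler count shows that the 5-cycle side of a dual disk $5$-cut must be the disk side of $E^*$ in $G'$ with exactly one interior vertex, whereas the unique disk side here is $D\cup D'$ (which has many interior vertices) and the other side is a once-punctured torus, not a disk. This is exactly the content of the paper's laconic ``it contradicts Lemma~\ref{minc}''; the only case that actually needs conditions~(3) and~(4) is $(l,a)=(7,3)$.

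There are also concrete errors. (i) Your planarity argument for $|E^*|\leq 4$ fails: the complement of $D\cup D'$ need not be a single non-disk face, since a once-punctured torus bounded by a short cycle $E^*$ can be triangulated with no interior vertices using chords and multi-edges among the $|E^*|$ boundary vertices (Euler gives $e_o=|E^*|+3$, $f_o=|E^*|+2$ when $v_o=0$); the correct contradiction is the small cyclic cut violating Lemma~\ref{minc}. (ii) For $(l,a)=(7,3)$ with $V(D'^\circ)=\varnothing$, it is false that every fan-triangulation of the pentagon $u_iu_{i+1}u_{i+2}u_{i+3}v$ contains a chord $u_ku_{k+2}$: the fan from $v$ has chords $vu_{i+1},vu_{i+2}$, neither of which is a chord of $C$, and is handled by condition~(4), not~(3). (iii) Your $(l,a)=(7,4)$ analysis is both circular and incorrect. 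Invoking Fact~\ref{fact:cont-cycle} for the $6$-cycle $E$ is circular, since that part of the Fact is derived from Lemma~\ref{lem:6,7-cut}, which this claim is proving. And Lemma~\ref{minc} does \emph{not} produce a degree-5 vertex $w^*$ adjacent to all of $E^*$ on the non-disk side: the once-punctured torus side of a length-$5$ cycle has $f_o=2v_o+7\geq 7$ faces of $G'$, never $5$, so the 5-cycle component cannot lie there, and the disk side $\Delta$ is too large — Lemma~\ref{minc} already yields a contradiction before any analysis of $D'$ or $w^*$. The ``plan'' you sketch is thus chasing a structure that cannot arise.
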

\begin{proof}
    Suppose $l=7$, and a contractibly connected path $u_ivu_{i+3}$ exists. Then, a contractibly connected chord of $C$ outside $D$ exists or there exists a vertex $v'$ outside $D$ ($v'$ can be the same as $v$) such that a path $u_iv'u_{i+3}$ exists and facial triangles $c_0c_1v', c_1c_2v', c_2c_3v'$ exist by Lemma \ref{minc}. That contradicts Claim \ref{clm:no-chord-C-outiside-D} or the condition (4) of $C$. If other kinds of contractibly connected path of length $2$ that contradict this claim exist, it contradicts Lemma \ref{minc}.
\end{proof}
Claims \ref{clm:no-chord-C-outiside-D}, \ref{clm:no-two-path-C-outside-D} are important in enumerating all possibilities of 2,3-cycles that contain vertices outside $D$ after contraction.

To use Lemma \ref{lem:conf-in-T}, we have to handle a few cases when $C$ is in a particular situation: when $C$ has a chord in $D$, or a vertex strictly inside $D$ is adjacent to at least four consecutive vertices of $C$, or at most $\frac{18}{5}l-12$ edges exist between $C$ and vertices in $D - C$. In these cases, we find a circuit of length at most six in $D$, which enables us to get a characterization of $D$ by Lemma \ref{minc} or the almost-minimality of $C$ or find a reducible configuration strictly inside $D$. The characterization found here is enumerated in Figure \ref{fig:6cut} ($l=6$), \ref{fig:7cut} ($l=7$), which viloates $D-C$ has at least $l-2$ vertices ($C$ is balanced). 
Therefore, we find a reducible configuration strictly inside $D$ by Lemma \ref{lem:conf-in-T}. In the rest of the section, we show that the dual of the resulting graph belongs to $\mathcal{T}_1 \setminus \mathcal{T}_0$ after contracting $c(K)$ of a C-reducible configuration $K$ and 2,3-cycle reductions. More precisely, we have to show that the dual of the resulting graph has no bridge and is not a Petersen-like graph.

\subsection{Bridge}\label{sect:cut6,7-bridge}\showlabel{sect:cut6,7-bridge}
A bridge corresponds to a loop in the dual graph, so we care about a loop in $G'$ after contraction. There are two types of loops in the toroidal graphs: contractible, noncontractible. 
We handle a noncontractible loop and one particular type of contractible loop in Section 
\ref{subsubsect:noncontractible-loop}, \ref{subsubsect:particular-contractible-loop}
(The checks in those sections are independent of the result of this section). In this section, we handle other kinds of contractible loops.
By computer check, we show the following claim.
\begin{claim}\label{clm:loop-6,7cycle}
For a C-reducible configuration $K \in \mathcal{K}$, assume that $K$ appears in $G'$, and a contractibly connected chord of the ring of $K$ becomes a contractible loop after contracting edges of $c(K)$. Then, a contractible cycle of length $k \in \{6,7\}$ with the following properties exists.
\begin{itemize}
    \item The cycle contains an edge in $G'$ that becomes a loop after contraction, and 
    \item the cycle separates $G'$ into two components, each of which has at least $k-2$ vertices.
    \item If $k=7$, at least four edges of the cycle are incident with a vertex of $K$. 
\end{itemize}
\end{claim}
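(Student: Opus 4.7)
The plan is to verify this claim by a computer-assisted enumeration over the C-reducible configurations $K \in \mathcal{K}$ with $|c(K)| \geq 2$ (approximately $2000$ configurations, since a single contraction edge cannot identify two distinct ring vertices). The key structural observation is that, when we contract $c(K)$, two vertices of $G'$ are identified if and only if they lie in the same connected component of the subgraph of $G'$ spanned by $c(K)$. Consequently, a chord $e = uv$ outside $D_K$ becomes a loop iff the ring vertices $u$ and $v$ belong to the same such component; this forces the existence of a $u$-$v$ path $Q$ through the interior of $D_K$ consisting of edges in $c(K)$.

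The enumeration proceeds as follows. For each candidate configuration $K$, I would first partition $V(S)$ (the vertex set of the free completion $S$) into equivalence classes under ``same connected component of $c(K)$.'' For each pair of distinct ring vertices $(u, v)$ in a common class, I would exhaustively consider all placements of an external chord $e = uv$ compatible with the embedding (constrained further by Claims \ref{clm:no-chord-C-outiside-D} and \ref{clm:no-two-path-C-outside-D}), and search for a cycle $C^* = e \cup Q$ in $G'$ of length $6$ or $7$, where $Q$ is a $u$-$v$ path inside $D_K$ using edges of $c(K)$ augmented possibly by ring edges. I would then verify the three required properties: (i) $|C^*| \in \{6, 7\}$ can be achieved for every chord-to-loop case, using that shorter cycles are excluded by Fact \ref{fact:cont-cycle} while longer candidates can be replaced by a shorter path through a different branch of $c(K)$; (ii) the two disks bounded by $C^*$ each contain at least $k-2$ vertices of $G'$, checked directly from the structure of $K$ on the interior side and using the minimality of $G'$ (in particular the lower bound on $|V(G')|$ coming from \cite{brinkmann2013generation}) on the exterior side; (iii) when $k=7$, at least four edges of $C^*$ are incident to $V(K)$, which follows immediately since the path $Q$ lives inside $D_K$ and traverses at least five interior edges of $G'$.

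The main obstacle is the combinatorial volume: the product of roughly $2000$ configurations with multiple chord placements and multiple contraction paths through each $c(K)$ yields a very large case split that must be automated. A secondary subtlety is handling configurations in which contracting $c(K)$ simultaneously collapses several chord pairs into loops; there one must show that at least one choice of $C^*$ satisfies all three conditions at once, which requires careful bookkeeping but no new structural ideas. Finally, one should guard against the degenerate case where the minimal $C^*$ built from $e$ and $Q$ has length less than $6$: here Fact \ref{fact:cont-cycle} already rules out the configuration in a minimal counterexample, so the candidate $K$ can be discarded on the spot.
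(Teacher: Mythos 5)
Your high-level strategy matches the paper's: the paper proves this claim purely ``by computer check'' (with the actual algorithm deferred to the pseudocode in the appendix and the code repository), and you propose a computer-assisted enumeration over the C-reducible configurations in $\mathcal{K}$. So the approach is the same. However, your prose justifications for why the three bulleted properties will hold for the cycle you construct are not sound, and they are where the real content of the computer check lives.

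The most concrete issue is your argument for the third bullet. You take $C^* = e \cup Q$ with $Q$ a $u$-$v$ path ``using edges of $c(K)$ augmented possibly by ring edges,'' and then assert that $Q$ ``traverses at least five interior edges of $G'$,'' from which you deduce the $\geq 4$ condition. But once you permit ring edges in $Q$, that inference breaks: ring--ring edges are not incident with any vertex of $K$, and a path $Q$ of length $6$ that detours along the ring may contribute far fewer than four edges incident with $V(K)$. If instead you insist that $Q$ use only $c(K)$-edges (all of which are indeed incident with $V(K)$), the third bullet would follow trivially, but then you run into the first bullet: the $c(K)$-path joining $u$ and $v$ can have length anywhere up to $|c(K)| - 1$, which for the largest configurations in $\mathcal{K}$ is as much as $9$. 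Your proposed fix, ``replaced by a shorter path through a different branch of $c(K)$,'' is not justified -- there is no reason the spanning subgraph of $c(K)$ containing $u$ and $v$ should have a second branch at all, let alone one of the right length. A similar issue afflicts the second bullet: one side of $C^*$ is bounded below by $|V(G')|$, but the other side is the part of the interior of $K$ cut off by $Q$, and nothing you have said guarantees it contains $k-2$ vertices. The claim is existential and the computer must search for \emph{some} cycle satisfying all three properties simultaneously, not merely verify one fixed candidate; your plan locks onto the single candidate $e \cup Q$ and then appeals to hand-waved repairs when it fails. In a complete write-up of the check you would need to specify the actual search space, which is exactly what the paper's pseudocode in Appendix~\ref{sect:code} supplies and what your prose leaves vague.
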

The third condition is necessary to handle the almost-minimality of $C$ as explained above.
When a cycle described in Claim \ref{clm:loop-6,7cycle} is inside $D$, that contradicts the definition of $C$. Otherwise, a chord of $C$ outside $D$ exists since the edges contracted are strictly inside $D$. That contradicts Claim \ref{clm:no-chord-C-outiside-D}.
In this discussion, we show that a contractible loop does not exist after contraction, so the following claim holds.
\begin{claim}\label{clm:not-contractible-bridge-cut6,7}
    For every C-reducible configuration $K \in \mathcal{K}$ that appears strictly inside $D$, after contracting c($K$) in $G'$, no contractible loops occur.
\end{claim}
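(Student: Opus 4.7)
The plan is to derive Claim \ref{clm:not-contractible-bridge-cut6,7} as a direct consequence of Claim \ref{clm:loop-6,7cycle} combined with the minimality conditions (1)--(4) imposed on $C$ and with Claim \ref{clm:no-chord-C-outiside-D}. First I would argue that one may restrict attention to contractible loops that are \emph{not} of the special type treated in Sections \ref{subsubsect:noncontractible-loop} and \ref{subsubsect:particular-contractible-loop}; for those excluded types, the result is independent of the present computer check, so they contribute nothing to worry about here. Every remaining hypothetical loop arising after contracting $c(K)$ must be produced by an edge $e$ of $G'$ whose endpoints are identified through the contraction edges, all of which lie strictly inside $D$.

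Next, assuming for contradiction that such a loop appears, I would invoke Claim \ref{clm:loop-6,7cycle}: it supplies a contractible cycle $Z$ of length $k\in\{6,7\}$ through $e$, separating $G'$ into two parts each containing at least $k-2$ vertices, and in the case $k=7$ with at least four edges incident to some vertex of $K$. The goal is to derive a contradiction in each of the two possible locations of $Z$.

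\textbf{Case 1: $Z$ lies inside $D$.} Here $Z$ is itself a balanced separating circuit of length $6$ or $7$ in $G'$. The only way $Z$ could fail to contradict the minimality condition (2) of $C$ is if $Z$ has length $7$ while $C$ has length $6$ and $Z$ shares at least four consecutive edges with $C$; but this scenario is explicitly excluded by the third bullet of Claim \ref{clm:loop-6,7cycle}, which forces four edges of $Z$ to be incident with a vertex of $K$ (hence strictly interior to $D$). Thus $Z$ violates the choice of $C$ directly.

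\textbf{Case 2: $Z$ leaves $D$.} Since the contracted edges all lie strictly inside $D$ while $Z$ must traverse $C$, the portion of $Z$ outside $D$ forms a path between two vertices $u_i,u_j$ of $C$, i.e.\ a contractibly connected $u_iu_j$-path of $C$. If this path is a single edge we contradict Claim \ref{clm:no-chord-C-outiside-D}; if it is longer, we can combine it with the shorter arc of $C$ to obtain a short contractible cycle, which in turn yields either a chord outside $D$ by the arguments behind Claim \ref{clm:no-chord-C-outiside-D} or contradicts Lemma \ref{minc} or Claim \ref{clm:no-two-path-C-outside-D}. Either way we reach a contradiction, so no contractible loop of the relevant type can arise, proving the claim. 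The main obstacle I anticipate is the bookkeeping in Case 2 to ensure that every sub-length and every combination of arcs outside $D$ is ruled out by one of Claims \ref{clm:no-chord-C-outiside-D}, \ref{clm:no-two-path-C-outside-D}, or Lemma \ref{minc}; aside from that, the proof is a direct application of Claim \ref{clm:loop-6,7cycle}.
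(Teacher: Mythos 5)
Your overall structure matches the paper's: Claim \ref{clm:loop-6,7cycle} produces a short separating cycle $Z$ through the offending loop edge, and you split by whether $Z$ stays inside $D$. Your Case 1 is exactly the paper's, and you correctly point out that the third bullet of Claim \ref{clm:loop-6,7cycle} is what disposes of the exceptional ``$l=6$, $k=7$, shares four or five edges with $C$'' scenario. However, your Case 2 is both more elaborate than the paper's and left with an acknowledged gap, and the gap is real: if the part of $Z$ outside $D$ is a path of length $3,4$ or $5$, combining it with an arc of $C$ (which itself has length up to $3$) yields a cycle of length up to $8$, which is too long to contradict Lemma \ref{minc}, and the ``arguments behind Claim \ref{clm:no-chord-C-outiside-D}'' and Claim \ref{clm:no-two-path-C-outside-D} only cover chords and length-$2$ paths. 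Moreover you implicitly assume $Z$ crosses $C$ only once.

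The observation you are missing, and the one that makes the paper's Case 2 a one-liner, is that the cycle $Z$ certified by Claim \ref{clm:loop-6,7cycle} is constructed locally from the free completion of $K$ together with the loop edge. Since $K$ appears strictly inside $D$, all of $Z$'s edges except possibly the loop edge lie in $D$; so if $Z$ is not contained in $D$, the loop edge itself must go outside $D$, and as its two endpoints are ring vertices of $K$ lying on $C$, it is precisely a contractibly connected chord of $C$ outside $D$. That is the ``chord of $C$ outside $D$'' the paper invokes and kills with Claim \ref{clm:no-chord-C-outiside-D}; no longer paths and no arc-of-$C$ bookkeeping ever arise. With that one extra sentence your Case 2 collapses to the single-edge subcase you already handle correctly, and your proof becomes the paper's.
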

By the same argument of Claim \ref{clm:not-contractible-bridge-cut6,7}, we can prove Lemma \ref{pairs6} in Section \ref{sect:reducible configurations}, by  using Lemma \ref{lem:6,7-cut}. 

\subsection{Petersen-like graph}\label{sect:cut6,7-Petersen-like}\showlabel{sect:cut6,7-Petersen-like}
The graph obtained by a 2,3-edge-cut reduction that is caused by (2,1),(1,1)-annulus-cut never becomes a Petersen-like graph since such a cut ensures that the representativity of this graph is at most one. Hence, we only consider 2,3-edge-cut reductions that correspond to a contractible cycle of length 2,3 in the dual (2,3-cycle reductions).
Also, the representativity of $G, G'$ is at least two since a graph of the representativity one does not become Petersen-like.
We denote $H$ as the graph obtained from $G'$ by deleting vertices outside $D$ and splitting vertices of $C$ if identified (we need to split vertices if $C$ is not a cycle as explained at the beginning of this proof). The graph $H$ is a near triangulation in the plane such that $C$ bounds the outer face boundary. 
Remind that $K$ is a configuration inside $D$ (also in $H$), and $c(K)$ is a set of edges to be contracted for C-reducibility.

\subsubsection{No 2,3-cycle reductions outside $D$}\label{sect:no-2,3-cycle-Petersen-like}\showlabel{sect:no-2,3-cycle-Petersen-like}
First, we consider the case that no two different vertices of $C$ are identified and no chord of $C$ exists in $H$ after contraction. In this case, no 2,3-cycle that contains a vertex outside $D$ appears after contractions since two non-consecutive vertices in $C$ have a distance at least 2 in $H / c(K)$, and no contractibly connected chord of $C$ outside $D$ exists by Claim \ref{clm:no-chord-C-outiside-D}. Hence, our approach is to search a circuit $C'$ in the dual of the Petersen graph or graphs in the Blanuša snark family (e.g. Petersen graph, Blanuša, Blanuša-V1, Blanuša-H1 snark) that was originally $C$ before contractions. To do that, we search a circuit $C'$ of length $l \in \{6, 7\}$ that bounds a disk $D'$ with the following conditions in the dual of the Petersen graph or graphs in the Blanuša snark family.
\begin{itemize}
    \item[(a)] No two vertices of $C'$ are identified and no chord of $C'$ exists in $D'$.
    \item[(b)] The number of vertices not in $D'$ is at least $l - 2$.
    \item[(c)] For every two noncontractible cycles of length two $v_0v_1, v_2v_3$ ($i \neq j \rightarrow v_i \neq v_j $) of the Petersen graph or graphs in the Blanuša snark family, both two annulus regions separated by these cycles share a common area with the region strictly inside $D'$.
\end{itemize}
The condition (a) is obvious in this case. The condition (b) comes from the definition of $C$. The condition (c) comes from Lemma \ref{lem:(2,2)-annulus-cut}. The restriction $i \neq j \rightarrow v_i \neq v_j$ represents the cyclicity of a cut in Lemma \ref{lem:(2,2)-annulus-cut}. The condition (c) is important in restricting infinite Petersen-like graphs to finite ones. More precisely, we do not need to care about Blanuša-H$k$ ($k \geq 1$), Blanuša-V$l$ ($l \geq 2$) since every circuit that satisfies the condition (c) in every embedding of these graphs is of length at least 8, so our targets are only the Petersen graph, Blanuša snark, and Blanuša-V1 snark. Luckily, almost all circuits in the dual of every embedding of these graphs do not satisfy all conditions (a), (b), (c). Only one circuit in these graphs satisfies all the conditions; it is the circuit of length 7 that surrounds the vertex of degree 7 in Blanuša-V1 snark (see Figure \ref{fig:blanusaV1-embedding}).
We handle the circuit by checking that all the configurations of $\mathcal{K}$ have at least two vertices or a vertex of degree not 7 after contraction and 2,3-cycle reductions by computer. Hence, we do not need to worry about getting such a circuit after contraction. Thus we have the following.
\begin{claim}\label{clm:not-Petersen-like1-cut6,7}
    For every C-reducible configuration $K \in \mathcal{K}$ except C(1), C(27), which appears strictly inside $D$, after contracting c($K$) in $G'$, assume that every two non-consecutive vertices of $C$ are of distance at least $2$ in $H / c(K)$. Then, $G'$ does not become a Petersen-like graph after contraction and 2,3-cycle reductions.
\end{claim}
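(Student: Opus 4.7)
The plan is a proof by contradiction combined with a finite enumeration. Suppose that after contracting $c(K)$ and applying 2,3-cycle reductions inside $D$, the resulting graph lies in $\mathcal{T}_0$, so that we can view it as the dual of a Petersen-like cubic graph $P$. Under the distance hypothesis, the circuit $C$ survives as an embedded circuit $C'$ of the same length in the dual of $P$, with no identified vertices and no chord inside the corresponding disk $D'$. Moreover, by Claim \ref{clm:no-chord-C-outiside-D} there are no contractibly connected chords of $C$ outside $D$, and all contracted edges of $c(K)$ lie strictly inside $D$; hence no 2,3-cycle reduction involves any vertex of $G'-D$, and the exterior of $C$ is preserved. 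Since $C$ was balanced, $C'$ bounds a disk with at least $|C|-2$ vertices on its outside. So to derive a contradiction it suffices to enumerate all possible such $C'$ in duals of Petersen-like graphs satisfying conditions (a), (b), (c) and rule them out.

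First I would use Lemma \ref{lem:(2,2)-annulus-cut} to cut the infinite family of Petersen-like graphs down to a finite list. Condition (c) says that for every pair of vertex-disjoint noncontractible length-2 cycles in $P$, both bounded annular regions must meet the interior of the disk bounded by $C'$; otherwise a forbidden cyclic $(2,2)$-annulus-cut would be produced. By the induction on the level $k$ mentioned in Subsection \ref{subsect:avoidpet}, every circuit of length at most 7 in the duals of the Blanuša-H$k$ ($k \geq 1$) and Blanuša-V$l$ ($l \geq 2$) embeddings satisfying (c) has length at least 8, which rules those infinite families out. Only the Petersen graph, the Blanuša snark, and the Blanuša-V1 snark remain. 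For each embedding (up to automorphism) of those three graphs displayed in Figures \ref{fig:petersen-embedding}--\ref{fig:blanusaV1-embedding}, I would enumerate circuits of length 6 or 7 satisfying (a), (b), (c); the enumeration leaves exactly one surviving case, the length-7 circuit surrounding the unique degree-7 vertex in the dual of Blanuša-V1.

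The main obstacle, which I would delegate to the computer, is to rule out this last surviving case: for every C-reducible $K \in \mathcal{K}$ other than C(1) and C(27), one must verify that after contracting $c(K)$ and performing any compatible 2,3-cycle reductions inside $D$, the image of $C$ never bounds a disk whose interior consists of a single vertex of degree 7. Concretely, one checks that every such $K$ either leaves at least two vertices strictly inside the image of $C$, or leaves a single vertex whose degree is not 7. This case distinction, running over the $\sim 2000$ configurations requiring contraction, is carried out by the algorithm described in Appendix \ref{sect:code}; the two exceptional configurations C(1) and C(27) are handled separately (by Lemma \ref{lem:5555} and in Section \ref{sect:flatproof}). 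Once this verification succeeds, the contradiction is obtained, proving Claim \ref{clm:not-Petersen-like1-cut6,7}.
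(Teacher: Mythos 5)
Your proposal follows essentially the same approach as the paper: you correctly observe that under the distance-2 hypothesis the circuit $C$ survives in the reduced dual, you enumerate circuits in the duals of Petersen-like graphs satisfying the same conditions (a), (b), (c), you use the $(2,2)$-annulus-cut lemma via condition (c) to restrict attention to the Petersen graph, Blanu\v{s}a snark, and Blanu\v{s}a-V1 snark, you isolate the same unique exceptional length-7 circuit around the degree-7 vertex of Blanu\v{s}a-V1, and you dispatch it by the same computer check. This is the argument the paper gives.
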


\subsubsection{2,3-cycle reductions outside $D$}\label{sect:2,3-cycle-Petersen-like}
\showlabel{sect:2,3-cycle-Petersen-like}
\begin{figure}
    \centering
    \includegraphics[width=0.9\linewidth]{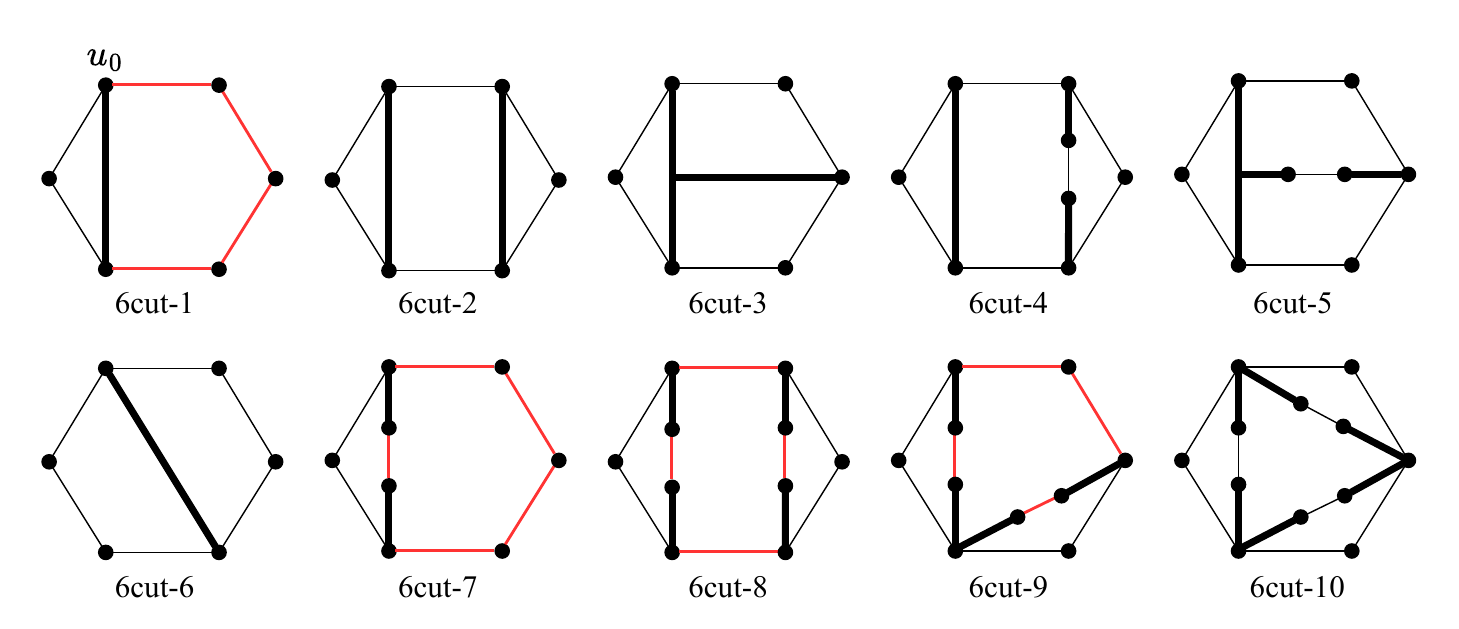}
    \caption{The 10 cases checked by computer. 
    In the figure, $u_0$ is the vertex in the upper left. The remaining vertices $u_1,u_2,...$ start after $u_0$ in the counter-clockwise order.
    The bold line represents contraction edges of $K$. 
    The red line represents a set of edges that consist of a contractible cycle of length $4$, $5$, or $6$ that will not be erased after reductions.}
    \label{fig:cut6-check-cases}
\end{figure}

\begin{figure}
    \centering
    \includegraphics[width=0.7\linewidth]{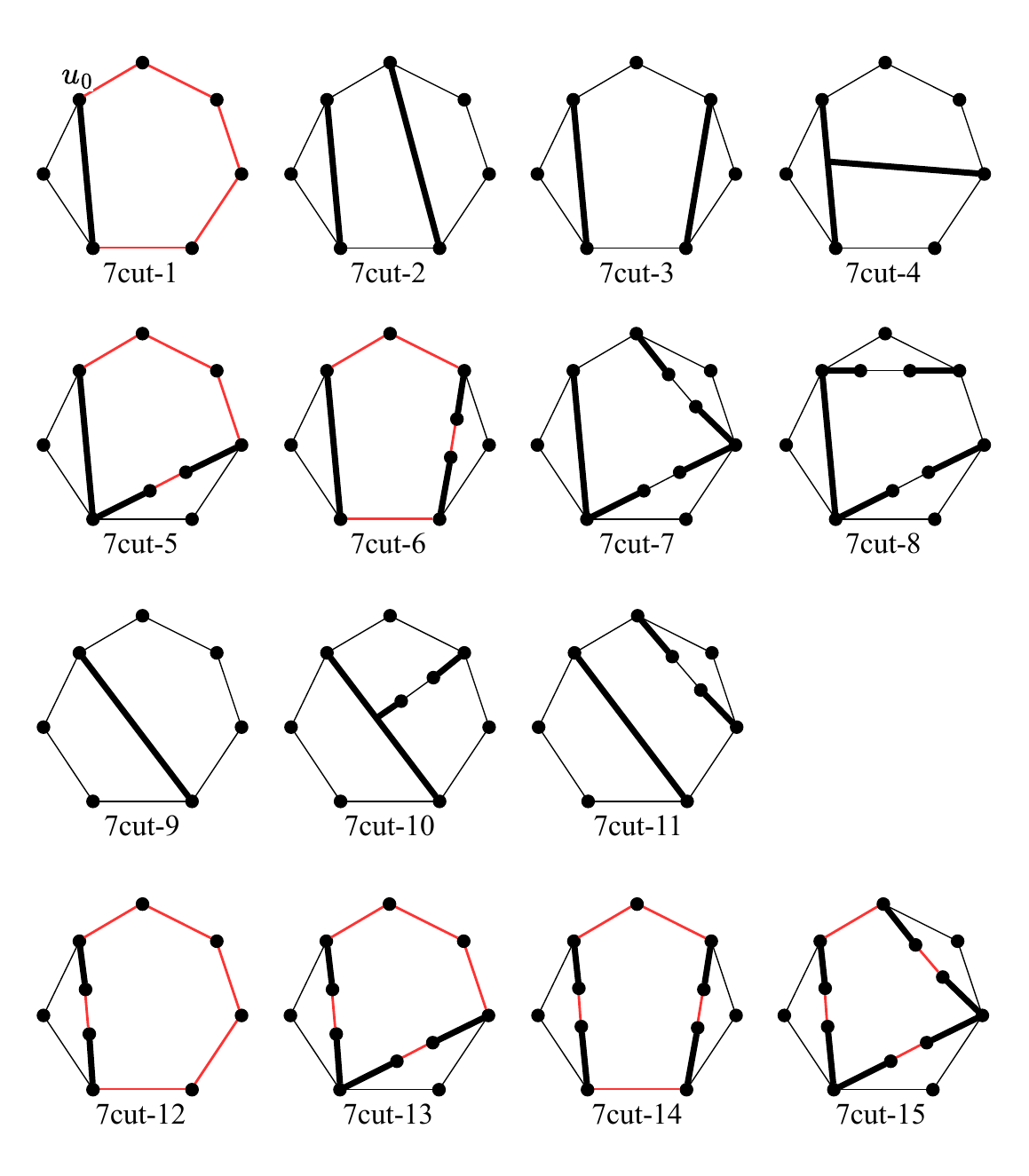}
    \caption{The 15 cases checked by computer. 
    In the figure, $u_0$ is the vertex in the upper left. The remaining vertices $u_1,u_2,...$ start after $u_0$ in the counter-clockwise order. 
    The bold line represents contraction edges of $K$.
    The red line represents a set of edges that consist of a contractible cycle of length $4$, $5$, or $6$ that will not be erased after reductions.}
    \label{fig:cut7-check-cases}
\end{figure}

Second, we consider the other cases: the distance between two non-consecutive vertices of $C$ may be at most 1 in $H$ after contraction.
In this case, we check the number of vertices inside a circuit of specified length in $H$ after contraction and 2,3-cycle reductions, which is explained later. We have to enumerate contractible 2,3-cycles that get rid of vertices. Especially, cycles that can contain edges outside $D$ are dangerous. This is because such a cycle does not contradict the almost-minimality of $C$. There are two possibilities of appearing such a cycle after contractions. One consists of a path inside $D$ and a contractibly connected path of $C$ whose length is at most 3. The other consists of two paths inside $D$ and two homotopic noncontractibly connected paths of $C$, where a \textit{noncontractibly connected path} of $C$ is a path $P$ outside $D$ that connects two vertices of $C$ such that $C + P$ contains some noncontractible cycle. Two noncontractibly connected paths $P_1, P_2$ of $C$ are \emph{homotopic} if two noncontractible cycles in $P_1+C, P_2+C$ are homotopic. The former kind of cycle appears in restricted situations by Claims \ref{clm:no-chord-C-outiside-D} and \ref{clm:no-two-path-C-outside-D} (See two cases (A), (B) below.). However, in these restricted situations, such a cycle still appears and we have no idea how to count vertices inside the cycle. Hence, we quit assessing the number of vertices in the region that can be bounded by such a cycle, but in other regions, we calculate it. To do so,
we enumerate all possible combinations of distances in $H / c(K)$ obtained by combining the following two cases (A), (B). 
\begin{itemize}
    \item[(A)] Two different nonconsecutive vertices in $C$ are identified (of distance 0 in $H / c(K)$) after contraction, 
    \item[(B)] Two vertices $u_i, u_{i+2}$ in $C$ are of distance 1 in $H / c(K)$.
\end{itemize}
These two cases are important in tracking 2,3-cycles that contain vertices both inside and outside $D$. By Claims \ref{clm:no-chord-C-outiside-D} and \ref{clm:no-two-path-C-outside-D}, vertices described in (A), (B) combined with a contractibly connected path of length 2, or 3 are the only cases that can be a 2,3-cycle that contain vertices both inside and outside $D$ after contraction of $c(K)$ by one contractibly connected path. 
We enumerate all combinations of distances and check that these situations do not happen by the contraction of $K \in \mathcal{K}$ needed for C-reducibility. This check is done by computer. All the cases are enumerated in Claim \ref{clm:not-Petersen-like2-cut6,7}: (6cut-1),...,(6cut-10),(7cut-1),....,(7cut-15). There are 10 cases for $l=6$, and 15 cases for $l=7$. The distance function $d(\cdot,\cdot)$ is measured in $H / c(K)$. The corresponding figures are shown in Figures \ref{fig:cut6-check-cases} and \ref{fig:cut7-check-cases}. 
Note that the red-colored edges are related to the evidence used to prove the resulting graph after contraction and 2.3-cycle reductions is not a Petersen-like graph. Indeed, a set of red-colored edges becomes a cycle of length 4, 5, or 6 after contraction. Also, the disk bounded by the cycle after contraction is not surrounded by 2,3-cycle after contraction by Lemma \ref{minc}, and Claims \ref{clm:no-chord-C-outiside-D} and \ref{clm:no-two-path-C-outside-D}, so the disk remains after 2,3-cycle reductions. Note that it is not a mistake that the cycle of length $4$ in (7cut-9) after contraction is not red-colored since the disk bounded by the cycle can be removed by a contractibly connected $u_3u_0$-path of length 3. 
No cycle of length 4, 5, or 6 of the dual of any embedding of the Petersen graph or graphs in the Blanuša snark family has the following properties (i), (ii), (iii).
The reader can check these properties by considering Figures \ref{fig:petersen-embedding}--\ref{fig:blanusaV2-embedding}.

\begin{itemize}
    \item[(i)] A circuit of length $4$ that bounds a disk that contains at least 1 vertex strictly inside.
    \item[(ii)] A circuit of length $5$ that bounds a disk that contains at least 2 vertices strictly inside.
    \item[(iii)] A circuit of length $6$ that bounds a disk that contains at least 4 vertices strictly inside. 
\end{itemize}
Hence, the number of vertices that remain after contraction inside the cycle that consists of red edges and several contracted edges can be a certificate of being non-Petersen-like graphs.

To do so, we calculate the number of vertices after contraction. If a cycle inside $D$ becomes a 2,3-cycle after contractions, it is possible that this cycle may originally contradict the minimality of $C$. We only consider cycles inside $D$ that do not contradict the minimality of $C$ and assume that vertices inside these cycles are erased.
We have to take care also about cycles that are not completely inside $D$.
As explained at the beginning of the section, a path inside $D$ and a contractibly connected path of $C$ can constitute a 2,3-cycle that is outside $D$, but we do not have to care about it since such a cycle does appear in the region where we count the number of vertices \footnote{For example, when checking (6cut-1), you may wonder if the distance between $u_2$ and $u_4$ can be 1 after contractions and a contractibly connected $u_2u_4$ path of length 2 can become a 2,3-cycle that get rid of vertices inside the cycle that consists of red-colored edges and contracted edges. However, we checked this case in (6cut-5), so we can assume that no such cycle appears in (6cut-1).} (i.e. the region bounded by red edges and several contracted edges).

Also, we care about a cycle that consists of two paths inside $D$ and two homotopic noncontractibly connected paths of $C$. The length of two homotopic noncontractibly connected paths of $C$ must be short, namely at most 3, to become a 2,3-cycle after reductions. Using this, we check whether the existence of such paths contradicts the length of cycle $C$ (which can be 6, 7) or not.
We assume that vertices are deleted only by cycles that pass the check (i.e. that are compatible with the length of $C$.).
More detailed implementations of this check are explained in Appendix \ref{subsect:code-cuts6,7}.

We treat (7cut-12) as a special case. If we get a Petersen-like graph in (7cut-12), and a 6-cycle in (7cut-12) bounds a disk that consists of exactly 3 vertices strictly inside, all these 3 vertices have degree 5 by observing a 6-cycle in the dual of any embedding of the Petersen graph or graphs in Blanuša snark family.
When such a case happens, the cycle does not satisfy both conditions (b) and (c) in Section \ref{sect:no-2,3-cycle-Petersen-like} (Here, we have to restrict that the number in the condition (b) is $l-3$ by considering the possibility of being removed vertices by a 3-cycle that contains vertices outside $D$). Hence, our condition (iii) in (7cut-12) can become weaker than described above: the number $4$ becomes $3$. We actually check under this condition.

The other criterion of this check is checking the almost-minimality of $C$. The criterion is the same as the one used in Section 7.1 in \cite{proj2024}. We explain it in short to make this paper self-contained. 
When some two vertices of $C$ become identified with each other after contractions of $c(K)$, these two vertices are in the ring of $K$.
When some vertices on $C$ are of distance 1 after contractions of $c(K)$, these two vertices are in the ring of $K$, or one is in the ring, and the other is adjacent to a vertex of the ring. 
Hence, assuming one of (6cut-1),...,(7cut-15) happens, we can find a short contractibly connected path of the ring of $K$, which is part of $C$. This path would constitute a cycle with a path in $K$ that could contradict the almost-minimality of $C$.
Let us explain how to check the argument above, using a specific example. In fact, this example corresponds to (6cut-2).
Let us assume $l=6$ and $d(u_0, u_2) = d(u_3,u_5) = 0$ after contraction of $c(K)$. 
Note that these four vertices belong to a ring of $K$.
For each configuration $K \in \mathcal{K}$, we enumerate four vertices $a, b, c, d$ in the ring such that $a, b, c, d$ are in the clockwise order listed in the ring of $K$, and $d(a, b) = d(c, d) = 0$ after contracting edges of $c(K)$.
In this case, we have several constraints that two vertices of the ring of $K$ are connected by a path of specified length outside $K$ (e.g. a contractibly connected $ab$(, $ba$)-path of length 2(, 4) of the ring exists respectively). We check by computer whether or not all such paths constitute a cycle that contradicts the almost-minimality of $C$. 

By computer-check described above, we ensure that almost all configurations in $\mathcal{K}$ except C(1), C(27) does not become a Petersen-like graph after contraction and reductions. However, there is one configuration in (7cut-9) that we cannot conclude immediately. We can handle it in Claim \ref{clm:proj1113-cut6,7}.

\begin{figure}
    \centering
    \includegraphics[width=7cm]{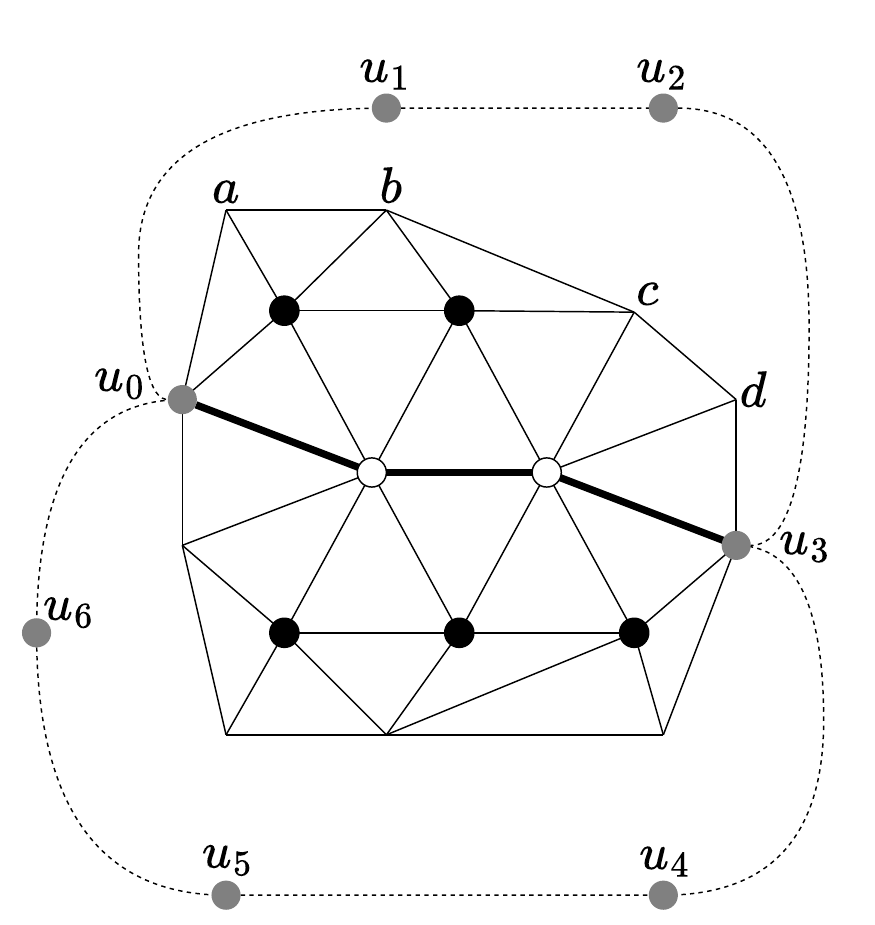}
    \caption{}
    \label{fig:cut6,7-proj1113}
\end{figure}
\begin{claim}\label{clm:proj1113-cut6,7}\showlabel{clm:proj1113-cut6,7}
    The configuration shown in Figure \ref{fig:cut6,7-proj1113}, (7cut-9) does not occur.
\end{claim}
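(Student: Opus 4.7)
The plan is to rule out the sole configuration that survived the automated computer-check in case (7cut-9). Specifically, I would proceed by taking the configuration $K$ depicted in Figure \ref{fig:cut6,7-proj1113}, trace through the contractions dictated by $c(K)$, and reach a contradiction either with the minimality/almost-minimality of $C$ (conditions (1)--(4)) or with one of the annulus-cut lemmas (Lemmas \ref{minc}, \ref{lem:(2,2)-annulus-cut}, \ref{lem:(1,3)-annulus-cut}).

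First, I would read off from Figure \ref{fig:cut6,7-proj1113} the explicit adjacencies between the vertices $u_0,\ldots,u_6$ of $C$ and the vertices of $K$ strictly inside $D$. Case (7cut-9) is characterized by a distance pattern in which two vertices of $C$ become identified by the contractions, producing a contractible 4-cycle (the red cycle in Figure \ref{fig:cut7-check-cases}) that would still bound a disk of at least one interior vertex after 2,3-cycle reductions. The exceptional situation is that, in contrast to the generic (7cut-9) layout, here the red 4-cycle does not obviously persist because an extra contractibly connected $u_3u_0$-path of length 3 in $D$ threatens to remove its interior via subsequent 3-cycle reductions. My task is therefore to argue that such a path cannot exist together with $K$ inside $D$.

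Next, I would consider the two disjoint short paths in $H/c(K)$ produced by the contractions of $K$ together with the hypothetical $u_3u_0$-path of length 3 inside $D$. Concatenating these paths with a subpath of $C$ yields either (a) a contractible cycle of length at most $5$ inside $D$ that bounds a disk containing at least two interior vertices --- contradicting Lemma \ref{minc}; (b) a contractible cycle of length exactly $6$ inside $D$ containing at least $4$ vertices --- contradicting the minimality (condition (2)) in the choice of $C$; or (c) a noncontractible cycle whose existence in $D$ together with $C$ forces a cyclic $(1,\le 3)$- or $(2,\le 2)$-annulus-cut, contradicting Lemma \ref{lem:(1,3)-annulus-cut} or \ref{lem:(2,2)-annulus-cut}. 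Which of (a)--(c) applies can be settled by a finite case analysis on the position of the $u_3u_0$-path relative to the vertices of $K$; each case either produces a shorter balanced separating circuit than $C$, or forces a forbidden annulus-cut.

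If none of (a)--(c) immediately applies, my fallback is to exhibit a different reducible configuration that appears (or weakly appears together with a reducible wrapping) inside $D$ in this specific layout, thereby allowing us to replace $K$ by this alternative and rerun the (7cut-9) check, which then passes by the main computer-verification. The main obstacle I anticipate is the case analysis in the previous paragraph: because the exceptional configuration has degree pattern close to the ``proj1113'' family borrowed from the projective-plane proof, several of the short separating cycles have length exactly $6$ or $7$, which is right at the boundary of what Lemma \ref{lem:6,7-cut} itself controls, so one must be careful not to invoke a claim that is not yet proven when $C$ is chosen as here. I expect to resolve this by checking the condition on the red 4-cycle directly against the list of 4-, 5-, and 6-cycles appearing in the duals of the embeddings in Figures \ref{fig:petersen-embedding}--\ref{fig:blanusaV2-embedding}, ruling out Petersen-likeness by the properties (i)--(iii) already established in Section \ref{sect:cut6,7-Petersen-like}.
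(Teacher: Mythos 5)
Your plan correctly identifies the relevant toolkit (Lemma \ref{minc}, the annulus-cut lemmas, and the minimality conditions (1)--(4) on $C$), but it stops short of the actual argument, and it is also framed around the wrong target. You set out to show that the contractibly connected $u_3u_0$-path of length $3$ cannot coexist with $K$ inside $D$, but the paper does not rule out that path at all; it bypasses the path entirely. The paper's proof constructs a single explicit cycle $C'$ from the contraction edges of $K$ together with $u_0u_1, u_1u_2, u_2u_3$, verifies (via Lemma \ref{minc}) that $u_1$ cannot be identified with any of $b,c,d$ and that $u_2$ cannot be identified with any of $a,b,c$, and then observes that $C'$ bounds a disk containing at least four vertices ($b$, $c$, and two degree-$5$ vertices of $K$). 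Since $C'$ is shorter than or preferentially ordered below $C$ under the selection rules (1)--(4), this directly contradicts the minimality of $C$ and kills the configuration in one step.

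Your three-way trichotomy (a)--(c) is plausible as a search space, but you never say which contradiction actually arises, and indeed the paper needs none of (a) or (c): the killer cycle $C'$ is built from $K$'s contraction edges and three consecutive $C$-edges, not from the hypothetical $u_3u_0$-path, and the contradiction is purely with condition (2) of the choice of $C$. The non-identification step (checking that $u_1, u_2$ are distinct from specific $K$-vertices, without which $C'$ might degenerate) is also missing from your outline, and it is load-bearing: if those identifications were allowed, $C'$ need not bound a disk with enough interior vertices. Finally, your fallback — exhibiting a new reducible configuration or matching against the 4-/5-/6-cycle lists of the Petersen-like duals — is a genuinely different route and is not used here; it would add work without sharpening the argument. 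In short: the gap is that the decisive construction ($C'$ plus the non-identification checks) is absent, and the case analysis you propose in its place is left entirely undone.
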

\begin{proof}
    By computer check, (7cut-9) can occur when $C$ is the cycle described in Figure \ref{fig:cut6,7-proj1113}. The bold lines represent contraction edges. We denote $C'$ as the cycle that consists of contraction edges and $u_0u_1$, $u_1u_2$, $u_2u_3$.
    The vertex $u_1$ is not identical to any vertex of $b,c,d$, since this leads to a cycle that contradicts Lemma \ref{minc}. The vertex $u_2$ is not identical to any vertices of $a,b,c$ by the same reason. Hence, $C'$ bounds a disk where at least 4 vertices ($b,c$ and two vertices of degree $5$ in this configuration) exist. That contradicts the minimality of $C$.
\end{proof}

The above discussion and the result of computer-check imply the following claim.

\begin{claim}\label{clm:not-Petersen-like2-cut6,7}\showlabel{clm:not-Petersen-like2-cut6,7}
    For every C-reducible configuration $K \in \mathcal{K}$ except C(1), C(27), which appears strictly inside $D$, after contracting c($K$) in $G'$, the combination of the distances enumerated in (6cut-$k$) ($1 \leq k \leq 10)$, (7cut-$l$) ($1 \leq l \leq 15$) (distance $d(\cdot, \cdot)$ is measured in $H / c(K)$, rotational symmetry and mirror symmetry of indexes are ignored) does not happen or, $G'$ does not become a Petersen-like graph. 

    \begin{itemize}
        \item (6cut-1) $d(u_0,u_2) = 0$,
        \item (6cut-2) $d(u_0,u_2) = 0$ and $d(u_3,u_5) = 0$,
        \item (6cut-3) $d(u_0,u_2) = 0$ and $d(u_2,u_4) = 0$,
        \item (6cut-4) $d(u_0,u_2) = 0$ and $d(u_3,u_5) = 1$,
        \item (6cut-5) $d(u_0,u_2) = 0$ and $d(u_2,u_4) = 1$,
        \item (6cut-6) $d(u_0,u_3) = 0$,
        \item (6cut-7) $d(u_0,u_2) = 1$,
        \item (6cut-8) $d(u_0,u_2) = 1$ and $d(u_3,u_5) = 1$,
        \item (6cut-9) $d(u_0,u_2) = 1$ and $d(u_2,u_4) = 1$,
        \item (6cut-10) $d(u_0,u_2) = 1$ and $d(u_2,u_4) = 1$ and $d(u_4,u_0) = 1$,
        \item (7cut-1) $d(u_0,u_2) = 0$,
        \item (7cut-2) $d(u_0,u_2) = 0$ and $d(u_3,u_6) = 0$,
        \item (7cut-3) $d(u_0,u_2) = 0$ and $d(u_3,u_5) = 0$,
        \item (7cut-4) $d(u_0,u_2) = 0$ and $d(u_2,u_4) = 0$,
        \item (7cut-5) $d(u_0,u_2) = 0$ and $d(u_2,u_4) = 1$,
        \item (7cut-6) $d(u_0,u_2) = 0$ and $d(u_3,u_5) = 1$,
        \item (7cut-7) $d(u_0,u_2) = 0$ and $d(u_2,u_4) = 1$ and $d(u_4,u_6) = 1$,
        \item (7cut-8) $d(u_0,u_2) = 0$ and $d(u_2,u_4) = 1$ and $d(u_5,u_0) = 1$,
        \item (7cut-9) $d(u_0,u_3) = 0$,
        \item (7cut-10) $d(u_0,u_3) = 0$ and $d(u_3,u_5) = 1$,
        \item (7cut-11) $d(u_0,u_3) = 0$ and $d(u_4,u_6) = 1$,
        \item (7cut-12) $d(u_0,u_2) = 1$,
        \item (7cut-13) $d(u_0,u_2) = 1$ and $d(u_2,u_4) = 1$,
        \item (7cut-14) $d(u_0,u_2) = 1$ and $d(u_3,u_5) = 1$,
        \item (7cut-15) $d(u_0,u_2) = 1$ and $d(u_2,u_4) = 1$ and $d(u_4,u_6) = 1$.
    \end{itemize}
\end{claim}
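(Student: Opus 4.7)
The plan is to reduce the claim to a finite, computer-checkable test applied separately to each configuration $K\in\mathcal{K}\setminus\{C(1),C(27)\}$ and each of the $25$ distance patterns (6cut-$k$), (7cut-$k$). Since $|\mathcal{K}|$ is of the order of $14000$, the bulk of the verification must be automated; the task of the proof is to specify a finite sufficient condition that can be checked mechanically, and to argue that these local checks indeed imply the global conclusion.

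First I would set up, for each $K$, an enumeration of candidate identifications of the ring-vertices of $C$ with vertices in the ring of $K$. As noted in the paragraph preceding Claim \ref{clm:not-Petersen-like2-cut6,7}, whenever two vertices of $C$ become identified (or end up at distance $1$) after contracting $c(K)$, they must lie in the ring of $K$ (or be adjacent to a vertex of the ring). Thus for each pattern the enumeration of candidate tuples $(u_0,u_1,\dots)$ realizing the prescribed distances in $H/c(K)$ is finite and small per $K$. For each candidate I would run two elimination tests, and declare the case settled if either succeeds.

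Test~A is the almost-minimality argument sketched in the text for (6cut-2): the assumed distance identifications force a short contractibly connected path of the ring of $K$ outside $D$, which together with an interior path of $K$ yields a contractible cycle violating the choice of $C$ via conditions (1)--(4). Test~B is the red-cycle argument: for each pattern I pick, following Figures \ref{fig:cut6-check-cases} and \ref{fig:cut7-check-cases}, a set of ``red'' edges that, after contracting $c(K)$, forms a contractible cycle $C^{\ast}$ of length $4$, $5$, or $6$ bounding a disk $D^{\ast}$. I would then invoke Claims \ref{clm:no-chord-C-outiside-D} and \ref{clm:no-two-path-C-outside-D} to argue that $D^{\ast}$ is preserved under every admissible $2,3$-cycle reduction, count the surviving vertices inside $D^{\ast}$, and compare with properties (i)--(iii) of short cycles in the duals of the Petersen graph and of the Blanu\v{s}a family (verifiable by inspecting Figures \ref{fig:petersen-embedding}--\ref{fig:blanusaV2-embedding}). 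If the count exceeds the threshold permitted by the relevant property, $G'$ cannot reduce to a Petersen-like graph. The relaxed variant of (iii) in (7cut-12) would be justified by rechecking conditions (b) and (c) of Section \ref{sect:no-2,3-cycle-Petersen-like} with the lowered value $l-3$.

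The main obstacle I expect is the vertex count inside $D^{\ast}$: naive counting fails because $2,3$-cycle reductions can destroy vertices by crossing the boundary $C$. I would handle this by classifying boundary-crossing $2,3$-cycles into two types. Type (a) uses a single contractibly connected path of $C$ of length at most three; these are severely restricted by Claims \ref{clm:no-chord-C-outiside-D} and \ref{clm:no-two-path-C-outside-D}, and are absorbed by choosing the red edges to avoid regions such a path could sweep out (the situation flagged in the footnote to (6cut-1)). Type (b) uses two homotopic noncontractibly connected paths of $C$, each of length at most three; their combined length must be compatible with $|C|\in\{6,7\}$, so whether they occur is determined by the pattern itself and can be pre-tabulated. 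With both types under control, the vertex count becomes a purely local finite computation whose implementation I would defer to Appendix \ref{subsect:code-cuts6,7}. The lone residual case in (7cut-9) that survives Tests~A and~B is dispatched separately by Claim \ref{clm:proj1113-cut6,7}.
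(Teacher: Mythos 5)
Your proposal follows essentially the same route as the paper: the paper also combines the red-cycle vertex-count argument (using properties (i)--(iii) of short cycles in duals of Petersen-like graphs) with the almost-minimality criterion, invokes Claims~\ref{clm:no-chord-C-outiside-D} and~\ref{clm:no-two-path-C-outside-D} to control the two kinds of boundary-crossing $2,3$-cycles, weakens (iii) for (7cut-12), defers the remaining (7cut-9) case to Claim~\ref{clm:proj1113-cut6,7}, and delegates the bulk of the casework to the computer program in Appendix~\ref{subsect:code-cuts6,7}. The only cosmetic difference is in how the type-(a) boundary-crossing cycles are dismissed: the paper notes that any such cycle is already accounted for under a later pattern in the enumeration (e.g.\ a troublesome contractibly connected $u_2u_4$-path in (6cut-1) falls under (6cut-5)), rather than by adjusting the choice of red edges, but this does not change the substance of the argument.
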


The proof of Lemma \ref{lem:6,7-cut} is obtained by applying Claims \ref{clm:not-contractible-bridge-cut6,7}, \ref{clm:not-Petersen-like1-cut6,7} and \ref{clm:not-Petersen-like2-cut6,7}, and the result in Section 
\ref{subsubsect:noncontractible-loop}, \ref{subsubsect:particular-contractible-loop}.

\section{Wrappings of Configurations}
\label{sect:lowrep}\showlabel{sect:lowrep}

With the discharging algorithm described in Section \ref{sect:discharging}, we have confirmed that either a vertex $v \in G'$ with the resulting charge $T(v) > 0$ or one of its neighbors is contained in some weakly appearing (defined in Section \ref{sect:reducible configurations}) reducible configuration $K$.
If $K$ appears in $G'$ as well (as an induced subgraph), we can simply reduce $G'$ by contracting some arbitrary edge in $G(K)$ when $K$ is D-reducible, or the edges in $c(K)$ when $K$ is C-reducible.
However, if this is not the case, there is a wrapping of $K$ (say, $K'$) such that $K'$ appears in $G'$, instead of $K$.
We confirm that for almost all the cases of $K'$, $G(K')$ admits a representativity one curve (in the resulting graph), or $K'$ is reducible as well.

To be more precise, we perform a brute-force check on every configuration $K\in \mathcal{K}$ to confirm that the following property holds.

\begin{lem}\label{lem:weakly}\showlabel{lem:weakly}
    For every general configuration $K'$ such that $K'$ is a strict wrapping of $K \in \mathcal{K}$ (i.e. $G(K')$ is not isomorphic to $G(K)$), $K'$ is reducible with $c(K') \leq 4$.
    Moreover, for all cases such that $3 \leq c(K') \leq 4$, there is always a small cyclic cut in $G \dotdiv c(K')$ which violates Lemma \ref{lem:(2,2)-annulus-cut}.
\end{lem}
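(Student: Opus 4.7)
The plan is to carry out a systematic computer enumeration. For each configuration $K \in \mathcal{K}$, I would enumerate all general configurations $K'$ that are strict wrappings of $K$. By the definition of wrapping (Section~\ref{sect:weak-appear}), a wrapping is specified by a map $f \colon V(K) \to V(K')$ preserving the prescribed vertex degrees $\gamma$ and sending each facial triangle of $G(K)$ to a facial triangle of $G(K')$. Strictness means $f$ is not an isomorphism, so at least two vertices of $K$ are identified, or some pair of triangles of $G(K)$ is glued along shared edges to produce a surface different from the disk. Since $K$ is a finite near-triangulation embedded in a disk and $G'$ is embedded in the torus, the possible identifications are bounded in number: each identification of two vertices $u, v \in V(K)$ is constrained by equality of their $\gamma$-values and by the requirement that the triangle structure around them remain consistent after gluing. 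Thus the set of wrappings of $K$ to enumerate is finite and tractable in practice (roughly $50$ per configuration, as estimated in the introduction).

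Next, for each wrapping $K'$ I would test reducibility. If $K'$ embeds in a disk (i.e.\ the identifications collapse to a planar configuration with one outer face), the standard D/C-reducibility check of Definitions~\ref{dfn:Dred} and~\ref{dfn:Cred} applies with planar matchings. If $K'$ becomes annular (two boundary rings), I apply the annular variant using the annular matchings of Definition~\ref{dfn:annular-matching}. For each case the procedure is: build the island $I(K')$, compute the set $\mathscr{C}^\ast \setminus \mathscr{C}_0$ of non-extendable ring colorings, iteratively remove colorings that fail to $\theta$-fit some matching, and check whether the consistent residue is empty. If D-reducibility fails, try every $X \subseteq E(I(K'))$ with $|X| \le 4$ satisfying the degree constraint and test C-reducibility with contraction set $X$. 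The assertion $c(K') \le 4$ is then the empirical output of this exhaustive search.

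For the second clause, for every wrapping $K'$ whose certified contraction size is $3$ or $4$, the reduction $G \dotdiv c(K')$ creates short closed curves in $G$: namely, each contracted edge in the dual corresponds to a facial pair of edges being merged, and the surrounding ring structure of $K'$, once wrapped, produces a short noncontractible curve or a $(2,{\le}2)$-annulus-cut. For each such $K'$ I would exhibit, by inspection of the gluing, a pair of disjoint noncontractible curves crossing at most two edges each, thereby contradicting Lemma~\ref{lem:(2,2)-annulus-cut} in the minimal counterexample. Since this is again a finite list of cases (a few hundred at most), the verification can be scripted alongside the reducibility test.

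The main obstacle is the sheer combinatorial size of the enumeration: more than $14000$ base configurations times tens of wrappings each, with reducibility testing (especially C-reducibility with contraction size up to $4$) being expensive. The technical challenge is to correctly handle wrappings that become annular rather than disk-like, because these require the toroidal/annular matching framework rather than the standard planar Kempe-chain analysis; getting the orientation and parity conventions right when rings are glued is the delicate part. Once the infrastructure is in place, however, the proof reduces to running the verification and confirming that (i) no wrapping escapes reducibility, and (ii) every contraction of size $\ge 3$ yields a certifiable small cyclic cut. Pseudocode for the enumeration and check is included in Appendix~\ref{sect:code}, and all outputs are stored in the repository accompanying this paper.
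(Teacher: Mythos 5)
Your proposal matches the paper's approach: the lemma is indeed established by brute-force computer verification, enumerating all strict wrappings of every $K\in\mathcal{K}$, testing D/C-reducibility of each (with the annular-matching framework where the wrapped configuration has two boundary rings), and then, for the cases with contraction size $3$ or $4$, checking by inspection of the gluing that a cut violating Lemma~\ref{lem:(2,2)-annulus-cut} arises. The only real divergence is in how the enumeration of wrappings is organized: you describe it abstractly as searching for $\gamma$- and triangle-preserving maps, whereas the paper implements it concretely by choosing a vertex/edge pair in the free completion $S$ and propagating identifications along ``parallel rooted paths'' via DFS, aborting whenever an identification would force two vertices with differing $\gamma_K$-values to coincide; this is an implementation detail, not a different mathematical idea, and both descriptions yield the same finite set of candidate wrappings.
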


We have implemented an algorithm that takes an input configuration $K$, and outputs a set of general configurations that are all possible wrappings of $K$.
This is done by taking two distinct vertices $u,v \in V(S)$ and an edge $e_u, e_v \in E(S)$ such that $u \in e_u, v \in e_v$, and trying to ``identify" the two vertex/edge pairs $(u, e_u)$ and $(v, e_v)$
(Here, $S$ is the \emph{free completion} of the configuration $K$). Then we try to identify vertices as many as possible. 
Formally, the identification process succeeds if the following condition is satisfied.

\begin{dfn}
    A \emph{rooted path} on $S$ is a sequence of vertices $v_1, v_2, \cdots, v_k \in V(S)$ (for $k \geq 2$) such that the path $P = v_1 v_2 \cdots v_k$ is on $S$.
    We call $v_1$ the \emph{starting vertex}, $v_1v_2$ the \emph{starting edge} (if it exists), and $v_k$ the \emph{ending vertex} of the rooted path.
    The \emph{positive/negative angle} on $v_i$ ($1 < i < k$) in a rooted path $P$ is the smallest $n$ such that $\pi_{v_{i}}^{\pm n}(v_{i-1}) = v_{i+1}$, when $\pi_{v_{i}}$ is the local rotation\footnote{Here, we borrow the term \emph{local rotation} from \cite{MT} but we slightly alter the definition so that it returns vertices instead of edges: A local rotation $\pi_v$ is a cyclic permutation of the neighbors of $v$, such that if we fix a neighbor $u$ of $v$, the vertices $u, \pi_v(u), \pi_v(\pi_v(u)), \cdots$ form a cyclic walk around $v$. Since the underlying graph is a triangulation, the three vertices $v, u, \pi_v(u)$ must always construct a triangle.} on $v_i$ and $\pi_{v_{i}}^{\pm n}$ is the function $\pi_{v_{i}}$ or $\pi_{v_{i}}^{-1}$ applied $n$ times.

    Two rooted paths $P$ and $Q$ are \emph{parallel} if the following conditions are satisfied. 
    \begin{itemize}
        \item $P = (v_1, \cdots, v_k)$ and $Q = (u_1, \cdots, u_k)$ both have the same length.
        \item For all $1 \leq i < k$, if $v_i$ and $u_i$ are both in $G(K)$, $\gamma_K(v_i) = \gamma_K(u_i)$
        \item For all $1 < i < k$, either the positive angle or negative angle on $v_i$ on $P$ and $u_i$ on $Q$ is the same.
    \end{itemize}
\end{dfn}

\begin{claim}
    Let $K$ be a configuration and let $u \in V(K), v \in V(S(K))$ be two different vertices, $e_u, e_v$ be two different edges incident to $u,v$, respectively, in $S(K)$ (The free completion of $K$).
    If for any parallel rooted paths $P$ and $Q$ in $S(K)$ where the starting vertex is $u,v$ and the starting edge is $e_u, e_v$, respectively, one of the following conditions holds, there exists a general configuration $K'$ such that $K'$ is a wrapping of $K$. 
    \begin{itemize}
        \item One of the ending vertices in $P, Q$ is not in $V(K)$.
        \item The degree (value of $\gamma_K$) of the ending vertices of $P, Q$ are equal.
    \end{itemize}
    This $K'$ can be constructed by ``gluing" the vertices $u, v$ and the edges $e_u, e_v$ so that any path between $u$ and $v$ becomes a non-contractible curve on the torus.
\end{claim}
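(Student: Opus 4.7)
The plan is to construct the desired wrapping $K'$ explicitly as a quotient of the free completion $S$ of $K$. Starting from the forced identification of the pair $(u,e_u)$ with $(v,e_v)$, I would propagate it by declaring, for every pair of parallel rooted paths $P=(v_1,\ldots,v_k)$ and $Q=(u_1,\ldots,u_k)$ starting at $(u,e_u)$ and $(v,e_v)$ respectively, that $v_i$ is identified with $u_i$ at each step $i$ as long as both $v_i, u_i$ still lie in $V(K)$; the identification is extended to incident edges via the matching local rotations. Call the resulting equivalence relation on $V(S)\cup E(S)$ the relation $\approx$. The two cases in the hypothesis are precisely what is needed for well-definedness: whenever both ending vertices of a parallel pair lie in $V(K)$, their $\gamma_K$-values agree, so $\gamma_K$ descends consistently to the quotient; whenever one ending vertex leaves $V(K)$, no further constraint is imposed along that branch, so no vertex of $V(K)$ is ever merged with a ring vertex.

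Next, I would define $G(K')$ as the image of $G(S)$ under the quotient map, equipped with the embedding inherited from $S$. Since parallel paths preserve local angles by definition, the rotation system on $S$ descends coherently to a rotation system on $G(K')$. The identification of $e_u$ with $e_v$ at the collapsed vertex $u=v$ creates a closed walk that is not null-homotopic in the quotient, so the underlying surface of $G(K')$ is no longer a disk; it is either an annulus (when the propagation does not close up further) or the torus itself (when additional identifications close the second ``side''), and in both cases the configuration is 2-cell embedded in the resulting closed surface. Let $\mathcal{Q}(K')$ consist of the images of the infinite face of $S$ under the quotient, which form at most two facial walks, and define $\gamma_{K'}([x]):=\gamma_K(x)$ for every equivalence class $[x]$ whose representatives all lie in $V(K)$; the remaining classes contain ring vertices of $S$ and play the role of the new boundary walks of $K'$.

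The last step is to verify the three axioms of a general configuration for $K'$ and the two axioms of a wrapping for the map $f(x):=[x]$. Axiom (1) holds because a single non-contractible cut can split the original outer face into at most two facial walks. Axiom (2) follows by summing the analogous inequality for $x\in V(K)$ in $S$ over each equivalence class. Axiom (3), the ring-size formula, is inherited directly from that of $K$. The wrapping conditions $\gamma_{K'}(f(x))=\gamma_K(x)$ and preservation of triangular faces under $f$ are then immediate, since every triangle of $G(K)$ lies in the interior of $S$ where the quotient map is a local homeomorphism. The main obstacle, and the step that requires the most care, will be the topological well-definedness of the propagation: I must argue that $\approx$ does not over-collapse $S$, by showing that the parallel-paths rule terminates in a unique maximal quotient whose 2-cell structure is non-degenerate. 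This is exactly where the disjunction ``positive angle or negative angle agrees'' in the definition of parallel paths becomes essential, as it encodes the orientation data that makes the descended rotation system coherent and prevents any face of $S$ from being glued to itself in an incoherent way.
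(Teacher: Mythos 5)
Your approach mirrors the paper's: both construct $K'$ by propagating the identification of $(u,e_u)$ with $(v,e_v)$ along parallel rooted paths in $S$, and both use the hypothesis to ensure that $\gamma_K$ descends to the quotient without conflict. The paper's own treatment of this claim is even thinner than yours --- it merely restates the DFS identification procedure and defers correctness to the implementation and the pseudocode in Appendix~\ref{subsect:code-wrapping} --- so your framing as an explicit quotient is, if anything, a cleaner elaboration of the same idea.

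Two cautions are in order. First, your assertion that ``no vertex of $V(K)$ is ever merged with a ring vertex'' does not follow from the hypothesis as stated: the hypothesis only excludes a $\gamma_K$-value conflict between two configuration vertices, so a class mixing a $V(K)$ vertex with a ring vertex of $S$ is entirely possible --- and harmless, since such a class simply inherits its degree from the unique $V(K)$ representative and is still an interior vertex of $K'$. Second, the topological non-degeneracy of the propagation that you honestly flag as the ``main obstacle'' is indeed left open in your write-up; the paper does not resolve it by a general mathematical argument either, but sidesteps it by running the wrapping-enumeration program on every $K \in \mathcal{K}$ and observing that it terminates with valid general configurations, which suffices for the paper's specific finite verification but does not constitute a proof of the claim in full generality.
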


We iterate over all possible parallel rooted paths using DFS, and identify all corresponding vertex pairs that lie on each path.
If for some paths, the endpoints of them have different $\gamma_K$ values, we abort the identification process.
If not, we proceed to construct a wrapping $K'$ by identifying all pairs of vertices that we accumulated in the DFS process.

For more detailed implementations of this algorithm, refer to the pseudocode in Appendix \ref{subsect:code-wrapping}.

\section{Configuration of contraction size $\geq 3$}
\label{sect:safecont}\showlabel{sect:safecont}

The main result of this section is the following.

\begin{lem}\label{T(v)>0T}\showlabel{T(v)>0T}
  Let $G$ be a minimal counterexample, and let $K$ be a C-reducible configuration in $\mathcal{K}$ \emph{weakly appearing} in $G$.
  Then $G \dotdiv c(K)$ is 2-edge connected and not Petersen-like.
\end{lem}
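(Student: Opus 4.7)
The argument splits according to whether $K$ strictly appears in $G'$ or only weakly appears, and, in the latter case, whether the conclusion that fails is 2-edge-connectedness or non-Petersen-likeness. I would first remove the weak-vs-strict distinction. If $K$ only weakly appears, then $G'$ contains some strict wrapping $K'$ of $K$; Lemma \ref{lem:weakly} states that $K'$ is itself reducible with $|c(K')|\leq 4$, and that for $|c(K')|\geq 3$ the contracted graph $G\dotdiv c(K')$ contains a cyclic cut violating Lemma \ref{lem:(2,2)-annulus-cut}. Combined with Lemma \ref{low-red}, this produces a smaller element of $\mathcal{T}_1\setminus \mathcal{T}_0$ which fails to be $3$-edge-colorable, contradicting the minimality of $G$. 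Hence we may assume throughout that $K$ strictly appears in $G$, so that $c(K)$ denotes an actual set of edges in $G$; the two special configurations $C(1)$ and $C(27)$ are handled separately, via Lemma \ref{lem:5555} and the flat-case analysis respectively.

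For 2-edge-connectedness, a bridge in $G\dotdiv c(K)$ corresponds to a loop in the dual $G'/c(K)$. Non-contractible loops, and the particular contractible loops coming from low-representativity curves, are treated independently in the appendix. For a generic contractible loop, Claim \ref{clm:loop-6,7cycle} asserts that its preimage in $G'$ is a contractible cycle of length $6$ or $7$ separating at least $l-2$ vertices on each side. If this cycle lies entirely in the disk $D$ bounded by $C$, it contradicts the minimal choice of $C$; if it uses edges outside $D$, the portion lying outside $D$ is a contractibly connected chord of $C$, forbidden by Claim \ref{clm:no-chord-C-outiside-D}. In either case we reach a contradiction, so no contractible loop is created.

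To rule out the Petersen-like case I would trace the hypothetical sequence of $2$-, $3$- and $4$-cycle reductions in $G'/c(K)$ that would lead to the dual of $P_{10}$ or a Blanu\v{s}a-family snark. The analysis splits into two scenarios according to whether the short cycles used in the reductions lie entirely in $D$ or meet its boundary. In the first scenario, the surviving boundary circuit $C$ itself must occur as a $6$- or $7$-circuit in the dual of one of these snarks satisfying the planarity and $(2,2)$-annulus conditions (a)-(c) of Section \ref{sect:no-2,3-cycle-Petersen-like}. Direct inspection of the finitely many embeddings in Figures \ref{fig:petersen-embedding}-\ref{fig:blanusaV2-embedding} leaves only the $7$-circuit around the degree-$7$ vertex of Blanu\v{s}a-V$1$, and a computer pass over $\mathcal{K}$ confirms that no reducible $K$ can give rise to it. In the second scenario, Claims \ref{clm:no-chord-C-outiside-D} and \ref{clm:no-two-path-C-outside-D} restrict the possible identifications among non-consecutive vertices of $C$ to the $25$ distance patterns (6cut-$k$), $1\leq k\leq 10$, and (7cut-$k$), $1\leq k\leq 15$; for each pair $(K,\text{pattern})$ a computer check verifies that either the pattern is incompatible with contracting $c(K)$, or the forced cycle of length at most $6$ bounds too many interior vertices to fit inside the dual of any Petersen-like graph. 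The single residual subcase is handled by hand in Claim \ref{clm:proj1113-cut6,7}.

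The main conceptual difficulty lies in the Petersen-like part: one must show that the listed distance patterns truly exhaust every way in which a $2$- or $3$-cycle can straddle $C$ after contracting $c(K)$, a combinatorial census that rests squarely on Claims \ref{clm:no-chord-C-outiside-D} and \ref{clm:no-two-path-C-outside-D}. Once that census is in place, the remainder is a voluminous but mechanical computer verification spanning the roughly $2000$ configurations in $\mathcal{K}$ with $|c(K)|\geq 2$.
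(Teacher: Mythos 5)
Your first step — handling strict wrappings via Lemma~\ref{lem:weakly} and then assuming $K$ actually appears — matches the paper, as does the split of the bridge case into non-contractible loops, the special contractible loops from wrap-around, and generic contractible loops. However, you omit the size-$\leq 2$ argument, which is what licenses the ``assume $K$ appears'' move: the paper first observes via the enumeration of snarks up to order~$36$ that $G$ has order at least~$38$, so a contraction of at most two edges cannot collapse $G$ all the way to a Petersen-like graph, and a wrapping $K'$ always has $|c(K')|\leq 2$ (or else the cut violation).

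The larger problem is the Petersen-like half. You have imported wholesale the apparatus of Section~\ref{sect:6,7-cut}: the balanced separating circuit $C$, the disk $D$, Claims~\ref{clm:no-chord-C-outiside-D} and~\ref{clm:no-two-path-C-outside-D}, the conditions (a)--(c), the distance patterns (6cut-$k$)/(7cut-$k$), and Claim~\ref{clm:proj1113-cut6,7}. None of these objects are available in Lemma~\ref{T(v)>0T}: the hypothesis gives only a weakly appearing $K$, discovered by the discharging rules on the whole triangulation, not a configuration sitting inside a disk $D$ bounded by a minimally chosen circuit $C$. So ``if the cycle lies entirely in $D$ it contradicts the minimal choice of $C$'' has no referent here, and the census of straddling cycles relies on Claims~\ref{clm:no-chord-C-outiside-D} and~\ref{clm:no-two-path-C-outside-D}, whose statements and proofs are themselves conditioned on the choice of $C$. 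The paper's actual route in Section~\ref{subsect:avoidpet} is different and does not need a circuit: after running the $2/3$-cut elimination algorithm from~\cite{proj2024} (modified for annular cuts), it tracks the post-contraction degree of each vertex of $K$ and compares the cyclic degree pattern of each vertex's link against Table~\ref{tab:neighbor-degree-pattern}, the exhaustive list of link patterns occurring in duals of Petersen-like toroidal graphs. A vertex of degree~$4$ or $\geq 11$, or a link pattern absent from the table, certifies non-Petersen-likeness. You make no mention of this table, and also introduce ``$4$-cycle reductions,'' which play no role in the paper's argument (only $2$- and $3$-cycle reductions occur in the dual). The one place you do contact the actual Section~8 mechanism is the bridge case via Fact~\ref{fact:567-cuts}, but even there the follow-up reasoning again appeals to $C$ and $D$. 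As written, the proposal proves (something close to) Lemma~\ref{lem:6,7-cut}, not Lemma~\ref{T(v)>0T}.
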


We first observe that if $K'$ is a strict wrapping of $K \in \mathcal{K}$, by Lemma \ref{lem:weakly}, we confirm $|c(K')| \leq 2$. 

When the contraction size of a configuration $K$ is two or less, it is easy to prove this lemma. 
Indeed, in this case, due to the high cyclic-edge-connectivity of $G$, a $2/3$-edge-cut reduction occurs in $G \dotdiv c(K)$ if and only if $c(K)$ are members of an edge cut of size exactly 5.
Even in this case, the reduction will only reduce the order of $G$ by 4. 
As mentioned in the introduction, we observe that $G$ is of order at least 38 by the result of \cite{brinkmann2013generation} (that generates all the snarks of order at most 36. It turns out that only the snarks in $\mathcal{T}_0$ can be embedded in the torus). 
Therefore, the resulting graph is of order at least $34$ but if the resulting graph is in $\mathcal{T}_0$, this would yield a $(2, 2)$-annulus-cut, a contradiction to Lemma \ref{lem:(2,2)-annulus-cut}. 

Therefore, we can assume that in Lemma \ref{T(v)>0T}, we may assume that $K$ \emph{does indeed appear}. Hereafter, we only consider this case. 

However, when $|c(K)| \geq 3$, we would need to individually track the contraction pattern in every $K \in \mathcal{K}$.
We have implemented an algorithm to track the possible degree count of every vertex after contraction and check if its neighboring structure contains a bridge or matches the structure of (dual of) Petersen-like graphs.
Upon checking, we also make use of the following fact, which is a direct corollary of Lemmas \ref{minc} and \ref{lem:6,7-cut}.
\begin{fact}
    \label{fact:567-cuts}\showlabel{fact:567-cuts}
    Let a contractible $k$-cycle ($5 \leq k \leq 7$) $F$ be contained in the dual graph $G'$.
    Then, the size of the smaller connected component of $G' - F$ is less than or equal to $1$ when $k = 5$, $3$ when $k = 6$, and $4$ when $k = 7$.
\end{fact}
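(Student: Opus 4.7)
The plan is to derive Fact \ref{fact:567-cuts} as a direct consequence of Lemmas \ref{minc} and \ref{lem:6,7-cut}, handling the three values of $k$ separately. In each case, I assume for contradiction that the smaller component of $G' - F$ has strictly more than the claimed bound, and I exhibit a cut/circuit that falls under one of the two cited lemmas, yielding a smaller counterexample.

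For $k = 5$: A contractible 5-cycle $F$ in the triangulation $G'$ is dual to an edge-cut $F^*$ of size $5$ in the cubic graph $G$. I first observe that if one side of $G' - F$ has no vertex, the bound $\leq 1$ is trivially satisfied. Otherwise, each side of $G' - F$ contains at least one vertex of $G'$, i.e., at least one face of $G$, and the boundary of any such face is a cycle of $G$ contained in the corresponding primal component. Hence $F^*$ is a cyclic 5-edge-cut, so by Lemma \ref{minc} one of the components of $G - F^*$ is a 5-cycle $C$. The cycle $C$ bounds a single face of $G$ on its side (being a component of $G - F^*$ drawn inside the closed disk in the torus cut off by $F$), and this single face is exactly one vertex of $G'$. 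Thus the corresponding component of $G' - F$ has exactly one vertex, proving the bound.

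For $k \in \{6, 7\}$: Assume for contradiction that the smaller component of $G' - F$ has at least $k - 2$ vertices. Then both components of $G' - F$ have at least $k - 2$ vertices, so $F$ is a \emph{balanced separating circuit} of length $l = k$ in the sense defined at the beginning of Section \ref{sect:6,7-cut}. The first bullet of Lemma \ref{lem:6,7-cut} then guarantees the existence of a balanced separating circuit $C$ (possibly different from $F$, produced by the explicit minimization procedure) such that some configuration $K \in \mathcal{K}$ \emph{appears} in the disk bounded by $C$ and such that the graph obtained from $G'$ by contracting the edges of $c(K)$ belongs to $\mathcal{T}_1 \setminus \mathcal{T}_0$. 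Because $K$ is reducible, Lemma \ref{lem:reducing} forces $G \dotdiv c(K)$ to be non-$3$-edge-colorable as well; being strictly smaller than $G$ and lying in $\mathcal{T}_1 \setminus \mathcal{T}_0$, this contradicts the minimality of $G$.

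The main obstacle is essentially bookkeeping rather than mathematical: for $k = 5$ one must translate the primal statement of Lemma \ref{minc} ("one side is a $5$-cycle") into the dual face count ("one side of $G' - F$ has exactly one vertex"), which requires checking that a $5$-cycle arising as a component of $G - F^*$ bounds exactly one face of $G$ on its side. For $k \in \{6, 7\}$, the work is purely to match the hypothesis "smaller side has $\geq k-2$ vertices" with the definition of \emph{balanced} in Section \ref{sect:6,7-cut} and then invoke the first bullet of Lemma \ref{lem:6,7-cut}; no further case analysis is needed, as the heavy lifting (Section \ref{sect:cut6,7-bridge}, Section \ref{sect:cut6,7-Petersen-like}) has already been absorbed into that lemma.
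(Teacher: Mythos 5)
Your proof is correct and matches the paper's intent exactly: the paper states this Fact without proof as "a direct corollary of Lemmas \ref{minc} and \ref{lem:6,7-cut}", and your argument is precisely the derivation the authors have in mind (cyclic $5$-cut plus Lemma \ref{minc} for $k=5$; "balanced circuit" plus the first bullet of Lemma \ref{lem:6,7-cut} and minimality for $k=6,7$). The only unstated step is your parenthetical claim that the $5$-cycle component must lie inside the disk bounded by $F$, which does hold (an Euler-characteristic count shows a $5$-cycle component cannot fill the non-disk side of a contractible dual cycle), so nothing essential is missing.
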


\subsection{Avoiding bridges}
\label{subsect:bridge}
\showlabel{subsect:bridge}

For $G \dotdiv c(K)$ to contain a bridge, there must be some edge cut $F \subset E(G)$ such that $|F \setminus c(K)| = 1$.
We classify possible cases by the type of cut $F$; when $F$ is a disk cut or when $F$ is an annular cut.

\subsubsection{Non contractible loop}
\label{subsubsect:noncontractible-loop}
\showlabel{subsubsect:noncontractible-loop}
\begin{figure}[htbp]
    \centering
    \includegraphics[width=10cm]{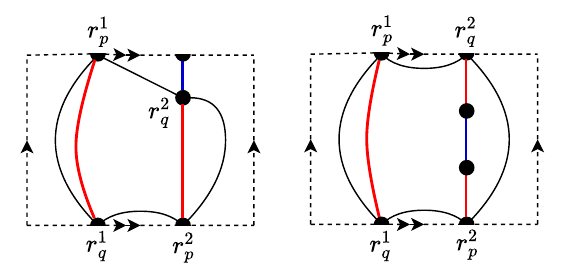}
    \caption{The dark solid line represents the ring, the red lines represent the contraction edges and the blue edge becomes a loop after contraction. The left and right figure represents case (i), (ii) respectively.}
    \label{fig:noncontractible-loop}
\end{figure}
Here, we care about a noncontractible loop in the torus, after contraction. 
Let $K$ be a configuration and $S$ be a free completion of $K$ with its ring. A noncontractilbe loop exists if two pairs of vertices $(r_p^1, r_q^1), (r_p^2, r_q^2)$ in the ring of $K$ satisfy either
(i) $d_{S / c(K)}(r_p^1, r_q^1) = 0, d_{S / c(K)}(r_p^2, r_q^2) = 0$, or
(ii) $d_{S / c(K)}(r_p^1, r_q^1) = 0, d_{S / c(K)}(r_p^2, r_q^2) = 1$. 
If a loop exists after contraction, 
$r_p^1$ is identical to $r_q^1$ and the distance between $r_p^2$ and $r_q^2$ is 1 (or swapping the roles of ($r_p^1, r_q^1$) and ($r_p^2, r_q^2$)) in the case (i), or 
$r_p^1$ is identical to $r_q^1$ and $r_p^2$ is identical to $r_q^2$ in the case (ii) (See Figure \ref{fig:noncontractible-loop}). 
If the contraction size is at most 3, a noncontractible loop does not exist. It is because case (i) is impossible and case (ii) is possible only when $d_S(r_p^1,r_q^1)=2$ and $d_S(r_p^2, r_q^2) = 2$, but this would yield a cut that contradicts Lemma \ref{minc} or Lemma \ref{lem:(2,2)-annulus-cut}.
For all $K$ of our set $\mathcal{K}$, we check whether the above condition can be satisfied after contracting $c(K)$.
If it can be satisfied, we re-check to see whether the annular configuration is reducible in this case. To simplify the implementation, we create an annular configuration from $K$ by only identifying one pair of vertices of the ring of $K$.
Almost all configurations obtained are reducible with contraction size at most 3.
However, 6 cases were not concluded immediately. We re-checked the annular configuration obtained when a loop appears (i.e., two pairs of vertices are identical or one pair of vertices is identical, and one pair of vertices is of distance exactly one).
It turns out that all the configurations obtained are D-reducible. Hence, a noncontractible loop does not exist after contraction.

\subsubsection{Contractible loop when a free completion contains a noncontractible cycle}
\label{subsubsect:particular-contractible-loop}
\showlabel{subsubsect:paricular-contractible-loop}
\begin{figure}[htbp]
    \centering
    \includegraphics[width=10cm]{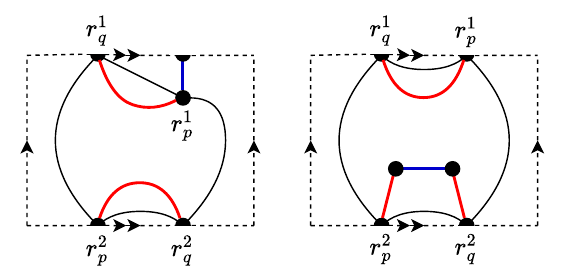}
    \caption{The dark solid line represents the ring, the red lines represent the contraction edges, and the blue edge becomes a loop after contraction. The left and right figures represent case (i) and (ii), one of the degree patterns in the above lists, respectively.}
    \label{fig:contractible-loop}
\end{figure}
Here, we handle one particular type of contractible loop\footnote{This type of contractible loop is different from other contractible loops. because it does not violate the minimality of $C$ in proving Lemma \ref{lem:6,7-cut}. So we have to deal with it separately.}. 
This loop exists in a similar situation to a noncontractible loop (See Figure \ref{fig:contractible-loop}). The distance described above for cases (i) or (ii) is the same.
The difference is how these vertices are connected outside $S$.
$r_q^1$ is identical to $r_p^1$ and the distance between $r_q^2$ and $r_p^1$ is 1 (or swapping the roles of ($r_q^1, r_p^1$) and ($r_q^2, r_p^1$)) in the case (i), or 
$r_q^1$ is identical to $r_p^1$ and $r_q^2$ is identical to $r_p^2$ in the case (ii).
The contraction size of at most three does not cause this kind of loop in the same discussion as the noncontractible loop case.
We check in the same way as the noncontractible loop case. The annular configurations obtained by identifying one pair of vertices are C-reducible with contraction size at most 3. There are 2 cases not concluded immediately, but we check the situation when such a loop appears. It turns out that they are D-reducible.

\subsubsection{Other contractible loops}
\label{subsubsect:contractible-loop}
\showlabel{subsubsect:contractible-loop}

Finally, there is a case when a normal contractible loop of length $l$ turns into a self-loop after $l-1$ edges are contracted. 
This may happen when a configuration contains a particularly long path consisting of $5$ or more contraction edges.
However, even in such extreme cases, we have implemented a program and checked that the outside structure does not form a loop by using Fact \ref{fact:567-cuts}.

\subsection{Avoiding Petersen-like graphs}
\label{subsect:avoidpet}
\showlabel{subsect:avoidpet}

We now check whether the contraction of $c(K)$ would make $G$ turn into a Petersen-like graph.
To start, we use the algorithm introduced in the projective planar case \cite{proj2024} to eliminate vertices that could disappear after $2/3$-cut reductions.
Although we do need to modify some parts to consider annular cuts, the overall algorithm is the same. (For implementation specifics, refer to the Appendix.)

After the above elimination, we want to check whether or not the remaining structure matches any graph in $\mathcal{T}_0$.

Hereafter, we use the dual of the remaining graph (hereafter, we use $T$) since the degree of vertices is easier to handle than the size of faces in the original cubic graph.
For every graph in $\mathcal{T}_{0}$, we take its dual and inspect every vertex $v$.
Since the dual is a triangulation, there should be a walk of size $d$ surrounding $v$, where $d$ is the degree of $v$.
For example, when $v$ is the vertex of degree 6 in the Petersen graph's dual, the walk surrounding $v$ consists of vertices whose degree is 5, 9, 5, 9, 5, 9, in clockwise order.
The degree list of all possible walks (excluding rotational and reflective symmetry) is given in Table \ref{tab:neighbor-degree-pattern}. 
Note that the graphs of the Blanuša snark family do not have a unique embedding. Each embedding is determined by the differences between $(2, 2)$-annulus-cuts, and we want all the degree lists for each possible embedding.
The list below was generated by listing all embeddings of the Petersen graph, Blanuša snark, Blanuša-V1 snark, Blanuša-V2 snark, and the Blanuša-H1 snark. It is easy to see that no other degree lists are possible in any of the bigger snarks in either Blanuša family. 

\begin{table}[htbp]
    \centering
    \begin{tabular}{c|l}
         Size & Patterns\\
         \hline
         \multirow{2}{*}{5}
         & 55679, 55688, 55689, 5568T, 55779, 5577T, 55858, 55859, 55959, 55969, 55T5T, 56868, 56869, \\
         & 56877, 56886, 56888, 56897, 57768, 57769, 57777, 57788, 57797, 57988 \\
         6 & 557557, 557558, 558558, 575868, 575959, 585868, 585959, 595959 \\
         7 & 5575657, 5575757, 5575857, 5575957, 5575T57, 5595659 \\
         \multirow{2}{*}{8} 
         & 55558558, 55665566, 56565656, 56565657, 56565658, 5657565T, 56585658, 56585758, 56585858, \\
         & 56585958, 56585659, 56595659, 565T565T \\
         9 & 555575757, 555575857, 556556556, 556556557, 556556558 \\
         10 & 5555755558, 5555855558
    \end{tabular}
    \caption{
        Table of neighboring degree patterns of Petersen-like (dual) graphs in the torus.
        Each row contains all the possible degree patterns of a certain degree $k$.
        Every degree pattern of degree $k$ is shortened to a string of size $k$, and each letter of the string is the size of the face. ($T$ represents $10$).
    }
    \label{tab:neighbor-degree-pattern}
\end{table}

With this table, we track the degree of each vertex of $K / c(K)$ and check if it matches with one of the above lists' degree patterns.
When a vertex is of degree exactly $4$ or $11$ or more, the resulting graph $G \dotdiv c(K)$ would not be a Petersen-like graph.
If not, we list up the degree of each neighbor vertex and check if it matches with one of the patterns in the corresponding row of the table above.
If it does not, the resulting graph would not be a Petersen-like graph.

We implement the above matching algorithm and run it over every configuration in $\mathcal{K}$ with contraction size $\geq 3$ (the pseudocode is provided in Appendix \ref{subsect:code-petlike-check}). 
For some configurations, since some contraction edge sets $c(K)$ failed to pass the test, we have tested multiple contraction edge sets that admit C-reducibility of $K$, and selected the edge set with a successful outcome of the above algorithm.

As a result, we found that all the configurations in $\mathcal{K}$ with some contraction edge set of size $\geq 3$ pass these tests, except for one configuration (namely C(1), which was handled in Section \ref{sect:6,7-cut} because the ring size is exactly six.).

\section{The flat case}
\label{sect:flatproof}\showlabel{sect:flatproof}

In this section, we assume that every vertex $v$ of $G'$ satisfies $T(v)=0$ (final charge is zero).
In Lemma \ref{T(v)>0}, we show when a vertex of final charge positive exists, a reducible configuration weakly appears within its third neighbors. Our approach here is similar: when vertices whose final charge is zero are concentrated, we will find a reducible configuration near these vertices. We prove this by tiling several vertices and their neighbors, whose center vertices have final charge zero, and then by checking whether a reducible configuration weakly appears by computer in Lemma \ref{lem:charge0-concentrated}, \ref{lem:charge0-77} (An example is in Figure \ref{fig:example-combine-graph}). To ensure that the execution time of this program is practical, we take the following approach. We check the cases in Lemma \ref{lem:charge0-concentrated} from case 1 to case 9 in this order listed. First, we handle the cases when higher degree vertices of final charge zero are concentrated such as cases 1 and 2. An amount of initial charge of a higher degree vertex is a very small negative value, so when the final charge of such a vertex is zero, many neighbors send a charge to the vertex. That enables us to find many vertices of degree 5 or 6 that constitute a reducible configuration. 

Then, we can restrict the structure of a minimal counterexample $G$ and its dual graph $G'$. For example, the degree of neighbors of $v, u$ in $G'$ in case 3 is $5,6,7,$ or $8$ after handling cases 1, and 2. In this way, we prune the case analysis. We check all the cases in Lemma \ref{lem:charge0-concentrated} and all the cases in Lemma \ref{lem:charge0-77} by computer.
Note that case $i$ can be a subcase of case $j (j < i)$ (e.g. $i=8,j=6$) since we prune the analysis of case $i (1 \leq i \leq 9)$ by using the fact case $j (j < i)$ is already proved.

\begin{lem}\label{lem:charge0-concentrated}
  Suppose that one of the following is contained as a subgraph of $G'$: 
  \begin{enumerate}
    \item a vertex $v$ such that $d(v)=10,11$,
    \item an edge $vu$ such that $d(v)=9,d(u)=7,8,$ or $9$,
    \item an edge $vu$ such that $d(v)=d(u)=8$,
    \item a facial triangle $uvw$ such that $d(v)=d(u)=7, d(w)=8$,
    \item a facial triangle $uvw$ such that $d(v)=d(u)=d(w)=7$,
    \item a vertex $v$ such that $d(v)=8$, and at least three neighbors of $v$ are of degree $7$,
    \item a vertex $v$ such that $d(v)=7$, and the clockwise consecutive neighbors $u_1, u_2, u_3$ of $v$ satisfy $d(u_1)=d(u_3)=8, d(u_2) \leq 6$,
    \item an edge $vu$ such that $d(v)=7,d(u)=8$, or
    \item a vertex $v$ such that $d(v)=7$, and at least three neighbors of $v$ are of degree $7$.
  \end{enumerate}
  Then a configuration 
  in $\mathcal{K}$ weakly appears.
\end{lem}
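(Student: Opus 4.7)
The plan is to run essentially the same ``charge-propagation'' analysis as in Lemma \ref{T(v)>0}, but exploiting the much stronger constraint imposed by the flat condition $T(\cdot)\equiv 0$: whereas positive final charge gave us merely an inequality at a single vertex, the flat condition gives an equality at every vertex simultaneously, so the local charge flow is rigidly determined. Since a high-degree vertex $v$ has very negative initial charge $10(6-d(v))$, forcing $T(v)=0$ requires its neighbors to send a prescribed (large) amount of charge into $v$, and by Lemmas~\ref{lem:vsends}, \ref{lem:edgesends}, \ref{lem:sendavg5+1} and \ref{lem:sendavg5-2or4} this severely restricts the degree sequence around $v$. Cases (1)--(9) are ordered so that the most restrictive configurations (two high-degree vertices close together) are handled first; earlier cases become excluded sub-patterns of later cases, which is essential for keeping the case enumeration tractable.

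For each case I would first compute, from the initial charge deficit at the central vertex (or vertices) and the upper bounds given by Section~\ref{sect:maxcharge}, the possible multisets of charges $\phi(u_i,v)$ sent by neighbors; then translate each such multiset into the possible degree-assignments of the $u_i$ using the explicit descriptions in Figures~\ref{fig:send6}, \ref{fig:send5} and \ref{fig:deg5send4}. For instance, in case (1) with $d(v)=10$, the deficit is $-40$, so the average incoming charge per edge is $4$; combined with Lemmas~\ref{lem:edgesends} and \ref{lem:sendavg5+1} this forces a very constrained pattern of degree-$5$ and degree-$6$ neighbors. In case (2), the flat conditions at \emph{both} $v$ and $u$ interact through the two common neighbors of the edge $vu$, and Lemma~\ref{lem:sendavg5-2or4} bounds the total charge sent into $\{v,u\}$ by any fan of neighbors, giving again only a handful of local degree patterns. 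The same strategy works uniformly in (3)--(9), with the central object being a single vertex, an edge, or a triangle of mid-to-high-degree vertices that are forced to be flat.

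Once all candidate degree patterns in the third neighborhood (the ``cartwheel'') of the central object are enumerated, the verification that some reducible configuration in $\mathcal{K}$ weakly appears is carried out by a brute-force subgraph search: for each candidate cartwheel, we match it against every $K\in\mathcal{K}$ as a configuration (using the weak-appearance notion from Section~\ref{sect:weak-appear}, which by Lemma~\ref{lem:weakly} is enough since any strict wrapping of a $K\in\mathcal{K}$ is still reducible). Because the cartwheels built in cases (1)--(9) grow larger as we go down the list, the earlier cases act as pruning rules that forbid certain degrees from appearing in later cartwheels, which is crucial to keep the search space from blowing up. The check is performed by the same computer programs whose pseudocode is presented in Section~\ref{sect:code}, run twice independently.

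The main obstacle will be cases (8) and (9): here the central object has relatively low degree ($7$ and $8$), so the flat constraint is weaker and many more neighbor degree-patterns survive. These cartwheels are the largest and come closest to failing the weak-appearance check; for them I expect to need to invoke earlier cases aggressively to eliminate patterns containing an adjacent pair of $9{+}$-degree vertices or a $7{,}7{,}7$-triangle, and perhaps to feed into the continuation Lemma~\ref{lem:charge0-77} (mentioned right after the statement) to close off a residual sub-case. A secondary difficulty, already present in cases (5)--(7), is making sure that when a reducible configuration is only \emph{weakly} appearing, its wrapping still yields a valid reduction via Lemma~\ref{lem:weakly}; this is not a new issue but must be re-verified for the cartwheels constructed here.
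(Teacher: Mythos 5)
Your proposal matches the paper's proof essentially step for step: assume the flat condition $T(\cdot)\equiv 0$ from the beginning of Section~\ref{sect:flatproof}, exploit the large charge deficit at high-degree vertices via Lemmas~\ref{lem:vsends}--\ref{lem:sendavg5-2or4} to pin down neighbor degrees, order cases $1$--$9$ from most- to least-constrained so that earlier cases prune the cartwheel enumeration for later ones, and delegate the final weak-appearance check to the computer programs of Section~\ref{sect:code}. The only small imprecision is the suggestion to ``feed into'' Lemma~\ref{lem:charge0-77} to close case $9$: in the paper, Lemma~\ref{lem:charge0-77} (and its companion Lemma~\ref{lem:charge0-77-2}) is a parallel statement about a single $7{,}7$ edge with its own list of exceptional subgraphs, run by the same computer check alongside the nine cases rather than invoked inside them.
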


In addition to Lemma \ref{lem:charge0-concentrated}, we show Lemma \ref{lem:charge0-77}.

\begin{lem}\label{lem:charge0-77}
  If an edge $vu$ such that $d(v)=d(u)=7$ exists in $G'$, a configuration 
  in $\mathcal{K}$ weakly appears, unless the subgraph that consists of neighbors of $uv$ is one of the graphs in the Figure \ref{fig:charge0-77}.
\end{lem}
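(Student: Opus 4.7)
The plan is to reduce the statement to a finite computer check by first using Lemma \ref{lem:12} and the already-handled cases of Lemma \ref{lem:charge0-concentrated} to severely restrict the local structure around the edge $vu$. Since we are in the flat case, Lemma \ref{lem:12} forces every vertex degree to be at most $11$, and cases $1$ of Lemma \ref{lem:charge0-concentrated} eliminates degrees $10$ and $11$. Applying case $2$ rules out any neighbor of $v$ or $u$ of degree $9$; case $8$ rules out any neighbor of degree $8$. Hence every neighbor of $v$ and of $u$ lies in $\{5,6,7\}$. Case $5$ forces the two common neighbors of $v$ and $u$ (the apexes of the two facial triangles containing $vu$) to have degree $5$ or $6$. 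Case $9$ applied to $v$ (respectively $u$) shows that $v$ has at most two degree-$7$ neighbors in total; since $u$ is already one of them, $v$ has at most one further degree-$7$ neighbor, and symmetrically for $u$. Thus the degree sequence of the twelve vertices comprising $v$, $u$, their two common neighbors, and their six remaining ``private'' neighbors lies in a small finite set.

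With these constraints, the plan is to enumerate all possible local neighborhood graphs of $vu$ by choosing, for each vertex in this neighborhood, a degree consistent with the above restrictions and, where several triangulations of the neighborhood are compatible with the chosen cyclic order, all such triangulations. For each of the resulting near-triangulations I would run the standard weak-appearance test against the library $\mathcal{K}$ (the same routine used for Lemmas \ref{lem:deg5deg6} and \ref{lem:charge0-concentrated}). Every branch that produces a weak appearance of some $K\in\mathcal{K}$ is discharged immediately; the exceptions that survive the search comprise exactly the list displayed in Figure \ref{fig:charge0-77}.

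The main obstacle is that knowing only the degrees of the first neighbors of $vu$ is not always sufficient to trigger a weak appearance of a configuration in $\mathcal{K}$: some members of $\mathcal{K}$ have diameter as large as five (see Section \ref{sect:dist5}), so their weak appearance may depend on degrees of second neighbors or on specific facial triangles one step further out. The workaround is to carry out the enumeration adaptively: whenever a candidate member of $\mathcal{K}$ almost fits but needs the degree of a vertex outside the first neighborhood, branch on the possible degree of that vertex, again invoking Lemma \ref{lem:12} and the already-handled cases 1--9 of Lemma \ref{lem:charge0-concentrated} to keep the branching finite. A further subtlety is that the exception list must be \emph{minimal}: for every graph $H$ remaining in Figure \ref{fig:charge0-77} one must verify by the same routine that no configuration in $\mathcal{K}$ weakly appears in any triangulation extending $H$, so that these exceptions are genuinely unavoidable at this stage and must be dealt with elsewhere in the flat-case argument.

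Because the branching is uniformly bounded and all constraints are local, the entire analysis fits into the same computer-verification framework used throughout Section \ref{sect:flatproof}; the pseudocode is entirely analogous to the one used for Lemma \ref{lem:charge0-concentrated} and is collected in Appendix \ref{sect:code}. The output of the search is precisely the finite family in Figure \ref{fig:charge0-77}, which proves the lemma.
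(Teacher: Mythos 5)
Your proposal is correct and takes essentially the same route as the paper: prune the possible degrees around the edge $vu$ using Lemma \ref{lem:12} and the already-proved cases of Lemma \ref{lem:charge0-concentrated}, then run a finite computer enumeration of the remaining neighborhood triangulations against $\mathcal{K}$, the survivors being exactly Figure \ref{fig:charge0-77}. One trivial slip: with $d(v)=d(u)=7$ there are $4+4=8$ private neighbors beyond the two common apex vertices (not six), consistent with the total of twelve vertices you quote.
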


\begin{figure}[htbp]
    \centering
    \includegraphics[width=3cm]{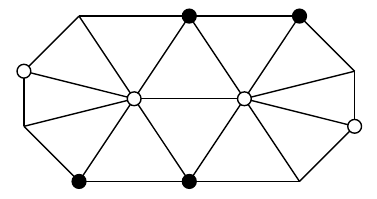}
    \includegraphics[width=3cm]{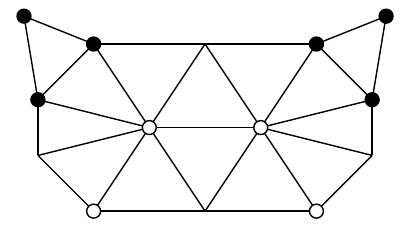}
    \includegraphics[width=3cm]{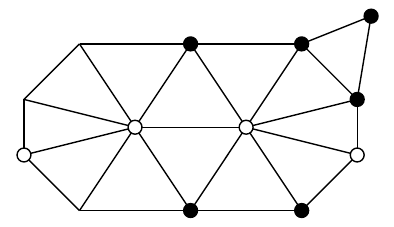}
    \includegraphics[width=3cm]{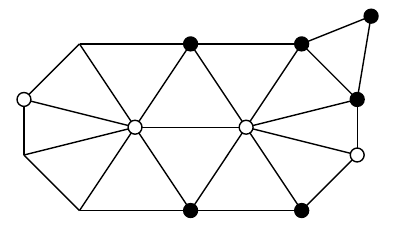}
    \includegraphics[width=3cm]{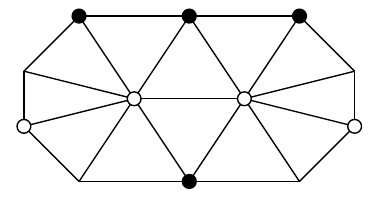}
    \caption{The exceptional cases of Lemma \ref{lem:charge0-77}. Two vertices placed in the center represent $v$ and $u$.}
    \label{fig:charge0-77}
  \end{figure}

By Lemma \ref{lem:charge0-concentrated}, every vertex $v$ of degree $8$ or $9$ is surrounded by neighbors of degree $5,6$. 
The same statement does not hold when degree is $7$ since a vertex $v$ of degree $7$ can have a neighbor of degree $7$, but this happens to only a few cases by Lemma \ref{lem:charge0-77}. We can also prove Lemma \ref{lem:charge0-77-2}.

\begin{lem}\label{lem:charge0-77-2}
  If a path of length two such that its vertices are all of degree 7 exists, 
  a configuration 
  in $\mathcal{K}$ weakly appears.
\end{lem}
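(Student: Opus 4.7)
\emph{Plan of proof.} Write the path as $u_1 v u_2$ with $d(u_1) = d(v) = d(u_2) = 7$, and suppose toward contradiction that no configuration of $\mathcal{K}$ weakly appears in $G'$. Then Lemma \ref{lem:charge0-77} applies to each of the edges $u_1 v$ and $v u_2$, forcing the second neighborhood of each edge to realize one of the five exceptional patterns in Figure \ref{fig:charge0-77}. The aim is to show that combining any two such exceptional patterns through the shared vertex $v$ yields a configuration on the second neighborhood of $\{u_1, v, u_2\}$ in which some configuration of $\mathcal{K}$ does weakly appear, a contradiction.

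The key observation is that $u_1$ and $u_2$ are two of the seven neighbors of $v$; they appear in prescribed positions of the cyclic rotation around $v$ dictated by the exceptional pattern of $u_1 v$ (for $u_1$) and of $v u_2$ (for $u_2$). The degrees of the other neighbors of $v$, together with the facial triangles they close, must simultaneously satisfy both patterns. This rigid compatibility condition, plus the freedom in choosing which position of the rotation at $v$ contains $u_2$ relative to $u_1$, leaves only a short enumerable list of joint configurations on the combined second neighborhood. For each surviving joint configuration, I would run the same brute-force pattern match against $\mathcal{K}$ already used to prove Lemmas \ref{lem:charge0-concentrated} and \ref{lem:charge0-77}; the expectation, matching the pattern of the adjacent lemmas, is that each joint configuration contains a weakly appearing member of $\mathcal{K}$.

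Two points require care and constitute the main obstacle. First, before running the search one should prune the enumeration by appealing to Lemma \ref{lem:charge0-concentrated}: several joint patterns place a cluster of high-degree vertices (edges between degree-$8$ or degree-$9$ vertices, or facial triangles with three degree-$7$ vertices) inside the combined neighborhood, and Lemma \ref{lem:charge0-concentrated} already produces a reducible configuration there, so these pairings can be discarded immediately. Second, the two second neighborhoods may share further vertices beyond $v$ and its neighbors: a neighbor of $u_1$ may coincide with a neighbor of $u_2$, or the combined picture may wrap around the torus. Such identifications produce a wrapping rather than an induced subgraph, so we rely on the weak-appearance framework and the wrapping analysis of Section \ref{sect:lowrep} (Lemma \ref{lem:weakly}) to certify that a configuration of $\mathcal{K}$ still weakly appears in every identification pattern. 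With the enumeration pruned in this way the remaining computer search is of modest size and of the same style as the checks already carried out in this section.
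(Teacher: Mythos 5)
Your proposal follows the same overall route as the paper: apply Lemma \ref{lem:charge0-77} to both edges $u_1v$ and $vu_2$, observe that each edge's second neighborhood must realize one of the five exceptional patterns of Figure \ref{fig:charge0-77}, and then argue that the compatibility constraint coming from the shared vertex $v$ rules out most pairings and forces a reducible configuration in the rest. The execution differs, though. You propose a further computer search over all joint configurations (pruned via Lemma \ref{lem:charge0-concentrated}), whereas the paper closes the argument entirely by hand: the degrees of the \emph{diagonal vertices} (the two vertices forming facial triangles with the central edge) must agree across the two exceptional patterns, and a direct degree check kills every pairing except $(\mathrm{edge}2,\mathrm{edge}2)$, $(\mathrm{edge}3,\mathrm{edge}3)$, $(\mathrm{edge}3,\mathrm{edge}4)$, and $(\mathrm{edge}4,\mathrm{edge}4)$; in each of those four survivors one sees C(1) or C(26) sitting as a subgraph. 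So the paper's proof is a short explicit case analysis producing named configurations, while yours is a valid but less informative plan that leaves the verification to a machine.

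One point in your proposal is slightly off target. You raise the possibility that neighbors of $u_1$ and $u_2$ coincide, producing a wrapping, and invoke Lemma \ref{lem:weakly} to handle it. But the conclusion you are proving is that a configuration of $\mathcal{K}$ \emph{weakly appears}, and weak appearance (Section \ref{sect:weak-appear}) is defined precisely by dropping the injectivity condition, so identifications of outer-ring vertices are already accommodated by the statement itself. Lemma \ref{lem:weakly} would only be needed later, when the weak appearance is converted into an actual reduction of $G$; it plays no role in establishing the unavoidability claim of this lemma.
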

\begin{proof}
  We consider the case that $uvw$ is a path that satisfies the assumption of this lemma. By Lemma \ref{lem:charge0-77}, each of $uv$ and $vw$ corresponds to the edge placed in the center of one of the graphs enumerated in Figure \ref{fig:charge0-77}. We say that an edge $uv$ corresponds to edge $i (1 \leq i \leq 5)$ when $uv$ corresponds to the edge in the center of $i$-th graph from the left in Figure \ref{fig:charge0-77}. \textit{A diagonal vertex of $e$} is a vertex that constitutes a facial triangle with an edge $e$. 
  We cannot tile two graphs in Figure \ref{fig:charge0-77} such that $uvw$ constitutes a path of length two, except when ($uv$, $vw$) correspond to (edge2, edge2), (edge3, edge3), (edge3, edge4), or (edge4, edge4) respectively, by the difference of degrees of diagonal vertices of $uv$ or $vw$. In each case, C(1), or C(26) is contained as a subgraph, which implies the statement of this Lemma.
\end{proof}

By combining Lemmas \ref{lem:charge0-77} and \ref{lem:charge0-77-2}, the existence of an edge $uv$ such that $d(v)=d(u)=7$ implies the same conclusion of Lemma \ref{T(v)>0}. Hence, every vertex $v$ of degree $7$ is surrounded by neighbors of degree $5,6$. It is the same situation as the degree of $8$ or $9$. We can find a reducible configuration in this situation. We check the following lemma by computer.

\begin{lem}\label{lem:charge0-789-surrounded56}\showlabel{lem:charge0-789-surrounded56}
    If all neighbors of a vertex of degree $7,8,$ or $9$, are of degree $5$ or $6$, a configuration 
    in $\mathcal{K}$ weakly appears.
\end{lem}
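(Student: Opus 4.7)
The plan is to verify this lemma by an exhaustive computer search, analogous to the searches underlying the other lemmas of this section, which is made feasible because Lemmas \ref{lem:charge0-concentrated}, \ref{lem:charge0-77}, and \ref{lem:charge0-77-2} already drastically restrict the possible local structures around such a vertex.

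First I would fix the central vertex $v$ of degree $d \in \{7,8,9\}$ together with its neighbors $u_1, \ldots, u_d$ listed in clockwise order. Since $G'$ is a triangulation, the sequence $u_1 u_2 \cdots u_d u_1$ is a closed walk consisting of facial triangles $v u_i u_{i+1}$, so the first neighborhood is parametrized by a cyclic string $(d(u_1),\ldots,d(u_d)) \in \{5,6\}^d$, up to rotation and reflection. For each such pattern I would then enumerate the admissible degree assignments on the second neighborhood. Lemma \ref{lem:charge0-concentrated}(1) forbids degrees $\geq 10$, Lemma \ref{lem:charge0-concentrated}(2)--(5) together with Lemma \ref{lem:charge0-77-2} forbid most adjacency patterns among degree-$7,8,9$ vertices, and Lemma \ref{lem:charge0-77} tabulates the only surviving ways a degree-$7$ vertex can be adjacent to another degree-$7$ vertex. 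These restrictions cut the branching factor per second-neighbor down to a handful of admissible degrees, and in many sub-cases (for instance when $d(v)=9$) they force every second-neighbor into $\{5,6\}$.

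The core step is, for each completed local assignment, to invoke the same weak-appearance checker used in Lemma \ref{T(v)>0} against the database $\mathcal{K}$, searching within the disk formed by $v$, its first neighborhood, and the enumerated second-neighborhood vertices (extending into third neighbors on demand when the configuration in $\mathcal{K}$ requires it). I expect that in most branches a small configuration of Figure \ref{fig:confs_degree56}, or a configuration from $\mathcal{K}$ centered on the degree-$5,6$ neighbors of $v$, will be found; the program would flag any residual branch for which no $K \in \mathcal{K}$ is found, and these branches are then to be inspected and either resolved or used to enlarge $\mathcal{K}$.

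The principal obstacle will be the combinatorial explosion when $d(v)=9$, since each degree-$6$ second-neighbor can itself have further degree-$5$ or degree-$6$ third-neighbors that may be required to witness a configuration of $\mathcal{K}$. To keep the search tractable, I would process the three cases $d(v)=9,8,7$ in that order, so that the higher-$d$ cases are available as already-forbidden sub-patterns when analyzing smaller $d(v)$, and I would apply Lemmas \ref{lem:charge0-concentrated}--\ref{lem:charge0-77-2} as online pruning rules after every partial degree assignment, as is done throughout Section \ref{sect:flatproof}. With these optimizations the enumeration should terminate, and the expected outcome is that every admissible local structure contains a weakly appearing configuration in $\mathcal{K}$.
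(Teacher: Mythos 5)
Your proposal is essentially the paper's approach: the paper proves this lemma by computer verification, after first using Lemmas \ref{lem:charge0-concentrated}, \ref{lem:charge0-77}, and \ref{lem:charge0-77-2} to reduce to the case where all first neighbors have degree 5 or 6, and then running a weak-appearance checker against the database $\mathcal{K}$ over the remaining local degree assignments — exactly the enumeration-plus-pruning scheme you describe.
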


Therefore, we only have to handle the case if every vertex is of degree $5$ or $6$. A triangulation in the torus has an average degree of exactly $6$, implying that all vertices are of degree $6$. Hence, the reducible configuration C(27) weakly appears. This is the only case we use C(27). Actually, we do not need C(27) in checking the weak appearance of reducible configurations if a vertex of positive charge exists (Lemma \ref{T(v)>0}) or vertices of zero charge are concentrated. Hence we have to ensure the graph after contracting the contraction edges of C(27) does not become Petersen-like only when all degrees of $G'$ are 6. This check is easy since a vertex whose degree is at least 18, which is a clear evidence of being non-Petersen-like, exists after contraction.

In summary, we have the following.
\begin{lem}\label{lem:charge0F}\showlabel{lem:charge0F}
Assume that every vertex $v$ of $G'$ satisfies $T(v)=0$ (final charge is zero). Then a reducible configuration $K$ in the set $\mathcal{K}$ weakly appears in $G'$. 
\end{lem}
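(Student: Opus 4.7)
The plan is to reduce the general flat case to the single remaining scenario where $G'$ is $6$-regular, by successively eliminating all other degree patterns using the lemmas already established in this section together with the degree-$\ge 12$ bound from Lemma~\ref{lem:12}.

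First I would bound degrees from above. By Lemma~\ref{lem:12}, if $T(v)=0$ everywhere then no vertex has degree $\ge 12$. Case~1 of Lemma~\ref{lem:charge0-concentrated} then rules out any vertex of degree $10$ or $11$, since such a vertex would force a reducible configuration of $\mathcal{K}$ to weakly appear (which is the desired conclusion). Thus I may assume every vertex of $G'$ has degree between $5$ and $9$ (minimum degree $5$ coming from the fact that a minimal counterexample is internally $6$-connected in the dual).

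Next I would eliminate vertices of degree $7,8,9$. Cases 2--5 of Lemma~\ref{lem:charge0-concentrated} handle essentially all adjacency/triangle patterns among vertices of degree $\ge 8$, and among a degree-$9$ vertex and its neighbors of degree $\ge 7$. Cases 6--9 together with Lemmas~\ref{lem:charge0-77} and~\ref{lem:charge0-77-2} constrain the situation around degree-$7$ vertices: Lemma~\ref{lem:charge0-77} lists the only local pictures in which an edge joining two degree-$7$ vertices need not produce a weakly appearing reducible configuration, and Lemma~\ref{lem:charge0-77-2} rules out paths $uvw$ of length two with all three vertices of degree $7$. Chaining these together, any vertex $v$ of degree $7,8,$ or $9$ must have all of its neighbors of degree at most $6$, hence (by the minimum degree bound) all neighbors of degree $5$ or $6$. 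Lemma~\ref{lem:charge0-789-surrounded56} then produces a weakly appearing configuration of $\mathcal{K}$, a contradiction. Consequently every vertex of $G'$ has degree $5$ or $6$.

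Since $G'$ triangulates the torus, Euler's formula (as already used in Lemma~\ref{lem:120}) gives average degree exactly $6$; combined with the bound $d(v)\le 6$ we just established, this forces $d(v)=6$ for every $v$. In this $6$-regular case the configuration C(27) weakly appears in $G'$ (this is precisely the role of C(27), as remarked just before the lemma statement), and C(27) lies in $\mathcal{K}$, completing the proof.

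The main obstacle is the degree-$7$ layer: the existence of exceptional local pictures in Lemma~\ref{lem:charge0-77} means that a single edge joining two degree-$7$ vertices need not immediately yield a reducible configuration. One has to combine Lemma~\ref{lem:charge0-77} with Lemma~\ref{lem:charge0-77-2} to exclude \emph{extending} such an edge to a path, and then feed the resulting ``degree-$7$ vertices only meet degree-$\le 6$ vertices'' structural conclusion into Lemma~\ref{lem:charge0-789-surrounded56}. The rest of the argument is a clean cascade that ends at the unique $6$-regular configuration C(27).
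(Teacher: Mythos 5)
Your proposal follows the same approach as the paper: use Lemma~\ref{lem:12} and case~1 of Lemma~\ref{lem:charge0-concentrated} to exclude degrees $\ge 10$, combine cases 2--9 of Lemma~\ref{lem:charge0-concentrated} with Lemmas~\ref{lem:charge0-77} and \ref{lem:charge0-77-2} to force every degree-$7$, $8$, or $9$ vertex to be surrounded by degree-$5,6$ neighbors, invoke Lemma~\ref{lem:charge0-789-surrounded56}, and finish with the Euler-formula argument that forces $6$-regularity and hence C(27). You are slightly more explicit than the paper in invoking Lemma~\ref{lem:12} for the degree $\ge 12$ case, but the logical structure and every key lemma cited are the same.
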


\section{Main proof}
\label{sect:mainproof}\showlabel{sect:mainproof}

We now discuss Theorem \ref{mainth}. Let $G$ be a minimal counterexample. We know the following properties by Lemmas \ref{minc}, \ref{lem:(2,2)-annulus-cut} and \ref{lem:(1,3)-annulus-cut}. 

\begin{enumerate}
    \item  $G$ is embedded in the torus $\Sigma$, 
    \item $G$ is cyclically 5-edge-connected, and if there is an edge-cut $F$ of size exactly five, one of the connected components of $G-F$ is a 5-cycle and 
    \item 
     no cyclic (2, $\leq$ 3)-annulus-cut nor cyclic (1, $\leq$ 3)-annulus-cut
\end{enumerate}

If the representativity of $G$ is exactly one, by Lemma \ref{lem:rep1}, either there is a subgraph $K$ of the dual graph $G'$, that is a reducible configuration in the set $\mathcal{K}$ and that does not hit the vertex in $G'$ that intersects a non-contractible curve of length exactly one in $\Sigma$, or there is an annular island described in Lemma \ref{lem:rep1}. In the latter case, reducing this annular island does not result in a Petersen-like graph, because the original graph has representativity one embedding in the torus (and Petersen-like graphs need to have representativity two for any embedding). 

Assume that the representativity of $G$ is at least two. Now we apply our discharging rules $\mathcal{R}$. By Lemma \ref{T(v)>0}, if there is a vertex $v$ of the dual graph $G'$ with $T(v) >0$,  there must exist a subgraph $K'$ of $G'$ which contains either $v$ or at least one of its neighbors and a reducible configuration in $\mathcal{K}$ weakly appears in $G'$. 

Consider the case $T(v)=0$ for all the vertices $v$ of $V(G)$. 
Then, according to Lemma \ref{lem:charge0F}, a reducible configuration in $\mathcal{K}$ weakly appears in $G'$.

But by Lemma \ref{T(v)>0T}, such a subgraph $K$ would not appear in $G'$. Indeed, reducing such a graph does not result in any Petersen-like graph. 
This completes the proof of Theorem \ref{mainth}.\qed

\section{Algorithm}
\label{sect:algorithm}\showlabel{sect:algorithm}

In this section, we consider the algorithmic corollary of our main theorem, Theorem \ref{mainth}, see Theorem \ref{algo}. Let us mention the problem again. 

\begin{quote}{\bf Algorithmic Problem}\\
{\bf Input}: A cubic graph $G$.\\
{\bf Output}: Either a three edge-coloring of $G$ or an obstruction that is not three edge-colorable, or else conclude that $G$ cannot be embedded in the torus (and output a forbidden minor for embedding a graph in the torus).\\
{\bf Time complexity}: $O(n^2)$, where $n=|V(G)|$.
\end{quote}

\medskip

{\bf Description}

\medskip

{\bf Step 1:} Embedding $G$ in the torus. 

\medskip

This can be done using the algorithm by Mohar \cite{MOHARlinear}. If $G$ cannot be embedded in the torus, we just output the obstruction (i.e., a forbidden minor) obtained from the algorithm by Mohar \cite{MOHARlinear}.

Hereafter, we may assume that the input graph $G$ is embedded in the torus $\Sigma$.

\medskip

{\bf Step 2:} Reduction to a cyclically 5-edge-connected graph. 

\medskip

This step is almost identical to that for the planar case, see \cite{RSST-STOC}. So we just give a brief sketch here, and give more details in Appendix (Section \ref{secalgoappen}). 

We try to find a cyclic cut of size four or less.  
We find such a cut (if it exists) with the smallest size. If there is such a cut $F$, let the two connected components of $G-F$ be $G_1$ and $G_2$, and we obtain a 3-edge-coloring by applying this algorithm to both $G_i$ recursively.
If we fail to three-edge color one of $G_i$, we obtain an obstruction and output it.

Hereafter we assume that $G$ is a cyclically 5-edge-connected graph. If there is a cut $F$ of size five, where both connected components are not 5-cycles, again, 
we obtain a three-edge coloring by applying this algorithm to both $G_i$ recursively (we never fail because we do not reduce the obstructions). 
As in Lemmas \ref{minc}, \ref{lem:(2,2)-annulus-cut} and \ref{lem:(1,3)-annulus-cut}, we can obtain the coloring of $G$ by merging the edge-colorings of $G_1$ and $G_2$.

\medskip

{\bf Step 3:} Find a reducible configuration in $\mathcal{K}$ 

\medskip

 By Lemmas \ref{T(v)>0} and \ref{T(v)>0T},  a reducible configuration in $\mathcal{K}$ weakly appears. We give more details in Appendix (Section \ref{secalgoappen}).  

\medskip

Now let us estimate the time complexity of this algorithm. Step 1 takes $O(n)$ time as in \cite{MOHARlinear}. Both steps 2 and 3 take an $O(n)$ time and we recuse. For correctness, we follow the proof in \cite{proj2024}, which is given in the appendix for the sake of completeness. 

\bibliographystyle{plain}
\bibliography{4CT}

%
\appendix
%

\section{List of cases that a vertex sends charge 4,5,6}
\label{sect:sendscharge}\showlabel{sect:sendcharge}

\tikzset{deg5/.style={thick, circle, draw, fill=black, inner sep=1.5pt,}}
\tikzset{deg6/.style={thick, circle, draw, fill=black, inner sep=0pt,}}
\tikzset{deg7/.style={thick, circle, draw, fill=white, inner sep=2pt,}}
\tikzset{deg8/.style={thick, rectangle, draw, fill=white, inner sep=2pt,}}
\tikzset{deg9/.style={thick, regular polygon, regular polygon sides=3, rotate=180, draw, fill=white, inner sep=1pt,}}
\tikzset{deg10/.style={thick, regular polygon, draw, fill=white, inner sep=2pt,}}
\tikzset{->-/.style={decoration={
    markings,
    mark=at position .6 with {\arrow{>}}}, postaction={decorate}}}
\tikzset{->>-/.style={decoration={
    markings, 
    mark=at position .5 with {\arrow{>}};, 
    mark=at position .6 with {\arrow{>}};}, postaction={decorate}}}

\begin{figure}[htbp]
\begin{tabular}{ccccc}

\begin{minipage}[t]{0.2\hsize}
\centering
\begin{tikzpicture} []
    \node [deg6] at (0.8, 2.151) (v0) {};
    \node [deg7] at (1.8, 2.151) (v1) {};
    \node [above = 0.15 cm of v1, anchor=center] (v1+) { $+$ };
    \node [deg5] at (1.3, 1.285) (v2) {};
    \node [deg5] at (0.3, 1.285) (v3) {};
    \node [deg5] at (0.3, 3.017) (v4) {};
    \node [deg5] at (1.3, 3.017) (v5) {};
    \node [deg5] at (1.126, 0.3) (v6) {};
    \node [deg5] at (1.126, 4.002) (v7) {};
    \draw [->-] (v0) -- (v1);
    \foreach \u / \v in {v0/v1, v0/v2, v0/v3, v0/v4, v0/v5, v1/v2, v1/v5, v2/v3, v2/v6, v3/v6, v4/v5, v4/v7, v5/v7}
        \draw (\u) -- (\v);
    \foreach \u / \v in {}
        \draw[ultra thick, blue] (\u) -- (\v);
\end{tikzpicture}
\end{minipage}
&
\begin{minipage}[t]{0.2\hsize}
\centering
\begin{tikzpicture} []
    \node [deg6] at (0.8, 2.151) (v0) {};
    \node [deg7] at (1.8, 2.151) (v1) {};
    \node [above = 0.15 cm of v1, anchor=center] (v1+) { $+$ };
    \node [deg5] at (1.3, 1.285) (v2) {};
    \node [deg5] at (0.3, 1.285) (v3) {};
    \node [deg5] at (0.3, 3.017) (v4) {};
    \node [deg5] at (1.3, 3.017) (v5) {};
    \node [deg5] at (1.126, 0.3) (v6) {};
    \node [deg6] at (1.126, 4.002) (v7) {};
    \node [deg5] at (2.24, 3.359) (v8) {};
    \draw [->-] (v0) -- (v1);
    \foreach \u / \v in {v0/v1, v0/v2, v0/v3, v0/v4, v0/v5, v1/v2, v1/v5, v1/v8, v2/v3, v2/v6, v3/v6, v4/v5, v4/v7, v5/v7, v5/v8, v7/v8}
        \draw (\u) -- (\v);
    \foreach \u / \v in {}
        \draw[ultra thick, blue] (\u) -- (\v);
\end{tikzpicture}
\end{minipage}
&
\begin{minipage}[t]{0.2\hsize}
\centering
\begin{tikzpicture} []
    \node [deg6] at (0.8, 2.151) (v0) {};
    \node [deg7] at (1.8, 2.151) (v1) {};
    \node [above = 0.15 cm of v1, anchor=center] (v1+) { $+$ };
    \node [deg5] at (1.3, 1.285) (v2) {};
    \node [deg5] at (0.3, 1.285) (v3) {};
    \node [deg5] at (0.3, 3.017) (v4) {};
    \node [deg5] at (1.3, 3.017) (v5) {};
    \node [deg5] at (2.24, 0.943) (v6) {};
    \node [deg6] at (1.126, 0.3) (v7) {};
    \node [deg6] at (1.455, 4.434) (v8) {};
    \node [deg5] at (2.187, 3.377) (v9) {};
    \draw [->-] (v0) -- (v1);
    \foreach \u / \v in {v0/v1, v0/v2, v0/v3, v0/v4, v0/v5, v1/v2, v1/v5, v1/v6, v1/v9, v2/v3, v2/v6, v2/v7, v3/v7, v4/v5, v4/v8, v5/v8, v5/v9, v6/v7, v8/v9}
        \draw (\u) -- (\v);
    \foreach \u / \v in {}
        \draw[ultra thick, blue] (\u) -- (\v);
\end{tikzpicture}
\end{minipage}
&
\begin{minipage}[t]{0.2\hsize}
\centering
\begin{tikzpicture} []
    \node [deg6] at (0.8, 2.032) (v0) {};
    \node [deg7] at (1.8, 2.032) (v1) {};
    \node [above = 0.15 cm of v1, anchor=center] (v1+) { $+$ };
    \node [deg5] at (1.3, 2.898) (v2) {};
    \node [deg5] at (0.3, 2.898) (v3) {};
    \node [deg5] at (0.3, 1.166) (v4) {};
    \node [deg6] at (1.3, 1.166) (v5) {};
    \node [deg5] at (1.126, 3.883) (v6) {};
    \node [deg5] at (0.8, 0.3) (v7) {};
    \node [deg5] at (1.8, 0.3) (v8) {};
    \draw [->-] (v0) -- (v1);
    \foreach \u / \v in {v0/v1, v0/v2, v0/v3, v0/v4, v0/v5, v1/v2, v1/v5, v2/v3, v2/v6, v3/v6, v4/v5, v4/v7, v5/v7, v5/v8, v7/v8}
        \draw (\u) -- (\v);
    \foreach \u / \v in {}
        \draw[ultra thick, blue] (\u) -- (\v);
\end{tikzpicture}
\end{minipage}
&
\begin{minipage}[t]{0.2\hsize}
\centering
\begin{tikzpicture} []
    \node [deg6] at (0.8, 2.151) (v0) {};
    \node [deg7] at (1.8, 2.151) (v1) {};
    \node [above = 0.15 cm of v1, anchor=center] (v1+) { $+$ };
    \node [deg5] at (1.3, 1.285) (v2) {};
    \node [deg5] at (0.3, 1.285) (v3) {};
    \node [deg6] at (0.3, 3.017) (v4) {};
    \node [deg5] at (1.3, 3.017) (v5) {};
    \node [deg5] at (1.126, 0.3) (v6) {};
    \node [deg5] at (1.126, 4.002) (v7) {};
    \node [deg5] at (2.24, 3.359) (v8) {};
    \draw [->-] (v0) -- (v1);
    \foreach \u / \v in {v0/v1, v0/v2, v0/v3, v0/v4, v0/v5, v1/v2, v1/v5, v1/v8, v2/v3, v2/v6, v3/v6, v4/v5, v4/v7, v5/v7, v5/v8, v7/v8}
        \draw (\u) -- (\v);
    \foreach \u / \v in {}
        \draw[ultra thick, blue] (\u) -- (\v);
\end{tikzpicture}
\end{minipage}
\\
\begin{minipage}[t]{0.2\hsize}
\centering
\begin{tikzpicture} []
    \node [deg6] at (0.8, 2.151) (v0) {};
    \node [deg7] at (1.8, 2.151) (v1) {};
    \node [above = 0.15 cm of v1, anchor=center] (v1+) { $+$ };
    \node [deg5] at (1.3, 1.285) (v2) {};
    \node [deg5] at (0.3, 1.285) (v3) {};
    \node [deg6] at (0.3, 3.017) (v4) {};
    \node [deg6] at (1.3, 3.017) (v5) {};
    \node [deg5] at (1.126, 0.3) (v6) {};
    \node [deg5] at (0.8, 3.883) (v7) {};
    \node [deg5] at (2.3, 3.017) (v8) {};
    \node [deg5] at (1.8, 3.883) (v9) {};
    \draw [->-] (v0) -- (v1);
    \foreach \u / \v in {v0/v1, v0/v2, v0/v3, v0/v4, v0/v5, v1/v2, v1/v5, v1/v8, v2/v3, v2/v6, v3/v6, v4/v5, v4/v7, v5/v7, v5/v8, v5/v9, v7/v9, v8/v9}
        \draw (\u) -- (\v);
    \foreach \u / \v in {}
        \draw[ultra thick, blue] (\u) -- (\v);
\end{tikzpicture}
\end{minipage}
&
\begin{minipage}[t]{0.2\hsize}
\centering
\begin{tikzpicture} []
    \node [deg6] at (0.8, 2.583) (v0) {};
    \node [deg7] at (1.8, 2.583) (v1) {};
    \node [above = 0.15 cm of v1, anchor=center] (v1+) { $+$ };
    \node [deg5] at (1.3, 1.717) (v2) {};
    \node [deg5] at (0.3, 1.717) (v3) {};
    \node [deg6] at (0.3, 3.449) (v4) {};
    \node [deg5] at (1.3, 3.449) (v5) {};
    \node [deg5] at (2.187, 1.357) (v6) {};
    \node [deg6] at (1.455, 0.3) (v7) {};
    \node [deg5] at (1.126, 4.434) (v8) {};
    \node [deg5] at (2.24, 3.791) (v9) {};
    \draw [->-] (v0) -- (v1);
    \foreach \u / \v in {v0/v1, v0/v2, v0/v3, v0/v4, v0/v5, v1/v2, v1/v5, v1/v6, v1/v9, v2/v3, v2/v6, v2/v7, v3/v7, v4/v5, v4/v8, v5/v8, v5/v9, v6/v7, v8/v9}
        \draw (\u) -- (\v);
    \foreach \u / \v in {}
        \draw[ultra thick, blue] (\u) -- (\v);
\end{tikzpicture}
\end{minipage}
&
\begin{minipage}[t]{0.2\hsize}
\centering
\begin{tikzpicture} []
    \node [deg6] at (0.8, 2.151) (v0) {};
    \node [deg7] at (1.8, 2.151) (v1) {};
    \node [above = 0.15 cm of v1, anchor=center] (v1+) { $+$ };
    \node [deg5] at (1.3, 1.285) (v2) {};
    \node [deg6] at (0.3, 1.285) (v3) {};
    \node [deg6] at (0.3, 3.017) (v4) {};
    \node [deg5] at (1.3, 3.017) (v5) {};
    \node [deg5] at (2.24, 0.943) (v6) {};
    \node [deg5] at (1.126, 0.3) (v7) {};
    \node [deg5] at (1.455, 4.434) (v8) {};
    \node [deg5] at (2.187, 3.377) (v9) {};
    \draw [->-] (v0) -- (v1);
    \foreach \u / \v in {v0/v1, v0/v2, v0/v3, v0/v4, v0/v5, v1/v2, v1/v5, v1/v6, v1/v9, v2/v3, v2/v6, v2/v7, v3/v7, v4/v5, v4/v8, v5/v8, v5/v9, v6/v7, v8/v9}
        \draw (\u) -- (\v);
    \foreach \u / \v in {}
        \draw[ultra thick, blue] (\u) -- (\v);
\end{tikzpicture}
\end{minipage}
&
\begin{minipage}[t]{0.2\hsize}
\centering
\begin{tikzpicture} []
    \node [deg7] at (1.201, 1.95) (v0) {};
    \node [deg7] at (2.201, 1.95) (v1) {};
    \node [above = 0.15 cm of v1, anchor=center] (v1+) { $+$ };
    \node [deg5] at (1.824, 1.168) (v2) {};
    \node [deg5] at (0.978, 0.975) (v3) {};
    \node [deg7] at (0.3, 1.516) (v4) {};
    \node [above = 0.15 cm of v4, anchor=center] (v4+) { $+$ };
    \node [deg7] at (0.3, 2.383) (v5) {};
    \node [above = 0.15 cm of v5, anchor=center] (v5+) { $+$ };
    \node [deg5] at (0.978, 2.925) (v6) {};
    \node [deg5] at (1.824, 2.731) (v7) {};
    \node [deg5] at (1.824, 0.3) (v8) {};
    \node [deg5] at (1.355, 3.706) (v9) {};
    \draw [->-] (v0) -- (v1);
    \foreach \u / \v in {v0/v1, v0/v2, v0/v3, v0/v4, v0/v5, v0/v6, v0/v7, v1/v2, v1/v7, v2/v3, v2/v8, v3/v4, v3/v8, v4/v5, v5/v6, v6/v7, v6/v9, v7/v9}
        \draw (\u) -- (\v);
    \foreach \u / \v in {}
        \draw[ultra thick, blue] (\u) -- (\v);
\end{tikzpicture}
\end{minipage}
&
\begin{minipage}[t]{0.2\hsize}
\centering
\begin{tikzpicture} []
    \node [deg7] at (2.085, 1.275) (v0) {};
    \node [deg7] at (3.085, 1.275) (v1) {};
    \node [above = 0.15 cm of v1, anchor=center] (v1+) { $+$ };
    \node [deg5] at (2.709, 0.493) (v2) {};
    \node [deg5] at (1.863, 0.3) (v3) {};
    \node [deg5] at (1.184, 1.709) (v4) {};
    \node [deg6] at (1.863, 2.25) (v5) {};
    \node [deg6] at (2.709, 2.057) (v6) {};
    \node [deg5] at (0.3, 2.167) (v7) {};
    \node [deg5] at (1.065, 2.512) (v8) {};
    \node [deg5] at (2.422, 2.876) (v9) {};
    \node [deg5] at (3.571, 1.96) (v10) {};
    \node [deg5] at (3.25, 2.735) (v11) {};
    \draw [->-] (v0) -- (v1);
    \foreach \u / \v in {v0/v1, v0/v2, v0/v3, v0/v4, v0/v5, v0/v6, v1/v2, v1/v6, v1/v10, v2/v3, v4/v5, v4/v7, v4/v8, v5/v6, v5/v8, v5/v9, v6/v9, v6/v10, v6/v11, v7/v8, v9/v11, v10/v11}
        \draw (\u) -- (\v);
    \foreach \u / \v in {}
        \draw[ultra thick, blue] (\u) -- (\v);
\end{tikzpicture}
\end{minipage}
\\
\end{tabular}
\caption{A vertex sends charge 6 in these cases.}
\label{fig:send6}
\end{figure}
\subfile{tikz/send5}
\tikzset{deg5/.style={thick, circle, draw, fill=black, inner sep=1.5pt,}}
\tikzset{deg6/.style={thick, circle, draw, fill=black, inner sep=0pt,}}
\tikzset{deg7/.style={thick, circle, draw, fill=white, inner sep=2pt,}}
\tikzset{deg8/.style={thick, rectangle, draw, fill=white, inner sep=2pt,}}
\tikzset{deg9/.style={thick, regular polygon, regular polygon sides=3, rotate=180, draw, fill=white, inner sep=1pt,}}
\tikzset{deg10/.style={thick, regular polygon, draw, fill=white, inner sep=2pt,}}
\tikzset{->-/.style={decoration={
    markings,
    mark=at position .6 with {\arrow{>}}}, postaction={decorate}}}
\tikzset{->>-/.style={decoration={
    markings, 
    mark=at position .5 with {\arrow{>}};, 
    mark=at position .6 with {\arrow{>}};}, postaction={decorate}}}

\begin{figure}[htbp]
\begin{tabular}{ccccc}

\begin{minipage}[t]{0.2\hsize}
\centering
\begin{tikzpicture} []
    \node [deg5] at (1.109, 0.3) (v0) {};
    \node [deg7] at (2.109, 0.3) (v1) {};
    \node [above = 0.15 cm of v1, anchor=center] (v1+) { $+$ };
    \node [deg5] at (1.418, 1.251) (v2) {};
    \node [deg5] at (0.3, 0.888) (v3) {};
    \draw [->-] (v0) -- (v1);
    \foreach \u / \v in {v0/v1, v0/v2, v0/v3, v1/v2, v2/v3}
        \draw (\u) -- (\v);
    \foreach \u / \v in {}
        \draw[ultra thick, blue] (\u) -- (\v);
\end{tikzpicture}
\end{minipage}
&
\begin{minipage}[t]{0.2\hsize}
\centering
\begin{tikzpicture} []
    \node [deg5] at (1.109, 0.888) (v0) {};
    \node [deg7] at (2.109, 0.888) (v1) {};
    \node [above = 0.15 cm of v1, anchor=center] (v1+) { $+$ };
    \node [deg5] at (1.418, 1.839) (v2) {};
    \node [deg6] at (0.3, 1.476) (v3) {};
    \node [deg5] at (0.3, 0.3) (v4) {};
    \draw [->-] (v0) -- (v1);
    \foreach \u / \v in {v0/v1, v0/v2, v0/v3, v0/v4, v1/v2, v2/v3, v3/v4}
        \draw (\u) -- (\v);
    \foreach \u / \v in {}
        \draw[ultra thick, blue] (\u) -- (\v);
\end{tikzpicture}
\end{minipage}
&
\begin{minipage}[t]{0.2\hsize}
\centering
\begin{tikzpicture} []
    \node [deg5] at (1.109, 0.3) (v0) {};
    \node [deg7] at (2.109, 0.3) (v1) {};
    \node [above = 0.15 cm of v1, anchor=center] (v1+) { $+$ };
    \node [deg5] at (1.418, 1.251) (v2) {};
    \node [deg6] at (0.3, 0.888) (v3) {};
    \node [deg5] at (2.436, 1.839) (v4) {};
    \node [deg6] at (0.94, 2.325) (v5) {};
    \draw [->-] (v0) -- (v1);
    \foreach \u / \v in {v0/v1, v0/v2, v0/v3, v1/v2, v1/v4, v2/v3, v2/v4, v2/v5, v3/v5, v4/v5}
        \draw (\u) -- (\v);
    \foreach \u / \v in {}
        \draw[ultra thick, blue] (\u) -- (\v);
\end{tikzpicture}
\end{minipage}
&
\begin{minipage}[t]{0.2\hsize}
\centering
\begin{tikzpicture} []
    \node [deg5] at (0.3, 1.251) (v0) {};
    \node [deg7] at (1.3, 1.251) (v1) {};
    \node [above = 0.15 cm of v1, anchor=center] (v1+) { $+$ };
    \node [deg5] at (0.609, 0.3) (v2) {};
    \node [deg5] at (0.609, 2.202) (v3) {};
    \draw [->-] (v0) -- (v1);
    \foreach \u / \v in {v0/v1, v0/v2, v0/v3, v1/v2, v1/v3}
        \draw (\u) -- (\v);
    \foreach \u / \v in {}
        \draw[ultra thick, blue] (\u) -- (\v);
\end{tikzpicture}
\end{minipage}
&
\begin{minipage}[t]{0.2\hsize}
\centering
\begin{tikzpicture} []
    \node [deg5] at (1.109, 1.251) (v0) {};
    \node [deg7] at (2.109, 1.251) (v1) {};
    \node [above = 0.15 cm of v1, anchor=center] (v1+) { $+$ };
    \node [deg5] at (1.418, 0.3) (v2) {};
    \node [deg5] at (0.3, 1.839) (v3) {};
    \node [deg6] at (1.418, 2.202) (v4) {};
    \draw [->-] (v0) -- (v1);
    \foreach \u / \v in {v0/v1, v0/v2, v0/v3, v0/v4, v1/v2, v1/v4, v3/v4}
        \draw (\u) -- (\v);
    \foreach \u / \v in {}
        \draw[ultra thick, blue] (\u) -- (\v);
\end{tikzpicture}
\end{minipage}
\\
\begin{minipage}[t]{0.2\hsize}
\centering
\begin{tikzpicture} []
    \node [deg5] at (1.109, 1.251) (v0) {};
    \node [deg7] at (2.109, 1.251) (v1) {};
    \node [above = 0.15 cm of v1, anchor=center] (v1+) { $+$ };
    \node [deg5] at (1.418, 0.3) (v2) {};
    \node [deg6] at (0.3, 1.839) (v3) {};
    \node [deg6] at (1.418, 2.202) (v4) {};
    \node [deg6] at (0.587, 3.033) (v5) {};
    \node [deg5] at (1.781, 3.32) (v6) {};
    \draw [->-] (v0) -- (v1);
    \foreach \u / \v in {v0/v1, v0/v2, v0/v3, v0/v4, v1/v2, v1/v4, v3/v4, v3/v5, v4/v5, v4/v6, v5/v6}
        \draw (\u) -- (\v);
    \foreach \u / \v in {}
        \draw[ultra thick, blue] (\u) -- (\v);
\end{tikzpicture}
\end{minipage}
&
\begin{minipage}[t]{0.2\hsize}
\centering
\begin{tikzpicture} []
    \node [deg5] at (1.109, 1.251) (v0) {};
    \node [deg7] at (2.109, 1.251) (v1) {};
    \node [above = 0.15 cm of v1, anchor=center] (v1+) { $+$ };
    \node [deg6] at (1.418, 0.3) (v2) {};
    \node [deg5] at (0.3, 0.663) (v3) {};
    \node [deg5] at (0.3, 1.839) (v4) {};
    \node [deg6] at (1.418, 2.202) (v5) {};
    \draw [->-] (v0) -- (v1);
    \foreach \u / \v in {v0/v1, v0/v2, v0/v3, v0/v4, v0/v5, v1/v2, v1/v5, v2/v3, v3/v4, v4/v5}
        \draw (\u) -- (\v);
    \foreach \u / \v in {}
        \draw[ultra thick, blue] (\u) -- (\v);
\end{tikzpicture}
\end{minipage}
&
\end{tabular}
\caption{A vertex of degree $5$ sends charge 4 in these cases.}
\label{fig:deg5send4}
\end{figure}
\clearpage

\section{List of discharging rules}
\label{sect:rule}\showlabel{sect:rule}
\subfile{tikz/rule}
\clearpage

\section{Appendix: Pseudo codes}
\label{sect:code}\showlabel{sect:code}

\subsection{Pseudocode for checking cuts of size 6 or 7}
\label{subsect:code-cuts6,7}\showlabel{subsect:code-cuts6,7}

We explain a pseudocode that checks Claim \ref{clm:not-Petersen-like2-cut6,7} in Section \ref{sect:6,7-cut}. Let $K$ be a configuration in $\mathcal{K}$, and $S$ be a free completion of $K$ with ring $R$. Let us remind $c(K)$ is a set of edges to be contracted to prove C-reducbility. When checking the combinations of distances in (6cut-$k$) ($1 \leq k \leq 10$), (7cut-$l$) ($1 \leq l \leq 15$), we check the following patterns of distances for each configuration $K$ in the set $\mathcal{K}$. 
These precise check patterns for each configuration are rather complicated as shown below. This is because we have to check two patterns when the distance of two vertices of $C$ is 1: two vertices $r, r'$ of the ring of $K$ satisfy (i) $d_{S / c(K)}(r, r') = 1$ and both $r, r'$ belong to $C$ or, (ii) $d_{S / c(K)}(r, r') = 0$ and $r$ belong to $C$ and $r'$ is adjacent to a vertex of $C$. In the patterns below, to simplify notations when the symbols $r, r', r''$ are used, they unconditionally represent vertices that do not belong to $C$ but belong to the ring $R$.
\begin{itemize}
    \item(6cut-1)
    \begin{itemize}
        \item $d_{S / c(K)}(u_0, u_2) = 0$
    \end{itemize}
    \item(6cut-2)
    \begin{itemize}
        \item $d_{S / c(K)}(u_0, u_2) = 0$, $d_{S / c(K)}(u_3, u_5) = 0$
    \end{itemize}
    \item(6cut-3)
    \begin{itemize}
        \item $d_{S / c(K)}(u_0, u_2) = 0$, $d_{S / c(K)}(u_2, u_4) = 0$
    \end{itemize}
    \item(6cut-4)
    \begin{itemize}
        \item $d_{S / c(K)}(u_0, u_2) = 0$, $d_{S / c(K)}(u_3, u_5) = 1$
        \item $d_{S / c(K)}(u_0, u_2) = 0$, $d_{S / c(K)}(u_3, r) = 0$, where $r$ is adjacent to $u_5$.
    \end{itemize}
    \item(6cut-5)
    \begin{itemize}
        \item $d_{S / c(K)}(u_0, u_2) = 0$, $d_{S / c(K)}(u_2, u_4) = 1$
        \item $d_{S / c(K)}(u_0, u_2) = 0$, $d_{S / c(K)}(u_2, r) = 0$, where $r$ is adjacent to $u_4$.
    \end{itemize}
    \item(6cut-6)
    \begin{itemize}
        \item $d_{S / c(K)}(u_0, u_3) = 0$
    \end{itemize}
    \item(6cut-7)
    \begin{itemize}
        \item $d_{S / c(K)}(u_0, u_2) = 1$
        \item $d_{S / c(K)}(u_0, r) = 0$, where $r$ is adjacent to $u_2$.
    \end{itemize}
    \item(6cut-8)
    \begin{itemize}
        \item $d_{S / c(K)}(u_0, u_2) = 1$, $d_{S / c(K)}(u_3, u_5) = 1$
        \item $d_{S / c(K)}(u_0, u_2) = 1$, $d_{S / c(K)}(u_3, r) = 0$, where $r$ is adjacent to $u_5$.
        \item $d_{S / c(K)}(r, u_2) = 0$, $d_{S / c(K)}(u_3, r') = 0$, where $r$ is adjacent to $u_0$ and $r'$ is adjacent to $u_5$.
        \item $d_{S / c(K)}(u_0, r) = 0$, $d_{S / c(K)}(u_3, r') = 0$, where $r$ is adjacent to $u_2$ and $r'$ is adjacent to $u_5$.
    \end{itemize}
    \item(6cut-9)
    \begin{itemize}
        \item $d_{S / c(K)}(u_0, u_2) = 1$, $d_{S / c(K)}(u_2, u_4) = 1$
        \item $d_{S / c(K)}(u_0, u_2) = 1$, $d_{S / c(K)}(u_2, r) = 0$, where $r$ is adjacent to $u_4$.
        \item $d_{S / c(K)}(r, u_2) = 0$, $d_{S / c(K)}(u_2, r') = 0$, where $r$ is adjacent to $u_0$ and $r'$ is adjacent to $u_4$.
        \item $d_{S / c(K)}(u_0, r) = 0$, $d_{S / c(K)}(r', u_4) = 0$, where $r, r'$ are adjacent to $u_2$.
    \end{itemize}
    \item(6cut-10)
    \begin{itemize}
        \item $d_{S / c(K)}(u_0, u_2) = 1$, $d_{S / c(K)}(u_2, u_4) = 1$, $d_{S / c(K)}(u_4, u_0) = 1$
        \item $d_{S / c(K)}(r, u_2) = 0$, $d_{S / c(K)}(u_2, u_4) = 1$, $d_{S / c(K)}(u_4, r') = 0$, where $r,r'$ are adjacent to $u_0$.
    \end{itemize}
    \item(7cut-1)
    \begin{itemize}
        \item $d_{S / c(K)}(u_0, u_2) = 0$
    \end{itemize}
    \item(7cut-2)
    \begin{itemize}
        \item $d_{S / c(K)}(u_0, u_2) = 0$, $d_{S / c(K)}(u_3, r_6) = 0$
    \end{itemize}
    \item(7cut-3)
    \begin{itemize}
        \item $d_{S / c(K)}(u_0, u_2) = 0$, $d_{S / c(K)}(u_3, u_5) = 0$
    \end{itemize}
    \item(7cut-4)
    \begin{itemize}
        \item $d_{S / c(K)}(u_0, u_2) = 0$, $d_{S / c(K)}(u_2, u_4) = 0$
    \end{itemize}
    \item(7cut-5)
    \begin{itemize}
        \item $d_{S / c(K)}(u_0, u_2) = 0$, $d_{S / c(K)}(u_2, u_4) = 1$
        \item $d_{S / c(K)}(u_0, u_2) = 0$, $d_{S / c(K)}(u_2, r) = 0$, where $r$ is adjacent to $u_4$.
    \end{itemize}
    \item(7cut-6)
    \begin{itemize}
        \item $d_{S / c(K)}(u_0, u_2) = 0$, $d_{S / c(K)}(u_3, u_5) = 1$
        \item $d_{S / c(K)}(u_0, u_2) = 0$, $d_{S / c(K)}(u_3, r) = 0$, where $r$ is adjacent to $u_5$.
        \item $d_{S / c(K)}(u_0, u_2) = 0$, $d_{S / c(K)}(r, u_5) = 0$, where $r$ is adjacent to $u_3$.
    \end{itemize}
    \item(7cut-7)
    \begin{itemize}
        \item $d_{S / c(K)}(u_0, u_2) = 0$, $d_{S / c(K)}(u_2, u_4) = 1$,  $d_{S / c(K)}(u_4, r_6) = 1$
        \item $d_{S / c(K)}(u_0, u_2) = 0$, $d_{S / c(K)}(u_2, u_4) = 1$,  $d_{S / c(K)}(u_4, r) = 0$, where 
        $r$ is adjacent to $u_6$.
        \item $d_{S / c(K)}(u_0, u_2) = 0$, $d_{S / c(K)}(u_2, r) = 0$,  $d_{S / c(K)}(r', r_6) = 0$, where $r, r'$ are adjacent to $u_4$.
    \end{itemize}
    \item(7cut-8)
    \begin{itemize}
        \item $d_{S / c(K)}(u_0, u_2) = 0$, $d_{S / c(K)}(u_2, u_4) = 1$,  $d_{S / c(K)}(u_5, u_0) = 1$
        \item $d_{S / c(K)}(u_0, u_2) = 0$, $d_{S / c(K)}(u_2, u_4) = 1$,  $d_{S / c(K)}(r, u_0) = 0$, where $r$ is adjacent to $u_5$.
        \item $d_{S / c(K)}(u_0, u_2) = 0$, $d_{S / c(K)}(u_2, r) = 0$,  $d_{S / c(K)}(r', u_0) = 0$, where 
        $r$ is adjacent to $u_4$ and $r'$ is adjacent to $u_5$.
    \end{itemize}
    \item(7cut-9)
    \begin{itemize}
        \item $d_{S / c(K)}(u_0, u_3) = 0$
    \end{itemize}
    \item(7cut-10)
    \begin{itemize}
        \item $d_{S / c(K)}(u_0, u_3) = 0$, $d_{S / c(K)}(u_3, u_5) = 1$
        \item $d_{S / c(K)}(u_0, u_3) = 0$, $d_{S / c(K)}(u_3, r) = 0$, where $r$ is adjacent to $u_5$.
    \end{itemize}
    \item(7cut-11)
    \begin{itemize}
        \item $d_{S / c(K)}(u_0, u_3) = 0$, $d_{S / c(K)}(u_4, u_6) = 1$
        \item $d_{S / c(K)}(u_0, u_3) = 0$, $d_{S / c(K)}(u_4, r) = 0$, where $r$ is adjacent to $u_6$.
    \end{itemize}
    \item(7cut-12)
    \begin{itemize}
        \item $d_{S / c(K)}(u_0, u_2) = 1$
        \item $d_{S / c(K)}(u_0, r) = 0$, where $r$ is adjacent to $u_2$.
    \end{itemize}
    \item(7cut-13)
    \begin{itemize}
        \item $d_{S / c(K)}(u_0, u_2) = 1$, $d_{S / c(K)}(u_2, u_4) = 1$
        \item $d_{S / c(K)}(u_0, u_2) = 1$, $d_{S / c(K)}(u_2, r) = 0$, where $r$ is adjacent to $u_4$
        \item $d_{S / c(K)}(r, u_2) = 0$, $d_{S / c(K)}(u_2, r') = 0$, where $r$ is adjacent to $u_0$ and $r'$ is adjacent to $u_4$.
        \item $d_{S / c(K)}(u_0, r) = 0$, $d_{S / c(K)}(r', u_4) = 0$, where $r, r'$ are adjacent to $u_2$.
    \end{itemize}
    \item(7cut-14)
    \begin{itemize}
        \item $d_{S / c(K)}(u_0, u_2) = 1$, $d_{S / c(K)}(u_3, u_5) = 1$
        \item $d_{S / c(K)}(u_0, u_2) = 1$, $d_{S / c(K)}(u_3, r) = 0$, where $r$ is adjacent to $u_5$.
        \item $d_{S / c(K)}(u_0, u_2) = 1$, $d_{S / c(K)}(r, u_5) = 0$, where $r$ is adjacent to $u_3$.
        \item $d_{S / c(K)}(r, u_2) = 0$, $d_{S / c(K)}(u_3, r') = 0$, where $r$ is adjacent to $u_0$ and $r'$ is adjacent to $u_5$.
        \item $d_{S / c(K)}(u_0, r) = 0$, $d_{S / c(K)}(r', u_5) = 0$, where $r$ is adjacent to $u_2$ and $r'$ is adjacent to $u_3$.
        \item $d_{S / c(K)}(r, u_2) = 0$, $d_{S / c(K)}(r', u_5) = 0$, where $r$ is adjacent to $u_0$ and $r'$ is adjacent to $u_3$.
    \end{itemize}
    \item(7cut-15)
    \begin{itemize}
        \item $d_{S / c(K)}(u_0, u_2) = 1$, $d_{S / c(K)}(u_2, u_4) = 1$, $d_{S / c(K)}(u_4, r_6) = 1$
        \item $d_{S / c(K)}(u_0, u_2) = 1$, $d_{S / c(K)}(u_2, u_4) = 1$, $d_{S / c(K)}(u_4, r) = 0$, where $r$ is adjacent to $u_6$.
        \item $d_{S / c(K)}(r, u_2) = 0$, $d_{S / c(K)}(u_2, u_4) = 1$, $d_{S / c(K)}(u_4, r') = 0$, where $r$ is adjacent to $u_0$ and $r'$ is adjacent to $u_6$.
        \item $d_{S / c(K)}(u_0, u_2) = 1$, $d_{S / c(K)}(u_2, r) = 0$, $d_{S / c(K)}(r', u_6) = 0$, where $r, r'$ are adjacent to $u_4$.
        \item $d_{S / c(K)}(r, u_2) = 1$, $d_{S / c(K)}(u_2, r') = 0$, $d_{S / c(K)}(r'', r_6) = 0$, where $r$ is adjacent to $u_0$ and $r', r''$ are adjacent to $u_4$.
    \end{itemize}
\end{itemize}

\begin{algorithm}[htbp]
    \caption{ReducableVerticesByTwoNonContractiblePaths($K$, $S$, $R$, $l$)}
    \label{alg:reducableverticesbytwononcontractiblepaths}
    \begin{algorithmic}[1]
        \Require {A C-reducible configuration $K$, its free completion $S$ and its ring $R$, the length of $C$ is $l$}
        \Ensure {Returns the set of vertices that may be deleted due to two non-contractibly connected paths of $R$.}
        \State $d:$ a function, receives two vertices in $S$ and returns the distance between them
        \State $d':$ a function, receives two vertices in $S / c(K)$ and returns the distance between them
        \State $rRr':$ the path obtained by moving along $R$ in clockwise order from $r \in R$ to $r' \in R$.
        \State $U \gets \varnothing$
        \ForAll {$r_p^1,r_q^1,r_p^2,r_q^2 \in R$ such that $r_p^1, r_q^1, r_p^2, r_q^2$ are listed in clockwise order of $R$}
            \State \Comment{the representativity after contraction is at least 2 so the lowerbounds of $c_1, c_2$ exist.}
            \ForAll {$c_1$ in $\max \{0, 2 - d'(r_p^1, r_q^1)\}, \ldots, 3 - d'(r_q^1, r_p^2) - d'(r_q^2, r_p^1)$}
                \ForAll {$c_2$ in $\max \{0, 2 - d'(r_p^2, r_q^2)\} , \ldots, 3 - c_1 - d'(r_q^1, r_p^2) - d'(r_q^2, r_p^1)$}
                    \If {\text{calcLowerBoundLength($K$, $S$, $R$, $l$, $r_p^1$, $r_q^1$, $r_p^2$, $r_q^2$, $c_1$, $c_2$)} $> l$}
                        \Continue
                    \EndIf
                    \State \texttt{hasSmallCut} $\gets$ \False
                    \ForAll {$P_1 \gets$ all paths of $r_q^1$ and $r_p^2$ in $S$}
                        \ForAll {$P_2 \gets$ all paths of $r_q^2$ and $r_p^1$ in $S$}
                            \State $C_1 \gets$ the component of $S - P_1$ that contains midpoints of $r_q^1Rr_p^2$.
                            \State $C_2 \gets$ the component of $S - P_2$ that contains 
                            midpoints of $r_q^2Rr_p^1$.
                            \If {$|P_1| + |P_2| + c_1 + c_2 = 4, |C_1 \cup C_2| > 0$ or $|P_1| + |P_2| + c_1 + c_2 = 5, |C_1 \cup C_2| > 1$}
                                \State \texttt{hasSmallCut} $\gets$ \True
                                \Break
                            \EndIf
                        \EndFor
                    \EndFor
                    \If {\texttt{hasSmallCut} is \False}
                        \ForAll {$P_1 \gets$ all paths where $|P_1 / c(K)| = d'(r_q^1, r_p^2)$, connecting $r_q^1$ and $r_p^2$ in $S$}
                            \ForAll {$P_2 \gets$ all paths where $|P_2 / c(K)| = d'(r_q^2, r_p^1)$, connecting $r_q^2$ and $r_p^1$ in $S$}
                                \State $C_1 \gets$ the component of $S - P_1$ that contains midpoints of $r_q^1Rr_p^2$.
                                \State $C_2 \gets$ the component of $S - P_2$ that contains midpoints of $r_q^2Rr_p^1$.
                                \State $U \gets U \cup V(C_1) \cup V(C_2)$
                            \EndFor
                        \EndFor
                    \EndIf
                \EndFor
            \EndFor
        \EndFor
        \State \Return $U$
    \end{algorithmic}
\end{algorithm}

\begin{algorithm}[htbp]
    \caption{calcLowerBoundLength($K, S, R, l, r_p^1$, $r_q^1$, $r_p^2$, $r_q^2$, $c_1$, $c_2$)}
    \label{alg:calclowerboundlength}
    \begin{algorithmic}[1]
        \Require {A C-reducible configuration $K$, its free completion $S$ and its ring $R$, the length of $C$ is $l$, vertices of $R$, $r_p^1, r_q^1, r_p^2, r_q^2$ which are listed in clockwise order of $R$, the length of non-contractibly connected path of $R$ between $r_p^1, r_q^1$ (, $r_p^2, r_q^2$) is $c_1 \leq 3$ (, $c_2 \leq 3$) respectively.}
        \Ensure {The lower bound of length of a circuit that bounds the disk inside $K$.}
        \If {$c_1 = 3$ or $c_2 = 3$}
            \State \Return $0$ \Comment{return trivial lower bound.}
        \EndIf
        \ForAll {$r, r' \in R$}
            \State $L_1[r][r'] \gets$ the minimum possible length of a contractibly connected $rr'$-path of $R$ that is also a subpath of $C$. \Comment{calculated by the function forbiddenCycle in \cite{proj2024}}
            \State $x \gets$ the minimum possible length of a contractibly connected $rr'$-path of $R$ that consists of an edge whose one end is $r$ and a subpath of $C$ (its calculated length is the same when the path consists of an edge whose one end is $r'$ and a subpath of $C$). \Comment{calculated by the function forbiddenCycleOneEdge in \cite{proj2024}}
            \State $L_2[r][r'] \gets x - 1$
        \EndFor
        \State \Comment{check when all $r_p^1, r_q^1, r_p^2, r_q^2$ belong to $C$.}
        \State $L \gets \max(L_1[r_p^1][r_q^1], 2 - c_1) + \max(L_1[r_p^2][r_q^2], 2 - c_2)$ \Comment{the original representativity is at least two.}
        \State $L' \gets L_1[r_q^1][r_p^2] + L_1[r_q^2][r_p^1]$
        \If { $L + c_1 + c_2 \leq 5$ and $L' + c_1 + c_2 \leq 5$ } \Comment{the number of vertices outside $C$ is at least four}
            \State $L'' \gets L + L' + 6 - c_1 - c_2 - \max(L, L')$
        \Else 
            \State $L'' \gets L + L'$
        \EndIf
        \State 
        \If { $c_1 = 2$ }
            \State \Comment{check when $r_p^2, r_q^2$ and only one of $r_p^1$ or $r_q^1$ belong to $C$.}
            \State $L_a \gets \max(L_2[r_p^1][r_q^1], 1) + \max(L_1[r_p^2][r_q^2], 2 - c_1)$ \Comment{the original representativity is at least two.}
            \State $L_a' \gets \min(L_1[r_q^2][r_p^1] + L_2[r_q^1][r_p^2], L_2[r_q^2][r_p^1] + L_1[r_q^1][r_p^2])$
            \If { $L_a + c_2 + 1 \leq 5$ and $L_a' + c_2 + 1 \leq 5$ } \Comment{the number of vertices outside $C$ is at least four.}
                \State $L_a'' \gets L_a + L_a' + 5 - c_2 - \max(L_a, L_a')$
            \Else
                \State $L_a'' \gets L_a + L_a'$
            \EndIf
            \State $L'' \gets \min(L'', L_a'')$
            \If { $c_2 = 1$ }
            \State \Comment{check when $r_p^1, r_q^1$ and only one of $r_p^2$ or $r_q^2$ (twice) belong to $C$.}
                \State $L_b \gets \max(L_1[r_p^1][r_q^1], 2 - c_1) + \max(L_2[r_p^2][r_q^2], 2)$
                \State $L_b' \gets \min(L_1[r_q^2][r_p^1] + L_2[r_q^1][r_p^2], L_2[r_q^2][r_p^1] + L_1[r_q^1][r_p^2])$
                \If { $L_b + c_1 \leq 5$ and $L_b' + c_1 \leq 5$ }
                    \State $L_b'' \gets L_b + L_b' + 6 - c_1 - \max(L_b, L_b')$
                \Else
                    \State $L_b'' \gets L_b + L_b'$
                \EndIf
                \State $L'' \gets \min(L'', L_b'')$
            \EndIf
        \EndIf
        \If { $c_2 = 2$ }
            \State... \Comment{perfom the same check when $c_1 = 2$ with the roles of $r_p^1, r_q^1$ and $r_p^2, r_q^2$ swapped, so we omit it.}
        \EndIf
        \State \Return $L''$
    \end{algorithmic}
\end{algorithm}

We have mainly two parts of implementing these checks. One part is checking the minimality of $C$ and the other part is counting the number of vertices as explained in Section \ref{sect:2,3-cycle-Petersen-like}. Note that the representativity of $G, G'$ are at least two in these algorithms since $G$ does not become Petersen-like otherwise.

The former part is almost identical to the pseudocode explained in Appendix of \cite{proj2024}, which are the functions named forbiddenCycle and forbiddenCycleOneEdge. The only difference between that pseudocode and ours is when checking the minimality, we can only use the "almost" minimality of $C$ by the definition of $C$. To be more precise, we have to check a cycle of length $7$ that would contradict the minimality contains at least four edges that are not shared with $C$ when the length of $C$ is 6. This is done by checking that cycle contains at least four edges whose at least one endpoint is a vertex of $K$. The other implementations of this part is the same so we omit it here. 

The latter part is almost identical to the pseudocode used to check the number of vertices after reductions used in \cite{proj2024}. 
The difference exists when checking a contractible cycle that consists of two paths inside $S$ and two homotopic noncontractibly connected paths of $R$, as explained in Section \ref{sect:2,3-cycle-Petersen-like}. Here, we check these paths are compatible with $C$, whose length is 6 or 7. We show the pseudocode in Algorithm \ref{alg:reducableverticesbytwononcontractiblepaths}. 

In Algorithm \ref{alg:reducableverticesbytwononcontractiblepaths}, we check whether the existence of short two noncontractibly connected paths of $R$ contradicts the length of cycle $C$ being 6, 7 or not. To do that, we calculate the lower bound of length of a circuit that bounds the disk inside $K$ in Algorithm \ref{alg:calclowerboundlength}.
Let $r_p^1, r_q^1, r_p^2, r_q^2$ be the vertices of the ring $R$ of a configuration $K \in \mathcal{K}$ such that $r_p^1, r_q^1, r_p^2, r_q^2$ are listed in the clockwise order of $R$. The symbol $P_1, P_2$ denote homotopic noncontractibly connected paths of the ring $R$ between $(r_p^1, r_q^1)$, $(r_p^2, r_q^2)$ respectively.
First, we consider the case where both of the lengths of $P_1, P_2$ are at most 1. We use the following claim that says $|P_1|=0,|P_2|=1$ (or $|P_1|=1,|P_2|=0$) is possible in the restricted situation.
\begin{claim}
\label{clm:(0,1)-noncontractible-paths}
\showlabel{clm:(0,1)-noncontractible-paths}
    We assume the followings:
    \begin{itemize}
        \item $u_i = u_j (0 \leq i < j \leq l)$ and $u_iCu_j$ (a subpath of $C$ from $u_i$ to $u_j$) is a noncontractible cycle in $G'$,
        \item a noncontractible connected chord $u_mu_n(j < m < n < l)$ of $C$ that is homotopic to $u_iCu_j$ exists, and
        \item the representativity of $G'$ is at least two.
    \end{itemize}
    Then, $l=7$ and $u_iu_mu_n$ is a facial triangle.
\end{claim}
\begin{proof}
    We obtained two regions by deleting $D$ and chord $u_m u_n$ from the torus. One is a disk and the other is an annulus or a disk. By the condition of $C$ ($C$ is balanced), these two regions have at least four vertices, but Lemma \ref{minc}, \ref{lem:(2,2)-annulus-cut} and the hypothesis that the representativiy is at least two contradicts it unless $l=7$ and $u_i u_m u_n$ is a facial cycle.
\end{proof}
By Claim \ref{clm:(0,1)-noncontractible-paths},
we know no paths in $S$ with $P_1,P_2$ erase vertices outside $D$ if $|P_1|=0, |P_2|=1$ (or $|P_1|=1, |P_2|=0$). Hence, if both of the length of $P_1, P_2$ are at most 1, all $r_p^1, r_q^1, r_p^2, r_q^2$ belong to $C$ since $C$ bounds the disk where $K$ appears. Each pair of vertices ($r_p^1, r_q^1$), ($r_q^1, r_p^2$), ($r_p^2, r_q^2$), ($r_q^2, r_p^1$) are connected by the subpath of $C$. We calculate the minimum length of such a possible path by using the function forbiddenCycle, which uses the almost minimality of $C$. The sum of these lengths represents the least number of length of a circuit that bounds $K$. We compare the length and the length of $C$, which is 6 or 7. We have one remark here.
The number of vertices outside the disk bounded by $C$ is at least four by the definition of $C$. This fact combined with Lemma \ref{minc} and \ref{lem:(2,2)-annulus-cut} implies not both two regions obtained by deleting $S$, $P_1$ and $P_2$ from the torus are surrounded at most five edges.
We can calculate a tighter lower bound by using it. We added comments when using these remarks in Algorithm \ref{alg:reducableverticesbytwononcontractiblepaths}, \ref{alg:calclowerboundlength}.
We do a similar check when the length of $P_1$ is 2 and the length of $P_2$ is at most 1. Unlike the case where the length of $P_1$ is at most 1, it is possible that $r_p^1, r_q^1$ do not belong to $C$ but the midvertex of $P_1$ belongs to $C$. In this case, we calculate the minimum length of the subpath between the midvertex of $P_1$ and $r_p^2$(or $r_q^2$) by using the function forbiddenCycleOneEdge, which also uses the almost minimality of $C$. 
Moreover, when the length of $P_2$ is one, $C$ may pass through $r_p^2$ (or $r_q^2$) twice. However, if it happens, $C$ does not pass through the midvertex of $P_1$ by Claim \ref{clm:(0,1)-noncontractible-paths}, so we do a similar check in this case.

\subsection{Pseudocode for enumerating wrappings of configurations}
\label{subsect:code-wrapping}\showlabel{subsect:code-wrapping}

We provide a pseudocode that enumerates all possible wrappings (defined in Subsection \ref{sect:weak-appear}) which is used to check Lemma \ref{lem:weakly}.

The toplevel function is \textsc{enumerateAllWrappings} in Algorithm \ref{alg:enumerateallwrappings}.
This function calls the following \textsc{identifyTwoDirectedEdges} function, which returns the general configuration after merging two vertices $a_1, b_1$ and the edges $a_1a_2, b_1b_2$.
All possible $a_1, b_1, a_2, b_2$ are tested, and the accumulated graphs are further passed into a recursive call of \textsc{enumerateAllWrappings}.
Finally, all graphs which were generated in this process are returned.

The \textsc{identifyTwoDirectedEdges} function in Algorithm \ref{alg:identifytwodirectededges} uses a Disjoint Set Union (DSU) which contains the methods Unite and Same.
This data structure is used to track whether there is a parallel path $P_1, P_2$ starting from $a_1b_1, a_2b_2$ respectively, and ending in some pair of vertices $u_1,u_2$, respectively.
In this case, DSU.Unite($u_1, u_2$) is called, and after this DSU.Same($u_1,u_2$) will return \textsf{True}.

\begin{algorithm}[h!]
    \caption{\textsc{enumerateAllWrappings}($K$)}
    \label{alg:enumerateallwrappings}
    \begin{algorithmic}[1]
        \Require {A general configuration $K$}
        \Ensure {A set of general configurations which are strict wrappings of $K$.}
        \State $S \gets$ the free completion of $K$ 
        \State $R \gets$ the ring of $S$.
        \State $\mathcal{W} \gets \varnothing$
        \ForAll{$a_1, a_2 \in V(S)$}
            \ForAll{$b_1 \in V(S)$ where $a_1b_1 \in E(S)$}
                \ForAll{$b_2 \in V(S)$ where $a_2b_2 \in E(S)$}
                    \If{$a_1, a_2, b_1, b_2$ are all in $R$}
                        \Continue \Comment{This wrapping is not strict}
                    \EndIf
                    \State $\mathcal{W}' \gets$ \textsc{identifyTwoDirectedEdges}($K$, $(a_1, b_1)$, $(a_2, b_2)$)
                    \State $\mathcal{W} \gets \mathcal{W} \cup \mathcal{W}'$
                    \ForAll{$K' \in \mathcal{W}'$}
                        \State $\mathcal{W} \gets \mathcal{W} \cup$ \textsc{enumerateAllWrappings}($K'$) \Comment{Recursively identify vertices}
                    \EndFor
                \EndFor
            \EndFor
        \EndFor
        \State \Return $\mathcal{W}$
    \end{algorithmic}
\end{algorithm}

\begin{algorithm}[htbp]
    \caption{\textsc{identifyTwoDirectedEdges}($K$, $(a_1, b_1)$, $(a_2, b_2)$)}
    \label{alg:identifytwodirectededges}
    \begin{algorithmic}[1]
        \Require {A configuration $K$, two pairs of adjacent vertices $(a_1, b_1)$ and $(a_2, b_2)$.}
        \Ensure {Either a singleton set containing a graph with the vertex pairs $a_1, a_2$ and $b_1, b_2$ identified, or an empty set if such a graph need not be considered.}
        \State $S \gets$ the free completion of $K$ 
        \State $R \gets$ the ring of $S$.
        \State $U \gets \varnothing$
        \State DSU $\gets$ a new Disjoint Set Union over $V(K)$
        \State \Comment{\textsc{OverlapDFS} is a function to search all parallel paths, and returns if the degree conditions meet.}
        \Function{OverlapDFS}{$a_1, b_1, a_2, b_2$}
            \If{$(a_1, b_1) \in U$}
                \State \Return \textsf{True}
            \EndIf
            \State $U \gets U \cup \{(a_1, b_1)\}$
            \State DSU.Unite($a_1, a_2$)
            \If{$a_1 = a_2$}
                \State \Return \textsf{True} if $b_1 = b_2$, \textsf{False} if $b_1 \neq b_2$
            \EndIf
            \If{$a_1, a_2 \notin R \land \gamma_K(a_1) \neq \gamma_K(a_2)$}
                \State \Return \textsf{False} \Comment{Degree mismatch}
            \EndIf
            \ForAll{$c_1, c_2$ s.t. the angle of $b_1$ in path $a_1b_1c_1$ and the angle of $b_2$ in $a_2b_2c_2$ is the same}
                \If{\textsc{OverlapDFS}($b_1, c_1, b_2, c_2$) is \textsf{False}}
                    \State \Return \textsf{False}
                \EndIf
            \EndFor
        \EndFunction

        \If{\textsc{OverlapDFS}$(a_1, b_1, a_2, b_2)$ is \textsf{False}}
            \State \Return $\varnothing$
        \EndIf
        \State $w \gets \min{d_K(u, v) \mid u,v \in V(K), u \neq v, \mathrm{DSU.Same}(u, v)}$ ($d_K(u,v)$ is the distance between $u,v$ in $K$)
        \If{$w \leq 1$}
            \State \Return $\varnothing$ \Comment{the representativity is $1$}
        \EndIf

        \State Define $a \sim b$ as DSU.Same($a, b$)
        \State $K' \gets$ a general configuration by merging all equivalent vertices w.r.t. $\sim$
        
        \State \Return $\{K'\}$
    \end{algorithmic}
\end{algorithm}

\subsection{Pseudocode for proving the avoidance of Petersen-like graphs after contraction}
\label{subsect:code-petlike-check}\showlabel{subsect:code-petlike-check}

We provide a pseudocode explained in \ref{subsect:avoidpet}; a code which tracks the degree of each vertex in $K$ after contraction, and checks if it does not match any structure of graphs in $\mathcal{T}_0$.
The function \textsc{matchNeighboringVertices} in Algorithm \ref{alg:matchneighboringvertices} is the toplevel function.

The first part of the algorithm is identical to the projective planar case \cite{proj2024} (the function ReducableVertices and its dependencies in Appendix C.5).
What this function did was to check for paths outside $K$ which may form a contractible cut, and see if it becomes a 2,3-cut reduction after contraction of $c(K)$.
Upon checking paths, we prune cases that violate Fact \ref{fact:567-cuts}.
This fact held in the projective planar case too, so the code implementation will be exactly the same.
Therefore, readers should refer to \cite{proj2024}.

After this, we use Table \ref{tab:neighbor-degree-pattern} to perform a degree matching of neighboring vertices.

\begin{algorithm}[htbp]
    \caption{\textsc{matchNeighboringVertices}($K$, $c(K)$)}
    \label{alg:matchneighboringvertices}
    \begin{algorithmic}[1]
        \Require {A configuration $K$ and a contraction edge set $c(K)$.}
        \Ensure {\textsf{True} if the contraction of $c(K)$ is safe, \textsf{False} otherwise.}
        \State $S,R \gets$ the free completion of $K$ and its ring
        \State $U \gets$ ReducableVertices($K,S,R$)
        \State $G' \gets G(K) - U$
        \State $G \gets G' / c(K)$
        \State Define $f(v)$ as \textsf{True} if $v \in S - R$, or $v$ is a contracted vertex of vertices which all are in $S - R$, \textsf{False} otherwise
        \ForAll{$v \in V(G)$ s.t. $f(v)$ is \textsf{True}}
            \State $d^* \gets$ the degree of $v$ in $G$
            \If {$d^* \leq 4$ or $d^* \geq 11$}
                \State \Return \textsf{True} \Comment{Degree mismatch}
            \EndIf
            \State $N_G \gets$ the induced subgraph of all neighboring vertices of $v$ where $f(v)$ is \textsf{True}
            \ForAll{maximal paths $P$ in $N_G$}
                \State $u_1\cdots u_{|P|} \gets P$
                \State $d_1\cdots d_{|P|} \gets$ the degree of each $u_1\cdots u_{|P|}$
                \State isNotPetersenLike $\gets$ \textsf{True}
                \ForAll{degree patterns $d'$ in the corresponding row (degree = $d^*$) of Table \ref{tab:neighbor-degree-pattern}}
                    \If{$d$ is a cyclical subsequence of $d'$ or its mirror}
                        \State isNotPetersenLike $\gets$ \textsf{False}
                    \EndIf
                \EndFor
                \If{isNotPetersenLike is \textsf{True}}
                    \State \Return \textsf{True} \Comment{Neighbor pattern mismatch}
                \EndIf
            \EndFor
        \EndFor
        \State \Return \textsf{False} \Comment{No degree/pattern mismatch found}
    \end{algorithmic}
\end{algorithm}

\section{Small edge-cuts in minimal counterexamples}
\label{sect:below5cuts}\showlabel{sect:below5cuts}

We provide a proof of the following lemma:

\begin{lem}
    Let $G$ be a minimal counterexample and $F$ be a cyclical edge cut.
    Then, one of the following holds.
    \begin{itemize}
        \item $|F| \geq 6$.
        \item $|F| = 5$, and one of the following two statements holds.
        \begin{itemize}
            \item One of the connected components of $G-F$ is a 5-cycle.
            \item $F$ is an \emph{annular cut} (a cut whose dual is two disjoint non-contractible cycles that are homotopic), and the size of the two dual cycles are 1 and 4, respectively.
        \end{itemize}
    \end{itemize}
\end{lem}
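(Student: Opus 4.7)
The plan is to unify the proofs of Lemmas \ref{minc}, \ref{lem:(2,2)-annulus-cut}, and \ref{lem:(1,3)-annulus-cut} in a single case analysis on $|F|$. Our driving observation is that if any low edge-cut reduction of $G$ produces a graph lying in $\mathcal{T}_0$, then continuing the reduction chain to $P_{10}$ shows $G \in \mathcal{T}_0$, contradicting $G \in \mathcal{T}_1 \setminus \mathcal{T}_0$; contrapositively, every low edge-cut reduction of $G$ produces graphs in $\mathcal{T}_1 \setminus \mathcal{T}_0$, which by minimality of $G$ must be 3-edge-colorable, so Lemma \ref{low-red} delivers a 3-edge-coloring of $G$, contradicting that $G$ is a counterexample.

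For cyclic cuts of size $2$ or $3$, the corresponding 2-edge-cut or 3-edge-cut reduction applies directly and the engine above concludes; this already covers the $(1,1)$- and $(1,2)$-annular subcases, since every annular cut is cyclic. For cyclic 4-cuts, if one side is isomorphic to $I_4$ we apply the 4-edge-cut reduction; otherwise (non-annular 4-cuts and $(2,2)$- or $(1,3)$-annular 4-cuts in which neither side is $I_4$) we run a parity-and-Kempe-chain argument. By the parity condition, any 3-edge-coloring restricts to the 4 cut edges with color multiplicities of type $(4,0,0)$ or $(2,2,0)$; pairing the four dangling half-edges according to the embedding and joining each pair by a new edge on each side produces two strictly smaller cubic toroidal graphs. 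Minimality then either supplies compatible 3-edge-colorings that Kempe chains glue into a coloring of $G$, or places both reduced graphs in $\mathcal{T}_0$, in which case the reduction chains graft back to show $G \in \mathcal{T}_0$.

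For cyclic 5-cuts, a vertex count on each side (in a cubic graph, a side with $n$ vertices satisfies $3n - 5 = 2m$, forcing $n$ odd; cyclicity forces $n \geq 5$) shows that $n = 5$ makes the side a 5-cycle. If both sides have $n \geq 7$, the cut colorings are of type $(3,1,1)$ up to permutation, and the same pairing-plus-Kempe analysis (now with one ``singleton'' edge extending through the cut) yields either a 3-edge-coloring of $G$ or $G \in \mathcal{T}_0$. The $(2,3)$-annular case of size 5 falls under the ``5-cycle on one side'' conclusion, whereas the $(1,4)$-annular case is the allowed exception: it forces representativity one and is treated separately in Lemma \ref{lem:rep1}.

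The hard part will be the Kempe-chain-gluing step in the general 4- and 5-cut cases: we must simultaneously ensure that both pairings yield graphs in $\mathcal{T}_1$ (no bridge, loop, or loss of toroidal embeddability is introduced), that the minimality-supplied partial colorings on the two sides can be reconciled at the cut by Kempe switches, and that the $\mathcal{T}_0$-alternative actually grafts back to show $G \in \mathcal{T}_0$. The annular subcases add the topological wrinkle that each side is annular rather than disk-like, so the pairings and the Kempe switches must be analyzed in the cylindrical geometry; this is where the $(2,2)$- and $(1,3)$-annular exclusions need separate inspection beyond the non-annular 4-cut case.
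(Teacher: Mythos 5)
Your outline and the paper share the same standard skeleton (case analysis on $|F|$, split at $F$, invoke minimality, glue colorings by Kempe chains), but you have left unfilled exactly the step that the paper has to work hard to make precise, namely your claim that when both halves land in $\mathcal{T}_0$ ``the reduction chains graft back to show $G\in\mathcal{T}_0$.'' This is not automatic: $\mathcal{T}_0$ is defined by the existence of a sequence of $2$-, $3$-, and $4$-edge-cut reductions ending at $P_{10}$, and the split-and-pair operation you perform on a general cyclic $4$- or $5$-cut is not one of those reductions unless one side is literally $I_4$. So $G_1,G_2\in\mathcal{T}_0$ does not immediately chain back to $G\in\mathcal{T}_0$. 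The paper closes this gap concretely: it recognizes the merge as a dot product and uses the algebra
$BH_i\cdot BH_j = BH_{i+j-2}$, $BH_i\cdot BV_j = BV_{i+j-2}$
(with $BH_{-1}=P_{10}$, $BH_0=BV_0=$ Blanuša) to verify that a dot product of Petersen-like toroidal graphs is again Petersen-like, i.e.\ the merged graph still lies in $\mathcal{T}_0$. Nothing in your proposal replaces this check, and without it the contrapositive argument does not close.

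Two further points. First, your $(2,3)$-annular $|F|=5$ case is asserted to ``fall under the 5-cycle conclusion,'' but that is the conclusion, not an argument; the paper handles this by enumerating the ten possible insertions of $5$-cycles on one side and checking that all are toroidal and non-Petersen-like. Second, the assertion that pairing the dangling half-edges ``according to the embedding'' produces toroidal graphs on both sides is precisely the part that needs care in the annular geometry — there is no single canonical pairing, which is why the paper enumerates all insertion types. You flag both of these as ``the hard part,'' which is honest, but since they are the actual content of the lemma the proposal as written is an outline rather than a proof. (Also note that the paper simply cites the projective-planar paper for the disk-cut case, so your ambition to unify and reprove all three lemmas from scratch is more work than the paper undertakes.)
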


This lemma asserts the \emph{internal 6-connectivity} (defined in \cite{AppelHaken89}, \cite{RSST}) of the dual triangulation $G'$, except when $F$ admits a representativity $1$ structure.

When $F$ ($|F| \leq 5$) is a \emph{disk cut} (a cut whose dual is a contractible cycle), there is proof in \cite{proj2024}.
So, the problem remains when $F$ is an annular cut.
When $|F| = 2,3$, it is easy to see that a 2/3-edge reduction suffices.

When $|F| = 4$, we attempt to use the proof for \cite{proj2024}.
We consider the case when $F$ consists of two pairs of edges, each consisting of a dual cycle of size two.
For the proof to work, we need to show that for one of the graphs $G_1, G_2$ (two graphs separated by $F$), the six possible insertions of parallel edges / vertex pair insertion are all toroidal and non-Petersen-like.
By taking the contraposition, it is equivalent to saying that if $G_1, G_2$ are both subgraphs of a Petersen-like graph with either one parallel edge or vertex pair deleted, the merging of $G_1, G_2$ results in a three-edge colorable graph or a Petersen-like graph.
This merging process is actually equivalent to the \emph{dot product} of two snarks defined in \cite{chladny2010factorisation}, and it is known that the dot product of Blanuša snark families can be written like the following:

\begin{align*}
    BH_i \cdot BH_j &= BH_{i+j-2} \\
    BH_i \cdot BV_j &= BV_{i+j-2}
\end{align*}

Here, $BH_i$ is the Blanuša-H$i$ snark and $BV_i$ is the Blanuša-V$i$ snark.
The two equations above also hold when denoting the Blanuša snark as $BH_{0}$ or $BV_{0}$ and the Petersen graph as $BH_{-1}$.
Therefore, every Petersen-like graph (including the Petersen graph and Blanuša snark) merged in this manner also results in a Petersen-like graph.
Therefore, the condition holds, and the rest of the proof can be completed by \cite{proj2024}.

When $|F| = 5$ and the size of the two dual cycles are 2 and 3, respectively, we can see from Figure \ref{fig:annular-5-cycles} that any 5-cycle can be embedded onto $S_A$.

\begin{figure}[htbp]
    \centering
    \includegraphics[width=0.2\linewidth]{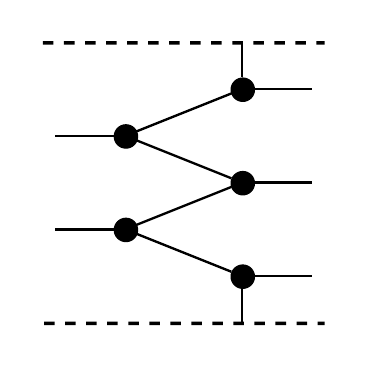}
    \includegraphics[width=0.2\linewidth]{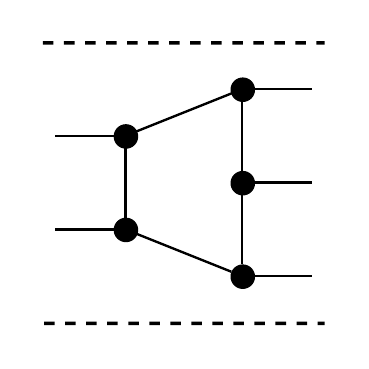}
    \caption{The two 5-cycles between a representativity two and representativity three cut. All possible 5-cycles are isomorphic to one of these 5-cycles.}
    \label{fig:annular-5-cycles}
\end{figure}

Just like in the case of $|F| = 4$, we need to show that the ten possible insertions of 5-cycles are all toroidal and non-Petersen-like, at least for one side of the graph.
These are all the cases, and the lemma holds.

\section{The remaining unsafe configurations}
\label{subsec:remaining1}\showlabel{subsec:remaining1}

\subsection{Dealing with C(1)}
\label{subsect:5555}\showlabel{subsect:5555}
Our goal in this section is to prove Lemma \ref{lem:5555}. The symbol $S$ denotes the free completion of C(1) with the ring and, $v_i (0 \leq i \leq 9)$ represents a set of vertices in $S$ as shown in Figure \ref{fig:5555-contractions}.
The two sets of contraction edges for C-reducibility of C(1) are $C_1 = \{v_0v_6, v_6v_7, v_7v_2, v_5v_9, v_9v_8, v_8v_3\}$ or $C_2 = \{v_1v_6, v_6v_9, v_9v_5, v_2v_7, v_7v_8, v_8v_4\}$.

We show that $G'$ does not become the dual of any Petersen-like graphs when C(1) appears. We borrow some terms from Section \ref{sect:6,7-cut}. A \emph{noncontractibly-connected path} of the ring of C(1) is a path $P$ outside $S$ such that $P + S$ contains a noncontractible cycle.
\begin{figure}[htbp]
    \centering
    \includegraphics[width=10cm]{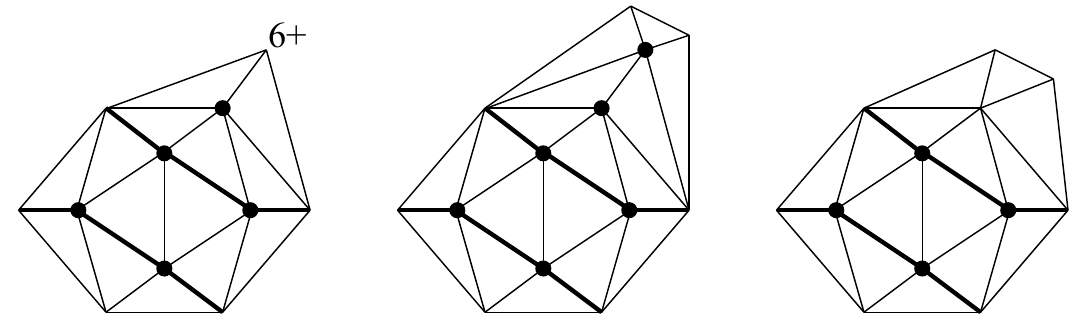}
    \caption{Three reductions occurred by contracting $C_1$ in C(1)}
    \label{fig:5555-reductions}
\end{figure}
\begin{claim}\label{clm:5555-reductions}\showlabel{clm:5555-reductions}
    If a 2,3-cycle-reduction occurs by contracting $C_1$ and $G'$ might become Petersen-like, then it is one of the three cases shown in Figure \ref{fig:5555-reductions}. Moreover, the number of faces deleted by each reduction is 2, 4, or 2 in order of Figure \ref{fig:5555-reductions} respectively.
\end{claim}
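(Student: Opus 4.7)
The plan is to enumerate every 2- or 3-cycle reduction that can appear after contracting $C_1$, and then filter down to only those compatible with $G'$ becoming the dual of a Petersen-like graph.

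First, I would analyze the quotient structure $S/C_1$. The contraction of $C_1$ identifies $\{v_0,v_2,v_6,v_7\}$ into a single vertex $V_1$ and $\{v_3,v_5,v_8,v_9\}$ into $V_2$. In particular, the ring $v_0v_1\ldots v_5$ is mapped onto a structure in which $v_0=v_2=V_1$ and $v_3=v_5=V_2$ while $v_1,v_4$ remain distinct. A contractible 2- or 3-cycle in $G'/C_1$ that creates a new reduction must either lie entirely within $S/C_1$ (in which case it is explicit from Figure \ref{fig:5555-contractions}) or must use at least one edge of $G'\setminus S$ and hence arise from an outside path connecting ring vertices that become at distance at most one in $S/C_1$. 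The relevant pairs are: the identified pairs $(v_0,v_2)$ and $(v_3,v_5)$ (where an outside edge would already be a loop, handled in Sections \ref{subsubsect:noncontractible-loop} and \ref{subsubsect:particular-contractible-loop}), the ring-consecutive pairs such as $(v_0,v_1)$, $(v_1,v_2)$, etc., and pairs between the two sides such as $(v_1,v_4)$.

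Second, for each candidate outside path I would use the constraints of a minimal counterexample (Lemmas \ref{minc}, \ref{lem:(2,2)-annulus-cut}, \ref{lem:(1,3)-annulus-cut}) to eliminate possibilities. Any outside path, together with a subpath of the ring, forms a closed curve in $G'$ whose length and contractibility class yield a low edge-cut in $G$; most such cuts are forbidden. Crucially, since the ring has length $6$ and the interior of $S$ has only four vertices, the disk bounded in $G'$ on the $S$-side has at most $4$ vertices strictly inside, and the analogous bound on the outside comes from the minimality of $G$. This excludes most candidates outright.

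Third, for each surviving candidate, I would check whether the reduction could preserve the Petersen-like property by tracing the degrees of $V_1,V_2,v_1,v_4$ in the quotient, together with the structure of the remaining faces, against the degree patterns listed in Table \ref{tab:neighbor-degree-pattern}. The three cases in Figure \ref{fig:5555-reductions} are precisely those that survive. The face-count assertion would then follow by inspection: the first and third cases produce a bigon bounding a disk that contained exactly two triangular faces of $G'$, while the second (a 3-cycle) bounds a disk accounting for four faces. Counting boundary edges of each absorbed disk gives the numbers $2,4,2$.

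The main obstacle is the completeness of the enumeration: one must account for every topologically distinct way an outside path can be routed, including both contractible routings and those wrapping noncontractibly around the torus. To make the case analysis tractable and reliable, I would mirror the computer-assisted strategy of Section \ref{sect:6,7-cut}, encoding each candidate outside path by the pair of ring vertices it connects together with its homotopy class, and verifying by computer that all non-listed candidates are killed either by a minimality constraint or by a degree-pattern mismatch with Table \ref{tab:neighbor-degree-pattern}.
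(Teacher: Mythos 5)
Your outline is aimed in the right direction---split off the loop cases, classify outside paths by which ring vertices they join and by homotopy class, and kill most by minimality---but it misses the specific argument the paper uses for the case of \emph{two} noncontractibly connected outside paths, and it substitutes something weaker. In the paper this case is dispatched cleanly: two such paths together with (parts of) the free completion $S$ form a $(2,\le 3)$-annulus-cut in the dual, so by Lemma~\ref{lem:(2,2)-annulus-cut} only a handful of vertices survive the contraction and 2,3-cycle reductions---fewer than the 5 faces of the Petersen graph's dual---so $G'$ trivially cannot become Petersen-like and this whole homotopy class drops out of the enumeration. Your proposal instead says "the disk bounded in $G'$ on the $S$-side has at most 4 vertices," which is a statement about the \emph{contractible} side and does not apply to the noncontractible two-path routings; you then fall back on matching degree patterns against Table~\ref{tab:neighbor-degree-pattern}, which is not needed for this claim at all (the table is used elsewhere in Section~\ref{sect:safecont}, not here) and is much harder to make exhaustive by hand.

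The other parts of your proposal are essentially correct and agree with the paper: the remaining outside routings are single contractibly connected paths, which, combined with ring subpaths, produce short separating cycles that contradict Lemma~\ref{minc} in all but the three depicted cases, and the face counts $2,4,2$ then follow by inspection of Figure~\ref{fig:5555-reductions}. One small stylistic difference: the paper argues these three survivors and the face counts purely by hand (the claim is short enough to settle directly), whereas you propose another round of computer verification mirroring Section~\ref{sect:6,7-cut}---defensible, but heavier machinery than needed once the $(2,\le 3)$-annulus-cut observation is in place.
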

\begin{proof}
    We assume the representativity of $G$ is at least 2 since otherwise, $G$ does not become Petersen-like after contractions. 
    
    Suppose a 2,3-cycle-reduction occurs by two noncontractibly-connected paths of the ring of C(1). This reduction occurs by two paths between each pair of vertices in the ring $(v_i, v_l), (v_j, v_k)$ such that $v_i,v_j,v_k,v_l$ are listed in clockwise order of $R$ and $d_{S / C_1}(v_i, v_j) + d_{S / C_1}(v_k, v_l)$ is at most 3 (e.g. $i=0,j=2,k=3,l=4$). The sum of lengths of two paths is at most $3 - d_{S / C_1}(v_i, v_j) - d_{S / C_1}(v_k, v_l)$. Unless the representativity of $G$ is at most 1, These two paths with $S$ constitute a (2, $\leq$ 3)-annulus-cut in its dual, so a small number of vertices remain after contracting $C_1$ by considering Lemma \ref{lem:(2,2)-annulus-cut}. 
    This number is lower than $5$, which is the number of vertices of the dual of the Petersen graph. Hence, $G'$ does not become Petersen-like.
    
    If a 2,3-cycle-reduction occurs by one contractibly connected path of the ring, it contradicts Lemma \ref{minc} except in three cases in Figure \ref{fig:5555-reductions}.
    
    When no 2,3-cycle-reductions occur, the number of faces deleted by contractions is 12, which is the number of faces in C(1). We see more 2, 4, and 2 faces are deleted by each of the three reductions respectively.
\end{proof}
By the symmetry, the same statement as Claim \ref{clm:5555-reductions} holds for $C_2$.

The following fact is trivial.
\begin{fact}\label{fact:size-snark}\showlabel{fact:size-snark}
    The number of vertices of the Petersen graph, or graphs of the Blanuša snark family is represented as $10+8n$ by some integer $n \geq 0$.
\end{fact}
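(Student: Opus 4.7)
The plan is to prove Fact \ref{fact:size-snark} by induction on the number of 4-edge-cut expansions applied to the Petersen graph. First, I would observe the base case: the Petersen graph itself has exactly $10 = 10 + 8 \cdot 0$ vertices, confirming the formula for $n = 0$.

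Next, I would analyze the effect of a single 4-edge-cut expansion on the vertex count. By definition, a 4-edge-cut reduction removes a subgraph isomorphic to $I_4$ (the Petersen graph with two adjacent vertices deleted, hence $10 - 2 = 8$ vertices) and replaces it with two edges $\hat{e}_1, \hat{e}_2$ that introduce no new vertices. The 4-edge-cut expansion is the inverse operation, so it takes two edges and replaces them with a copy of $I_4$ attached through a matching of four edges, introducing exactly $8$ new vertices (and no removals of vertices from the original graph).

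For the inductive step, suppose a graph $G_k$ in the Blanuša snark family (or the Petersen graph itself) has $10 + 8k$ vertices. Any graph $G_{k+1}$ obtained from $G_k$ by one further 4-edge-cut expansion then has $(10 + 8k) + 8 = 10 + 8(k+1)$ vertices. Since by definition every member of the Blanuša snark family is obtained from the Petersen graph by a finite sequence of 4-edge-cut expansions (as described before the statement of Theorem \ref{mainth}), the result follows by induction on the number of expansions. There is no real obstacle here; the only point to verify carefully is the vertex count for $I_4$, which is immediate from its definition.
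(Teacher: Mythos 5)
Your proof is correct and is exactly the computation the authors have in mind: the paper states this fact without proof (calling it ``trivial''), and the intended justification is precisely that each $4$-edge-cut expansion grafts on a copy of $I_4$, which has $10-2=8$ vertices, so starting from the Petersen graph's $10$ vertices the count grows by $8$ per expansion. The one sanity check worth noting is that the formula is borne out by the Blanuša snark itself, which is the unique one-expansion graph and indeed has $18 = 10 + 8\cdot 1$ vertices.
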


\begin{figure}[htbp]
  \centering
  \includegraphics[width=\linewidth]{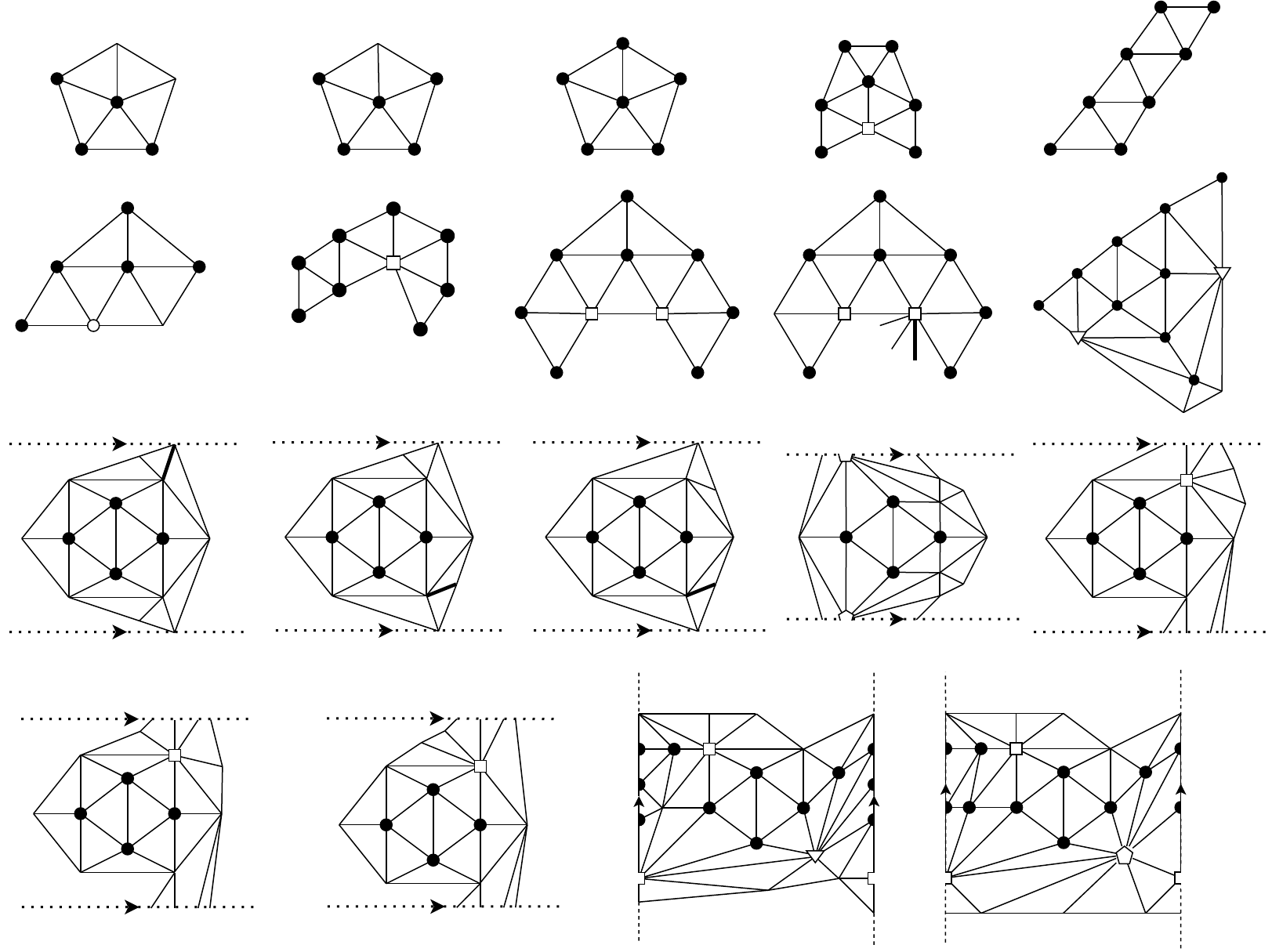}
  \caption{The D-reducible or C-reducible configurations whose contraction size is 1 used in the proof of Lemma \ref{lem:5555}.}
  \label{fig:5555conf_pre}
\end{figure}

\begin{figure}[htbp]
    \centering
    \includegraphics[width=14cm]{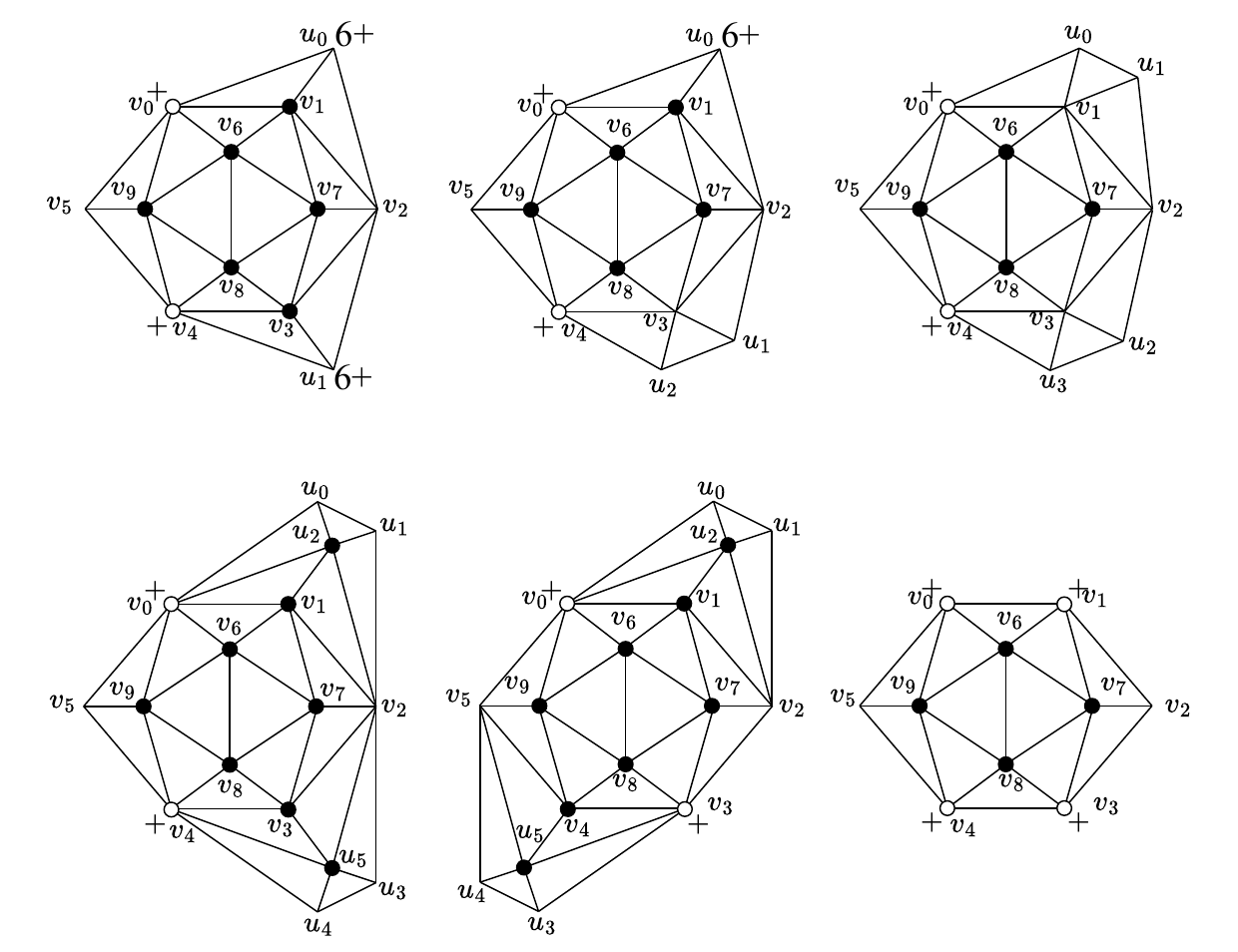}
    \caption{The possible degrees of neighbors of C(1) assuming $G'$ becomes the dual of Petersen-like graphs after contracting each of $C_1, C_2$.}
    \label{fig:5555-cases}
\end{figure}

We enumerate a configuration used in the proof of Lemma \ref{lem:5555} in Figure \ref{fig:5555conf_pre}. The bold lines represent contraction edges used for C-reducibility.
These configurations sometimes contain C(1) as a subgraph. 
The symbols $D(i) (1 \leq i \leq 19$) denote the configurations in this figure. These configurations are D-reducible or C-reducible with contraction size 1, so $G'$ does not become the dual of Petersen-like graphs.
$D(i) (11 \leq i \leq 19)$ are annular configurations. These annular configurations are shown with their rings to describe their embeddings clearly.

\begin{claim}\label{claim:5555-cases}\showlabel{claim:5555-cases}
    If $G'$ results in the dual of one of the Petersen-like graphs by contracting each of $C_1$, $C_2$, the degrees of neighbors of vertices of C(1) are one of the cases described in Figure \ref{fig:5555-cases}.
\end{claim}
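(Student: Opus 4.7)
\medskip
\noindent\textbf{Proof proposal.} The plan is to convert the hypothesis ``both $G'\dotdiv C_1$ and $G'\dotdiv C_2$ are duals of Petersen-like graphs'' into two independent arithmetic/structural constraints and then enumerate the surviving degree patterns.

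First, I would set up the arithmetic side. Contracting $C_1$ deletes the $12$ faces of C(1) itself, after which Claim \ref{clm:5555-reductions} asserts that at most the three $2,3$-cycle-reductions in Figure \ref{fig:5555-reductions} may still occur, each costing $2$, $4$, or $2$ additional faces. Hence the number of faces lost in passing from $G'$ to $G'\dotdiv C_1$ (together with subsequent small reductions) belongs to a very short list of values. The same holds for $C_2$ by the built-in symmetry of C(1). Combined with Fact \ref{fact:size-snark}, which says the face count of the dual of any Petersen-like graph is $10+8n$, both sides yield simultaneous congruences on $|F(G')|$ and on which of the three secondary reductions of Figure \ref{fig:5555-reductions} are actually forced to appear. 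This already restricts the homotopy type and length of any short path outside $S$ linking two ring vertices of C(1).

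Second, I would exploit the reducibility list $D(1),\dots,D(19)$ of Figure \ref{fig:5555conf_pre}. Each of these is D-reducible or C-reducible with contraction size $1$, so by Lemmas \ref{lem:reducing} and \ref{T(v)>0T} none of them may appear (weakly) in $G'$; for the annular configurations $D(11),\dots,D(19)$, a contraction of size one can never create a bridge or a Petersen-like graph, so their appearance in $G'$ would give an immediate reduction contradicting minimality. Therefore every assignment of degrees to the vertices adjacent to C(1) that embeds some $D(i)$ is forbidden. I would go around the ring of C(1) and, for each of the (at most two) outside neighbors of each ring vertex, record the minimal local degree constraints imposed by avoiding $D(1),\dots,D(19)$; this cuts down the candidate degrees of each outer vertex to a small finite set.

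Third, I would carry out the enumeration. Using Fact \ref{fact:cont-cycle} to preclude short contractible separating cycles, the outer neighborhood of C(1) is essentially a triangulated disk whose inner boundary is the $6$-cycle $v_0v_1\cdots v_5$; its structure is determined by the rotation of edges around each $v_i$ and by the degrees of the outer vertices. I would run through every such triangulated annulus compatible with the arithmetic constraints of the first step and the forbidden-subgraph constraints of the second step. The output should match exactly the list in Figure \ref{fig:5555-cases}. I expect to automate this last enumeration by a short computer search, since the number of combinations is bounded but tedious to verify by hand.

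The main obstacle, I expect, is the bookkeeping in step three: one must make sure that the secondary reductions identified in Figure \ref{fig:5555-reductions} are consistent not only in count but also in homotopy placement on the torus (so that the resulting dual really is one of the Petersen-like graphs listed), and simultaneously that none of $D(1),\dots,D(19)$ occurs even as a wrapping in the sense of Section \ref{sect:lowrep}. Handling wrappings rather than only appearances is what makes a pure case-analysis by hand impractical, and it is the reason the verification is finally delegated to a computer check, analogous to the procedure used in Sections \ref{sect:cut6,7-bridge} and \ref{sect:cut6,7-Petersen-like}.
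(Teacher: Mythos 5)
Your proposal follows essentially the same route as the paper: combine the $\bmod\,8$ face-count congruence (from Fact \ref{fact:size-snark} and the per-case face-loss counts in Claim \ref{clm:5555-reductions}) with the observation that none of the small $D$-reducible configurations around the ring of C(1) can appear, then enumerate what survives. The paper's proof is leaner — it only needs $D(1),D(2),D(3)$ and a short hand enumeration rather than the full list $D(1),\dots,D(19)$ plus a computer check and an explicit wrapping analysis — but the underlying decomposition (reducibility constraints on outer degrees plus the parity-of-faces arithmetic) is identical.
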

\begin{proof}
    D(1), D(2), and D(3) are D-reducible, so not both degrees of $v_0, v_1$ are at most 6. The same claim holds for $v_3, v_4$.
    The number of faces in triangulation is the number of vertices in the dual cubic graphs.
    If $G'$ results in the dual of one of the Petersen-like graphs by reducing each of $C_1, C_2$, the difference between the number of faces after reductions of $C_1$ and $C_2$ is the multiple of $8$ by Fact \ref{fact:size-snark}.
    The statement follows from Claim \ref{clm:5555-reductions}.
\end{proof}

Therefore, we need to address six cases in Figure \ref{fig:5555-cases}. We use the symbol near each vertex to designate it in Figure \ref{fig:5555-cases}. In these proofs, the symbol $H$ denotes a subgraph of $G'$ that consists of $v_i (0 \leq i \leq 9), u_i (0 \leq i)$ each figure in Figure \ref{fig:5555-cases}. 

We use the following fact about the dual of the Petersen graph or graphs in the Blanuša snark family.
We checked the fact by enumerating embeddings as explained in Section \ref{sect:safecont}.
\begin{fact}\label{fact:deg5-in-3cycle}\showlabel{fact:deg5-in-3cycle}
    In the dual of any embedding of the Petersen graph or graphs in the Blanuša Snark family, 
    if all vertices of a facial cycle of length 3 are of degrees at least 6, their degrees are $6,6,8$.
\end{fact}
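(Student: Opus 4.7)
The plan is to reduce Fact \ref{fact:deg5-in-3cycle} to an inspection of Table \ref{tab:neighbor-degree-pattern}, which the authors already produced by enumerating all 2-cell toroidal embeddings of the Petersen graph and the Blanuša snark family. The key bridge is the standard duality observation: in the triangulation $G'$ dual to a cubic graph $G$ embedded in the torus, a facial 3-cycle is bounded by a triple $v, u_i, u_{i+1}$, where $u_i, u_{i+1}$ are two cyclically consecutive neighbors of $v$; conversely every consecutive pair in the local rotation around $v$ defines such a facial triangle. Consequently, every triangular face of $G'$ has its vertex-degree multiset of the form $(d, d_i, d_{i+1})$, where $d$ is the row label of the neighboring degree pattern of $v$ and $d_i d_{i+1}$ is a consecutive substring of that pattern.

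First, I would justify that it suffices to inspect the finitely many patterns listed in Table \ref{tab:neighbor-degree-pattern}. For the Petersen graph and the Blanuša snark this is immediate; for the Blanuša-H$k$ and Blanuša-V$k$ families it is the content of the footnote to Section \ref{sect:minimal}, namely that the flexibility of the embedding is entirely controlled by how the four edges of the relevant $(2,2)$-annulus-cut are drawn. Induction on $k$ (with the base cases $k \le 3$ verified by direct computer enumeration as described in Section \ref{subsect:avoidpet}) shows that every vertex of any embedding of any Blanuša-$(H/V)k$ snark has one of the neighboring degree patterns already recorded in the table.

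Next, I would read off the fact by filtering every row of Table \ref{tab:neighbor-degree-pattern} for consecutive substrings $d_i d_{i+1}$ with both $d_i, d_{i+1} \ge 6$, and simultaneously requiring the row label $d \ge 6$. The rows of degree $5$ are immediately excluded. The degree-$7$, degree-$9$, and degree-$10$ rows are also ruled out because every pattern there separates any two entries $\ge 6$ by at least one $5$ (verifiable by direct inspection of each string). The only surviving cases are: in degree $6$, the patterns $575868$ and $585868$, each of which contains the consecutive pair $68$ (equivalently $86$), producing a triangle with degree multiset $\{6,8,6\}$; and in degree $8$, the pattern $55665566$, whose consecutive pair $66$ produces a triangle with degree multiset $\{8,6,6\}$. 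In every case the multiset is exactly $\{6,6,8\}$, which is the claim.

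The only real obstacle is conceptual rather than technical: one must trust that Table \ref{tab:neighbor-degree-pattern} is exhaustive. This, however, is exactly the computer-assisted enumeration already carried out and documented in Section \ref{subsect:avoidpet}, so the proof of Fact \ref{fact:deg5-in-3cycle} itself is a short, purely combinatorial pattern match on a finite table.
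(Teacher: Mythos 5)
Your proposal is correct and matches the paper's approach: the paper also verifies Fact \ref{fact:deg5-in-3cycle} by the same exhaustive enumeration of embeddings described in Section \ref{subsect:avoidpet}, which is exactly what underlies Table \ref{tab:neighbor-degree-pattern}. Your reduction to a hand-checkable cyclic-substring filter on the printed table (noting that the only surviving pairs are $86/68$ in the degree-$6$ patterns $575868$, $585868$ and $66$ in the degree-$8$ pattern $55665566$, each yielding the multiset $\{6,6,8\}$) is a valid and human-verifiable packaging of that same computation.
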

\begin{fact}\label{fact:no11}\showlabel{fact:no11}
    In the dual of any embedding of the Petersen graph or graphs in the Blanuša snark family, the degree of any vertex is at most 10. 
\end{fact}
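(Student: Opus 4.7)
The claim is equivalent, via primal-dual duality, to the statement that in every 2-cell toroidal embedding of every member of the family $\{P_{10}\}\cup\{\text{Blanu\v{s}a-H}k : k\ge 0\}\cup\{\text{Blanu\v{s}a-V}k : k\ge 1\}$, every face has length at most 10. The plan is to stratify the family by the number of 4-edge-cut expansions needed to obtain the graph from $P_{10}$, handle small strata by a direct finite enumeration, and then propagate the bound upward by induction along the 4-edge-cut expansion operation.

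First I would verify the bound directly on the base cases: the Petersen graph, the Blanu\v{s}a snark, and all Blanu\v{s}a-H$k$, Blanu\v{s}a-V$k$ with $k\le 3$. Each of these graphs is finite and has only finitely many rotation systems that realize a 2-cell embedding of genus one; enumerating all of these up to automorphism and computing face lengths is a concrete finite check. This matches the computational verification stated in the excerpt and depicted, for representative choices, in Figures \ref{fig:petersen-embedding}--\ref{fig:blanusaV2-embedding}. In each such embedding one confirms that the longest face has length $\le 10$.

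Second, to cover Blanu\v{s}a-H$k$ ($k\ge 4$) and Blanu\v{s}a-V$k$ ($k\ge 4$), I would induct on $k$ using the 4-edge-cut expansion, which cuts along a noncontractible simple closed curve meeting exactly four edges and glues in the annular island $I_4$. The key structural input is the observation already recorded in the footnote of Section \ref{sect:minimal}: for these graphs the flexibility in the embedding reduces to how the four edges of each $(2,2)$-annulus-cut are realized, and this choice is drawn from a finite set of local patterns. Consequently, the 4-edge-cut expansion (a) contributes internal faces of $I_4$ whose lengths are bounded and, by inspection of $I_4$, at most some small constant, and (b) extends the (at most four) faces adjacent to the cut by an amount determined entirely by the chosen local pattern. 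Since the collection of local patterns arising in the inductive step is already exhausted by the rotation systems appearing around the inserted $I_4$ in the $k\le 3$ cases, every face length appearing for $k\ge 4$ already appears for some $k\le 3$, so the base-case check settles these too.

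The main obstacle is justifying this stabilization: one must rule out the a priori possibility that repeated expansions along the ``same'' face append edges to that face without bound. The heart of the argument is to verify that each application of a local insertion pattern either leaves a face unchanged or splits it into shorter pieces while appending a bounded amount to the faces adjacent to the cut, so that no face can accumulate length beyond $10$. Once this periodicity of local patterns around each inserted $I_4$ is established, the inductive step is purely local and the uniform bound $\gamma_{G'}(v)\le 10$ follows.
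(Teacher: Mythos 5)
Your plan—verify all toroidal embeddings of the small members (Petersen, Blanuša, Blanuša-H/V with small~$k$) by finite enumeration, then argue that larger members introduce no new local patterns because the only freedom lies in how the four edges of each $(2,2)$-annulus-cut are drawn—is exactly the route the paper takes: the fact is justified by the computer enumeration described in Section~\ref{sect:safecont} (generating Table~\ref{tab:neighbor-degree-pattern}) together with the footnote in Section~\ref{sect:minimal} asserting that the embedding flexibility stabilizes for Blanuša-V$k$ ($k\geq 3$) and Blanuša-H$l$ ($l\geq 2$). The ``stabilization'' obstacle you correctly flag is likewise handled in the paper by the same inductive-periodicity observation plus safety enumeration up to $k\le 3$, so your proposal matches the paper's proof in both structure and level of detail.
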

\begin{fact}\label{fact:deg5-around10}\showlabel{fact:deg5-around10}
    In the dual of any embedding of the Petersen graph or graphs in the Blanuša snark family, assume a vertex $v$ of degree 10 exists. Let $v_i(0 \leq i < 10)$ of its neighbors be listed in clockwise order of the embedding. Their degrees are $5,5,5,5,x,5,5,5,5,y$, where $x,y \neq 5$, and $v_1=v_7$, $v_2=v_6$.
\end{fact}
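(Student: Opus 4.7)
The plan is to verify Fact \ref{fact:deg5-around10} by a finite case analysis over the toroidal embeddings of Petersen-like graphs, following exactly the enumeration scheme set up in Section \ref{subsect:avoidpet}. A vertex of degree $10$ in the dual triangulation corresponds to a face of size $10$ in the cubic primal, so I would start by locating every such face in each relevant embedding. First dispose of the Petersen graph: its toroidal dual is $K_6$, all of whose vertices have degree $5$, so no degree-$10$ vertex arises there and the claim is vacuous on that component of the family.

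Next, turn to the small members of the Blanuša snark family (Blanuša, Blanuša-H1, Blanuša-V1, Blanuša-V2), whose complete sets of embeddings are already displayed in Figures \ref{fig:blanusa-embedding}--\ref{fig:blanusaV2-embedding}. For each embedding, locate every face of size $10$ in the cubic primal and read off the degree sequence of the adjacent faces along its boundary walk, in clockwise order. A direct inspection shows that only the two patterns $5555755558$ and $5555855558$ from Table \ref{tab:neighbor-degree-pattern} appear, and the topology of the $10$-face forces the boundary walk to revisit two of its neighbors in exactly the positions $v_1=v_7$ and $v_2=v_6$ (so the face boundary is a closed walk that pinches twice, giving two multi-incidences with the same two dual neighbors).

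For the remaining infinite families Blanuša-H$k$ ($k\geq 2$) and Blanuša-V$k$ ($k\geq 3$), I would invoke the principle already used in Section \ref{subsect:avoidpet}, namely that each iterated 4-edge-cut expansion only modifies the embedding within the cylindrical patch spanned by the four cut edges and the inserted copy of $I_4$. This patch introduces only faces of size at most a fixed small bound and cannot create, destroy, or enlarge any face of size $10$ elsewhere in the graph. Hence every face of size $10$ in a larger member of a family is geometrically identical to one already verified in a base case, and the cyclic degree pattern around the corresponding degree-$10$ dual vertex is unchanged.

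The main technical obstacle is justifying the last step: that no 4-edge-cut expansion introduces a degree-$10$ vertex with a different neighborhood pattern. The cleanest route is to enumerate the finitely many ways the four ``outgoing'' edges of an $I_4$-expansion can be routed relative to the surrounding embedding (up to automorphisms of $I_4$) and confirm in each case that the degrees of the faces incident to the cut-region are all at most $9$; this is precisely the computer enumeration referenced after the statement of Fact \ref{fact:deg5-around10}, and also matches the exhaustive list of degree patterns of sizes up to $10$ recorded in Table \ref{tab:neighbor-degree-pattern}. Combining this reduction with the direct inspection of the finitely many base embeddings yields the claim.
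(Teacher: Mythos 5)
Your proposal is correct and follows essentially the same route as the paper: the Fact is verified by exhaustive enumeration of the toroidal embeddings of the Petersen graph and the small members of the Blanuša family (Blanuša, H1, V1, V2), with the observation that further 4-edge-cut expansions are localized (as stated in the footnote near Figure \ref{fig:blanusa-embedding} and in Section \ref{subsect:avoidpet}) so no new degree-$10$ neighborhood pattern can arise. One small quibble: your phrase ``the topology of the $10$-face forces the boundary walk to revisit two of its neighbors'' overstates the matter --- the identifications $v_1=v_7$, $v_2=v_6$ are not a priori consequences of having a degree-$10$ vertex in a toroidal triangulation, but are simply read off from the concrete embeddings you inspect, exactly as the degree sequence is.
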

\begin{fact}\label{fact:88-in-Blanusa}\showlabel{fact:88-in-Blanusa}
    In the dual of any embedding of the Petersen graph and graphs in the Blanuša Snark family, if an edge whose both endpoints' degrees are at least 8, their degrees are $(8,8)$, $(8,9)$, or $(8,10)$.
\end{fact}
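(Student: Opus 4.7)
The plan is to derive Fact \ref{fact:88-in-Blanusa} directly by inspection of Table \ref{tab:neighbor-degree-pattern}, together with the assertion (made when the table was introduced in Subsection \ref{subsect:avoidpet}) that the table records every cyclic degree sequence that can occur around any vertex in any embedding of any Petersen-like graph. Once that completeness is granted, the statement becomes a one-line check. No entry of any of the five degree-9 rows, nor of the two degree-10 rows, is as large as $9$; hence a vertex of degree $9$ or $10$ in any Petersen-like dual has all its neighbors of degree at most $8$. In particular, both endpoints of an edge $uv$ with $d(u),d(v)\ge 8$ cannot simultaneously have degree $\ge 9$, so without loss of generality $d(u)=8$. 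Scanning the thirteen degree-8 patterns then shows that the only entries bounded below by $8$ are $8$, $9$, and $T=10$, so $d(v)\in\{8,9,10\}$, giving exactly the three claimed pairs.

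The only non-trivial step is justifying the exhaustiveness of the table itself, which is the same justification already promised in the footnote at the end of Subsection \ref{sect:minimal}. The plan is to enumerate by computer, for each of the base snarks (Petersen, Blanu\v{s}a, Blanu\v{s}a-V$1$, Blanu\v{s}a-V$2$, Blanu\v{s}a-H$1$), every toroidal embedding up to the graph's automorphism, and for each embedding compute the cyclic degree sequence around every face of the primal (equivalently, around every vertex of the dual). Each such sequence is added to the candidate list and duplicates are removed. One then argues that the 4-edge-cut expansion that produces Blanu\v{s}a-H$(k{+}1)$ from Blanu\v{s}a-H$k$ (resp. Blanu\v{s}a-V$(k{+}1)$ from Blanu\v{s}a-V$k$) is local: every toroidal embedding of the expanded snark differs from an embedding of the smaller one only by how the freshly inserted copy of $I_4$ is routed inside an annular region bounded by the $(2,2)$-annulus-cut used for the expansion. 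A routine inspection then shows that every vertex of the expanded graph either lies in a region unaffected by the expansion, and thus has the same degree-sequence neighborhood as in the smaller snark, or lies inside the inserted $I_4$ tube and has a neighborhood that already occurs inside an embedding of Blanu\v{s}a-V$2$ or Blanu\v{s}a-H$1$ that was enumerated in the first step.

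The main obstacle is precisely this last step: carefully verifying that the local neighborhood (covering the full cyclic degree sequence around the vertex) of any vertex inside a freshly inserted $I_4$ coincides, up to isomorphism of embedded graphs, with one that has already been recorded for one of the base snarks. This is essentially a bounded case analysis driven by the few homotopy classes of $(2,2)$-annulus-cuts on the torus and the fixed shape of $I_4$; it can be, and in fact was, confirmed mechanically. Modulo this completeness check on Table \ref{tab:neighbor-degree-pattern}, Fact \ref{fact:88-in-Blanusa} reduces to reading the seven rows of the table labeled by sizes $9$ and $10$ and noting, in a single pass, the largest letter that appears in each.
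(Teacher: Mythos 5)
Your proposal is correct and essentially the same as the paper's: the paper just says it verified this fact by the same embedding-enumeration that produced Table~\ref{tab:neighbor-degree-pattern}, while you spell out how the fact can be read off that table (no digit $\ge 9$ occurs in the degree-$9$ or degree-$10$ rows, forcing one endpoint to have degree $8$, and the degree-$8$ rows contain no digit larger than $T=10$). Your stability argument for extending the table to all higher-level Blanu\v{s}a snarks matches the footnote's induction-on-$(2,2)$-annulus-cut justification.
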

\begin{fact}\label{fact:6,9-around8}\showlabel{fact:6,9-around8}
    In the dual of any embedding of the Petersen graph and graphs in the Blanuša Snark family,
    there is no vertex such that degree is $8$ and degrees of its neighbors listed in clockwise order of the embedding is $6,d_0,d_1,9,d_2,d_3,d_4,d_5$.
\end{fact}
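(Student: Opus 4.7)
The plan is to prove this fact by direct appeal to Table \ref{tab:neighbor-degree-pattern}, which the paper has already established as an exhaustive list of cyclic neighbor-degree sequences around each vertex in the dual of any embedding of any graph in $\mathcal{T}_0$. This table was produced in Section \ref{subsect:avoidpet} by enumerating embeddings of the Petersen graph, the Blanu\v{s}a snark, Blanu\v{s}a-V$1$, Blanu\v{s}a-V$2$, and Blanu\v{s}a-H$1$, and by observing that larger members of the Blanu\v{s}a family introduce no new local patterns. Since a degree-$8$ vertex $v$ corresponds precisely to a size-$8$ entry in the table, and the table is closed under rotation and reflection, it suffices to verify the non-occurrence of the forbidden local configuration against the $13$ listed size-$8$ strings.

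Concretely, I would read off the forbidden pattern as follows: a cyclic degree sequence $(a_0,\dots,a_7)$ around $v$ is forbidden iff there exist indices $i$, $i+3$ (mod $8$) with $a_i=6$ and $a_{i+3}=9$, i.e.\ a $6$ and a $9$ at cyclic distance exactly $3$. I would then scan the size-$8$ row of Table \ref{tab:neighbor-degree-pattern}:
\begin{itemize}
\item The patterns $55558558$, $55665566$, $56565656$, $56565657$, $56565658$, $5657565T$, $56585658$, $56585758$, $56585858$, $565T565T$ contain no digit $9$ at all and are trivially eliminated.
\item The remaining patterns contain a $9$: in $56585958$ the unique $6$ (at position $1$) and the unique $9$ (at position $5$) are at cyclic distance $4$; in $56585659$ the $9$ sits at position $7$ and the $6$s at positions $1$ and $5$, giving cyclic distances $2$ and $2$; in $56595659$ every $6$--$9$ pair (positions $(1,3),(1,7),(5,3),(5,7)$) is at cyclic distance $2$.
\end{itemize}
In none of the listed patterns does a $6$ sit at cyclic distance $3$ from a $9$, so the configuration described in the statement cannot arise.

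There is essentially no combinatorial obstacle beyond this enumeration; the only subtle point is to justify that Table \ref{tab:neighbor-degree-pattern} really does cover \emph{all} embeddings of \emph{all} members of the Blanu\v{s}a family (not only the small ones explicitly enumerated). This is handled already in Section \ref{subsect:avoidpet}, where it is argued that the flexibility of embeddings of Blanu\v{s}a-H$k$ ($k\geq 2$) and Blanu\v{s}a-V$k$ ($k\geq 3$) is entirely governed by the local choice at each $(2,2)$-annulus-cut, so no new local neighborhood pattern can appear in larger snarks. Invoking that observation closes the argument.
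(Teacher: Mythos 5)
Your proof is correct and takes essentially the same route as the paper: Fact~\ref{fact:6,9-around8} is one of the facts the paper asserts were ``checked by enumerating embeddings as explained in Section~\ref{sect:safecont}'', and Table~\ref{tab:neighbor-degree-pattern} is precisely the output of that enumeration, so scanning its size-$8$ row (while accounting for rotation and reflection, which your cyclic-distance reformulation handles correctly) is a faithful re-derivation of the same computer check. The only additional justification you correctly flag—that the table is exhaustive for all levels of the Blanu\v{s}a families—is indeed the content of the footnoted remark in Section~\ref{sect:minimal} and of Section~\ref{subsect:avoidpet}, so no gap remains.
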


\begin{figure}
    \begin{tabular}{cc}
        \begin{minipage}[t]{0.5\hsize}
            \centering
            \includegraphics[width=\hsize]{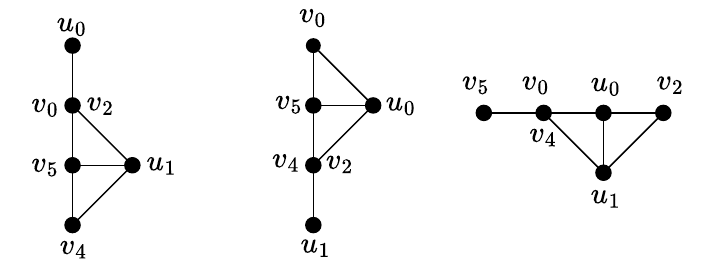}
            \caption{Subgraphs after each reduction in the first case.}
            \label{fig:5555-case1-reductions}
        \end{minipage} &
        \begin{minipage}[t]{0.5\hsize}
            \centering
            \includegraphics[width=\hsize]{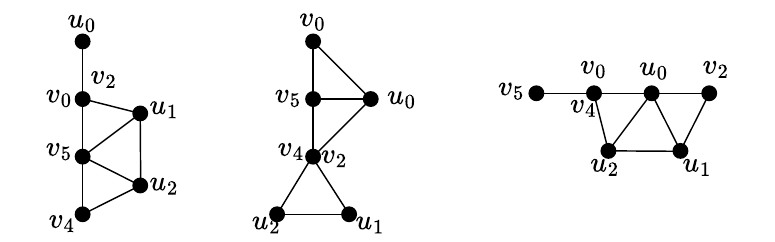}
            \caption{Subgraphs after each reduction in the second case.}
            \label{fig:5555-case2-reductions}
        \end{minipage} \\
        \begin{minipage}[t]{0.5\hsize}
            \centering
            \includegraphics[width=0.7\hsize]{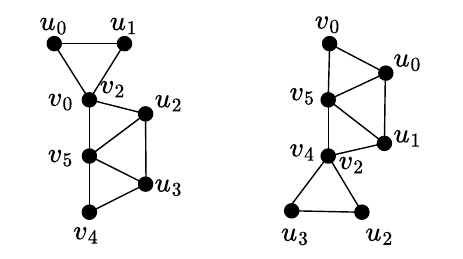}
            \caption{Subgraphs after each reduction in the third case.}
            \label{fig:5555-case3-reductions}
        \end{minipage} & 
        \begin{minipage}[t]{0.5\hsize}
            \centering
            \includegraphics[width=0.7\hsize]{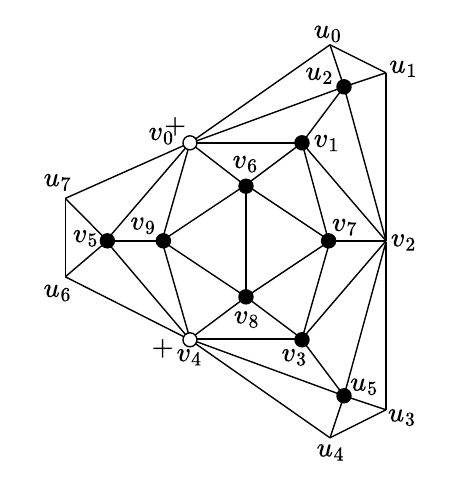}
            \caption{More detailed structure of neighbors of C(1) in the fourth case.}
            \label{fig:5555-case4-detailed}
        \end{minipage} \\
        \begin{minipage}[t]{0.5\hsize}
            \centering
            \includegraphics[width=0.5\hsize]{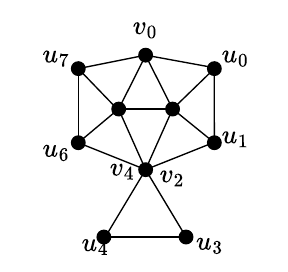}
            \caption{Subgraphs after each reduction in the fourth case.}
            \label{fig:5555-case4-reductions}
        \end{minipage} &
        \begin{minipage}[t]{0.5\hsize}
            \centering
            \includegraphics[width=0.5\hsize]{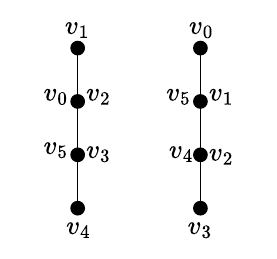}
            \caption{Subgraphs after each reduction in the sixth case.}
            \label{fig:5555-case6-reductions}
        \end{minipage} \\ 
        \multicolumn{2}{c} {
            \begin{minipage}[t]{0.8\hsize}
                \centering
                \includegraphics[width=\hsize]{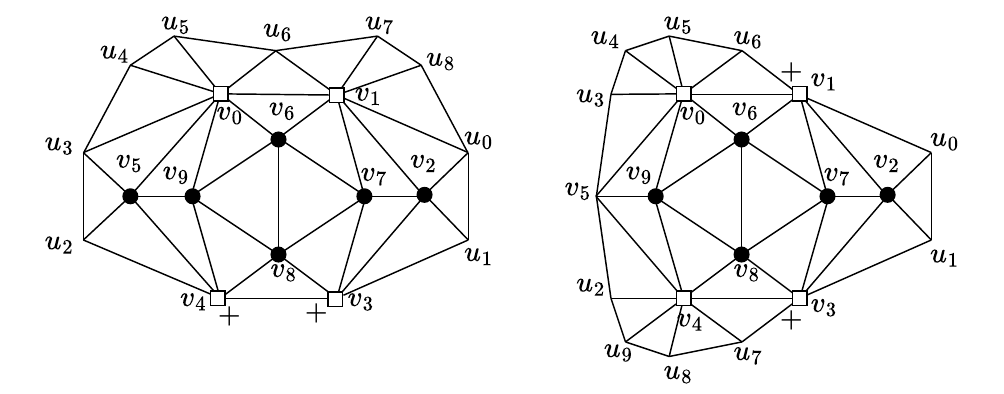}
                \caption{More detailed structure of neighbors of C(1) in the sixth case.}
                \label{fig:5555-case6-detailed}
            \end{minipage}
        } 
    \end{tabular}
\end{figure}

\begin{claim}\label{claim:5555-case1}\showlabel{claim:5555-case1}
    $G'$ does not become the dual of Petersen-like graphs in the first case of Figure \ref{fig:5555-cases}.
\end{claim}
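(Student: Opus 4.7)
The plan is to pin down the local structure of $G'$ around $C(1)$ in the first case of Figure \ref{fig:5555-cases}, and then to drive a contradiction by combining the reducibility of the configurations $D(1),\dots,D(19)$ with the structural facts \ref{fact:deg5-in-3cycle}--\ref{fact:6,9-around8} on duals of Petersen-like graphs. First I would fix the degree assignment around the ring of $C(1)$ prescribed by case 1 of Figure \ref{fig:5555-cases}, and label the relevant second-neighborhood vertices so that the subgraph $H\subseteq G'$ is fully described up to the adjacencies between the ring and the outside. Since case 1 came from Claim \ref{claim:5555-cases}, which requires that both $C_1$-contraction and $C_2$-contraction produce a Petersen-like dual, both sides of the equation constrain $H$: the two counts of deleted faces (from Claim \ref{clm:5555-reductions}) must differ by a multiple of $8$, which in turn forces a specific pairing of the three possible $2$/$3$-cycle reductions of Figure \ref{fig:5555-reductions} on the two sides.

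Next I would rule out every reducible configuration in $\{D(1),\dots,D(19)\}$ weakly appearing in $H$. This is the mechanism that forces as many inner edges as possible to be present, because otherwise a forbidden $D(i)$ would arise. In particular, I expect configurations $D(11)$--$D(19)$ (the annular ones) to intervene whenever the $2$/$3$-cycle reduction of Claim \ref{clm:5555-reductions} involves noncontractibly-connected paths of the ring; if such a path shortcut causes a wrapping that matches one of these annular islands, minimality of $G$ is contradicted. I would carry out this elimination by hand (or by direct inspection of the figure) to narrow the possibilities to a small finite list of candidate configurations $H$.

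Then, for each surviving candidate, I would apply the two contractions $C_1$ and $C_2$ and track how vertex degrees evolve in the resulting triangulations $G'/C_1$ and $G'/C_2$. For each ring-adjacent vertex, I would compute its new degree and the cyclic degree sequence of its neighborhood in the contracted graph. Using Fact \ref{fact:deg5-in-3cycle} (forbidden degree triples on a facial triangle), Fact \ref{fact:no11} (no vertex of degree $\ge 11$), Fact \ref{fact:88-in-Blanusa} (allowed degrees on an $\ge 8{-}\ge 8$ edge), Fact \ref{fact:6,9-around8} (forbidden local pattern around a degree-$8$ vertex), and, when a high-degree vertex appears, Fact \ref{fact:deg5-around10}, I would show that at least one of the two contractions produces a triangulation whose local neighborhood pattern is not in Table \ref{tab:neighbor-degree-pattern}, contradicting Petersen-likeness. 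In many subcases this should already rule out matching the dual of any snark in the Blanu\v sa family or of the Petersen graph.

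The main obstacle is bookkeeping: case 1 of Figure \ref{fig:5555-cases} specifies only the degrees of the immediate outer neighbors of $C(1)$, but both reductions identify or merge several of these neighbors via noncontractibly-connected paths, so the degree of the resulting merged vertex depends on how far around the ring the identification goes. Thus I expect to split case 1 into a handful of subcases according to the pairing of reductions imposed by the multiple-of-$8$ constraint, and for each subcase check that either a $D(i)$ weakly appears in $H$ or one of Facts \ref{fact:deg5-in-3cycle}--\ref{fact:6,9-around8} is violated after contraction. Once all subcases are eliminated, Claim \ref{claim:5555-case1} follows.
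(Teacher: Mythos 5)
Your high-level plan (fix the degrees prescribed by case~1, rule out $D(i)$ weak appearances to force adjacencies, contract, track local degree patterns, and contradict Petersen-likeness via Facts \ref{fact:deg5-in-3cycle}--\ref{fact:6,9-around8}) is in the right spirit, but it misses the single idea that makes this case go through. You restrict to applying the two contraction sets $C_1$ and $C_2$ and hope to derive a contradiction from the resulting degree data together with the mod-$8$ constraint from Claim \ref{clm:5555-reductions}. That constraint, however, is already consumed in establishing Claim \ref{claim:5555-cases}; it is what put you in case~1 in the first place and yields no further information once the case is fixed. With only $C_1$ and $C_2$ available, the degree data around the ring of $C(1)$ is too loose: in particular you have no way to pin down the degree of $v_5$, and without that the cyclic neighbor-degree sequences of the merged high-degree vertices in $G'/C_1$ and $G'/C_2$ remain consistent with rows of Table \ref{tab:neighbor-degree-pattern}.

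The missing ingredient is that in case~1 the vertices $v_1,v_6,v_7,v_8$ (and similarly $v_3,v_6,v_7,v_8$) already form another copy of $C(1)$ inside $H$, so there are \emph{additional} C-reducible contraction sets beyond $C_1,C_2$. The paper introduces a third set $C_3=\{u_0v_1, v_1v_7, v_7v_3, v_0v_6, v_6v_8, v_8v_4\}$ and applies Fact \ref{fact:size-snark} to the pair $(C_3,C_1)$ (or $(C_3,C_2)$) to conclude $d(v_5)\neq 5$; it then derives the final contradiction from the triangle $u_0,u_1,v_0(=v_4)$ in $G'/C_3$ and Fact \ref{fact:deg5-in-3cycle}. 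The contradiction thus lives in $G'/C_3$, not in $G'/C_1$ or $G'/C_2$ as your plan assumes. Also note that, unlike what you suggest, the $D(i)$ configurations play no role in case~1 (they enter only in the later cases); the elimination here relies purely on the extra contraction set and the parity of deleted faces. Without observing the nested $C(1)$'s and exploiting $C_3$, your bookkeeping in case~1 would not close.
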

\begin{proof}
    Four vertices $v_1, v_6, v_7, v_8$ and $v_3, v_6, v_7, v_8$ also constitute a configuration C(1), so we can use the contraction edges for these configurations. No two vertices in $v_i (0 \leq i \leq 9), u_i (0 \leq i \leq 2)$ are identical since otherwise, a cycle that contradicts Lemma \ref{minc} exists or the representativity becomes at most 1 by one of the contractions.

    Let $C_3$ be an edge set $\{u_0v_1, v_1v_7, v_7v_3, v_0v_6, v_6v_8, v_8v_4\}$. $C_3$ is also contraction edges used for C-reducibility. By considering the number of faces deleted by reductions of $C_3$ and Fact \ref{fact:size-snark}, the degree of $v_5$ is not 5.
   
    After contracting $C_1, C_2, C_3$, $H$ becomes each of three graphs in Figure \ref{fig:5555-case1-reductions} respectively, and other reductions do not occur. All vertices of the Petersen graph and graphs in the Blanuša snark family have degrees of at least 5. Hence, each of $u_0$, $u_1$ is incident with at least 4 edges outside $H$. The degrees of $v_0, v_4$ are at least 7, so each of $v_0$, $v_4$ is incident with at least 2 edges outside $H$. The cycle that consists of $u_0, u_1, v_0(=v_4)$ contradicts Fact \ref{fact:deg5-in-3cycle} after contracting $C_3$. Therefore the claim holds.
\end{proof}

\begin{claim}\label{claim:5555-case2}\showlabel{claim:5555-case2}
     $G'$ does not become the dual of Petersen-like graphs in the second case of Figure \ref{fig:5555-cases}.
\end{claim}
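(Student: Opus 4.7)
The approach mirrors the strategy already employed in Claim \ref{claim:5555-case1}. The plan is to locate one or more additional copies of C(1) sitting inside the subgraph $H$ of $G'$ determined by the degree pattern of the second case in Figure \ref{fig:5555-cases}. Since C(1) is characterized by a specific four-vertex cluster of degree-5 vertices surrounded by the appropriate ring degrees, the degree assignment in case 2 should, by the local symmetry, still permit the detection of at least one extra occurrence of C(1) obtained by shifting the center of the configuration by one triangle along the band of degree-5 vertices. This new occurrence supplies an additional set of contraction edges, call it $C_3$, for which C-reducibility also holds.

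First, I would verify that in the second case, no two vertices of $\{v_0,\dots,v_9,u_0,u_1,\dots\}$ can be identified under any of the contractions $C_1$, $C_2$, or $C_3$. If such an identification occurred it would either create a short contractible cycle contradicting Lemma \ref{minc}, a small annular cut contradicting Lemma \ref{lem:(2,2)-annulus-cut}, or force the representativity to drop to one, in which case $G$ cannot be Petersen-like. Second, I would count the number of faces removed by contracting $C_3$ following Claim \ref{clm:5555-reductions}. The constraint from Fact \ref{fact:size-snark} that the vertex counts of $G'$ after each of $C_1$, $C_2$, $C_3$ must pairwise differ by a multiple of $8$ should pin down the degree of one or more of the outer vertices (such as $v_5$ or the relevant $u_i$), paralleling how the degree of $v_5$ was fixed in the previous claim.

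Third, I would examine the three residual subgraphs obtained after the three contractions, as depicted in Figure \ref{fig:5555-case2-reductions}, and check each one against the structural constraints on duals of Petersen-like graphs. Typically this will yield either a facial triangle all of whose vertex-degrees exceed $5$ but do not fit the pattern $(6,6,8)$ forced by Fact \ref{fact:deg5-in-3cycle}, an edge both of whose endpoints have degree at least $8$ but violate the allowed pairs $(8,8),(8,9),(8,10)$ of Fact \ref{fact:88-in-Blanusa}, or a degree-$8$ vertex with the forbidden neighborhood pattern of Fact \ref{fact:6,9-around8}. In each residual graph one of these obstructions should appear, yielding the desired contradiction.

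The main obstacle I anticipate is the combinatorial bookkeeping: case 2 has a less symmetric degree pattern than case 1, so a single auxiliary contraction $C_3$ may not suffice on its own. I expect to need a second auxiliary contraction $C_4$ arising from yet another C(1) occurrence in order to simultaneously constrain enough outer degrees via the mod-$8$ face-count argument. A further subtlety is that, because the degrees in case 2 already include higher-degree vertices, the facial-triangle argument via Fact \ref{fact:deg5-in-3cycle} may fail at the analogous spot where it succeeded in Claim \ref{claim:5555-case1}; in that event, I would fall back on Fact \ref{fact:6,9-around8} applied to the degree-$8$ vertex produced by the reduction, or invoke Fact \ref{fact:deg5-around10} if a degree-$10$ vertex is forced to appear by the accumulated constraints.
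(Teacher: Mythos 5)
Your proposal matches the paper's proof in essence: locate an additional copy of C(1) (the paper uses $v_1,v_6,v_7,v_8$) yielding an auxiliary contraction set $C_3$, rule out vertex identifications via Lemma \ref{minc} and representativity, apply the face-count argument with Fact \ref{fact:size-snark} to pin down $\deg(v_5)\neq 5$, and then derive a contradiction from a facial triangle via Fact \ref{fact:deg5-in-3cycle}. Your anticipated fallbacks (a second auxiliary contraction $C_4$, or Facts \ref{fact:6,9-around8}/\ref{fact:deg5-around10}) turn out to be unnecessary—the single $C_3$ and Fact \ref{fact:deg5-in-3cycle} already close case 2—but the route you sketch is the paper's.
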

\begin{proof}
    Four vertices $v_1, v_6, v_7, v_8$ also constitute a configuration C(1), so we can use the contraction edges for these configurations. No two vertices in $v_i (0 \leq i \leq 9), u_i (0 \leq i \leq 3)$ are identical since otherwise, a cycle that contradicts Lemma \ref{minc} exists or the representativity becomes at most 1 by one of the contractions.

    Let $C_3$ be an edge set $\{u_0v_1, v_1v_7, v_7v_3, v_0v_6, v_6v_8, v_8v_4\}$. $C_3$ is also contraction edges used for C-reducibility. By considering the number of faces deleted by reductions of $C_3$ and Fact \ref{fact:size-snark}, the degree of $v_5$ is not 5.
   
    After contracting $C_1, C_2, C_3$, $H$ becomes each of three graphs in Figure \ref{fig:5555-case2-reductions} respectively, and other reductions do not occur. All vertices of the Petersen graph and graphs in the Blanuša snark family have degrees of at least 5. Hence, each of $u_0$(, $u_2$) is incident with at least 4(, 3) edges outside $H$ respectively. The degrees of $v_0, v_4$ are at least 7, so each of $v_0$, $v_4$ is incident with at least 2 edges outside $H$. The cycle that consists of $u_0, u_2, v_0(=v_4)$ contradicts Fact \ref{fact:deg5-in-3cycle} after contracting $C_3$. Therefore the claim holds.
\end{proof}

\begin{claim}\label{claim:5555-case3}\showlabel{claim:5555-case3}
     $G'$ does not become the dual of Petersen-like graphs in the third case of Figure \ref{fig:5555-cases}.
\end{claim}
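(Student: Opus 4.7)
The plan is to follow the template established in Claims \ref{claim:5555-case1} and \ref{claim:5555-case2}. First, I would locate additional copies of the configuration C(1) supported on subsets of $V(H)$ (as in previous cases, quadruples such as $\{v_1,v_6,v_7,v_8\}$ typically form another C(1)); each such copy yields its own C-reducible contraction set whose contraction we are free to apply. Second, I would rule out identifications: no two vertices of $\{v_0,\ldots,v_9\}\cup\{u_i\}$ can coincide in $G'$, since any identification would either create a cycle violating Lemma \ref{minc}, \ref{lem:(2,2)-annulus-cut}, or \ref{lem:(1,3)-annulus-cut}, or would reduce the representativity to one after one of the contractions, in which case $G'$ cannot be Petersen-like.

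Next, I would introduce a third contraction edge set $C_3$, analogous in structure to the $C_3$ used in Claims \ref{claim:5555-case1} and \ref{claim:5555-case2} (i.e., the one arising from the alternative C(1) subconfiguration). Using Claim \ref{clm:5555-reductions} to count the number of faces destroyed by each of the three reductions, and combining this with Fact \ref{fact:size-snark} (the order of any Petersen-like graph is $10+8n$), I would deduce modular constraints on the face-counts. These constraints should force specific degree restrictions on the ``free'' vertices in the third case of Figure \ref{fig:5555-cases} (analogously to how the previous cases forced $d(v_5)\ne 5$). If needed, I would also use Fact \ref{fact:no11} to exclude any degree exceeding ten.

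With those preparations, I would then draw the three graphs obtained from $H$ after contracting $C_1$, $C_2$, $C_3$ respectively, analogous to Figures \ref{fig:5555-case1-reductions} and \ref{fig:5555-case2-reductions}. In each of the three resulting graphs, I would identify a triangle whose three vertices (some combination among $u_i$ and $v_0=v_4$) all have degree at least six after contraction, count precisely how many external edges each must carry (using that $\delta\ge 5$ in Petersen-like duals and the given degrees of the $v_i$), and then compare to Fact \ref{fact:deg5-in-3cycle}, which forces such a triangle to have degree sequence $(6,6,8)$. The edge-counting should show that the actual sequence cannot be $(6,6,8)$, giving the desired contradiction. If any of the three sub-cases resists this bare triangle argument, I would fall back on Fact \ref{fact:deg5-around10}, \ref{fact:88-in-Blanusa}, or \ref{fact:6,9-around8} to exclude the local neighborhood pattern that survives.

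The main obstacle will be the bookkeeping in the third step: verifying, for each of the three contractions and each possible identification of $v_0$ with $v_4$, that the derived external-edge count is incompatible with $(6,6,8)$. Unlike the earlier cases, the third case of Figure \ref{fig:5555-cases} may involve slightly different degree assignments on the neighbors $u_i$, so the triangle detected after contraction may no longer be the $(u_0,u_1,v_0)$-triangle used in Claim \ref{claim:5555-case1} or the $(u_0,u_2,v_0)$-triangle used in Claim \ref{claim:5555-case2}; identifying the right triangle, and confirming that its degrees after contraction are determined cleanly by the hypotheses, is where most of the work lies.
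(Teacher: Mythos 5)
The proposal has a genuine gap: you rule out all vertex identifications, but the paper's proof of the third case must explicitly allow them. Unlike cases 1 and 2, in case 3 the pairs $(v_0,u_3)$, $(v_0,v_4)$, and $(u_0,v_4)$ \emph{can} be identified (the corresponding paths in $H$ wrap around a non-contractible cycle), and the proof only excludes identifications of $u_i$ with $u_j$ ($i\in\{0,1\}$, $j\in\{2,3\}$) by invoking the reducibility of the annular configurations D(11), D(12), D(13). Handling the surviving identification $(v_0,v_4)$ is central to the argument: when it occurs, the paper needs D(14)-reducibility to force $d(v_0)\ge 11$ before the final triangle argument closes. Your blanket claim that "no two vertices... can coincide" therefore omits the hardest part of the case.

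Your template also diverges from the paper's actual strategy in two further places. First, the paper does \emph{not} introduce a third contraction set $C_3$ nor uses the face-count/modular-constraint argument (Claim \ref{clm:5555-reductions} plus Fact \ref{fact:size-snark}) for case 3 — those tools are specific to cases 1, 2, 4, and 5. Instead, case 3 bounds $d(v_2)\ge 7$ directly by noting that C(4) and C(5) are reducible, and uses D(6) to rule out the joint degree pattern $d(v_5)=5$, $d(v_4)=7$. Second, the decisive triangle is the one on $u_1$, $v_5$, and $u_2(=v_4)$ formed after contracting $C_2$ (where the equality $u_2=v_4$ is a consequence of the contraction, not a pre-existing identification in $G'$); it is this triangle whose degree sequence is shown to violate Fact \ref{fact:deg5-in-3cycle}, after a careful case split on whether $v_0=v_4$ wraps. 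Without the auxiliary annular configurations D(6), D(11)--D(14) and the wrap-around case analysis, the degree lower bounds that make the $(6,6,8)$ comparison fail cannot be established.
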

\begin{proof}
    If $u_i$ ($i=0$, or $1$) and $u_j$ ($j=2$, or $3$) are identical and a path in $H$ between $u_i$ and $u_j$ is a noncontractible cycle, reducible configuration D(11), D(12), D(13) appears.
    It is possible that each pair of vertices $(v_0, u_3), (v_0, v_4), (u_0, v_4)$ are identical and a path in $H$ between them is a noncontractible cycle.
    However, the other two vertices are not identical since otherwise, a cycle that contradicts Lemma \ref{minc} exists or the representativity becomes at most 1 by one of the contractions.

    After contracting $C_1, C_2$, $H$ becomes each of two graphs in Figure \ref{fig:5555-case3-reductions} respectively, and other reductions do not occur. All vertices of the Petersen graph and graphs in the Blanuša snark family have degrees of at least 5. Hence, $u_1$ is incident with at least 3 edges outside $H$. 
    Each degree of $v_5, v_4$ is at least 5, 7, so each of $v_5$, $v_4$ is incident with at least 2 edges outside $H$ respectively. 
   
    The degree of $v_2$ is at least 7, since otherwise C(4), and C(5) are reducible, so $v_2$ is incident with at least 2 edges outside $H$. 
    After contracting $C_2$, the cycle that consists of $u_1, v_5, u_2(=v_4)$ exists. Each degree of $u_1, v_5$ becomes at least 6. The degree of $u_2(=v_4)$ becomes at least 8.
    If $v_4$ is identical to $v_0$ and a path in $H$ between them is a noncontractible cycle, the degree of $v_0(=v_4)$ in $G'$ is at least 11 since D(14) is reducible, so the degree of $u_2(=v_4)$ is at least 9 after contracting $C_2$.
    Otherwise, not both degrees of $v_5, v_4$ are exactly 5, 7 respectively since D(6) is reducible, so the degree of $v_5$ is at least 7 or the degree of $u_2(=v_4)$ is at least 9 after contracting $C_2$. 
    Hence the cycle contradicts Fact \ref{fact:deg5-in-3cycle} after contracting $C_2$. Therefore the claim holds.
\end{proof}

\begin{claim}\label{claim:5555-case4}\showlabel{claim:5555-case4}
    $G'$ does not become the dual of Petersen-like graphs in the fourth case of Figure \ref{fig:5555-cases}.
\end{claim}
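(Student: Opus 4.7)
The plan is to follow the same pattern as in the proofs of Claims \ref{claim:5555-case1}, \ref{claim:5555-case2}, and especially \ref{claim:5555-case3}, but adapted to the additional structural information encoded in Figure \ref{fig:5555-case4-detailed}. The proof will combine three ingredients: (i) reducibility of the D-configurations D(1)--D(19) (and possibly additional copies of C(1) appearing as subgraphs), (ii) applying the two contraction sets $C_1, C_2$ and inspecting the resulting subgraphs in Figure \ref{fig:5555-case4-reductions}, and (iii) invoking the facts \ref{fact:deg5-in-3cycle}, \ref{fact:no11}, \ref{fact:deg5-around10}, \ref{fact:88-in-Blanusa}, \ref{fact:6,9-around8} about duals of Petersen-like graphs to force a contradiction.

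First I would rule out all identifications among the boundary vertices $v_0,\dots,v_9$ and the outer vertices $u_0, u_1, \dots$ shown in Figure \ref{fig:5555-case4-detailed}. Identifications that create a short contractible cycle are forbidden by Lemma \ref{minc}; identifications realized via non-contractible paths through the interior of $H$ are forbidden either because they would drop the representativity to at most one after a contraction, or because they would produce a reducible annular configuration from the list D(11)--D(19). The remaining \emph{permitted} identifications (analogous to the $(v_0,u_3),(v_0,v_4),(u_0,v_4)$ type identifications in Claim \ref{claim:5555-case3}) have to be enumerated carefully, and for each of them the corresponding annular reducibility will force higher-than-expected degrees at the identified vertex (exactly as D(14) forced the degree in the proof of Claim \ref{claim:5555-case3}).

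Next, I would refine the degree information at the outer neighbors using the reducibility of D(1)--D(10): several low-degree assignments at vertices adjacent to C(1) are forbidden because they create a D-reducible configuration. If an additional copy of C(1) appears as a subgraph (as it did via $\{v_1,v_6,v_7,v_8\}$ in Claims \ref{claim:5555-case1} and \ref{claim:5555-case2}), I would use its contraction edges as a third set $C_3$, and use Fact \ref{fact:size-snark} to restrict the degrees of the vertices whose surrounding faces differ between the $C_1$-reduction and the $C_3$-reduction. Then, after applying $C_1$ (or $C_2$, whichever is more convenient in view of Figure \ref{fig:5555-case4-reductions}), count the number of edges each surviving boundary vertex sends outside $H$ using minimum degree $5$; this will exhibit a facial triangle in the reduced graph all of whose vertices have degree $\geq 6$, and whose degree multiset cannot match $(6,6,8)$, contradicting Fact \ref{fact:deg5-in-3cycle}.

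The main obstacle is the genuinely larger case analysis forced by the extra detail in Figure \ref{fig:5555-case4-detailed}: there are more outer vertices $u_i$, hence more potential pairwise identifications and more candidate D-configurations to check, and therefore more possible degree profiles at the vertices of the critical triangle in the reduced graph. In some sub-cases the triangle may satisfy $(6,6,8)$ numerically, in which case the contradiction has to be obtained instead from Facts \ref{fact:88-in-Blanusa} or \ref{fact:6,9-around8} applied to the degree-$8$ vertex, or from Fact \ref{fact:deg5-around10} if a degree-$10$ vertex appears after contraction. Keeping the case split organized (first eliminate identifications, then fix degrees of outer vertices using D(1)--D(10), then apply contractions and read off the forced triangle) should make all sub-cases yield to the same template used for Claim \ref{claim:5555-case3}.
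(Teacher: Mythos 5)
Your high-level template is the right one for this family of claims, and you correctly anticipate that Fact~\ref{fact:deg5-around10} (the degree-$10$ fact) may be needed when the triangle argument of Fact~\ref{fact:deg5-in-3cycle} doesn't fire. However, the specific endgame you propose --- exhibiting a facial triangle in the reduced graph whose degree multiset cannot be $(6,6,8)$ --- is not what happens in the paper's proof of this case, and it is not clear such a triangle can be exhibited directly here. The actual proof does not end with a direct contradiction against any of the Facts; it ends by making a new reducible configuration, D(10), \emph{weakly appear}, which is a qualitatively different kind of closing step.

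The paper's actual chain is tighter and quite specific. First, the third contraction set $C_3=\{u_2v_1,v_1v_7,v_7v_3,v_0v_6,v_6v_8,v_8v_4\}$ (coming from the second copy of C(1) on $v_1,v_6,v_7,v_8$ or $v_3,v_6,v_7,v_8$) is contracted, and comparing face counts modulo~$8$ via Fact~\ref{fact:size-snark} forces $d(v_5)=5$; this is the opposite conclusion to Claims~\ref{claim:5555-case1}--\ref{claim:5555-case2} where the analogous step forced $d(v_5)\ne 5$. That $d(v_5)=5$ then rules out the identifications $u_3=v_5$ and $u_4=v_5$ via the annulus-cut lemmas. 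Next, the \emph{specific} configuration D(4) is invoked to force $d(v_0),d(v_2),d(v_4)\geq 9$; "refine degrees using D(1)--D(10)" in the abstract is not enough --- D(4) is the one that gives the key bound. After contracting $C_2$, this makes $d(v_2{=}v_4)\geq 10$, and Fact~\ref{fact:no11} pins it at exactly $10$, hence $d(v_2)=d(v_4)=9$ in $G'$; then Fact~\ref{fact:deg5-around10} forces $d(u_3)=d(u_4)=6$ (degrees $5$ after contraction, so $6$ before). With all these degrees pinned, the proof closes by observing that the reducible configuration D(10) now weakly appears, which is the contradiction. None of this is a $(6,6,8)$-triangle argument; your proposal, while aimed in the right direction, lacks the $d(v_5)=5$ step, the identification of D(4) as the source of the degree-$9$ lower bounds, the exact Fact~\ref{fact:no11}/Fact~\ref{fact:deg5-around10} chain, and above all the final move of landing D(10). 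Those are the pieces you would need to turn the plan into a proof.
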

\begin{proof}
    Four vertices $v_1, v_6, v_7, v_8$ or $v_3, v_6, v_7, v_8$ also constitute a configuration C(1), so we can use the contraction edges for these configurations.
    
    It is possible that each pair of vertices 
    $(u_0, u_3), (u_0, u_4), (u_0, v_5), (u_1, u_3)$
    , $(u_1, u_4), (u_1, v_5), (u_3, v_5), (u_4, v_5)$ are identical and a path in $H$ between them is a noncontractible cycle.
    If the other two vertices are identical, a cycle that contradicts Lemma \ref{minc} exists or the representativity becomes at most 1 by one of the contractions.

    Let $C_3$ be an edge set $\{u_2v_1, v_1v_7, v_7v_3, v_0v_6, v_6v_8, v_8v_4\}$. $C_3$ is also contraction edges used for C-reducibility. 
    By considering the number of faces deleted by reductions of $C_3$ and Fact \ref{fact:size-snark}, the degrees of $v_5$ must be 5. 
    Here, the symbol $H$ denotes a subgraph of $G'$ that consists of $v_i (0 \leq i \leq 9), u_i (0 \leq i \leq 7)$ in Figure \ref{fig:5555-case4-detailed}. 
    The degree of $v_5$ is 5, so each of $u_3, u_4$ are not identical to $v_5$ since otherwise an annulus-cut that contradicts Lemma \ref{lem:(2,2)-annulus-cut}, \ref{lem:(1,3)-annulus-cut} exists.
    The configuration D(4) is D-reducible, so all degrees of $v_0, v_2, v_4$ are at least 9. Hence each of $v_0$, $v_2$, $v_4$ is incident with at least 2 edges outside $H$.
    After contracting $C_2$, $H$ becomes a graph in Figure \ref{fig:5555-case4-reductions}. The degree of $v_2(=v_4)$ is at least 10. By Fact \ref{fact:no11}, its value is exactly 10. Hence the degrees of $v_2, v_4$ are exactly 9 in $G'$.
    By Fact \ref{fact:deg5-around10}, the degrees of $u_3, u_4$ after contracting $C_2$ are 5, so the degrees of $u_3$, $u_4$ are 6 before contraction (the degrees of these vertices change by 1 due to contraction). The reducible configuration D(10) weakly appears.
\end{proof}

\begin{claim}\label{claim:5555-case5}\showlabel{claim:5555-case5}
    $G'$ does not become the dual of Petersen-like graphs in the fifth case of Figure \ref{fig:5555-cases}.
\end{claim}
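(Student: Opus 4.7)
The plan is to follow the same template used for the first four cases, adapted to whichever degree pattern Figure \ref{fig:5555-cases} specifies for the fifth case. First, I would locate all subconfigurations isomorphic to C(1) inside the neighborhood of the central four vertices $v_1, v_2, v_3, v_4$ that is fixed in the fifth case. As in cases 1, 2, and 4, the expected payoff is that each such subconfiguration produces an additional admissible set $C_3$ (and perhaps $C_4$) of contraction edges of size $6$, giving more ways to contract $H$ while still yielding a cubic toroidal graph that is not 3-edge-colorable. The parity of vertex counts (Fact \ref{fact:size-snark}) applied to the contractions $C_1$, $C_2$, $C_3$ and Claim \ref{clm:5555-reductions} then forces the degrees of the few free outer vertices (the analogue of $v_5$ in earlier cases) to take very restricted values.

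Next, I would rule out all non-trivial identifications among the labelled vertices in $H$. As in cases 1--4, generic identifications produce either a cycle violating Lemma \ref{minc} or an annulus-cut violating Lemma \ref{lem:(2,2)-annulus-cut} or Lemma \ref{lem:(1,3)-annulus-cut}, and the only identifications that survive are those realised through noncontractible paths. For each such surviving identification, I would invoke one of the annular D-reducible configurations $D(11),\dots,D(19)$ (or one of the small planar ones $D(1),\dots,D(10)$) to exclude it. The case-analysis here is routine but needs care: the list of surviving identifications depends on which vertices are free in the fifth case, so I would enumerate them explicitly before invoking reducibility.

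Having pinned down $H$ and its admissible degree profile, I would then explicitly describe $H/C_1$, $H/C_2$, and $H/C_3$ (by analogy with Figures \ref{fig:5555-case1-reductions}--\ref{fig:5555-case4-reductions}) and pick the contraction that leaves a short facial walk ideally suited for deriving a contradiction. The most promising tools are Fact \ref{fact:deg5-in-3cycle} (a facial triangle of minimum degree $\geq 6$ forces the degree multiset $\{6,6,8\}$), Fact \ref{fact:no11} (maximum degree $\leq 10$), Fact \ref{fact:deg5-around10} (rigid neighbourhood of a degree-$10$ vertex), Fact \ref{fact:88-in-Blanusa}, and Fact \ref{fact:6,9-around8}. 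Using the bounds on external degrees forced by the already-identified vertices, I would show that after one of the contractions a facial triangle has a degree profile that none of these facts allows, or equivalently that a degree-$10$ vertex appears whose neighborhood is incompatible with Fact \ref{fact:deg5-around10}, as in the proof of Claim \ref{claim:5555-case4}.

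The main obstacle will be the combinatorial bookkeeping: in each of the preceding claims the exact choice of the auxiliary contraction set $C_3$ (and sometimes the choice of the reducible configuration $D(i)$ that kills an identification) depends delicately on which pair of corner vertices have which degrees. In the fifth case the symmetry of the pattern may not permit a single $C_3$ that simultaneously constrains $v_5$ and yields a short triangle in $H/C_j$; if so, the plan is to split into subcases according to which of the candidate degree positions is of degree $5$ versus $\geq 7$, and handle each subcase with a different auxiliary contraction. I anticipate that at least one subcase will require combining two facts (for instance, applying Fact \ref{fact:6,9-around8} after first using Fact \ref{fact:deg5-in-3cycle} to upgrade a degree lower bound) rather than a single-line contradiction as in Claim \ref{claim:5555-case1}.
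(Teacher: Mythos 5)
Your opening step is exactly right and matches the paper: introduce an auxiliary contraction set $C_3 = \{u_2v_1, v_1v_7, v_7v_3, v_0v_6, v_6v_8, v_8v_4\}$ coming from a nested copy of C(1), and invoke Fact~\ref{fact:size-snark} together with Claim~\ref{clm:5555-reductions} to compare face counts across the three contractions. But from there, the paper's argument for case~5 is drastically shorter than the multi-stage program you lay out. Once the parity of deleted faces is compared across $C_1$, $C_2$, $C_3$, the degrees of $v_5$, $u_1$, $u_2$ are all forced to equal $5$, and at that point the configuration $D(5)$ from Figure~\ref{fig:5555conf_pre} weakly appears, which immediately contradicts the assumption since $D(5)$ is reducible with contraction size at most one. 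The whole proof is two sentences.

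The rest of your plan --- ruling out identifications via the annular configurations $D(11),\dots,D(19)$, explicitly describing $H/C_1$, $H/C_2$, $H/C_3$, hunting for a short facial walk, and deploying Facts~\ref{fact:deg5-in-3cycle}, \ref{fact:no11}, \ref{fact:deg5-around10}, \ref{fact:88-in-Blanusa}, \ref{fact:6,9-around8} --- is the template for cases~1, 2, 3, 4, and~6, but none of it is needed in case~5. Your "anticipated obstacle" (that the symmetry may preclude a single $C_3$ and force subcases requiring combined structural facts) does not materialize: case~5 is in fact the \emph{easiest} of the six because the parity forcing is so strong that the degrees of three outer vertices are pinned to $5$ simultaneously, handing you a D-reducible configuration for free. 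If you followed your plan to its end you would eventually notice $D(5)$ appearing, but you would have done far more case analysis than necessary, and you risk missing the shortcut entirely by focusing on the contracted graphs $H/C_j$ rather than on the uncontracted neighborhood of C(1).
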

\begin{proof}

    Let $C_3$ be an edge set $\{u_2v_1, v_1v_7, v_7v_3, v_0v_6, v_6v_8, v_8v_4\}$. $C_3$ is also contraction edges used for C-reducibility. By considering the number of faces deleted by reductions of $C_3$ and Fact \ref{fact:size-snark}, the degrees of $v_5, u_1, u_2$ must be 5. The configuration D(5) weakly appears.

\end{proof}

\begin{claim}\label{claim:5555-case6}\showlabel{claim:5555-case6}
     $G'$ does not become the dual of Petersen-like graphs in the sixth case of Figure \ref{fig:5555-cases}.
\end{claim}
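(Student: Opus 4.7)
The plan is to adapt the reasoning template of Claims \ref{claim:5555-case1}--\ref{claim:5555-case5}. This case is the most intricate one, since Figure \ref{fig:5555-case6-detailed} displays additional outer vertices beyond $v_0,\dots,v_9$, so the scope of possible identifications and auxiliary contractions is the largest, and the final contradiction is likely to require invoking several of the Petersen-like dual facts together.

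First I would exploit the auxiliary copies of C(1) sitting inside our configuration --- namely $\{v_1,v_6,v_7,v_8\}$ and $\{v_3,v_6,v_7,v_8\}$ --- to introduce one or two additional C-reducing edge sets $C_3$ (and, if needed, $C_4$) analogous to the ones used in Claims \ref{claim:5555-case1} and \ref{claim:5555-case4}. Combining them with $C_1$ and $C_2$ and applying Fact \ref{fact:size-snark} to the parity of the number of deleted faces under each contraction will pin down the degrees of the central outer vertices (most likely $v_5$ and a few of the $u_i$'s) to a very short list of admissible values, in the same manner as the proofs of Claims \ref{claim:5555-case2} and \ref{claim:5555-case5}.

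Second, I would eliminate the non-trivial identifications among the vertices of Figure \ref{fig:5555-case6-detailed}. By Lemma \ref{minc} together with Lemmas \ref{lem:(2,2)-annulus-cut} and \ref{lem:(1,3)-annulus-cut}, any identification producing a short contractible cycle or a low-order annulus-cut is immediately forbidden. The only identifications left open are those of pairs of vertices joined by a non-contractible path in $H$; for each such pair, the annular reducible configurations $D(11),\dots,D(19)$ of Figure \ref{fig:5555conf_pre} rule out the corresponding wrapping scenarios by weak appearance, just as $D(11)$--$D(14)$ were used in Claims \ref{claim:5555-case3} and \ref{claim:5555-case4}.

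Finally, I would read off the triangulation $H$ after each reduction (cf.\ Figure \ref{fig:5555-case6-reductions}) and combine the structural facts \ref{fact:deg5-in-3cycle}, \ref{fact:no11}, \ref{fact:deg5-around10}, \ref{fact:88-in-Blanusa}, and \ref{fact:6,9-around8} about duals of Petersen-like graphs to eliminate each surviving degree pattern. Typically, after contracting $C_2$ (or $C_3$) a facial triangle is created whose three endpoints all have degree $\geq 6$ in the contracted graph, and Fact \ref{fact:deg5-in-3cycle} forces the degree sequence $(6,6,8)$; the outer structure we have forced makes this impossible. The residual corner cases, where the triangle degenerates or a vertex of degree $10$ appears, will collide with Fact \ref{fact:6,9-around8} or Fact \ref{fact:deg5-around10}, and in each such branch one of the reducible configurations $D(7),\dots,D(10)$ weakly appears in $G'$, contradicting the assumed Petersen-likeness. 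The main obstacle here is purely combinatorial bookkeeping --- the number of interlocking identifications, contraction targets, and degree scenarios is substantially larger than in the earlier cases --- but each branch closes by one of the two mechanisms above.
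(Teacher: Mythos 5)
Your high-level template is right --- handle identifications, pin down degrees, then use the structural facts and the $D(i)$ configurations --- but two of the concrete moves you lean on do not survive contact with the sixth case.

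First, your plan to introduce auxiliary contraction sets $C_3$ (and $C_4$) via the embedded C(1) copies at $\{v_1,v_6,v_7,v_8\}$ and $\{v_3,v_6,v_7,v_8\}$, in combination with Fact~\ref{fact:size-snark}, is a pattern borrowed from Claims~\ref{claim:5555-case1}, \ref{claim:5555-case2}, \ref{claim:5555-case4}, \ref{claim:5555-case5}. Those cases have degree hypotheses that make the extra copies of C(1) genuinely available as configurations. In the sixth case the degrees of the ring vertices $v_0,\dots,v_5$ are not constrained that way \emph{a priori}, and the paper does not use any $C_3$ or Fact~\ref{fact:size-snark} here. Instead, the engine that narrows the degree pattern is Fact~\ref{fact:88-in-Blanusa}: after contracting $C_1$ the identified vertices $v_0(=v_2)$ and $v_3(=v_5)$ have degree at least $8$, and after $C_2$ the vertices $v_1(=v_5)$ and $v_2(=v_4)$ do, so Fact~\ref{fact:88-in-Blanusa} forces one of the two degree profiles drawn as $H_1,H_2$ in Figure~\ref{fig:5555-case6-detailed}. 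Without that pivot your first step has no leverage, and you would be stuck with an unconstrained degree space.

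Second, you do not account for the subcases in which $(v_0,v_4)$ or $(v_1,v_3)$ are identified along a noncontractible path. Your proposed mechanism --- rule out wrappings by weak appearance of $D(11)$--$D(19)$ --- is what works in Claims~\ref{claim:5555-case3} and~\ref{claim:5555-case4}, but here the paper handles it differently: it observes that such an identification creates a noncontractible 2-cycle, uses Lemma~\ref{lem:(2,2)-annulus-cut} to exclude a second parallel one even after contracting $C_1$, concludes $G'/C_1$ would have to be the dual of the Petersen graph, counts $22$ faces for $G'$, and invokes the exhaustive generation result of~\cite{brinkmann2013generation}. That face-counting argument is essential and is not substitutable by the $D(i)$ configurations. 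Relatedly, your claimed use of Fact~\ref{fact:deg5-in-3cycle} (``$(6,6,8)$ on a forced triangle'') is not how it enters here: it is used once, to deduce that the degree of $u_0$ after contracting $C_2$ equals $5$, which then feeds into the $D(8)$/$D(9)$ branches and finally the $(9,5)$ subcase closed by Fact~\ref{fact:6,9-around8}.

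So while your toolbox is the correct one, the specific strategy would stall at the first step and also leaves a subcase unhandled. The actual proof is driven by Fact~\ref{fact:88-in-Blanusa} to reach $H_1$ or $H_2$, by a $22$-face counting argument for the noncontractible identifications, and by a short chain through $D(7)$--$D(10)$, $D(15)$--$D(19)$, Fact~\ref{fact:deg5-around10}, and Fact~\ref{fact:6,9-around8} for the surviving degree patterns.
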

\begin{proof}
  
    It is possible that each pair of vertices 
    $(v_0, v_4), (v_1, v_3)$ are identical and a path in $H$ between them is a noncontractible cycle.
    If the other two vertices are identical, a cycle that contradicts Lemma \ref{minc} exists or the representativity becomes at most 1 by one of the contractions. 
    
    If ($v_0$, $v_4$) are identical and $v_0v_9v_4(=v_0)$ is a noncontractible cycle of length 2, no other disjoint homotopic noncontractible cycle of length 2 exists by Lemma \ref{lem:(2,2)-annulus-cut}. The same statement holds after contracting $C_1$ since each component that consists of edges of $C_1$ contains $v_0$ or $v_9$. 
    Hence, $G' / C_1$ is the dual of the Petersen graph if it would be the dual of Petersen-like graphs. In this case, the original number of faces of $G'$ is exactly 22, which is not a minimal counterexample by the result of \cite{brinkmann2013generation} (All Snarks of size up to 36 are enumerated).
    We can say the same thing for $(v_1, v_3)$.

    After contracting $C_1, C_2$, $H$ becomes each of two graphs in Figure \ref{fig:5555-case6-reductions} respectively, and other reductions do not occur. All vertices of the Petersen graph and graphs in the Blanuša snark family have degrees of at least 5. Hence, each of $v_0, v_1, v_3, v_4$ is incident with at least 4 edges outside $H$. Each degree of $v_2, v_5$ is at least 5, so each of $v_2$, $v_5$ is incident with at least 2 edges outside $H$. 
    After contracting $C_1$, each of the degrees of $v_0(=v_2), v_3(=v_5)$ is at least 8. By Fact \ref{fact:88-in-Blanusa}, we assume w.l.o.g. each degree of $v_0, v_2$ is 8, 5 respectively. 
    After contracting $C_2$, each of the degrees of $v_1(=v_5), v_2(=v_4)$ is at least 8. By Fact \ref{fact:88-in-Blanusa}, each degree of $v_1, v_5$ is 8, 5 respectively, or the degree of $v_4$ is 8. 
    Hence, the degrees of neighbors of $H$ are either of two graphs in Figure \ref{fig:5555-case6-detailed}. The symbols $H_1, H_2$ denote the left, and right graph in Figure \ref{fig:5555-case6-detailed} respectively.

    If $u_0$ and $v_3$ are identical and a path in $H_i(i=1,2)$ between them is a noncontractible cycle and its degree is 8, a reducible configuration D(15), D(16), D(17) appears. 
    $u_0$ are not identical to $v_0, v_1, v_4$ since otherwise, a cycle that contradicts Lemma \ref{minc} exists or the representativity becomes at most 1 by one of the contractions.
    Hence, by combining the two above claims and the degree of $u_0$ after contracting $C_2$ is 5 by Fact \ref{fact:deg5-in-3cycle}, the degree of $u_0$ in $G'$ is 5.
    We can say the degrees of $u_1,u_2,u_3$ are also 5 in the same way.
    
    For $H_1$, a reducible configuration D(8) weakly appears.
    For $H_2$, the sum of degrees of $v_3, v_5$ in $G'$ are at most 15 by Fact \ref{fact:88-in-Blanusa}. 
    If a pair of degrees of $(v_3, v_5)$ is $(8, 5)$ or $(8, 6)$, a reducible configuration D(8) or D(9) weakly appears, so their possible values are $(8,7), (9,5), (9,6)$, or $(10,5)$. By Fact \ref{fact:deg5-around10}, reducible configuration D(7), D(18), or D(19) appears when their values are $(8,7), (9,6)$, or $(10,5)$ respectively. 
    The remaining case is when their values are $(9,5)$. The degree of $v_1$ is also 9 by considering the symmetry. By Fact \ref{fact:6,9-around8}, after contracting $C_1$, the resulting graph does not become Petersen-like.
\end{proof}

\section{Appendix to Algorithm for Theorem \ref{algo} }\label{secalgoappen}\showlabel{secalgoappen}

Step 3 can be divided into the following steps.

\medskip

{\bf Step 3.1:} When representativity of $G$ in $\Sigma$ is exactly one. 

\medskip

In this case by Lemma \ref{lem:conf-in-rep1}, there is a subgraph $K$ of the dual graph $G'$, that is a reducible configuration in the set $\mathcal{K}$ and that does not hit the vertex that intersects a non-contractible curve of length exactly one in $\Sigma$. 

Algorithmically speaking, this part is just to find a vertex $v$ of the dual graph $G'$ with $T(v) >0$, after cutting $G'$ into the plane.

\medskip

{\bf Step 3.2:} When representativity of $G$ in $\Sigma$ is at least two. 

\medskip

Now we apply our discharging rules $\mathcal{R}$. By Lemma \ref{T(v)>0}, if there is a vertex $v$ of the dual graph $G'$ with $T(v) >0$,  there must exist a subgraph $K'$ of $G'$ which contains either $v$ or at least one of its neighbors and is isomorphic to one of the reducible configurations of $\mathcal{K}$ (but $K'$ may not be an induced subgraph of $G'$). 

Consider the case $T(v)=0$ for all the vertices $v$ of $V(G)$. 
Then, according to Lemma \ref{T(v)>0T}, there is a reducible configuration $K$ in the set $\mathcal{K}$ in $G'$. 

\medskip

By Lemma \ref{T(v)>0T}, such a subgraph $K$ would not appear in $G'$. 
This completes the proof of Theorem \ref{mainth}.

\medskip

Now, let us estimate the time complexity of this algorithm. Step 1 takes $O(n)$ time as in \cite{MOHARlinear}. All of Steps 2 and 3 take an $O(n)$ time, and we recuse. More precisely, for the correctness and the time complexity: 

\begin{itemize}
\item 
     We can find the shortest non-contractible curve in $O(n)$ time by \cite{CABELLO}.
    \item
     By Lemma \ref{T(v)>0T}, all reducible configurations in $\mathcal{K}$ are safely reduced. 
    
    \item 
    The discharging process takes $O(n)$ time because we have at most 201 rules, each of which can apply just once to each edge. This allows us to find the set of all vertices $V'$ that end up with a positive charge (i.e., $T(v) > 0$) in $O(n)$ time. 
    \item 
    For the reducibility process, we only need to look at a vertex $v \in V'$, the first and second neighbors, and the third neighbors of $v$. But we do not have to look at all the vertices of degree at least 13, as none of our reducible configurations in $\mathcal{K}$ contains a vertex of degree 13 or more. Let $Q_v$ be the graph induced by these neighbors, together with $v$, after deleting the vertices of degree at least 13. 

Each reducible configuration is of order at most 20, and by our choice, $Q_v$ has at most 10000 vertices. By Lemma \ref{T(v)>0}, we know that $Q_v$ contains a reducible configuration $K_v$. Thus we can find a reducible configuration $K_v$ in $O(1)$ time. 

Consider the case $T(v)=0$ for all the vertices $v$ of $V(G)$. 
Then, by Lemma \ref{T(v)>0T}, there is a reducible configuration $K$ in the set $\mathcal{K}$ in $G'$. This can also be found in $O(n)$ time, as below.

\item 
We may apply Lemma \ref{lem:6,7-cut} if we find a separating cycle of length six or seven in $G'$. This case needs a bit of care.

We can actually enumerate all 5,6,7-cycles in $G'$ in $O(n)$ time by using \cite{FOCS05,Eppstein00}. To this end, we first refer to Theorem 3.1 in \cite{FOCS05}. The important idea is that we run a
bread-first search from some root vertex $r$ in $G'$, and assign the label to each vertex $v \in V(G')$ to be the distance between $r$ and $v$
modulo eight. The union of any seven label sets is the disjoint
union of subgraphs, each consisting of at most seven breadth-first layers. By \cite{Eppstein00} each
of these subgraphs, and therefore the union, has treewidth
at most 100. 

Thus by the standard dynamic programming on bounded treewidth graphs, we can enumerate all cycles of length 5, 6, and 7 in $O(n)$ time for each of the seven label sets. It is clear that every cycle of length at most seven must be contained in some of the seven label sets, and in addition, there are eight such bounded treewidth graphs, thus we can enumerate all cycles of length 5, 6, and 7 in $G'$ in $O(n)$ time. 
We can also prove that the number of such cycles is $O(n)$ by showing that there are at most three such cycles that can cross by Lemma \ref{minc}, see also Figures \ref{fig:6cut} and \ref{fig:7cut} (and the proofs associated with these figures). 

Among these cycles, we can take a minimal one, i.e. a cycle that satisfies Lemma \ref{lem:6,7-cut}, and subject to that, the number of vertices strictly inside the disk bounded by the cycle is as small as possible. By Lemma \ref{lem:6,7-cut}, we know that at least one of the configurations appears strictly inside the disk, which allows us to make a reduction.  

This dynamic programming also allows us to find a reducible configuration when $T(v)=0$ for all the vertices $v$ of $V(G)$. 
Then, by Lemma \ref{T(v)>0T} there is a reducible configuration $K$ in the set $\mathcal{K}$ in $G'$. This can be also found in $O(n)$ time
\end{itemize}

Thus, the total time complexity is $O(n^2)$ as claimed.

\section{Strengthening Tutte's 4-Flow Conjecture for toroidal graphs}\label{nonzero4}

An important consequence of this paper is that the Tutte 4-Flow Conjecture can be strengthened for toroidal graphs. This is a direct consequence of our main result, Theorem \ref{mainth}, where we consider cubic graphs. In this section, we prove that a minimum counterexample would need to be cubic, 
thus obtaining the following consequence of Theorem \ref{mainth}.

\begin{thm}
    Let $G$ be a 2-connected cubic graph that can be embedded in the torus. Then $G$ admits a nowhere-zero 4-flow if and only if it is not Petersen-like.
\end{thm}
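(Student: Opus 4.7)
The plan is to derive this theorem as an immediate corollary of Theorem \ref{mainth} via the classical equivalence between nowhere-zero $4$-flows and $3$-edge-colorings for cubic graphs.

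First I would recall the flow-coloring equivalence. By Tutte's theorem, nowhere-zero $4$-flows are equivalent to nowhere-zero $\mathbb{Z}_2\times\mathbb{Z}_2$-flows. Identifying the three nonzero elements of $\mathbb{Z}_2\times\mathbb{Z}_2$ with three colors, a proper $3$-edge-coloring of a cubic graph $G$ is exactly a nowhere-zero $\mathbb{Z}_2\times\mathbb{Z}_2$-flow: the three nonzero group elements sum to zero, which is precisely the flow-conservation condition at every cubic vertex, and conversely any nowhere-zero $\mathbb{Z}_2\times\mathbb{Z}_2$-flow assigns three distinct nonzero values to the edges incident to each vertex, i.e.\ a proper $3$-edge-coloring. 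Thus a cubic graph admits a nowhere-zero $4$-flow if and only if it is $3$-edge-colorable. This equivalence is already stated explicitly in the introduction of the paper.

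Second, I would apply Theorem \ref{mainth} together with Lemma \ref{low-red}. Given a $2$-connected cubic toroidal graph $G$, iteratively perform $2$-, $3$-, and $4$-edge-cut reductions until the resulting cubic graph $G^*$ is cyclically $5$-edge-connected. By Lemma \ref{low-red}, $G^*$ is $3$-edge-colorable if and only if $G$ is. Theorem \ref{mainth} then settles the cyclically $5$-edge-connected case: such a $G^*$ fails to be $3$-edge-colorable if and only if $G^*\in\mathcal{T}_0$, i.e.\ $G^*$ is the Petersen graph or a member of the Blanu\v{s}a snark family. Unwinding the chain of low edge-cut reductions shows that $G$ is not $3$-edge-colorable exactly when it can be obtained from such a graph in $\mathcal{T}_0$ by replacing vertices with $2$-edge-connected planar cubic multigraphs—that is, precisely when $G$ is Petersen-like in the broader sense discussed in the introduction.

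Combining the two steps immediately yields the theorem. There is no substantive obstacle: all the genuine mathematical work has been done in Theorem \ref{mainth} and the small edge-cut analysis behind Lemma \ref{low-red}; what remains is simply the standard dictionary translating edge-colorings into $\mathbb{Z}_2\times\mathbb{Z}_2$-flows. The only bookkeeping concern is making sure that the vertex-replacement operation used to define Petersen-like graphs in the cubic setting is compatible with the $2$-/$3$-edge-cut reductions, which is exactly the content of Lemma \ref{low-red}.
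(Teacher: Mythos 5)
Your approach — translating the $4$-flow statement into $3$-edge-colorability via the standard $\mathbb{Z}_2\times\mathbb{Z}_2$-flow dictionary for cubic graphs, and then invoking Theorem \ref{mainth} together with Lemma \ref{low-red} — is exactly what the paper intends when it calls this theorem ``a direct consequence of our main result.'' The flow-coloring step is correct and needs no further justification.

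There is, however, a gap in your reduction step. You assert that iterating the $2$-, $3$-, and $4$-edge-cut reductions always yields a cyclically $5$-edge-connected $G^*$. That does not follow: the paper's $4$-edge-cut reduction is defined only when one side of the cut is isomorphic to $I_4$ (the Petersen graph minus two adjacent vertices), so a cyclic $4$-edge-cut with neither side isomorphic to $I_4$ cannot be removed by any of your three reductions, and the iteration can stall while the graph is still only cyclically $4$-edge-connected. The paper avoids this by arguing on a \emph{minimal} counterexample: Lemma \ref{minc}, proved in Section \ref{sect:below5cuts}, shows a minimal counterexample in $\mathcal{T}_1\setminus\mathcal{T}_0$ has no cyclic cut of size $\le 4$ at all, and its treatment of $4$-cuts uses a dot-product decomposition of Petersen-like graphs rather than the $I_4$-reduction. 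The cleanest repair is to invoke the paper's explicit remark that Theorem \ref{mainth} is equivalent to the non-existence of any counterexample in $\mathcal{T}_1\setminus\mathcal{T}_0$; combined with Lemma \ref{low-red} (which gives the ``only if'' direction, since every graph in $\mathcal{T}_0$ reduces to $P_{10}$), this yields the theorem without needing the explicit iteration. Finally, note that the rest of Section \ref{nonzero4} is devoted to extending the statement to arbitrary $2$-edge-connected (not necessarily cubic) toroidal graphs by proving a minimum counterexample must be $3$-connected cubic; your argument supplies precisely the cubic base case that this extension then uses.
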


Let us extend the notion of a \emph{minimum counterexample} $G$ to be a 2-edge-connected toroidal graph $G$ with minimum $|V(G)|+|E(G)|$ that is not Petersen-like and does not admit a nowhere-zero 4-flow. 
The proof is almost identical to that given in \cite{proj2024}, and the first half of the proof follows the standard strategy used, for example, in \cite{thomasnon}. We will make use of the following observation.

\begin{lem}
    Petersen-like graphs on the torus are not apex graphs, i.e. every vertex-deleted subgraph is nonplanar.
\end{lem}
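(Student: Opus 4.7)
The plan is to prove the lemma in two stages: first reduce it to the cubic case $G_0 \in \mathcal{T}_0$, and then induct on $|V(G_0)|$.

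\medskip

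\textit{Reduction to the cubic case.} Any extended Petersen-like $G$ arises from a cubic $G_0 \in \mathcal{T}_0$ by replacing some of its vertices with $2$-connected planar graphs $H_1,\ldots,H_k$ (with portal vertices $p_i$ of degree $3$). For any $v \in V(G)$: if $v$ belongs to the cubic skeleton, then contracting each $H_i - p_i$ to a single vertex in $G - v$ yields $G_0 - v$ as a minor of $G - v$; if $v$ lies in the interior of some $H_j$, then $H_j - p_j - v$ (connected whenever $v$ is not a cut vertex of $H_j - p_j$, and otherwise handled by a component-wise contraction using the matching structure) can be contracted to exhibit $G_0$ or $G_0 - e$ for some edge $e \in E(G_0)$ as a minor of $G - v$. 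Since taking minors preserves planarity, the lemma will reduce to showing that $G_0 - v_0$ is nonplanar for every cubic $G_0 \in \mathcal{T}_0$ and every $v_0 \in V(G_0)$.

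\medskip

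\textit{Induction on $|V(G_0)|$.} The base case $G_0 = P_{10}$ uses vertex-transitivity of the Petersen graph: $P_{10} - v$ has three degree-$2$ vertices (the former neighbors of $v$), and suppressing them produces a $3$-regular graph on $6$ vertices which, by direct inspection of the Petersen structure, is isomorphic to $K_{3,3}$; hence $P_{10} - v$ is nonplanar. For the inductive step with $|V(G_0)| > 10$, the graph $G_0$ is a nontrivial dot product of copies of $P_{10}$, so it admits at least one cyclic $4$-edge cut whose $I_4$-side is a copy of the Petersen graph minus two adjacent vertices; let $G_1' \in \mathcal{T}_0$ be the corresponding $4$-edge-cut reduction. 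When $|V(G_0)| \geq 26$, $G_0$ contains at least two disjoint $I_4$-substructures arising from distinct dot-product expansions, so we may choose a cut whose $I_4$-side avoids $v$. Since $I_4$ admits two internally disjoint paths realizing any of the three possible pairings of its four attachment vertices (verified directly), contracting $I_4$ in $G_0 - v$ along the paths matching the pairing used to define $G_1'$ exhibits $G_1' - v$ as a minor of $G_0 - v$; by the inductive hypothesis, $G_1' - v$ is nonplanar.

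\medskip

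\textit{The Blanu\v{s}a case.} The main obstacle is the remaining case $|V(G_0)| = 18$, where $G_0$ is the Blanu\v{s}a snark and $v$ may lie in its unique $I_4$-side. Here I contract $I_4 - v$ (connected for every $v \in V(I_4)$, as $I_4$ is $2$-connected and this is verified case by case) to a single vertex $u$, producing as a minor of $G_0 - v$ the graph $G^* = P_{10} - \hat{e}_1 - \hat{e}_2 + u$ in which $u$ is joined to the four endpoints of $\hat{e}_1, \hat{e}_2$. Since the toroidal Blanu\v{s}a snark is unique up to isomorphism (as noted earlier in the paper), the edges $\hat{e}_1,\hat{e}_2$ are determined up to $\mathrm{Aut}(P_{10})$, so it suffices to establish a single structural fact: in every planar embedding of $P_{10} - \hat{e}_1 - \hat{e}_2$ (which is planar, since it suppresses to the triangular prism after removing its four degree-$2$ vertices), the four endpoints $a_1, b_1, a_2, b_2$ do not all lie on the boundary of a single face. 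This follows from a finite enumeration of the faces of the triangular prism together with the placement of the four suppressed vertices on its edges; once established, $u$ cannot be added planarly inside any face, so $G^*$ is nonplanar, and therefore so is $G_0 - v$. This completes the induction and hence the proof.
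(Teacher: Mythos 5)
Your proposal takes a genuinely different route from the paper. The paper's (very terse) argument is that every Petersen-like graph contains a subdivision of $P_{10}$; since $P_{10}$ minus any vertex suppresses to $K_{3,3}$ and $P_{10}$ minus any edge is still nonplanar (its crossing number is $2$), deleting any one vertex of $G$ cannot destroy the nonplanarity witnessed by that subdivision. You instead reduce to the cubic skeleton $G_0$ and induct on the dot-product decomposition of $G_0$, using explicit $4$-edge-cut reductions. The base case is solid: $P_{10}-v$ suppresses to $K_{3,3}$, as you say, and your verification that $I_4$ realizes all three pairings of its four attachment vertices by two disjoint paths is correct and is what makes the inductive contraction produce $G_1'-v$.

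However, there are concrete gaps. First, the claim that for $|V(G_0)|\geq 26$ there are \emph{two disjoint} copies of $I_4$, each forming one side of a cyclic $4$-edge-cut, is asserted without justification and is not obvious: in an iterated dot product $P_{10}\cdot P_{10}\cdots P_{10}$, only the last factor contributes an intact $I_4$; every ``middle'' factor has had both a pair of adjacent vertices and a later pair of nonadjacent edges removed, and the first factor is $P_{10}$ minus two edges, which is not $I_4$. Whether a second clean $I_4$ exists depends on \emph{where} the $4$-edge-cut expansions are performed, which you do not control in the argument. Second, in the Blanu\v{s}a case you silently assume $v$ is not one of the four attachment vertices of $I_4$: if it is, then deleting $v$ also removes one cut edge, so contracting $I_4-v$ yields a vertex $u$ of degree $3$, not $4$, and the ``four endpoints never lie on a common face'' argument does not directly apply. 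Third, the reduction-to-cubic step is fixable but mis-stated: when $H_j-p_j-v$ is disconnected, what you get as a minor is $G_0$ with $j$ split (possibly giving $G_0-j$ after discarding stray pieces), not ``$G_0-e$''; fortunately $G_0-j$ nonplanar is exactly the cubic claim and suffices, so this is a phrasing error rather than a fatal one. (A further minor point: $P_{10}-\hat e_1-\hat e_2$ is the triangular prism after suppression only for certain choices of nonadjacent $\hat e_1,\hat e_2$; for other choices it is $K_{3,3}$, in which case you are done even more quickly, so this does not hurt you, but the claim as stated is choice-dependent.) Overall, the inductive structure you use is reasonable and in the spirit of the low-edge-cut machinery of the paper, but the two-$I_4$s claim and the attachment-vertex case need to be filled in; the paper's $P_{10}$-subdivision argument avoids both issues by working once and for all rather than per cut.
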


\begin{proof}\label{lem:Petersen-like is not apex}
    It is well-known that the Petersen graph is not an apex graph. It can be proved by induction that the repeated dot product of Petersen graphs contains a subdivision of the Petersen graph, so it cannot be apex either. The same is true if we replace vertices in such dot product with 2-connected graphs. We leave the details to the reader.
\end{proof}

\begin{lem}
    A minimum counterexample is a 3-connected cubic graph.
\end{lem}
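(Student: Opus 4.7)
The plan is to produce, from a minimum counterexample $G$, a strictly smaller graph in the same class via three successive reductions: suppressing degree-$2$ vertices, resolving small cuts, and splitting off pairs at high-degree vertices. Each reduction strictly decreases $|V(G)|+|E(G)|$, preserves $2$-edge-connectedness and toroidal embedding, preserves the non-existence of a nowhere-zero $4$-flow in both directions, and preserves the property of being non-Petersen-like. This last preservation is the main technical burden, and it rests on the fact that Petersen-like toroidal graphs are not apex (the preceding lemma).

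First I would handle vertices of degree at most $2$. Degrees $0,1$ are excluded by $2$-edge-connectedness. If $v$ has degree $2$ with incident edges $e_1,e_2$, let $G'$ be obtained by suppressing $v$, replacing $e_1,e_2$ by a single edge joining their other endpoints (and deleting a loop if created). Then $G'$ is $2$-edge-connected, toroidal, and has $|V(G')|+|E(G')|=|V(G)|+|E(G)|-2$. A nowhere-zero $4$-flow transfers either way by giving $e_1,e_2$ the common value of the merged edge (flow conservation at $v$ is automatic). Since the definition of \emph{Petersen-like} in the introduction admits arbitrary planar $2$-connected blobs attached at a degree-$3$ access vertex, subdividing or suppressing an internal edge of such a blob keeps the blob planar with the access vertex still of degree $3$; hence Petersen-likeness is preserved in both directions, making $G'$ a strictly smaller counterexample, contradicting minimality.

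Next I would rule out $2$-edge-cuts (and then $2$-vertex-cuts). For a $2$-edge-cut $F=\{e_1,e_2\}$ with sides $V_1,V_2$, let $G_i$ be $G[V_i]$ together with a single new edge joining the two endpoints of $F$ that lie in $V_i$. Each $G_i$ is $2$-edge-connected, inherits a toroidal embedding, and is strictly smaller than $G$, and $G$ admits a nowhere-zero $4$-flow if and only if both $G_1$ and $G_2$ do. So some $G_i$ admits none; by minimality that $G_i$ is Petersen-like. Because Petersen-like toroidal graphs are not apex, the dual cycle of $F$ on the torus must be contractible, which places the other side $V_{3-i}$ (with its two cut-edge stubs) inside an open disk and hence makes it planar; then $G$ arises from $G_i$ by replacing one edge with a planar $2$-connected block, which fits into the Petersen-like framework as a blob replacement at an incident vertex. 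Thus $G$ would itself be Petersen-like, a contradiction. The $2$-vertex-cut case is handled by an analogous split-and-reassemble argument.

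Finally I would eliminate vertices of degree $\ge 4$. For such $v$, I would apply Mader's splitting-off theorem to find two edges $e_1=va$, $e_2=vb$, consecutive in the rotation at $v$, whose split (replacing $e_1,e_2$ by a single new edge $ab$) yields a $2$-edge-connected graph $G'$; rotation-consecutiveness ensures $G'$ remains toroidal, and if the resulting degree of $v$ drops to $2$ one further suppresses. The new graph satisfies $|V(G')|+|E(G')|<|V(G)|+|E(G)|$. A nowhere-zero $4$-flow on $G'$ lifts to one on $G$ by assigning $e_1,e_2$ the common value of $ab$ with matching orientation, preserving conservation at $v$. Hence if $G'$ is not Petersen-like, minimality gives a $4$-flow on $G'$ and thus on $G$, contradiction; and if $G'$ is Petersen-like, then the inverse split at the image of $v$ amounts to modifying a single planar blob of $G'$, so $G$ itself remains Petersen-like, again a contradiction. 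I expect the hard part of the whole proof to be precisely this Petersen-likeness bookkeeping in Steps 2 and 3: one must carefully verify, case by case over where the reduced piece sits within the cubic skeleton versus the planar blobs of the Petersen-like reduct, that the inverse operation keeps one in the Petersen-like class. This mirrors the corresponding steps in the analogous projective-planar argument of \cite{proj2024}, where the non-apex property again plays exactly this role.
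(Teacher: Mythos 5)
Your overall strategy (degree-$2$ suppression, small-cut elimination, splitting-off at high-degree vertices, each controlled by the non-apex property of Petersen-like graphs) is close in spirit to the paper's proof, but the crucial step---eliminating vertices of degree $\ge 4$---contains a genuine gap that the paper's argument is specifically designed to close.

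Your final step claims: ``if $G'$ is Petersen-like, then the inverse split at the image of $v$ amounts to modifying a single planar blob of $G'$, so $G$ itself remains Petersen-like.'' This is precisely the point that needs proof, and in general it is false. The two split-off edges $e_1=va$, $e_2=vb$ become a single new edge $ab$ of $G'$, and there is no reason this edge, together with the residual vertex $v$, must lie inside one planar blob of a Petersen-like structure on $G'$. Indeed, the paper observes the opposite: because $G$ is \emph{not} Petersen-like, if $G'=H$ were Petersen-like then the image of $v$ and the new edge must straddle two different Petersen blocks of $H$ (``they belong to different graphs when building $H$ from one or more copies of the Petersen graph''). The resolution is not to un-split, but to choose a \emph{different} consecutive pair of edges at $v$ (possible since $\deg v\ge4$): since one split lands the two pieces in different blocks, one of the neighboring splits lands them in the same block, making that $H$ not Petersen-like, and hence yielding a $4$-flow by minimality. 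Your version assumes the desired conclusion rather than deriving it. Relatedly, invoking Mader's splitting-off theorem does not help here, since Mader gives no control over whether the chosen pair is rotation-consecutive at $v$, which you need for the toroidal embedding; the paper instead relies only on $3$-connectivity to see that splitting off any consecutive pair preserves $2$-connectedness.

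Two smaller omissions: the paper also establishes that $G$ has no cyclic $3$-edge-cut and representativity at least $2$ (no $1$-nooses) \emph{before} the splitting step, and both facts are actively used to make the degree-$\ge4$ argument go through; your proposal skips them. Your $2$-edge-cut argument via ``the dual cycle of $F$ must be contractible'' is heuristically the right idea, but the paper handles the $2$-sum case with explicit flow arithmetic (choosing the flow value on the shared edge $xy$, and using a $4$-flow of $G_1-xy$ in the Petersen-like subcase) rather than a topological assertion, which is more robust and does not require you to relate non-apexness to contractibility of a $2$-cut.
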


\begin{proof}
    Suppose that $G$ is a minimum counterexample that is not cubic. If $G$ is not 2-connected, then $G=G_1\cup G_2$ where $G_1\cap G_2$ is a cutvertex. Since $G$ has no cutedges, both $G_1$ and $G_2$ are 2-edge-connected. If they both admit 4-flows, so does $G$. So, one of them must be Petersen-like (as otherwise we would contradict that $G$ is a minimum counterexample). However, in that case, the other one must be planar, and it is easy to see that $G$ itself would be Petersen-like.

    Similarly, if $G$ has a 2-vertex-separator $\{x,y\}$. In that case, $G$ would be a 2-sum of two graphs $G_1$ and $G_2$, both containing the edge $e=xy$. Let $\phi_i$ be a nowhere-zero 4-flow in $G_i$ for $i=1,2$. If $e\notin E(G)$, then we change $\phi_2$ so that $\phi_2(xy)=-\phi_1(xy)$. Then it is clear that $\phi_1+\phi_2$ gives rise to a nowhere-zero flow in $G$. On the other hand, if $e\in E(G)$, then we change $\phi_2$ so that $\phi_2(xy)\ne -\phi_1(xy)$. Then, again, $\phi_1+\phi_2$ gives rise to a nowhere-zero-flow in $G$. 
    
    This leaves us with the possibility that $G_1$ (say) is Petersen-like. In that case, $G_2$ must be planar. Since $G$ itself is not Petersen-like, the edge $xy\in E(G_1)$ is one of the edges confirming that $G_1$ is Petersen-like. In that case, we let $\phi_1$ be a nowhere-zero 4-flow of $G_1-xy$ (which is 2-edge-connected since otherwise it can be shown that $G$ would be Petersen-like). Now, taking a nowhere-zero 4-flow $\phi_2$ of $G_2$ (or of $G_2-xy$ if $xy\notin E(G)$), the sum $\phi_1+\phi_2$ is a nowhere-zero 4-flow of $G$, a contradiction.
    
    This confirms that $G$ is 3-connected. Next, we claim that $G$ is cyclically 4-edge-connected. For a contradiction, suppose that $F=\{x_1x_2,y_1y_2,z_1z_2\}$ is a cyclic 3-edge-cut. Let $C_1,C_2$ be the two components of $G-F$, where $x_i,y_i,z_i$ are in $C_i$ ($i=1,2$). By adding a vertex $u_i$ joined to $x_i,y_i,z_i$ in $C_i$, we obtain a 3-connected graph $H_i$ for $i=1,2$. If $H_1$ has a nowhere-zero 4-flow $\phi_1$ and $H_2$ has a nowhere-zero 4-flow $\phi_2$ (with values in the group $\mathbb{Z}_2\times \mathbb{Z}$), note first that the values on the cut $\{x_iu_i,y_iu_i,z_iu_i\}$ in $H_i$ are all different. Therefore, we can permute the flow values in $\phi_2$ such that $\phi_1(x_1u_1)=\phi_2(x_2u_2)$, $\phi_1(y_1u_1)=\phi_2(y_2u_2)$, and $\phi_1(z_1u_1)=\phi_2(z_2u_2)$. Then, the two flows can be combined to obtain a flow in $G$. This proves that at least one of the two graphs say $H_1$, has no 4-flows. Using induction, $H_1$ is Petersen-like. Now, we claim that $H_2$ must be planar. Since $H_1$ is not apex by the lemma, the embedding of $C_1=H_1-u_1$ is a 2-cell embedding with all three edges going to $u_1$ emanating into the same face. That face contains $C_2$; thus, $C_2$ plus the vertex $u_2$ is planar. This means that $G$ itself is also Petersen-like. But it has a nontrivial 3-edge-cut, so it is not minimal. 
    This contradiction proves that there are no cyclic 3-edge-cuts in $G$.

    The next claim is that $G$ has no edge $e$ for which $G-e$ would be planar. Assuming that is not the case, we can embed $G$ in the torus such that there is a 1-noose\footnote{A \emph{$1$-noose} is a non contractible closed curve on the torus that intersects the graph in one point only (either on an edge or a vertex).} that intersects $G$ only at the middle of $e$. In the proof, whose details are below, we will perform the splitting of vertices. By performing such a splitting, we get a contradiction unless the graph obtained after the splitting would be Petersen-like. However, this outcome is not possible if we have the 1-noose for the embedding of $G$ since Petersen-like graphs are non-apex. Finally, we claim that $G$ has no 1-nooses at all. If there is one, it cuts through a vertex of $G$, splitting it into two vertices, both of degree more than 1. This gives a 2-edge-connected planar graph, whose 4-flow is also a 4-flow in $G$. This shows that $G$ has representativity at least 2.
    
    Finally, if $G$ has a vertex $v$ of degree greater than 3, consider splitting off a pair of consecutive edges $vx,vy$ in a clockwise rotation around $v$. The resulting graph $H$ is 2-connected, since $G$ is 3-connected. A flow in $H$ gives a flow in $G$. As $H$ is smaller (it has fewer edges than $G$), it must be Petersen-like. Let $v'$ be the vertex of degree 2 with neighbors $x,y$ obtained from $v$ after splitting. Since the edges $vx,vy$ are consecutive around $v$, the vertices $v$ and $v'$ are on the same face in the toroidal embedding of $H$, and since $G$ is not Petersen-like, they belong to different graphs when building $H$ from one or more copies of the Petersen graph. 
    Suppose that the consecutive neighbors around $v$ are $x',x,y,y',\dots$. Now, it is easy to see that we could have split off the edges $vx',vx$ or the edges $vy,vy'$, and then $G_1$ would not be Petersen-like, thus yielding a contradiction.

    The above proof shows that the minimum counterexample is cubic. This completes the proof.    
\end{proof}

\end{document}